\def\todaysdate{3\textsuperscript{rd} March 2026}
\definecolor{lightblue}{rgb}{0.8,0.8,1}
\numberwithin{equation}{section}
\numberwithin{figure}{section}
\definecolor{vdarkred}{rgb}{0.7,0,0}
\declaretheoremstyle[
  spaceabove=\topsep,
  spacebelow=\topsep,
  headpunct=,
  numbered=no,
  postheadspace=1ex,
  headfont=\color{vdarkred}\normalfont\bfseries,
  bodyfont=\normalfont\itshape,
]{colored}
\declaretheoremstyle[
  spaceabove=\topsep,
  spacebelow=\topsep,
  headpunct=,
  numbered=no,
  postheadspace=1ex,
  headfont=\normalfont\bfseries,
  bodyfont=\normalfont\itshape,
]{italic}
\declaretheoremstyle[
  spaceabove=\topsep,
  spacebelow=\topsep,
  headpunct=,
  numbered=no,
  postheadspace=1ex,
  headfont=\normalfont\bfseries,
  bodyfont=\normalfont\upshape,
]{upright}
\declaretheorem[style=italic,name=Theorem,numbered=yes,numberwithin=section]{thm}
\declaretheorem[style=italic,name=Lemma,numbered=yes,numberlike=thm]{lem}
\declaretheorem[style=italic,name=Proposition,numbered=yes,numberlike=thm]{prop}
\declaretheorem[style=italic,name=Corollary,numbered=yes,numberlike=thm]{coro}
\declaretheorem[style=italic,name=Theorem,numbered=yes,numberwithin=section]{athm}
\declaretheorem[style=upright,name=Definition,numbered=yes,numberlike=thm]{defn}
\declaretheorem[style=upright,name=Remark,numbered=yes,numberlike=thm]{rmk}
\declaretheorem[style=upright,name=Example,numbered=yes,numberlike=thm]{eg}
\declaretheorem[style=upright,name=Notation,numbered=yes,numberlike=thm]{notation}
\declaretheorem[style=upright,name=Convention,numbered=yes,numberlike=thm]{convention}
\renewcommand*{\@seccntformat}[1]{\upshape\csname the#1\endcsname.\hspace{1ex}}
\renewcommand*{\section}{\@startsection{section}{1}{\z@}%
	{2.5ex \@plus 1ex \@minus 0.2ex}%
	{1.5ex \@plus 0.2ex}%
	{\normalfont\Large\bfseries}}
\renewcommand*{\subsection}{\@startsection{subsection}{2}{\z@}%
	{2.5ex \@plus 1ex \@minus 0.2ex}%
	{1.5ex \@plus 0.2ex}%
	{\normalfont\large\bfseries}}
\renewcommand*{\subsubsection}{\@startsection{subsubsection}{3}{\z@}%
	{2.5ex \@plus 1ex \@minus 0.2ex}%
	{1.5ex \@plus 0.2ex}%
	{\normalfont\normalsize\bfseries}}
\newcommand*{\subsubsubsection}{\@startsection{paragraph}{4}{\z@}%
	{2.5ex \@plus 1ex \@minus 0.2ex}%
	{1.5ex \@plus 0.2ex}%
	{\normalfont\normalsize\bfseries}}
\newcommand{\Diff}{\mathrm{Diff}}
\newcommand{\Aut}{\mathrm{Aut}}
\newcommand{\Map}{\ensuremath{\mathrm{Map}}}
\newcommand{\incl}[3][right]%
{%
\draw[<-,>=#1 hook] #2 to ($ #2!0.5!#3 $);
\draw[->] ($ #2!0.5!#3 $) to #3;%
}
\newcommand{\inclusion}[5][right]%
{%
\draw[<-,>=#1 hook] #4 to ($ #4!0.5!#5 $) node[#2,font=\small]{#3};
\draw[->] ($ #4!0.5!#5 $) to #5;%
}
\newsavebox{\pullback}
\sbox\pullback{%
\begin{tikzpicture}%
\draw (0,0) -- (1ex,0ex);%
\draw (1ex,0ex) -- (1ex,1ex);%
\end{tikzpicture}}
\newenvironment{itemizeb}%
{\begin{compactitem}

}%
{\end{compactitem}}
{\begin{compactitem}[#1]

}%
{\end{compactitem}}
{\begin{compactdesc}

}%
{\end{compactdesc}}
\newcommand{\cB}{\mathcal{B}}
\newcommand{\cC}{\mathcal{C}}
\newcommand{\cD}{\mathcal{D}}
\newcommand{\cG}{\mathcal{G}}
\newcommand{\cH}{\mathcal{H}}
\newcommand{\cM}{\mathcal{M}}
\newcommand{\cN}{\mathcal{N}}
\newcommand{\cP}{\mathcal{P}}
\newcommand{\cV}{\mathcal{V}}
\newcommand{\bD}{\mathbb{D}}
\newcommand{\bN}{\mathbb{N}}
\newcommand{\bQ}{\mathbb{Q}}
\newcommand{\bR}{\mathbb{R}}
\newcommand{\bS}{\mathbb{S}}
\newcommand{\bZ}{\mathbb{Z}}
\newcommand{\ab}{\mathbf{ab}}
\newcommand{\Fct}{\mathbf{Fct}}
\newcommand{\B}{\mathrm{B}}
\newcommand{\fF}{\mathfrak{F}}
\newcommand{\tti}{\mathtt{i}}
\newcommand{\fC}{\mathfrak{C}}
\newcommand{\st}{\mathfrak{s}}
\newcommand{\sH}{\boldsymbol{\mathscr{H}}}
\renewcommand{\footnoterule}{%
  \kern -3pt
  \hrule width \textwidth height 0.4pt
  \kern 2.6pt
}
\newcommand{\obj}{\ensuremath{\mathrm{ob}}}
\newcommand{\Ab}{\mathbf{Ab}}
\newcommand{\Grp}{\mathbf{Grp}}
\newcommand{\Top}{\mathbf{Top}}
\newcommand{\hTop}{\mathbf{hTop}}
\newcommand{\hMan}{\mathbf{hMan}}
\newcommand{\Cok}{\mathrm{Coker}}
\newcommand{\Ker}{\mathrm{Ker}}
\newcommand{\Image}{\mathrm{Im}}
\newcommand{\id}{\mathrm{id}}
\newcommand{\Bun}{\mathrm{Bun}}
\newcommand{\Hom}{\mathrm{Hom}}
\newcommand{\MCG}{\Gamma}
\newcommand{\Orth}{\mathrm{O}}
\DeclareMathOperator*{\colim}{colim}
\definecolor{Arthur}{rgb}{0.3,0,1}
\definecolor{dgreen}{rgb}{0,0.5,0}
\definecolor{later}{rgb}{0,0,0.5}
\definecolor{dOrange}{rgb}{0.9,0.4,0}
\tikzset{
  symbol/.style={
    draw=none,
    every to/.append style={
      edge node={node [sloped, allow upside down, auto=false]{$#1$}}}
  }
}
\begin{document}
\title{\LARGE\bfseries Twisted homological stability for handlebody mapping class groups}
\author{\normalsize Erik Lindell and Arthur Souli{\'e}}
\date{\normalsize\todaysdate}
\maketitle
{\makeatletter
\renewcommand*{\BHFN@OldMakefntext}{}
\makeatother
\footnotetext{2020 \textit{Mathematics Subject Classification}:
Primary: 20J05, 20J06, 55R40, 57K20; Secondary: 18M05, 55U10, 57M07.}
\footnotetext{\textit{Key words and phrases}: twisted homological stability, handlebody groups, moduli spaces of 3-manifolds, tangential structures.}

\begin{abstract}
We prove twisted homological stability for handlebody mapping class groups. Using the categorical framework developed by Randal-Williams and Wahl, we establish that the homology of the handlebody groups stabilises with respect to both genus and the number of marked boundary discs, for all coefficient systems of finite degree. Our first main theorem refines and extends the twisted stability result for handlebodies outlined by Randal-Williams and Wahl, allowing any number of marked discs and boundary points. We then introduce the notion of coefficient bisystem to treat stability under variation of boundary markings. As an application, we deduce homological stability for moduli spaces of 3-dimensional handlebodies equipped with tangential structures.
\end{abstract}

\section{Introduction}

Homological stability is a prevalent phenomenon in topology, asserting that the homology of a sequence of groups (or spaces) $G_{0} \to G_{1} \to G_{2} \to \cdots $ becomes independent of the index in a range increasing with homological degree. Beyond the classical case with constant coefficients, its twisted form considers functorial coefficient systems, allowing one to capture interactions between geometric and representation-theoretic information. In particular, twisted homological stability is thus a fundamental tool for computing group homology via the stable value, and for constructing universal characteristic classes of bundles of families of manifolds. In this paper, we study twisted homological stability for \emph{handlebody groups}. Namely, for $g\ge 0$, let $V_{g}$ denote the $3$--dimensional handlebody of genus $g$, i.e.~the iterated boundary connected sum $V_{g}:=\flat_{g} S^1\times\bD^{2}$ for some integer $g\ge 0$, where $\bD^{2}$ denotes the $2$--dimensional unit disc. Denoting by $\cD$ the image of an embedded copy of $\bD^{2}$ in the boundary of $V_{g}$, we consider the \emph{handlebody mapping class group} (a.k.a.~\emph{handlebody group})
\begin{equation}\label{eq:def-handlebody-mcg} 
\cH_{g,1}:=\pi_{0}\Diff(V_{g};\cD),
\end{equation}
that is the group of isotopy classes of diffeomorphisms of $V_{g}$ that pointwise fix the disc $\cD$.

By taking the boundary connected sum $V_{g}\flat V_{1}$ along some marked disc in $\partial V_{g}$ as in Figure~\ref{fig:Stabilisation}, we can extend any diffeomorphism of $V_{g}$ that fixes $\cD$ to one of $V_{g+1}$, by defining it to be the identity on $V_{1}$. By picking any embedded 2-disc in the boundary of $V_{g+1}\setminus V_{g}$, this defines a \emph{stabilisation homomorphism}
\[
\sigma_{g,1}\colon \cH_{g,1}\to\cH_{g+1,1}.
\]
\textcite[Cor.~1.9]{HatcherWahl} proved that the induced sequence of homomorphisms satisfies integral \emph{homological stability}, i.e.~that in degrees sufficiently small compared to $g$, $\sigma_{g,1}$ induces an isomorphism in integral homology. A proof generalising this result to hold with various twisted coefficient was outlined by \textcite[\S5.7]{RWW}, recovering in particular a previous stability result for the first homology with a specific twisted coefficient system found by Ishida and Sato in \cite[Th.~1.2]{IshidaSato} (see Example~\ref{eg:HS-examples-recover}). In this paper, we generalise these results further, in several directions. The first main result fills in the details of the proof of twisted homological stability for handlebody mapping class groups of \cite[Th.~5.31]{RWW}, and generalises it to hold with any number of marked discs and points in the boundary of $V_{g}$; see Theorem~\ref{thm:mainA}.

\begin{figure}[h]
\centering
\includegraphics[scale=0.5]{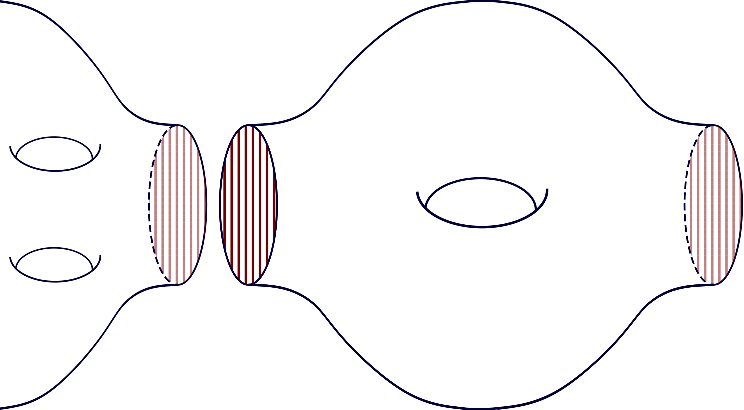}
\caption{The handlebody $V_{g+1}$ is obtained from $V_{g}$ by gluing on a genus $1$ handlebody along the marked disc and the stabilisation map $\sigma_{g,1}$ is defined by extending diffeomorphisms to the new part by the identity.}
\label{fig:Stabilisation}
\end{figure}

\paragraph*{Additional marked discs and points.} A more general flavour of the above handlebody mapping class group \eqref{eq:def-handlebody-mcg} is given by replacing the marked disc $\cD$ in the boundary of $V_{g}$ with any number $r\ge 1$ of disjointly embedded discs and $s\ge 0$ distinct points in the boundary. We denote by $\cH^{s}_{g,r}$ the group of the isotopy classes of diffeomorphisms of $V_{g}$ that pointwise fix all marked discs and points. If $s$ is zero, we drop it from the notation. Also, we write
\[
\sigma^{s}_{g,r}\colon \cH^{s}_{g,r}\to\cH^{s}_{g+1,r}
\]
for the analogously defined stabilisation homomorphism; see also \S\ref{sss:hom-stdness-properties} for the precise definition of this map.

\paragraph*{Twisted coefficients.} For integers $r\ge 1$ and $s\ge 0$, let $\{(F^{s}_{g,r},c^{s}_{g,r})\}_{g\ge 0}$ be a sequence of pairs where $F^{s}_{g,r}$ is a $\bZ[\cH^{s}_{g,r}]$-module and
\[
c^{s}_{g,r}:F^{s}_{g,r}\to F^{s}_{g+1,r}
\]
is a $\bZ[\cH^{s}_{g,r}]$-linear map, where the target is considered a $\bZ[\cH^{s}_{g,r}]$-module via the stabilisation map $\sigma^{s}_{g,r}$. We call such a sequence a \emph{coefficient system} for the sequence $\{(\cH^{s}_{g,r},\sigma^{s}_{g,r})\}_{g\ge 0}$.

\paragraph*{Stability with respect to handles.}
A coefficient system $\{(F^{s}_{g,r},c^{s}_{g,r})\}_{g\ge 0}$ for the sequence $\{(\cH^{s}_{g,r},\sigma^{s}_{g,r})\}_{g\ge 0}$ induces a map in homology
\begin{equation}\label{eq:intro-stab-maps-in-homology}
    (\sigma^{s}_{g,r},c^{s}_{g,r})_{*}\colon H_{*}(\cH^{s}_{g,r};F^{s}_{g,r})\to H_{*}(\cH^{s}_{g+1,r};F^{s}_{g+1,r})
\end{equation}
for each $g\ge 0$. Using the general framework for homological stability developed by \cite{RWW}, we prove that if the coefficient system is of \emph{finite degree}, which means that it satisfies a certain polynomiality condition (see \S\ref{ss:finite-degree} for details), the sequence of the maps \eqref{eq:intro-stab-maps-in-homology} satisfies homological stability:
\begin{athm}[Theorem~\ref{thm:HS_stabilistation-1}]\label{thm:mainA}
If $\{(F^{s}_{g,r},c^{s}_{g,r})\}_{g\ge 0}$ for integers $r\ge 1$ and $s\ge 0$, is a coefficient system of finite degree $d$, with respect to the sequence $\{(\cH^{s}_{g,r},\sigma^{s}_{g,r})\}_{g\ge 0}$, the map
$$(\sigma^{s}_{g,r},c^{s}_{g,r})_{*}\colon H_{*}(\cH^{s}_{g,r};F^{s}_{g,r})\to H_{*}(\cH^{s}_{g+1,r};F^{s}_{g+1,r})$$
is an isomorphism in degrees $*\le \frac{g-1}{2}-d-1$. 
\end{athm}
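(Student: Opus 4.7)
The plan is to deduce Theorem~\ref{thm:mainA} as a direct application of the general twisted homological stability machinery developed by \textcite{RWW}. First I would package the handlebody groups into a braided (in fact symmetric) monoidal groupoid $\cG^{s}_{r}$ whose objects are indexed by $g \ge 0$, whose automorphism groups are the $\cH^{s}_{g,r}$, and whose monoidal structure is boundary connected sum with a copy of $V_{1}$ along a marked disc. The stabilisation homomorphism $\sigma^{s}_{g,r}$ is then the tautological action of this generator on $V_{g}$. Forming the Quillen bracket construction $U\cG^{s}_{r}$ would produce a homogeneous category in the sense of \cite{RWW}, provided that the monoidal unit is initial and that the monoid of isomorphism classes has no zero divisors. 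These two algebraic hypotheses (conditions H1 and H2 of \cite{RWW}) reduce to familiar isotopy-extension and cancellation statements for diffeomorphisms of $V_{g}$ fixing the marked boundary data, which I would verify directly.

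With the homogeneous category in place, I would identify a coefficient system of finite degree $d$ as in \S\ref{ss:finite-degree} with a polynomial functor of degree $d$ on $U\cG^{s}_{r}$ in the sense of \cite{RWW}, so that \eqref{eq:intro-stab-maps-in-homology} coincides with the stabilisation map considered in \emph{loc.\ cit}. This translation is essentially formal, since both notions of finite degree are defined by iterated vanishing of the kernel and cokernel of the unit stabilisation natural transformation.

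The geometric heart of the argument, and where I expect the main obstacle to lie, is to verify the high-connectivity input required by \cite{RWW}. Concretely, one needs the semi-simplicial set whose $p$-simplices are ordered $(p+1)$-tuples of disjoint, properly embedded essential discs in $V_{g}$ that simultaneously split off $p+1$ copies of $V_{1}$ from $V_{g}$ (keeping all marked boundary discs and points inside one component) to be roughly $(g-1)/2$-connected, so as to provide ``slope $2$'' in the RWW sense. The strategy I would follow is to first treat the basic case $r=1$, $s=0$ by relating this splitting complex to the disc complex used by \textcite{HatcherWahl} in their proof of integral homological stability for $\cH_{g,1}$, invoking a colouring/bad-simplex argument to upgrade their connectivity conclusion to ordered tuples. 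The general case $r \ge 2$, $s \ge 0$ would then be reduced to the basic one by Birman-type fibrations comparing the relevant groups and complexes, exploiting that additional marked discs and points behave functorially and do not affect the underlying connectivity estimate.

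Finally, plugging the resulting slope-$2$ connectivity bound together with the coefficient degree $d$ into the main twisted stability theorem of \cite{RWW} yields an isomorphism in degrees $* \le (g-1)/2 - d - 1$, exactly as claimed. The remaining task is then the routine bookkeeping of checking that the numerical offsets in the RWW theorem specialise to the range stated in Theorem~\ref{thm:mainA}.
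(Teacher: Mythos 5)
Your proposal follows the same overall strategy as the paper: encode the handlebody groups in a braided monoidal groupoid, run the Quillen bracket/RWW--Krannich machinery (Theorem~\ref{thm:RWW-main-thm}), and feed in the connectivity of a disc complex coming from \cite{HatcherWahl}. Where you differ is in how the connectivity input is obtained, and there your plan is heavier than necessary. First, no reduction of the general case $r\ge 2$, $s\ge 0$ to $(r,s)=(1,0)$ via Birman-type fibrations is needed: Hatcher and Wahl prove the $\tfrac{g-3}{2}$-connectivity of their complex $Y^{A}(V,\cD_{r},x_{0},x_{1})$ for an arbitrary collection of boundary discs, and the paper absorbs the marked points by an explicit isomorphism of complexes (Lemma~\ref{lem:iso-simplicial-complex-marked-points}, Theorem~\ref{thm:HatcherWahl-connectivity1}); it is in any case unclear how a fibration of groups would transfer connectivity between the complexes, so this step of your plan would need a genuine argument or should simply be replaced by quoting the general statement. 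Second, the passage from the unordered complex to the ordered semi-simplicial set of destabilisations is handled in the paper not by a colouring/bad-simplex argument (which would require a weakly Cohen--Macaulay strengthening of the connectivity statement, i.e.\ control of links) but by verifying local standardness (Proposition~\ref{prop:local-standardness}), which via Proposition~\ref{prop:LS-iff-Wn-determined-by-vertices} shows every simplex is determined by its ordered vertices, so that $|W_{g}(V^{s}_{0,r},V_{1,1})_{\bullet}|$ is actually homeomorphic to $|S_{g}(V^{s}_{0,r},V_{1,1})_{\bullet}|\cong |Y^{A}(V_{g},\cD_{r}\sqcup\cP_{s},x_{0},x_{0})|$ (Propositions~\ref{prop:stab1-S_g-iso-Y} and~\ref{prop:stab1-W_g-S_g-homeomorphic}); your route is viable in principle but costs extra work. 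Finally, your parenthetical claim that the groupoid is symmetric is incorrect: the square of the braiding is a disc twist about a curve enclosing the two glued discs, which is nontrivial by the injection of Lemma~\ref{lem:H-injects-into-Gamma}, so the structure is genuinely braided (Proposition~\ref{prop:beta(1)-braided-monoidal}) --- harmless for the machinery, which only needs a braiding, but worth correcting.
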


\begin{rmk}
As detailed in the more technical Theorem~\ref{thm:HS_stabilistation-1} below, the stability range can be improved if the coefficient system is \emph{split} (see \S\ref{ss:finite-degree} for the definition), an additional assumption which roughly boils down to the assumption that $c^{s}_{g,r}:F^{s}_{g,r}\to F^{s}_{g+1,r}$ is split injective for each $g\ge 0$; see \S\ref{ss:finite-degree}.
\end{rmk}

Some examples of finite degree coefficients $\{(F^{s}_{g,r},c^{s}_{g,r})\}_{g\ge 0}$ can be obtained from the homology of $V_{g}$ and of its boundary, the surface $\partial V_{g}$. More specifically, if we let $\cD_{r}$ denote the union of the marked discs and $\cP_{s}$ the union of the marked points in the boundary of $V_{g}$, the relative homology groups $H_{1}(V_{g},\cD_{r}\sqcup\cP_{s};\bZ)$ and $H_{1}(\partial V_{g}\setminus (\mathring{\cD}_{r}\sqcup\cP_{s});\bZ )$ are the components of coefficient systems of degree $1$; see Example~\ref{eg:HS-examples-recover} for more details. This implies that taking the $r$-th tensor powers of these coefficient systems give us coefficient systems of degree $r$ (see \cite[Cor.~1.5]{AnnexLS1}).

\paragraph*{Stability with respect to marked discs.}
Hatcher and Wahl \cite{HatcherWahl} also proved that when $g$ is large compared to the homological degree, the integral homology of $\cH_{g,r}$ is independent of $r$, the number of marked discs in the boundary of $V_{g}$. More specifically, there is a group homomorphism
\[
\mu^{s}_{g,r}\colon \cH^{s}_{g,r}\to \cH^{s}_{g,r+1},
\]
defined by gluing a sphere with three marked discs to the first marked disc of $V_{g}$ (see Figure~\ref{fig:Stabilisation_mu}) and extending diffeomorphisms by the identity, and the result of Hatcher and Wahl says (in the case $s=0$) that for $g$ large enough to the homological degree, the induced map in homology is an isomorphism.

Our second main theorem generalises this result to homology with twisted coefficients. Our assumption on the twisted coefficients is somewhat technical, so let us leave out the details in the introduction. However, it essentially says that the coefficients need to be of finite degree in two different ways: we call this being a ``coefficient bisystem''; see Definition~\ref{def:double-coeff-system}.
\begin{figure}[h]
\centering
\includegraphics[scale=0.7]{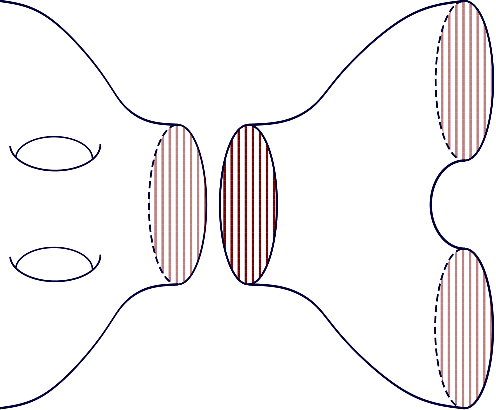}
\caption{By gluing a ``solid pair of pants'' to the first marked disc of our handlebody, we obtain a handlebody with an additional marked disc.}
\label{fig:Stabilisation_mu}
\end{figure}

\begin{athm}[Corollary~\ref{cor:independence-of-marked-discs}]\label{thm:mainB}
For integers $r\ge 1$ and $s\ge 0$, let $m^{s}_{g,r}\colon  F^{s}_{g,r}\to F^{s}_{g,r+1}$ be a $\bZ[\cH^{s}_{g,r}]$-linear map, where the target is consider a $\bZ[\cH^{s}_{g,r}]$-module via $\mu^{s}_{g,r}$. If $m^{s}_{g,r}$ is induced by a coefficient bisystem of degree $d$ (see Definition~\ref{def:double-coeff-system} below), the map
$$(\mu^{s}_{g,r}, m^{s}_{g,r})_{*}\colon H_{*}(\cH^{s}_{g,r}; F^{s}_{g,r})\to H_{*}(\cH^{s}_{g,r+1}; F^{s}_{g,r+1}),$$
is an isomorphism in degree $*\le\frac{g-1}{2}-d-1$.
\end{athm}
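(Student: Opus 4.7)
The plan is to deduce Theorem~\ref{thm:mainB} from Theorem~\ref{thm:mainA} via a commutative-square comparison combined with a stable-range limit identification. The handle-stabilisation $\sigma$ and the disc-stabilisation $\mu$ are supported in disjoint regions of $\partial V_{g}$ and hence commute up to isotopy, producing, for each $g\ge 0$, a commutative square
\[
\begin{tikzcd}[row sep=small]
H_{*}(\cH^{s}_{g,r};F^{s}_{g,r}) \ar[r,"\mu_{*}"] \ar[d,"\sigma_{*}"'] & H_{*}(\cH^{s}_{g,r+1};F^{s}_{g,r+1}) \ar[d,"\sigma_{*}"] \\
H_{*}(\cH^{s}_{g+1,r};F^{s}_{g+1,r}) \ar[r,"\mu_{*}"'] & H_{*}(\cH^{s}_{g+1,r+1};F^{s}_{g+1,r+1})
\end{tikzcd}
\]
that is compatible with the bisystem structure on coefficients.

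By Definition~\ref{def:double-coeff-system}, a coefficient bisystem of degree $d$ restricts in particular to coefficient systems of finite degree $d$ in the $\sigma$-direction for both values $r$ and $r+1$. Theorem~\ref{thm:mainA} then forces the two vertical arrows to be isomorphisms in the range $*\le\frac{g-1}{2}-d-1$. Hence, in this range, the two horizontal $\mu_{*}$'s are identified; iterating the square along $g$, the statement reduces to the assertion that the induced map on colimits
\[
\mu^{\infty}_{*}\colon \colim_{g}H_{*}(\cH^{s}_{g,r};F^{s}_{g,r}) \longrightarrow \colim_{g}H_{*}(\cH^{s}_{g,r+1};F^{s}_{g,r+1})
\]
is an isomorphism in every degree.

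For constant coefficients, this stable isomorphism is essentially the Hatcher-Wahl independence of boundary markings: in the stable range an extra marked disc can be absorbed by a free handle, yielding a homological inverse to $\mu$. The twisted version should follow by carrying the same geometric construction through the compatibility relations between the $\sigma$- and $\mu$-structure maps encoded in Definition~\ref{def:double-coeff-system}: these are precisely what is needed to make the stable coefficients $F^{s}_{\infty,r}$ and $F^{s}_{\infty,r+1}$ correspond under the inverse to $\mu^{\infty}_{*}$.

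The main obstacle is this final step, namely producing the twisted-coefficient inverse to $\mu^{\infty}_{*}$ from the bifunctorial structure of a coefficient bisystem. Once it is established, the corollary follows by the square chase above, and the stability range $*\le\frac{g-1}{2}-d-1$ reproduces that of Theorem~\ref{thm:mainA} on the nose, since both the $\sigma$-direction isomorphisms and the colimit identification hold in the same range.
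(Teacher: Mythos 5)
Your reduction to the stable statement is where the argument stops being a proof: the assertion that $\mu^{\infty}_{*}$ is an isomorphism with twisted coefficients is exactly the hard content of the theorem, and you leave it as something that ``should follow'' from the bisystem axioms. It does not follow formally. In the paper this step is replaced by an entire second stability theorem for the stabilisation $\rho^{s}_{g,r}=(-)\,\#\,\id_{V_{0,2}}$ (Theorem~\ref{thm:HS_stabilistation-2}), whose proof requires the infinite-handlebody groupoid $\overline{\sH}^{\ge 2}$, the extension $\overline{F}$ of a double coefficient system to it (Proposition~\ref{prop:extension-coeff-system-infty}), the contractibility of the stable destabilisation complex via \cite[Thm.~8.6]{HatcherWahl} (Theorem~\ref{thm:sc-infty-contractible}), the left-handed variant of Theorem~A (Theorem~\ref{thm:HS_stabilistation-1'}), and the commutation Lemma~\ref{lem:sigma-bar-rho-commute}; only then does the interleaving $\sigma^{s}_{g,r}=\nu^{s}_{g,r+1}\circ\mu^{s}_{g,r}$ and $\rho^{s}_{g,r}=\mu^{s}_{g,r}\circ\nu^{s}_{g,r+1}$ (equations \eqref{eq:sigma-decomposition}--\eqref{eq:rho-decomposition}) give that $\mu_{*}$ is both injective (from Theorem~A) and surjective (from Theorem~\ref{thm:HS_stabilistation-2}) in the range, without ever inverting the stable $\mu$ directly. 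That the twisted stable inverse cannot simply be extracted from the constant-coefficient Hatcher--Wahl ``absorb the disc into a handle'' picture is illustrated by Remark~\ref{rmk:non-stability-no-marked-discs}: for $r=0$ the constant-coefficient statement holds but the twisted one fails (Ishida--Sato), so the coefficient bookkeeping is genuinely where the theorem lives.

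Two further points in your square chase need repair even granting the stable step. First, the commutativity of $\sigma$ and $\mu$ is not a disjoint-support triviality: both operations are anchored at the \emph{first} marked disc ($\sigma$ glues $V_{1,1}$ along its right half, $\mu$ glues $V_{0,3}$ along all of it), and after applying $\mu$ the distinguished disc changes; the paper avoids this by only using the commutation of the \emph{left} stabilisation $\bar{\sigma}$ with $\rho$ (Lemma~\ref{lem:sigma-bar-rho-commute}). Second, the coefficient compatibility you invoke is not what Definition~\ref{def:double-coeff-system} provides: condition \eqref{item:extension-mixed} relates the left $\natural$-direction maps with the right $\#$-direction maps (diagram \eqref{eq:commutative-diag-extension-functor}), not the right $\natural$-direction $c^{s}_{g,r}$ with a $\mu$-direction map $m^{s}_{g,r}$, so you would have to build and verify that compatibility separately before your square even makes sense on coefficients.
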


\begin{rmk}
As in Theorem~\ref{thm:mainA}, the stability range can be improved if the coefficient bisystem is further assumed to be split.
\end{rmk}

Some examples of finite degree coefficient bisystems can again be obtained from the $\cH^{s}_{g,r}$-representations $H_{1}(V_{g},\cD_{r}\sqcup\cP_{s};\bZ)$ and $H_{1}(\partial V_{g}\setminus\mathring{\cD}_{r},\cP_{s};\bZ)$, considered above; see Example~\ref{eg:HW-recover} for details.

\paragraph*{Moduli spaces of handlebodies with tangential structures.}
Our homological stability results for handlebody mapping class groups with twisted coefficients also admit a reinterpretation in terms of moduli spaces, and lead to stability results for handlebodies equipped with tangential structures. Let $R^{s}_{g,r}\subset \partial V_{g}$ denote the union of $r\ge 1$ marked discs and $s\ge 0$ marked points in $\partial V_{g}$. By Lemma~\ref{lem:H-injects-into-Gamma}, the diffeomorphism group $\Diff(V_{g};R^{s}_{g,r})$ has contractible path components, and is therefore homotopy equivalent to the discrete group $\cH^{s}_{g,r}$. In particular, there is an homotopy equivalence $\B\Diff(V_{g};R^{s}_{g,r}) \simeq \B\cH^{s}_{g,r}$, so the group cohomology of $\cH^{s}_{g,r}$ agrees with the singular cohomology of $\B\Diff(V_{g};R^{s}_{g,r})$. Thus homological stability for $\cH^{s}_{g,r}$ may equivalently be formulated as homological stability for the spaces $\B\Diff(V_{g};R^{s}_{g,r})$. The space $\B\Diff(V_{g};R^{s}_{g,r})$ may be viewed as a \emph{moduli space} of genus $g$ handlebodies with marked boundary data. This viewpoint extends naturally to moduli spaces of handlebodies endowed with additional \emph{tangential structures}. In higher dimensions, homological stability for such moduli spaces has been studied by \textcite{Perlmutter}, and their stable homology by \textcite{Botvinnik-Perlmutter}. Related stability and stable cohomology results for the even-dimensional manifolds $W_{g} := \#_{g}(S^{n}\times S^{n})$, which arise as boundaries of $(2n+1)$--dimensional handlebodies, were previously established by \textcite{GalatiusRW-HomStab,GalatiusRW-StableCoh}.

In our setting, a tangential structure is a fibration $\theta\colon B \to \B\Orth(3)$, where $\Orth(3)$ is the group of $3\times3$ orthogonal matrices, and a $\theta$-structure on $V_{g}$ is a map of vector bundles (i.e.~a fibrewise map whose restriction to each fibre is a linear isomorphism) $\ell\colon TV_{g}\to \theta^{*}\gamma_{3}$, where $TV_{g}$ is the tangent bundle of $V_{g}$, $\gamma_{3}$ is the tautological rank-3 vector bundle over $\B\Orth(3)$ and $\theta^{*}\gamma_{3}$ denotes its pullback along $\theta$. We denote by $\Bun^{\theta}(V_{g}) \subset \Map(TV_{g},\theta^{*}\gamma_{3})$ the space of $\theta$-structures on $V_{g}$. The corresponding \emph{moduli space of genus $g$ handlebodies with a $\theta$-structure} is defined as the homotopy orbit space
\[
\B\Diff^{\theta}(V_{g})
:=
\Bun^{\theta}(V_{g})_{h\Diff(V_{g})},
\]
where $\Diff(V_{g})$ acts by precomposition. In \S\ref{def:tangential-structures}, we define these notions in more detail, and refine this construction by imposing boundary conditions along $R^{s}_{g,r}$, leading to moduli spaces $\B\Diff^{\theta}(V_{g};\ell^{s}_{g,r})$.

The boundary condition $\ell^{s}_{g,r}$ allows us to define stabilisation maps
\begin{equation}\label{eq:intro-tangential-stabilisation-phi}
\breve{\sigma}_{g,r}^{\theta,s}\colon
\B\Diff^{\theta}(V_{g};\ell^{s}_{g,r})
\longrightarrow
\B\Diff^{\theta}(V_{g+1};\ell^{s}_{g+1,r}),
\end{equation}
which increase the genus by extending the $\theta$-structure by a fixed one on the added handle, and similarly
\begin{equation}\label{eq:intro-tangential-stabilisation-mu}
\breve{\mu}_{g,r}^{\theta,s}\colon
\B\Diff^{\theta}(V_{g};\ell^{s}_{g,r})
\longrightarrow
\B\Diff^{\theta}(V_{g};\ell^{s}_{g,r+1}),
\end{equation}
which increase the number of marked discs. We may now state our final main result.

\begin{athm}[Theorem~\ref{thmC-detailed}]\label{thm:mainC}
Let $\theta\colon B\to \B\Orth(3)$ be a simply connected tangential structure. Then the induced maps
\[
(\breve{\sigma}_{g,r}^{\theta,s})_{*}\colon
H_{*}(\B\Diff^{\theta}(V_{g};\ell^{s}_{g,r});\bQ)
\longrightarrow
H_{*}(\B\Diff^{\theta}(V_{g+1};\ell^{s}_{g+1,r});\bQ)
\]
and
\[
(\breve{\mu}_{g,r}^{\theta,s})_{*}\colon
H_{*}(\B\Diff^{\theta}(V_{g};\ell^{s}_{g,r});\bQ)
\longrightarrow
H_{*}(\B\Diff^{\theta}(V_{g};\ell^{s}_{g,r+1});\bQ)
\]
are isomorphisms in degrees $*\le \frac{g-3}{2}$.
\end{athm}

\begin{rmk}
Although Theorem~\ref{thm:mainC} is deduced from Theorems~\ref{thm:mainA} and~\ref{thm:mainB}, which both hold integrally, it is stated with rational coefficients. This restriction stems from a technical issue concerning finite-degree coefficient systems; see Remark~\ref{rmk:thmC-over-Q}.
\end{rmk}

\paragraph*{Applications.}
In \cite{LS2}, we apply the results of this paper to compute the stable cohomology of $\cH_{g,1}$ with coefficients in tensor products of the representations $H_{1}(V_{g};\bQ)$ and $H_{1}(\partial V_{g};\bQ)$. Let us briefly outline below how these results are leveraged to carry out these computations. The starting point is Theorem~\ref{thm:mainC}, which allows us to invoke forthcoming results of \textcite{Barkan-Steinebrunner} to determine the stable rational cohomology of $\B\Diff^{\theta}(V_{g};\ell_{g,1})$ for suitable tangential structures $\theta\colon B\to \B\Orth(3)$. Given a rational vector space $W$, we consider the tangential structure $\theta_{W}\colon \B\Orth(3)\times K(W^{\vee},2)\to \B\Orth(3)$. In this case one has $\B\Diff^{\theta_{W}}(V_{g};\ell_{g,1}\simeq \Map((V_{g},\mathcal{D}), (K(W^{\vee},2),*))$.
A spectral sequence argument, combined with representation-theoretic input for general linear and symmetric groups, then allows us to compute the stable cohomology of $\cH_{g,1}$ with tensor powers of $H_{1}(V_{g};\bQ)$ as coefficients. To incorporate coefficients involving $H_{1}(\partial V_{g};\bQ)$, we use the short exact sequences relating the groups $\cH_{g,r}$ as the numbers of marked discs and points vary. The associated spectral sequences yield an inductive procedure for computing stable cohomology groups of the form $H^{*}(\cH_{g,1}; H_{1}(\partial V_{g};\bQ)\otimes M)$, assuming the stable cohomology with coefficients in $M$ is already known. The independence of stable twisted cohomology from the number of marked discs, established in Theorem~\ref{thm:mainB}, is a crucial input in this argument.

\paragraph*{Conventions and notation.}
We denote by $\Ab$ the category of abelian groups. We write $\otimes$ for $\otimes_{R}$ when the ground ring $R$ is clear from the context. Non-specified tensor products are taken over $\bZ$. For each $d\ge 1$, the $d$-fold tensor power of abelian groups functor is denoted by $T^{d}\colon \Ab \to \Ab$. We denote by $I$ the unit interval $[0,1]$.

\paragraph*{Acknowledgements} The authors thank Nathalie Wahl for several useful discussions. During the project, the first author was first supported by the project ANR-22-CE40-0008 SHoCoS, as a postdoc of Najib Idrissi, and then by the Knut and Alice Wallenberg Foundation through grant no. 2022.0278. He is also grateful to the Copenhagen Centre for Geometry and Topology for their hospitality during the writing. The second author was supported by PEPS JCJC 2024 (Projets Exploratoires-Premier Soutien, Jeunes chercheurs et jeunes chercheuses) of the INSMI. This collaboration was supported by the IRN MaDeF (International Research Networks Mathematics in Denmark and France).

\tableofcontents

\section{Background}\label{s:background}

In this section, we start by recollecting in \S\ref{ss:handlebody_MCG_recollections} some basics about handlebody groups and their relation to mapping class groups of surfaces. After this, we construct in \S\ref{sss:categorical_framework} a number of braided monoidal groupoids of handlebodies and modules over these, whose automorphism groups are precisely the handlebody groups, and we recall the Quillen bracket construction. As we study homological stability with twisted coefficients, we review in \S\ref{ss:finite-degree} the designed notion of finite degree coefficient system, of which we also introduce interesting examples.
We finally recall in \S\ref{ss:framework-HS} the general framework of \cite{RWW,Krannich} to prove twisted homological stability, and we verify that the categories we have introduced satisfy some basic properties that make them fit into it.

\subsection{Recollections on handlebody groups}\label{ss:handlebody_MCG_recollections}

We recall that a {$3$--dimensional compact handlebody} is a smooth $3$-manifold obtained by attaching a finite number of handles to the $3$-ball. Equivalently, it is diffeomorphic to the boundary connected sum $\flat_{g} (\bS^{1}\times \bD^{2})$ for some integer $g\ge 0$, which is denoted by $V_{g}$ and its boundary by $\Sigma_{g}:=\partial V_{g}$. For integers $r,s\ge 0$, let $\cD_{r}$ be the image of the disjoint union $\sqcup_{r}\bD^{2}$ of $r\ge 0$ embedded copies of the $2$-disc $\bD^{2}$ in the boundary of $V_{g}$ and $\cP_{s}$ be the image of an embedded copy of the set of $s\ge 0$ distinct points $\{p_{1},\ldots,p_{s}\}$ in $\Sigma_{g}\setminus \cD_{r}$. We also denote by $\Sigma^{s}_{g,r}$ the surface $\Sigma_{g}\setminus (\mathring{\cD}_{r}\sqcup\cP_{s})$, where $\mathring{\cD}_{r}$ is the disjoint union of the interiors of the marked discs of $\cD_{r}$.

We consider the topological group $\Diff(V_{g};\cD_{r}\sqcup\cP_{s})$ of orientation-preserving diffeomorphisms of $V_{g}$, which pointwise fix the sets $\cD_{r}$ and $\cP_{s}$. Its group of path components is denoted by $\cH^{s}_{g,r}$ and called the \emph{handlebody (mapping class) group}.
We also deal with the topological group $\Diff(\Sigma_{g};\cD_{r}\sqcup\cP_{s})$ of orientation-preserving diffeomorphisms of the boundary of $V_{g}$ fixing both $\cD_{r}$ and $\cP_{s}$ pointwise. We write $\MCG^{s}_{g,r}$ for its group of path components, which is called the \emph{mapping class group} of $\sigma^{s}_{g,r}$.
If $r=0$ (respectively $s=0$), we may omit $r$ and $\cD_{r}$ (respectively $s$ and $\cP_{s}$) from the notation.

A key property of handlebody groups is that they canonically embed into the mapping class groups of their boundaries. This result is probably known to the experts, but we give a short proof here for the convenience of the reader; see Lemma~\ref{lem:H-injects-into-Gamma}.
Since diffeomorphisms of smooth manifolds are boundary-preserving (see \cite[Th.~2.18]{Lee} for instance), the restriction map to the boundary $V_{g}\to \Sigma_{g}$ induces a fibration
\begin{equation}
\label{eq:fibration_restriction_diffs}
\Diff(V_{g};\cD_{r}\sqcup \cP_{s})\to \Diff(\Sigma_{g};\cD_{r}\sqcup \cP_{s}),
\end{equation}
with fibre $\Diff(V_{g};\partial V_{g})$.
\begin{lem}\label{lem:H-injects-into-Gamma}
For each $g,r,s\ge 0$, the map \eqref{eq:fibration_restriction_diffs} induces an injective homomorphism
\begin{equation}\label{eq:H-injects-into-Gamma}
\tti^{s}_{g,r}\colon\cH^{s}_{g,r}\hookrightarrow\MCG^{s}_{g,r}.
\end{equation}
Furthermore, if $g\ge 2$, then the homotopy groups $\pi_{i}(\Diff(V_{g};\cD_{r}\sqcup \cP_{s}))$ are trivial for all $i\ge 1$.
\end{lem}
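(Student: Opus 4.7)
The natural approach is to apply the long exact sequence of homotopy groups to the fibration \eqref{eq:fibration_restriction_diffs}, whose fibre over the identity is $\Diff(V_{g};\partial V_{g})$. Its tail reads
\[
\cdots\to\pi_{1}\bigl(\Diff(\Sigma_{g};\cD_{r}\sqcup\cP_{s})\bigr)\to\pi_{0}\bigl(\Diff(V_{g};\partial V_{g})\bigr)\to\cH^{s}_{g,r}\to\MCG^{s}_{g,r},
\]
so the injectivity of $\tti^{s}_{g,r}$ reduces to the path-connectedness of the fibre, and the vanishing of $\pi_{i}\bigl(\Diff(V_{g};\cD_{r}\sqcup\cP_{s})\bigr)$ for $i\ge 1$ becomes a formal consequence of the same sequence once one knows that both the fibre and the base have trivial higher homotopy.

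First I would invoke the classical contractibility theorem for the rel-boundary diffeomorphism group of a $3$-dimensional handlebody: combining Hatcher's solution of the Smale conjecture (for $V_{0}=\bD^{3}$) with his work on diffeomorphism groups of Haken $3$-manifolds (for $g\ge 1$), one obtains that $\Diff(V_{g};\partial V_{g})$ is contractible for every $g\ge 0$. In particular $\pi_{0}(\Diff(V_{g};\partial V_{g}))=0$, so the injectivity of $\tti^{s}_{g,r}$ follows at once for every $g,r,s\ge 0$.

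For the higher homotopy vanishing under the assumption $g\ge 2$, the contractibility of the fibre upgrades \eqref{eq:fibration_restriction_diffs} to a weak equivalence
\[
\Diff(V_{g};\cD_{r}\sqcup\cP_{s})\xrightarrow{\ \sim\ }\Diff(\Sigma_{g};\cD_{r}\sqcup\cP_{s}),
\]
so it suffices to prove that the path components of the surface diffeomorphism group on the right are weakly contractible. This is the content of the classical theorems of Earle--Eells (for closed surfaces of genus $\ge 2$) and Earle--Schatz (for compact surfaces with non-empty boundary pointwise fixed); the hypothesis for contractibility of components is negative Euler characteristic, which is automatic as soon as $g\ge 2$. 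The presence of the $s$ marked points in $\cP_{s}$ is then incorporated by the standard evaluation fibration whose base is a configuration space on $\Sigma^{0}_{g,r}$ (aspherical for $g\ge 2$) and whose fibre is the point-fixing diffeomorphism group, which inductively inherits the contractibility of its components.

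The main obstacle is essentially bookkeeping: matching the precise published statements of Hatcher's and of Earle--Eells/Earle--Schatz's theorems with the exact hypotheses we need, where marked boundary discs and marked boundary points are imposed simultaneously. Since $\cD_{r}$ and $\cP_{s}$ are disjoint, this should be achievable by peeling off the marked conditions one at a time via restriction/evaluation fibrations and verifying the required properties (contractibility of the fibre, asphericity of the new base) at each step.
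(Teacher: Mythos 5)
Your proposal is correct and follows essentially the same route as the paper: the homotopy long exact sequence of the restriction fibration \eqref{eq:fibration_restriction_diffs}, triviality of $\pi_{0}\Diff(V_{g};\partial V_{g})$ (Hatcher) for injectivity of $\tti^{s}_{g,r}$, and contractibility of the path components of both fibre and base (Hatcher/Ivanov and Earle--Eells/Earle--Schatz, with marked points absorbed by standard fibration arguments) for the vanishing of the higher homotopy groups. One small caveat: the restriction map is not a weak equivalence onto all of $\Diff(\Sigma_{g};\cD_{r}\sqcup\cP_{s})$, since it is not surjective on $\pi_{0}$ (its image is the union of components of boundary diffeomorphisms extending over the handlebody); but your actual reduction---weak contractibility of the components of the base---is exactly what the long exact sequence requires, so the argument is unaffected.
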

\begin{proof}
First, the morphism $\tti^{s}_{g,r}$ is defined as the right-most map of the homotopy long exact sequence $\cdots \to \pi_{0}\Diff(V_{g};\partial V_{g}) \to \cH^{s}_{g,r}\to \MCG^{s}_{g,r}$ of the fibration \eqref{eq:fibration_restriction_diffs}.
We note that the handlebody $V_{g}$ is a compact and irreducible $3$-manifold, homeomorphic to $\bD^{2}_{g}\times I$ where $\bD^{2}_{g}:= \bD^{2}\setminus \sqcup_{1\le i\le g}\mathring{\bD}^{2}_{i}$ (where $\mathring{\bD}^{2}_{i}$ are in the interior of $\bD^{2}$).
Then we know from \cite[Proof of Th.~1, Step~4]{Hatcherincompressible} that $\pi_{0}\Diff(V_{g};\partial V_{g})=0$, and so we deduce from the above homotopy long exact sequence that $\tti^{s}_{g,r}$ is injective.

Finally, for $g\ge 2$, the path components of $\Diff(\Sigma_{g};\cD_{r}\sqcup \cP_{s})$ are contractible by \cite{EarleEells1,EarleEells2,EarleSchatz}, as well as those of $\Diff(V_{g};\partial V_{g})$ by \cite[Th.~2]{Ivanov2}, \cite{Hatcher3manif} or \cite[Th.~2]{Hatcherincompressible}. The vanishing of the higher homotopy groups follows from the homotopy long exact sequence of the fibration \eqref{eq:fibration_restriction_diffs}.
\end{proof}
\begin{rmk}\label{rmk:classifying-space-handlebody}
Since the path component of the identity in $\Diff(V_{g};\cD_{r}\sqcup \cP_{s})$ is trivial by Lemma~\ref{lem:H-injects-into-Gamma}, the cohomology of the classifying space $\B\Diff(V_{g};\cD_{r}\sqcup\cP_{s})$ agrees with the group cohomology of the handlebody group $\cH^{s}_{g,r}$.
\end{rmk}

\subsection{Categorical framework}\label{sss:categorical_framework}

In this section, we review the necessary specific categories to apply the framework of \cite{RWW} and \cite{Krannich} to prove our twisted homological stability results for the handlebody groups.

\paragraph*{Preliminaries on categorical tools.}
We refer to \cite[\S VII]{MacLane1} for a complete introduction to the notions of monoidal categories and modules over them. We generically denote a monoidal category by $(\cC,\odot,0)$, where $\cC$ is a category, $\odot$ is the monoidal product, and $0$ is the monoidal unit. If it is braided, then its braiding is generically denoted by $b_{A,B}^{\cC} \colon A\odot B\overset{\sim}{\to}B\odot A$ for all objects $A$ and $B$ of $\cC$. A right-module $(\cM,\triangleright)$ (resp.~left-module $(\cM,\triangleleft)$) over a monoidal category $(\cC,\odot,0)$ is a category $\cM$ with a functor $\triangleright\colon\cM\times\cC\to\cM$ (resp.~$\triangleleft\colon\cC\times\cM\to\cM$) that is unital and associative. For example, a monoidal category $(\cC,\odot,0)$ is equipped with a left or right-module structure over itself, induced by its monoidal product. Each right-module structure $\triangleright$ in this paper is defined from some underlying monoidal structure $\odot$ (see \S\ref{sss:groupoids-handlebodies}), so we abuse notation by using the same symbol $\odot$ for $\triangleright$ or $\triangleleft$.

\subsubsection{Groupoids associated to compact handlebodies}\label{sss:groupoids-handlebodies}

We start by defining some braided monoidal groupoids, where the objects are handlebodies and the automorphism groups are the groups $\cH^{s}_{g,r}$.

\begin{defn}\label{def:groupoid-handlebody}
We define a category $\sH$ as follows.
\begin{itemizeb}
    \item The objects are 5-tuples $(V,r,s,i,j)$. Here $V$ is a $3$--dimensional compact handlebody (i.e.~it is diffeomorphic to $V_{g}=\flat_{g} (\bS^{1}\times \bD^{2})$ for some integer $g\ge 0$) or a disjoint union of a finite number of $3$-balls; $r\ge 1$ and $s\ge 0$ are integers, $i\colon \cD_{r}\hookrightarrow \partial V$ is a smooth embedding and $j\colon \cP_{s}\hookrightarrow \partial V\setminus\Image(i)$ is an injection. In particular, we remember the data of the embedding $j$ and not just its image, so that we have well-defined notions of \emph{left}-half and \emph{right}-half of the image of each disc. The set $\cD_{r}:=\sqcup_{r}\bD^{2}$ has a canonical ordering induced by that of the set $\{1,\ldots,r\}$, and we denote by $\cD^{k}_{V}\in \cD_{r}$ the image of the $k$-th embedded disc via the embedding $i$. 
    \item  A morphism $(V_{1},r_{1},s_{1},i_{1},j_{1})\to (V_{2},r_{2},s_{2},i_{2},j_{2})$ is given by the isotopy class of a diffeomorphism $\phi\colon V_{1}\to V_{2}$ such that $\phi\circ i_{1}=i_{2}$ and $\phi\circ j_{1}=j_{2}$. Note that in particular, this implies that $r_{1}=r_{2}$ and $s_{1}=s_{2}$.
\end{itemizeb}
For $r\ge 1$, let us denote by $\sH^{\ge r}$ the full subcategory with objects of the form $(V,k,s,i,j)$ for $k\ge r$. In particular, we have $\sH^{\ge 1}=\sH$.
\end{defn}
We fix the following conventions for the compact handlebodies of $\sH$.
\begin{notation}\label{nota:V-g-r-s-object-G-H}
For integers $r\ge 1$ and $s\ge 0$, we denote by $V^{s}_{g,r}$ the genus $g\ge 0$ compact handlebody $V_{g}=\flat_{g} (\bS^{1}\times \bD^{2})$, along with $r$ marked discs $\cD_{r}$ and $s$ marked points $\cP_{s}$ in the boundary, as well as implicitly associated smooth embedding $i\colon \cD_{r}\hookrightarrow \partial V$ and injection $j\colon \cP_{s}\hookrightarrow \partial V\setminus\Image(i)$, to make it an object of $\sH$; see Definition~\ref{def:groupoid-handlebody}. When $s=0$, we omit it from the notation. In particular, we have $\Aut_{\sH}(V^{s}_{g,r})=\cH^{s}_{g,r}$.
\end{notation}

\paragraph*{Monoidal structures.} We continue by defining a monoidal structure on the category $\sH^{\ge r}$ for each $r\ge 1$, each corresponding to a different family of stabilisation maps between handlebody groups. 
We only consider the cases $r=1,2$ later in \S\ref{s:background} and \S\ref{s:homological_stability}, but it is more practical to introduce all $r\ge 1$ at once.

\begin{figure}[h]
\centering
\includegraphics[scale=0.3]{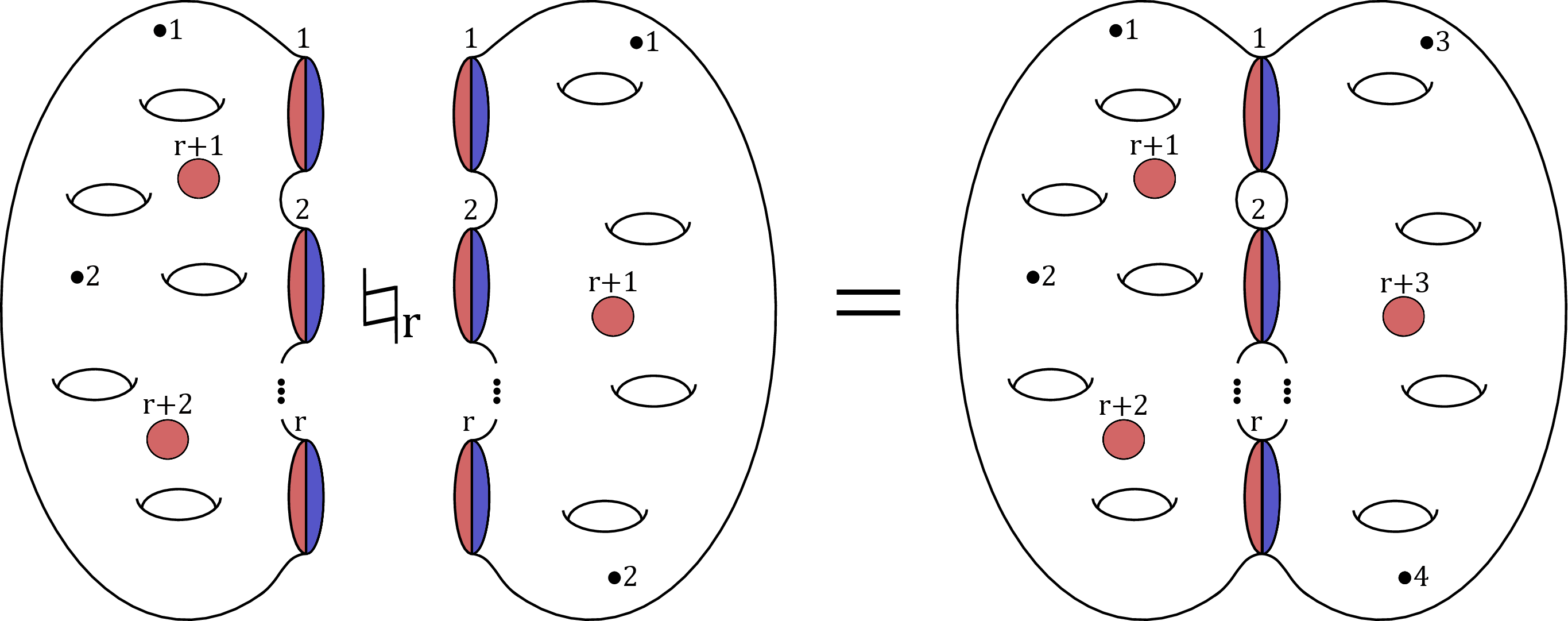}
\caption{The monoidal product $V_{1}\natural_{r} V_{2}$ of two handlebodies in $\cG_{\cG}^{\ge r}$.
}
\label{fig:natural_r}
\end{figure}

\begin{defn}\label{def:monoidal-structure-cG}
For $r\ge 1$ and for $(V_{1},r_{1},s_{1},i_{1},j_{1})$ and for objects $(V_{2},r_{2}, s_{2},i_{2},j_{2})$ of $\sH^{\ge r}$, we define the object $V_{1}\natural_{r}V_{2}$, which will serve as a monoidal product, as follows. We denote by $\frac{1}{2}\bD^{2}$ a half-disc. Let $f_{1}\colon \sqcup_{r}\frac{1}{2}\bD^{2}\hookrightarrow \Image(i_{1})$ denote the canonical inclusion into the right-half of the first $r$ marked discs $\sqcup_{1\leq i\leq r}\cD_{V_{1}}^{i}$ of $V_{1}$. Similarly, we denote by $f_{2}\colon \sqcup_{r}\frac{1}{2}\bD^{2}\hookrightarrow \Image(i_{2})$ the canonical inclusion into the left-half of the $r$ first marked discs $\sqcup_{1\leq i\leq r}\cD_{V_{2}}^{i}$ of $V_{2}$. The monoidal product $V_{1}\natural_{r} V_{2}$ is then given by the 5-tuple
\[(V_{1}\cup_{f_{1}\sim f_{2}} V_{2},r_{1}+r_{2}-r,s_{1}+s_{2},i_{1}\natural_{r}i_{2},j_{1}\sqcup j_{2}).\]
Namely:
\begin{itemizeb}
    \item The adjunction space $V_{1}\cup_{f_{1}\sim f_{2}} V_{2}$ is the quotient space $V_{1}\sqcup V_{2}/(f_{1}\sim f_{2})$, i.e.~the pushout of the diagram $(V_{1}\overset{f_{1}}\hookleftarrow \sqcup_{r} \frac{1}{2}\bD^{2} \overset{f_{2}}{\hookrightarrow}V_{2})$. In other words, it is the boundary connected sum along the right-half of each of the first $r$ marked discs (following the order given by $i_{1}$) in $V_{1}$ with the left-half of each corresponding marked disc in $V_{2}$.
    \item Identifying $\bD^{2}$ with the unit disc in $\bR^{2}$, we define $i_{1}\natural_{r}i_{2}\colon \sqcup_{r_{1}+r_{2}-r}\bD^{2}\to V_{1}\cup_{f_{1}\sim f_{2}} V_{2}$ by 
    \begin{align*}
    i_{1}\natural_{r}i_{2}(x,y)=\begin{cases}
    i_{1}(x,y)&\text{ if }x\le 0,\\
    i_{2}(x,y)&\text{ if }x\ge 0,
    \end{cases}
    \end{align*}
    on the first $r$ discs and as before on the remaining discs (ordered according to the order of the monoidal product).
\end{itemizeb}
Now, for morphisms $\phi_{1} \in \Hom_{\sH^{\ge r}}(V_{1}, V'_{1})$ and $\phi_{2} \in \Hom_{\sH^{\ge r}}(V_{2},V'_{2})$, we define the morphism $\phi_{1} \natural_{r}\phi_{2}$ by extending (up to isotopy) $\phi_{1}$ and $\phi_{2}$ by the identity on a collar neighbourhood of each disc in $\Image(i_{1})$ and $\Image(i_{2})$ respectively where the gluing is done.
\end{defn}
\begin{prop}\label{prop:monoidal-structure-G-cH}
The operation $-\natural_{r}-\colon \sH^{\ge r} \times\sH^{\ge r} \to \sH^{\ge r}$ makes $\sH^{\ge r}$ into a monoidal groupoid. The monoidal unit is the disjoint union of $r$ 3-balls $I_{r}$, each with one embedded disc and no marked points, which may be assumed to be \emph{strict} for simplicity.
\end{prop}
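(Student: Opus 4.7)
The plan is to verify the axioms of a monoidal category in the order \emph{(i)} bifunctoriality of $\natural_{r}$, \emph{(ii)} existence of a unit, \emph{(iii)} associativity, \emph{(iv)} pentagon and triangle coherence, and finally \emph{(v)} strictification. Since every morphism in $\sH^{\ge r}$ is represented by a diffeomorphism, hence invertible, the fact that $\sH^{\ge r}$ is a groupoid is automatic; the entire content of the proposition is the monoidal structure.

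First, I would check that $\natural_{r}$ is well-defined on objects. Given two objects $(V_{1},r_{1},s_{1},i_{1},j_{1})$ and $(V_{2},r_{2},s_{2},i_{2},j_{2})$ of $\sH^{\ge r}$, the adjunction $V_{1}\cup_{f_{1}\sim f_{2}}V_{2}$ is, up to canonical diffeomorphism, the iterated boundary connected sum of $V_{1}$ and $V_{2}$ along the $r$ selected pairs of marked half-discs; in particular it is again a compact handlebody (or a disjoint union of $3$-balls when both factors are such), with a well-defined smoothing along the pairing locus. This produces a legitimate object of $\sH^{\ge r}$ whose underlying manifold has the correct number $r_{1}+r_{2}-r$ of marked discs and $s_{1}+s_{2}$ marked points, with the embedding and injection data $i_{1}\natural_{r}i_{2}$ and $j_{1}\sqcup j_{2}$ specified in the statement. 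For morphisms, I would check that the "extend by the identity on a collar of the gluing region" recipe produces a diffeomorphism of the glued manifold that is well-defined up to isotopy: using a collar neighbourhood theorem for the boundary discs, the isotopy class of $\phi_{1}\natural_{r}\phi_{2}$ only depends on the isotopy classes of $\phi_{1}$ and $\phi_{2}$. Functoriality (preservation of identities and of composition) then follows from the fact that the identity extends to the identity and that composing diffeomorphisms that agree with the identity near the gluing locus can be done factorwise.

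Next, I would verify that the disjoint union $I_{r}$ of $r$ three-balls, each with a single marked disc and no marked points, is a unit. By construction, gluing the $k$-th marked disc of $I_{r}$ to the $k$-th marked disc of $V$ via $\natural_{r}$ simply attaches a collar to each of the first $r$ discs of $V$ (respectively of $I_{r}$); up to a canonical isotopy, this reproduces $V$ on the nose, and the unitor is the identity after a standard reparametrisation. For associativity, I would exhibit a natural diffeomorphism $(V_{1}\natural_{r}V_{2})\natural_{r}V_{3}\cong V_{1}\natural_{r}(V_{2}\natural_{r}V_{3})$ coming from the fact that the double boundary connected sum does not depend on the order in which the two half-disc identifications are carried out, which is a standard application of the uniqueness of collar neighbourhoods. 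The pentagon and triangle identities then reduce to checking equality of two isotopy classes of diffeomorphisms, and both equalities follow from the same collar-neighbourhood uniqueness, together with the fact that the marked discs and points of the three factors occupy pairwise disjoint regions.

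The main obstacle is therefore \emph{(v)}, the strictness assertion at the end. I would handle this by invoking Mac Lane's coherence theorem: the monoidal category $(\sH^{\ge r},\natural_{r},I_{r})$ is equivalent to a strict monoidal category $(\sH^{\ge r},\natural_{r},I_{r})^{\mathrm{str}}$ via the standard "skeletal" construction, where one fixes once and for all a representative of each isomorphism class (concretely, the $V^{s}_{g,r}$ of Notation~\ref{nota:V-g-r-s-object-G-H} equipped with fixed embeddings $i$ and $j$), and redefines $\natural_{r}$ on objects by composing with the chosen diffeomorphism back to a representative. Replacing $\sH^{\ge r}$ with this strictification does not alter the automorphism groups $\cH^{s}_{g,r}=\Aut_{\sH}(V^{s}_{g,r})$ nor, in the sequel, the homology computations in which we are ultimately interested, so we may assume the monoidal structure strict throughout.
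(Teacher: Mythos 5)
Your verification of the monoidal structure itself is correct and runs parallel to the paper's: where the paper obtains the bifunctoriality and the associator from the universal property of the pushout $\cup_{f_{1}\sim f_{2}}$ and the unitors from a deformation retraction of the $3$-balls of $I_{r}$ onto their marked discs, you argue geometrically via uniqueness of collar neighbourhoods; both routes are fine and of comparable length, and the coherence checks reduce in either case to the observation that the relevant diffeomorphisms are supported in disjoint regions.

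The gap is in your step \emph{(v)}. First, the statement only asserts that the monoidal \emph{unit} $I_{r}$ may be assumed strict, not that the whole monoidal structure is strict; the paper achieves this by slightly redefining the bifunctor $\natural_{r}$ so that $V\natural_{r}I_{r}=V=I_{r}\natural_{r}V$ on the nose, using the canonical unitor isomorphisms, and this keeps the same category $\sH^{\ge r}$. Second, the justification you give for full strictness does not work as stated: passing to a skeleton (fixing the representatives $V^{s}_{g,r}$ and transporting $\natural_{r}$ along chosen diffeomorphisms) does \emph{not} make the associators identities --- a skeletal monoidal category is in general not strict, and Mac Lane's strictification is a different construction which replaces $\sH^{\ge r}$ by a monoidally \emph{equivalent} strict category rather than by the same category with a modified product. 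The conclusion you want is salvageable (either by quoting the actual strictification theorem, which is more than the proposition claims, or by the paper's simpler unit-only trick), but as written the last step conflates skeletalisation with strictification and proves a claim the proposition does not make while leaving the claim it does make to an incorrect mechanism.
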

\begin{proof}
As $-\natural_{r} -$ is defined as a pushout on objects and by extending by the identity in the glued region for morphisms, the assignments of $-\natural_{r} -\colon \sH\times\sH\to\sH$ on objects and morphisms are defined in a unique way. The identity and composition axioms for morphisms are then straightforwardly checked from Definition~\ref{def:monoidal-structure-cG}, and the operation $-\natural_{r}-\colon \sH^{\ge r} \times\sH^{\ge r} \to \sH^{\ge r}$ is thus a well-defined bifunctor. The associator is defined by the natural isomorphism $V_{1}\natural_{r} (V_{2} \natural_{r} V_{3}) \cong (V_{1}\natural_{r} V_{2}) \natural_{r} V_{3}$ provided by the universal properties of $\cup_{f_{1}\sim f_{2}}$ and $\cup_{f_{2}\sim f_{3}}$ as pushouts, for which one easily checks the pentagon diagram axiom. The left and right unitors are given by the natural isomorphisms $V \natural_{r} I_{r} \cong V$ and $I_{r} \natural_{r} V \cong V$ induced by an evident deformation retraction of the 3-balls in $I_{r}$ onto their boundary marked discs, for which the triangle diagram axiom is obviously satisfied. Finally, we may turn $I_{r}$ into a \emph{strict} monoidal unit by slightly modifying the definition of the bifunctor $\natural_{r}$ through identification $V\natural_{r} I_{r}=V=I_{r}\natural_{r} V$, since the isomorphisms $V \natural_{r} I_{r} \cong V$ and $I_{r} \natural_{r} V \cong V$ then induce an obvious natural equivalence between $- \natural_{r} I_{r}$ and $I_{r} \natural_{r} -$.
\end{proof}

Although $\sH^{\ge 1}=\sH$ by Definition~\ref{def:groupoid-handlebody}, we implicitly mean that this groupoid is considered along with its monoidal structure $(\natural_{1},I_{1})$ when using the notation $\sH^{\ge 1}$.
Furthermore, for all integers $g\ge 0$, $r\ge 1$ and $s\ge 0$, it straightforwardly follows from the construction of the monoidal structures in Definition~\ref{def:monoidal-structure-cG} that there are evident natural isomorphisms in the groupoid $\sH$
\begin{equation}\label{eq:natural-iso-decomposition}
V^{s}_{g,r}\cong V^{s}_{0,r}\natural_{1} V_{1,1}^{\natural_{1} g} \cong V_{1,1}^{\natural_{1} g} \natural_{1} V^{s}_{0,r} \,\,\,\,\text{ and }\,\,\,\,V^{s}_{g,r}\cong V^{s}_{0,r}\natural_{2} V_{0,2}^{\natural_{2} g}\cong V_{0,2}^{\natural_{2} g} \natural_{2} V^{s}_{0,r},\end{equation}
with $r\ge 2$ for the isomorphisms involving $\natural_{2}$.

Let us now check some first elementary properties of these monoidal structures.
We recall that a monoidal category $(\cC, \odot, 0)$ is said to have \emph{no zero-divisors} if, for all objects $A$ and $B$ of $\cC$, $A\odot B \cong 0$ if and only if $A \cong B \cong 0$.
\begin{prop}\label{prop:no-0-divisors-no-automorphisms-of-0}
The monoidal groupoid $(\sH^{\ge r},\natural_{r},I_{r})$ has no zero-divisors and the monoidal unit $I_{r}$ has no non-trivial automorphisms, i.e.~$\Aut_{\sH^{\ge r}}(I_{r})=\{\id_{I_{r}}\}$.
\end{prop}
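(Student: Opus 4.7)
The plan is to prove the two assertions separately.

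For the triviality of $\Aut_{\sH^{\ge r}}(I_{r})$: since $I_{r}$ is an ordered disjoint union of $r$ 3-balls, each with one marked disc, an automorphism must respect the labelling of the marked discs and thus cannot permute the 3-balls; it decomposes as a product of automorphisms of each ball fixing its own marked disc. Hence $\Aut_{\sH^{\ge r}}(I_{r})\cong\prod_{k=1}^{r}\cH_{0,1}$, reducing the claim to showing $\cH_{0,1}$ is trivial. By Lemma~\ref{lem:H-injects-into-Gamma} with $(g,r,s)=(0,1,0)$, there is an injection $\cH_{0,1}\hookrightarrow\MCG_{0,1}=\pi_{0}\Diff(\bS^{2};\bD^{2})$; the target vanishes because restriction to the complementary 2-disc gives a homotopy equivalence $\Diff(\bS^{2};\bD^{2})\simeq\Diff(\bD^{2},\partial\bD^{2})$, the right-hand side being contractible by Smale's theorem.

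For the no-zero-divisors property, suppose $V_{1}\natural_{r}V_{2}\cong I_{r}$. The approach is to extract consequences from two kinds of invariants. The first is combinatorial: the rule $(V,r',s,i,j)\mapsto(r'-r,s)$ extends to a symmetric monoidal functor $\sH^{\ge r}\to(\bN^{2},+,(0,0))$, since the marked-disc count of $V_{1}\natural_{r}V_{2}$ is $r_{1}+r_{2}-r$ and marked points simply add. Applying this to the hypothesis forces $r_{V_{1}}=r_{V_{2}}=r$ and $s_{V_{1}}=s_{V_{2}}=0$, so in particular \emph{every} marked disc of each $V_{i}$ is consumed by a gluing.

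The second invariant is homological. Writing $V_{1}\natural_{r}V_{2}$ as the pushout $V_{1}\cup_{f_{1}\sim f_{2}}V_{2}$ whose intersection deformation retracts onto $r$ points, the Mayer-Vietoris sequence gives
\[
H_{1}(V_{1}\natural_{r}V_{2})\;\cong\;H_{1}(V_{1})\oplus H_{1}(V_{2})\oplus K,
\]
where $K$ is the kernel of the map $\bZ^{r}\to H_{0}(V_{1})\oplus H_{0}(V_{2})$ induced by the $r$ gluings. Since $H_{1}(I_{r})=0$, both $H_{1}(V_{i})=0$---so each $V_{i}$ is a disjoint union of, say, $k_{i}$ 3-balls---and $K=0$, which by standard graph theory says that the bipartite graph $\Gamma$ with vertices the components of $V_{1}\sqcup V_{2}$ and edges the $r$ gluings is a forest. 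Reading off the $H_{0}$-tail of the same sequence then yields $k_{1}+k_{2}=2r$.

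The combinatorial crux comes last. Each tree-component $T$ of $\Gamma$ with $v_{T}$ vertices corresponds to a single 3-ball component of $V_{1}\natural_{r}V_{2}$, built by successively boundary-connect-summing 3-balls along the $v_{T}-1$ half-disc pairs labelled by the edges of $T$. Because no marked disc is left unused, the marked discs on this component are exactly the $v_{T}-1$ newly glued ones, and matching against the one marked disc per component of $I_{r}$ forces $v_{T}=2$ for every tree. Bipartiteness then requires each tree-edge to join one vertex of $V_{1}$ to one vertex of $V_{2}$; combined with $k_{1}+k_{2}=2r$, this yields $k_{1}=k_{2}=r$, each a 3-ball carrying exactly one marked disc, and hence $V_{1},V_{2}\cong I_{r}$. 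The main obstacle is this final step, which requires carefully coordinating the numerical constraints (first invariant), the topological constraint that $\Gamma$ is a forest (second invariant), and the tree-by-tree marked-disc count; the key simplification is that having all discs used forces the marked-disc count on each resulting component to equal the edge count of its tree, so that the combinatorics collapse immediately.
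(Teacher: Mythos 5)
Your proposal is correct, and its overall skeleton matches the paper's: the automorphism statement is handled identically (an automorphism preserves the ordered marked discs, hence cannot permute the balls, giving $\Aut_{\sH^{\ge r}}(I_{r})\cong\cH_{0,1}^{\times r}$, and $\cH_{0,1}$ is killed by the injection of Lemma~\ref{lem:H-injects-into-Gamma} into $\MCG_{0,1}$, which the paper dispatches by Alexander's trick where you invoke Smale's theorem via restriction to the complementary disc --- the same reduction in substance). Where you genuinely diverge is the no-zero-divisors half: the paper simply asserts that $A\natural_{r}B\cong I_{r}$ forces every component of $A$ and $B$ to be a $3$-ball with one marked disc and no marked points, and that each has $r$ components, whereas you derive this from explicit invariants: additivity of $(r'-r,s)$ under $\natural_{r}$ to see that all discs of both factors are glued and there are no marked points, a Mayer--Vietoris computation to show $H_{1}$ of each factor vanishes (so the factors are unions of $3$-balls) and that the bipartite gluing graph is a forest, and a disc-per-tree count forcing every tree to have exactly two vertices, whence $k_{1}=k_{2}=r$ with one disc per ball. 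Your version is longer but fills in precisely the combinatorial bookkeeping the paper leaves implicit (including ruling out components untouched by any gluing, which the paper's one-line argument does not explicitly address), at the cost of machinery (Mayer--Vietoris, graph homology) that the paper avoids by appealing directly to the geometry of boundary connected sums. I see no gap in your argument.
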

\begin{proof}
First, we note that the condition $A\natural_{r} B\cong I_{r}$ for objects in $\sH^{\ge r}$ implies that each component of $A$ and $B$ is a $3$-ball, each with one marked disc and no marked points. Since $A$ and $B$ must have $r$ components, we deduce that $A\cong I_{r}\cong B$, thus proving the first statement.
Now, since we require that the automorphisms of $I_{r}$ fix the marked disc in each component individually, we have group isomorphisms
\[
\Aut_{\sH^{\ge r}}(I_{r})\cong \Aut_{\sH}(I_{1})^{\times r}\cong \cH_{0,1}^{\times r}.
\]
Recall that $\MCG_{0,1}\cong 0$ by Alexander's trick (see \cite[Lem.~2.1]{farbmargalit} for instance), so $\cH_{0,1}$ is also trivial by Lemma~\ref{lem:H-injects-into-Gamma}, which then implies the second statement.
\end{proof}

\begin{figure}[h]
    \centering
    \includegraphics[scale=0.3]{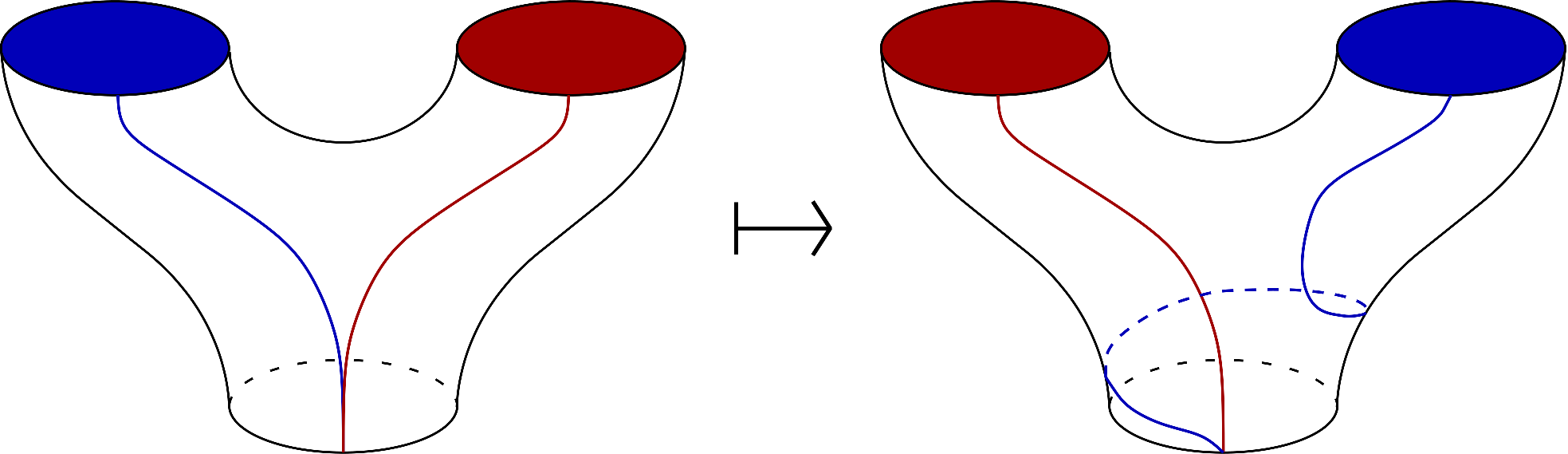}
    \caption{Illustration of the braiding in $(\sH^{\ge r},\natural_{r},I_{r})$ in a neighbourhood of one of the glued disc of $V_{1}\natural_{r} V_{2}$.}
    \label{fig:braiding}
\end{figure}

\paragraph*{Braidings.} Finally, we define a braiding on each $\sH^{\ge r}$.

Let us pick a closed neighbourhood $\cN$ in the handlebody $V_{1}\natural_{r} V_{2}$ of the subsurface induced by the gluing discs, i.e.~defined by the union of the glued half-discs $(f_{1}\sim f_{2})$ along with the new $r$ marked discs defined by $i_{1}\natural_{r}i_{2}$. Note that $\cN$ is a disjoint union of pair of pants neighbourhoods of the glued discs as shown in Figure~\ref{fig:braiding}: namely, it is diffeomorphic to $r$ disjoint $3$-balls, each with one marked disc defined by $i_{1}\natural_{r}i_{2}$, as well as two marked discs in each component of $\cN$ defined by the intersection 
\[
(V_{1}\natural_{r} V_{2}\setminus \mathring{\cN})\cap \cN.
\]
There is a diffeomorphism $b\colon \cN\to \cN$, unique up to isotopy, which fixes $\Image(i_{1}\natural_{r}i_{2})$ pointwise and switches the remaining two discs of each component, as described in Figure~\ref{fig:braiding}. Extending $b$ by the identity on $V_{1}\natural_{r} V_{2}\setminus \cN$, this defines an isomorphism:
\[
\beta_{V_{1},V_{2}}^{r}\colon V_{1}\natural_{r} V_{2}\to V_{2}\natural_{r} V_{1}.
\]
Note that this construction is clearly independent of the choice of the closed neighbourhood $\cN$, so $\beta_{V_{1},V_{2}}$ is a well-defined morphism in $\sH$.
\begin{prop}\label{prop:beta(1)-braided-monoidal}
The natural isomorphism $\beta^{r}$ makes $(\sH^{\ge r},\natural_{r},I_{r})$ into a braided monoidal category.
\end{prop}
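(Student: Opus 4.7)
The plan is to check the three conditions needed to upgrade the natural isomorphism $\beta^{r}$ to a braiding: naturality in both variables, and the two hexagon axioms. I would organise this around the fundamental observation that the half-twist representative of $\beta^{r}_{V_{1},V_{2}}$ is supported in the pair-of-pants neighbourhood $\cN$ of the glued discs, while morphisms of the form $\phi_{1}\natural_{r}\phi_{2}$ are, by Definition~\ref{def:monoidal-structure-cG}, required to be the identity on a collar neighbourhood of the gluing region. This localisation principle reduces every commutative square or hexagon to an isotopy statement about compactly supported diffeomorphisms of a finite disjoint union of standard $3$-balls.

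First I would establish naturality. Given morphisms $\phi_{i}\colon V_{i}\to V'_{i}$ for $i=1,2$, we may choose isotopy representatives fixing pointwise a collar neighbourhood of each disc involved in the gluing. Then the diffeomorphisms $\beta^{r}_{V'_{1},V'_{2}}\circ(\phi_{1}\natural_{r}\phi_{2})$ and $(\phi_{2}\natural_{r}\phi_{1})\circ\beta^{r}_{V_{1},V_{2}}$ both act as $\phi_{1}\sqcup\phi_{2}$ outside $\cN$ and as the half-twist inside $\cN$; they therefore agree (not merely are isotopic) as diffeomorphisms, which trivially gives the required equality in the groupoid $\sH^{\ge r}$. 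This argument also shows that $\beta^{r}$ does not depend on the choice of neighbourhood $\cN$ used to construct it.

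Next I would verify the hexagon axioms. Since the monoidal unit $I_{r}$ is strict and the associators come from pushout identifications, the hexagons reduce to the equalities
\[\beta^{r}_{V_{1}\natural_{r} V_{2},V_{3}}=(\beta^{r}_{V_{1},V_{3}}\natural_{r}\id_{V_{2}})\circ(\id_{V_{1}}\natural_{r}\beta^{r}_{V_{2},V_{3}})\]
and its mirror, both up to the evident reassociations. By the localisation principle, the left-hand side is realised by the half-twist interchanging the two ``incoming'' half-discs in a single pair of pants whose third boundary is $(V_{1}\natural_{r} V_{2})\natural_{r} V_{3}$, while the right-hand side is realised by first performing a half-twist on the $V_{2}$--$V_{3}$ gluing, then on the $V_{1}$--$V_{3}$ gluing, each supported in a pair-of-pants neighbourhood disjoint from the other discs. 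An explicit ambient isotopy, built from a rotation in the $3$-ball containing the relevant two pairs of pants, exhibits these two diffeomorphisms as isotopic (this is the standard fact that a full rotation of three marked discs factors as two half-twists, and is precisely the hexagon for the braiding of a cartesian product of discs in $\bR^{3}$). The second hexagon follows by the symmetric argument.

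The main obstacle I expect is purely bookkeeping: keeping track of which discs sit in which half of which pair-of-pants neighbourhood, and writing the isotopy between the two sides of the hexagon cleanly. A concise way to sidestep a direct pictorial calculation is to use the injective homomorphism $\tti^{s}_{g,r}\colon\cH^{s}_{g,r}\hookrightarrow \MCG^{s}_{g,r}$ of Lemma~\ref{lem:H-injects-into-Gamma}: the restrictions to $\partial V_{g}$ of the braiding diffeomorphisms are precisely the half-twists generating the standard braiding on the surface groupoid used in \cite{RWW}, which is already known to satisfy the hexagon axioms, and injectivity of $\tti^{s}_{g,r}$ then transfers those axioms back to $\sH^{\ge r}$.
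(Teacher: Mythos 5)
Your proposal is correct in outline and shares the paper's two structural pillars: naturality via a disjoint-support argument (the braiding is supported in the pair-of-pants neighbourhood $\cN$, while $\phi_{1}\natural_{r}\phi_{2}$ is the identity there), and reduction of the hexagon identities to a braid-type relation verified by injecting handlebody mapping classes into surface mapping classes. Where you genuinely diverge is in how that injection is used. The paper first localises: since the braiding is supported in a disjoint union of $3$-balls, the hexagons reduce to the braid relation in $\cH_{0,1,(3)}$, the mapping class group of the $3$-ball fixing one disc pointwise and three discs setwise; this group is then injected into $\mathbf{B}_{3}$ using the contractibility of $\Diff(V_{0};\partial V_{0})$ (the Smale conjecture, \cite{HatcherSmale}) and the fibration \eqref{eq:fibration_restriction_diffs}, and the relation is quoted from $\mathbf{B}_{3}$. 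Your ``concise'' route instead applies the global injection $\tti^{s}_{g,r}$ of Lemma~\ref{lem:H-injects-into-Gamma} to the whole object $V_{1}\natural_{r}V_{2}\natural_{r}V_{3}$ and imports the hexagons from the braided surface groupoid of \cite{RWW}. That works, and it buys you freedom from the $\cH_{0,1,(3)}$/setwise-fixed-discs bookkeeping, but it costs two checks you currently leave implicit: (i) that boundary restriction is compatible with $\natural_{r}$ and sends $\beta^{r}$ to the surface braiding on the nose (true by construction of the half-twist, but it is the load-bearing step); and (ii) that the injectivity statement applies to all objects of $\sH^{\ge r}$, including disjoint unions of $3$-balls, for which Lemma~\ref{lem:H-injects-into-Gamma} as stated (for connected $V_{g}$) must be extended componentwise — which is exactly where the Smale/Cerf input the paper invokes reappears. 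Note also that the paper relies on the injection rather than your primary route (the explicit ambient-isotopy/rotation argument), which as written is the sketchiest part of your proposal; if you keep it, the rotation factoring into two half-twists needs to be pinned down, otherwise the injection argument should be promoted from fallback to the actual proof. Your naturality claim that the two composites ``agree, not merely are isotopic'' is stronger than needed and only holds for carefully chosen representatives; equality of isotopy classes, as in the paper's disjoint-support argument, is what you should assert.
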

\begin{proof}
Note that a diffeomorphism of $V_{i}$ for $i\in \{1,2\}$ is isotopic to the identity in $V_{i} \cap \cN \subset V_{1}\natural_{r} V_{2}$. Hence it follows from a disjoint support argument and extension by the identity for these diffeomorphisms that each isomorphism $\beta_{V_{1},V_{2}}^{r}$ is natural with respect to morphisms of $\sH^{\ge r}$ in both variables $V_{1}$ and $V_{2}$. Now, note that the braiding is supported on a neighbourhood of the glued discs for any monoidal product $V_{1}\natural_{r} V_{2}$, which can be chosen as a disjoint union of $3$-balls on which the braiding looks as in Figure~\ref{fig:braiding}. Hence it suffices to show that the braid relation (i.e.~the hexagon identities) hold in $\cH_{0,1,(3)}$, by which we mean the mapping class group of diffeomorphisms of the $3$-ball which fix one marked disc pointwise and fix three other marked discs \emph{setwise}, i.e.~up to permutation. The analogous mapping class group of diffeomorphisms of the $2$-sphere fixing one marked disc pointwise and three other marked discs \emph{setwise} is isomorphic to the braid group on three strands $\mathbf{B}_{3}$; see \cite[\S9.1.4]{farbmargalit} for instance.
Since $\Diff(V_{0};\partial V_{0})$ is contractible by the Smale conjecture (see \cite[Appendix]{HatcherSmale}), the fibration \eqref{eq:fibration_restriction_diffs} defined by the restriction map to the boundary $V_{0}\to \Sigma_{0}$ induces an injection $\cH_{0,1,(3)} \hookrightarrow \mathbf{B}_{3}$. The claim thus follows from the fact that the braid relation holds in $\mathbf{B}_{3}$; see for example \cite[\S5.6.1]{RWW}.
\end{proof}

The monoidal products $\natural_{r}$ are used below to define stabilisation maps $\cH^{s}_{g,r}\to\cH^{s}_{g+1,r}$ for $r=1,2$; see \eqref{eq:canonical_stabilisation-natural} and \eqref{eq:canonical_stabilisation-sharp}. 
Going forward, we only consider these cases, and we thus introduce the following notation:
\begin{notation}\label{nota:monoidal-structure}
We write $\natural:=\natural_{1}$ and $\#:=\natural_{2}$.
\end{notation}

\subsubsection{The Quillen bracket construction}\label{sss:Quillen-bracket-construction}

We now recall a categorical construction necessary for the twisted homological stability framework in \S\ref{ss:framework-HS} that we follow in \S\ref{s:homological_stability}. This notion was originally introduced in \cite[p.3]{graysonQuillen}, and later developed in \cite[\S1.1]{RWW} and \cite[\S4.1]{Krannich}, to which we refer the reader for further details.

Let $(\cG,\odot,0)$ be a monoidal groupoid. We consider a left $(\cG,\odot,0)$-module $(\cM,\odot)$ and a right $(\cG,\odot,0)$-module $(\cM',\odot)$. (For instance, we may take $\cM=\cM'=\cG$ with the left or right-module structure induced by the monoidal product $\odot$.)
The \emph{Quillen bracket constructions} $\langle \cM,\cG ]$ and $[ \cM',\cG \rangle$ on $(\cM,\odot)$ and $(\cM',\odot)$ over the groupoid $(\cG,\odot,0)$ are the categories whose objects are the same as those of $\cM$ and whose morphisms are given by
\[
\Hom_{\langle \cG,\cM ]}(X,Y)=\colim_{\cG}\left(\Hom_{\cM}(- \odot X,Y)\right) \text{ and }
\Hom_{[ \cM',\cG \rangle}(X,Y)=\colim_{\cG}\left(\Hom_{\cM}(X\odot -,Y)\right).
\]
Hence, a morphism from $X$ to $Y$ in $\langle \cM,\cG ]$ (resp.~$[ \cM',\cG \rangle$) is an equivalence class of pairs $(A,\phi)$ (resp.~$(\phi',A')$), denoted by $[A,\phi]$ (resp.~$[\phi',A']$), where $A\in\obj(\cG)$ (resp.~$A'\in\obj(\cG)$) and $\phi \in \Hom_{\cG}(A\odot X , Y)$ (resp.~$\phi'\in \Hom_{\cG}(X\odot A', Y)$). Namely, two pairs $(A_{1},\phi_{1})$ and $(A_{2},\phi_{2})$ (resp.~$(\phi'_{1},A'_{1})$ and $(\phi'_{2},A'_{2})$) are equivalent if there exists an isomorphism $\chi \in \Hom_{\cG}(A_{1},A_{2})$ (resp.~$\chi' \in \Hom_{\cG}(A'_{1},A'_{2})$) such that $\phi_{1}=\phi_{2}\circ (\chi\odot\id_{X})$ (resp.~$\phi'_{1}=\phi'_{2}\circ (\id_{X}\odot\chi')$).
For two morphisms $[A,\phi] \colon X\to Y$ and $[B,\psi] \colon Y\to Z$ in $\langle \cG , \cM' ] \rangle$ (resp.~$[\phi',A'] \colon X\to Y$ and $[\psi',B'] \colon Y\to Z$ in $[ \cM',\cG \rangle$), the composition is defined by $[B,\psi]\circ[A,\phi]=[B\odot A,\psi\circ(\id_{B}\odot \phi)]$ (resp.~$[\psi',B']\circ[\phi',A']=[\psi'\circ(\phi'\odot\id_{B'}),B'\odot A']$).

There are canonical functors $\cM\to\langle \cG,\cM ]$ and $\cM'\to [ \cM',\cG \rangle $ defined as the identity on objects, while sending $f\in\Hom_{\cM}(X,Y)$ and $f'\in\Hom_{\cM'}(X',Y')$ to $[0,f]$ and $[f',0]$ respectively.
We collect here some elementary properties about the Quillen bracket constructions that we will later use in \S\ref{ss:finite-degree}--\S\ref{ss:framework-HS}.
\begin{prop}\label{prop:general-properties-Quillen}
We assume that the modules $\cM$ and $\cM'$ are groupoids. If the monoidal groupoid $(\cG,\odot,0)$ is braided (with braiding $\beta^{\cG}\colon \cG\times\cG\to\cG\times\cG$), has no zero-divisors and is such that $\mathrm{Aut}_{\cG}(0)=\{\id_{0}\}$.
Then:
\begin{compactenum}[(i)]
    \item\label{item:maximal-subgroupoid} The groupoid $\cM$ is the maximal subgroupoid of $\langle \cG,\cM ]$ and $[ \cM,\cG \rangle$.
    \item\label{item:extension-monoidal-structure} For $\cM=\cG$, the monoidal unit $0$ is an initial object in the category $\langle \cG,\cG ]$ and the monoidal structure $(\cG,\odot,0)$ extends to $\langle \cG,\cG ]$ by the same assignments for objects and defined as follows on morphisms
\[
[A,\phi]\odot[B,\psi]:=[A\odot B,(\phi\odot\psi)\circ(\id_{A}\odot (b^{\cG}_{X_{1},B})^{-1}\odot\id_{X_{2}})]\in\Hom_{\langle \cG,\cG ]}(X_{1}\odot X_{2},Y_{1}\odot Y_{2});
\]
   
    \item \label{item:extension-module-structure} The right-module structure $(\cM',\odot)$ induces a right $(\cG,\odot,0)$-module structure on $[ \cM',\cG \rangle$, with the same assignments for objects and defined as follows on morphisms
\[
[\phi',A']\odot \psi':=[(\varphi'\odot\psi')\circ(\id_{X_{1}}\odot (b^{\cG}_{A',X_{2}})^{-1}),A']\in\Hom_{[ \cM',\cG \rangle}(X_{1}\odot X_{2},Y_{1}\odot X_{2}).
\]
\end{compactenum}
\end{prop}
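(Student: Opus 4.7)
The proof splits naturally by claim, and the hypotheses are used in slightly different ways for each.

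For part \eqref{item:maximal-subgroupoid}, the plan is to show that every isomorphism in $\langle \cG,\cM]$ (resp.~$[\cM,\cG\rangle$) is equivalent to one of the form $[0,f]$ with $f$ an isomorphism of $\cM$. Suppose $[A,\phi]\colon X\to Y$ has an inverse $[B,\psi]\colon Y\to X$. Unpacking the composition formula gives $[B\odot A,\psi\circ(\id_{B}\odot\phi)]=[0,\id_{X}]$, so by definition of the equivalence relation there exists an isomorphism $\chi\colon B\odot A\to 0$ in $\cG$. The no--zero-divisors hypothesis then forces $A\cong 0\cong B$, and since $\Aut_{\cG}(0)=\{\id_{0}\}$ such an isomorphism $\chi\colon A\to 0$ is unique. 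The equivalence relation then identifies $[A,\phi]$ with $[0,\phi\circ(\chi^{-1}\odot\id_{X})]$, which is the image of an isomorphism of $\cM$ under the canonical functor $\cM\to\langle\cG,\cM]$. Conversely, $[0,f]$ has inverse $[0,f^{-1}]$ whenever $f$ is an isomorphism, and the functor $\cM\to\langle\cG,\cM]$ is clearly faithful on isomorphisms (using again $\Aut_{\cG}(0)=\{\id_{0}\}$ to control the equivalence relation restricted to morphisms with source $0$). The argument for $[\cM,\cG\rangle$ is identical.

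For part \eqref{item:extension-monoidal-structure}, I first verify that $0$ is initial in $\langle\cG,\cG]$: a morphism $0\to Y$ is an equivalence class $[A,\phi]$ with $\phi\colon A\odot 0\cong A\to Y$ in $\cG$; since $\cG$ is a groupoid, any two such pairs $(A,\phi)$ and $(A',\phi')$ are related by the canonical isomorphism $\chi:=\phi'^{-1}\circ\phi$, so there is a unique morphism $0\to Y$. Next, to extend the monoidal product, the assignment on objects is unchanged and the formula on morphisms is forced by requiring $\odot$ to be compatible with $[0,-]$ and with composition; the braiding factor appears exactly to reorder $A\odot X_{1}\odot B\odot X_{2}$ into $A\odot B\odot X_{1}\odot X_{2}$ before applying $\phi\odot\psi$. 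The main verification is that this formula descends to equivalence classes: given $\chi_{1}\colon A_{1}\to A_{1}'$ and $\chi_{2}\colon B_{1}\to B_{1}'$ witnessing equivalences, naturality (in all arguments) of the braiding of $\cG$ implies that the composite isomorphism $\chi_{1}\odot\chi_{2}\colon A_{1}\odot B_{1}\to A_{1}'\odot B_{1}'$ witnesses the equivalence of the products. Functoriality with respect to composition reduces, after cancelling identities, to the hexagon axiom for $b^{\cG}$ applied to moves of the form $\id\odot b^{\cG}_{-,-}\odot\id$. The associator and unitors of $(\cG,\odot,0)$ are inherited through the functor $\cG\to\langle\cG,\cG]$, and the associated coherence diagrams reduce to those already satisfied in $\cG$ plus naturality of $b^{\cG}$.

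Part \eqref{item:extension-module-structure} is entirely parallel to \eqref{item:extension-monoidal-structure}, with the roles of the two sides reversed: the proposed formula is the unique one compatible with $[-,0]$ and composition, well-definedness on equivalence classes follows from naturality of the braiding applied to $b^{\cG}_{A',X_{2}}$, and unitality/associativity of the right-module structure are inherited from $(\cM',\odot)$.

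The main obstacle I expect is not conceptual but bookkeeping: checking that the bracket--level monoidal product in \eqref{item:extension-monoidal-structure} is well-defined and strictly associative requires carefully tracking how the braiding $b^{\cG}$ interacts with the chosen representative $\chi$ of the equivalence class, and making sure that the two sides of the axioms differ only by compositions of braiding moves already controlled by the hexagon. This is where the full force of the braided (as opposed to merely monoidal) structure is needed, and where a wrong sign or a wrong permutation would break the argument.
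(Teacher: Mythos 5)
Your proposal is correct and takes essentially the same route as the paper, whose proof simply defers these verifications to \cite[Prop.~1.7, Prop.~1.8]{RWW} and \cite[Rem.~4.12]{Krannich}: your argument (an isomorphism forces its $\cG$-component to be isomorphic to $0$ by the no-zero-divisor hypothesis, uniqueness of the witnessing isomorphism via $\Aut_{\cG}(0)=\{\id_{0}\}$, and well-definedness plus functoriality of the extended product via naturality of the braiding and the hexagon identities) is exactly the one carried out there. One cosmetic slip in part (ii): the braiding factor reorders $A\odot B\odot X_{1}\odot X_{2}$ into $A\odot X_{1}\odot B\odot X_{2}$ (not the reverse, as you wrote) before applying $\phi\odot\psi$, but this does not affect the argument.
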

\begin{proof}
For the category $\langle \cG,\cM ]$, the proof of \eqref{item:maximal-subgroupoid} repeats verbatim that of \cite[Prop.~1.7]{RWW}, while that of \eqref{item:extension-monoidal-structure} is same as that of those of \cite[Prop.~1.8(i)-(ii)]{RWW}. Also, the proof of \cite[Prop.~1.8(ii)]{RWW} adapts mutatis mutandis to show the analogous property \eqref{item:extension-module-structure} for the category $[ \cM',\cG \rangle$; see \cite[Rem.~4.12]{Krannich}.
\end{proof}
\begin{notation}
In the situation of Proposition~\ref{prop:general-properties-Quillen}, we abuse notation and write $\phi$ for $[0,\phi]\in\Hom_{\langle \cG,\cM]}(X,Y)$ or $[\phi',0]\in\Hom_{[ \cM',\cG \rangle}(X,Y)$ for all $X,Y\in\obj(\cG)$, $\phi\in\Hom_{\cM}(X,Y)$ and $\phi'\in\Hom_{\cM'}(X,Y)$.
\end{notation}

\begin{rmk}\label{rmk:improve-results-Quillen}
Monoidal and module structure properties analogous to \eqref{item:extension-monoidal-structure} and \eqref{item:extension-module-structure} of Proposition~\ref{prop:general-properties-Quillen} also hold for the categories $[ \cG,\cG \rangle$ and $\langle \cG,\cM ]$ respectively. We only stated the results we need for our work, in particular used to introduce the notion of finite degree coefficient systems in our context (see \S\ref{ss:finite-degree}) or to interpret some objects we introduce (see Remarks~\ref{rmk:colim-as-Hom+interpretation-W-hom-Quillen} and \ref{rmk:Theta-alternative-formula}).
\end{rmk}

Finally, we deduce the following result from Propositions~\ref{prop:no-0-divisors-no-automorphisms-of-0}, \ref{prop:beta(1)-braided-monoidal} and \ref{prop:general-properties-Quillen}.
\begin{coro}\label{coro:maximal-subgroupoid-of-UGH}
For any $r\ge 1$, the groupoid $\sH^{\ge r}$ is the maximal subgroupoid of the categories $\langle \sH^{\ge r},\sH^{\ge r} ]$ and $[ \sH^{\ge r},\sH^{\ge r}\rangle$. Also, the category $\langle \sH^{\ge r},\sH^{\ge r} ]$ (resp.~$[ \sH^{\ge r},\sH^{\ge r}\rangle$) is equipped with a monoidal (resp.~right $\sH^{\ge r}$-module) structure $(\natural_{r},I_{r})$.
\end{coro}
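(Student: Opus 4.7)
The plan is to deduce this corollary by directly invoking Proposition~\ref{prop:general-properties-Quillen} with $\cG=\cM=\cM'=\sH^{\ge r}$ (equipped with its standard left/right-module structure over itself induced by $\natural_r$), after checking that all of its hypotheses are satisfied in this setting. The necessary hypotheses are collected in the preceding two propositions, so the argument amounts to a bookkeeping step rather than new input.

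First I would verify the hypotheses. By Proposition~\ref{prop:beta(1)-braided-monoidal}, the triple $(\sH^{\ge r},\natural_r,I_r)$ is a braided monoidal groupoid, with braiding $\beta^{r}$ constructed from the pair-of-pants gluing. By Proposition~\ref{prop:no-0-divisors-no-automorphisms-of-0}, this monoidal groupoid has no zero-divisors and satisfies $\Aut_{\sH^{\ge r}}(I_r)=\{\id_{I_r}\}$. These are exactly the standing assumptions of Proposition~\ref{prop:general-properties-Quillen}, so all three conclusions (i), (ii), (iii) of that proposition apply to $\cG=\sH^{\ge r}$ acting on itself.

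Next I would read off the statements. Applying item~(i) of Proposition~\ref{prop:general-properties-Quillen} to both $\cM=\sH^{\ge r}$ (considered as a left-module over itself) and $\cM'=\sH^{\ge r}$ (considered as a right-module over itself) gives that $\sH^{\ge r}$ is the maximal subgroupoid of both $\langle \sH^{\ge r},\sH^{\ge r}]$ and $[\sH^{\ge r},\sH^{\ge r}\rangle$. Applying item~(ii) with $\cG=\sH^{\ge r}$ shows that $(\natural_r,I_r)$ extends to a monoidal structure on $\langle \sH^{\ge r},\sH^{\ge r}]$ via the formula involving the braiding $\beta^{r}$, and applying item~(iii) to the right-module $\cM'=\sH^{\ge r}$ shows that $(\natural_r,I_r)$ similarly extends to a right $\sH^{\ge r}$-module structure on $[\sH^{\ge r},\sH^{\ge r}\rangle$.

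There is no real obstacle here: the only subtlety is matching the hypotheses. Specifically, one needs to be careful that the general construction of Proposition~\ref{prop:general-properties-Quillen} requires $\cM$ (resp.\ $\cM'$) to be a groupoid, which is automatic here since $\sH^{\ge r}$ is itself a groupoid. One should also remark that the formulas appearing in items (ii) and (iii) of Proposition~\ref{prop:general-properties-Quillen} use the braiding $\beta^{r}$ of $\sH^{\ge r}$ supplied by Proposition~\ref{prop:beta(1)-braided-monoidal}, so the monoidal and module structures obtained on the Quillen bracket constructions are canonical. This completes the proof.
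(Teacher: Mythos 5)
Your proposal is correct and follows exactly the route the paper takes: the corollary is deduced by verifying the hypotheses of Proposition~\ref{prop:general-properties-Quillen} (braided monoidal structure from Proposition~\ref{prop:beta(1)-braided-monoidal}, no zero-divisors and trivial $\Aut(I_r)$ from Proposition~\ref{prop:no-0-divisors-no-automorphisms-of-0}) and then reading off items (i)--(iii) for $\cG=\cM=\cM'=\sH^{\ge r}$. No gaps.
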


\subsubsection{Handlebodies of infinite rank}\label{sss:infinite-handlebodies}

Because of a technical issue in \S\ref{ss:handle-stabilisation-sharp}, we need to consider handlebodies which are not compact at some point. We therefore make the following definition:

\begin{defn}\label{def:groupoids-handlebody-infinity}
For each $3$--dimensional compact handlebody $V$ of $\sH$, let $V_{\infty,1}\natural V$ be the directed colimit in the category $\langle \sH^{\ge 1},\sH^{\ge 1}]$ of the sequential diagram
\begin{equation}\label{eq:def-infinite-in-barGH}
\begin{tikzcd}
    V\arrow[r,"\iota_{1}(V)"]& V_{1,1}\natural V\arrow[r,"\iota_{2}(V)"]& V_{1,1}^{\natural 2}\natural V\arrow[r]& \cdots \arrow[r,"\iota_{i}(V)"]& V_{1,1}^{\natural i}\natural V \arrow[r,"\iota_{i+1}(V)"]& \cdots.
\end{tikzcd}
\end{equation}
where $\iota_{i}(V)$ denotes the morphism $[V_{1,1},\id_{V_{1,1}^{\natural i}\natural V}]$ of $\Hom_{\langle \sH^{\ge 1},\sH^{\ge 1}]}(V_{1,1}^{\natural i-1}\natural V,V_{1,1}^{\natural i}\natural V)$. It is called the \emph{infinite object of $V$}. We construct the groupoid $\overline{\sH}$ as follows:
\begin{itemizeb}
\item The objects of $\overline{\sH}$ are those of $\sH$ along with any infinite object $V_{\infty,1}\natural V$ for $V$ a compact handlebody with marked discs and marked points of $\sH$.
\item The morphisms in $\overline{\sH}$ for the objects of $\sH$ are those Definition~\ref{def:groupoid-handlebody}. Those involving infinite objects are of the form $\phi\colon V_{\infty,1}\natural V\to V_{\infty,1}\natural V'$, i.e.~$\phi$ is defined by the universal property of the colimit $V_{\infty,1}\natural V$ if there exist integers $j, j'\ge 0$ and a set of morphisms $\{\phi_{i}\in \Hom_{\sH}(V^{\natural i+j}_{1,1}\natural V,V^{\natural i+j'}_{1,1}\natural V')\}_{i\ge j}$, such that the following diagram is commutative:
\begin{equation}\label{eq:morphism-infty-handlebody}
\begin{tikzcd}[column sep=.7em]
    V^{\natural j}_{1,1}\natural V\arrow{d}{\phi_{j}}\arrow[rrrrrr,"\iota_{1+j}(V)"] &&&&&&
    V^{\natural 1+j}_{1,1}\natural V\arrow{d}{\phi_{1+j}}\arrow[rrrrrr,"\iota_{2+j}(V)"] &&&&&&
    \cdots \arrow[rrrrrr,"\iota_{i+j}(V)"] &&&&&&
    V_{1,1}^{\natural i+j}\natural V\arrow{d}{\phi_{i+j}} \arrow[rrrrrr,"\iota_{i+1+j}(V)"] &&&&&&
    \cdots \\
    V^{\natural j'}\natural V' \arrow[rrrrrr,"\iota_{1+j'}(V')"] &&&&&&
    V^{\natural 1+j'}_{1,1}\natural V' \arrow[rrrrrr,"\iota_{2+j'}(V')"] &&&&&&
    \cdots \arrow[rrrrrr,"\iota_{i+j'}(V')"] &&&&&&
    V^{\natural i+j'}_{1,1}\natural V'\arrow[rrrrrr,"\iota_{i+1+j'}(V')"]  &&&&&&
    \cdots.
\end{tikzcd}
\end{equation}
\end{itemizeb}
We also denote by $\overline{\sH}^{\ge 2}$ the full subcategory of $\overline{\sH}$ with objects those of $\sH^{\ge 2}$ along with the infinite objects $V_{\infty,1}\natural V$ with $V\in\obj(\sH^{\ge 2})$.
\end{defn}
\begin{rmk}\label{rmk:diagrammatic-object-well-defined}
It is a classical fact (see \cite[Prop.~2.13.3]{Borceux1} for instance) that a sequential diagram \eqref{eq:def-infinite-in-barGH} in the category $\langle \sH^{\ge 1},\sH^{\ge 1}]$ corresponds to a quotient of a infinite disjoint union of the form $\coprod_{i\ge 0}V_{1,1}^{\natural i}\natural(V\#V^{\# n}_{0,2})/\sim$, while their morphisms are infinite disjoint unions of diffeomorphisms up to isotopy satisfying the condition that the diagram \eqref{eq:morphism-infty-handlebody} is commutative. Such infinite disjoint unions do exist (for instance in the homotopy category of topological spaces), and so the infinite objects of Definition~\ref{def:groupoids-handlebody-infinity} and their morphisms are well-defined.
\end{rmk}
\begin{notation}\label{nota:V-g-r-s-infinity}
Following Notation~\ref{nota:V-g-r-s-object-G-H}, we write $V^{s}_{\infty,r}$ for the infinite object $V_{\infty,1}\natural V^{s}_{0,r}$ in $\overline{\sH}$ for some fixed integers $r\ge 1$ and $s\ge 0$.
\end{notation}

\begin{convention}
Assuming that manifolds are second-countable (thus implying that their cardinality is no larger than $\lvert \bR \rvert$), the categories of Definitions~\ref{def:groupoid-handlebody} and \ref{def:groupoids-handlebody-infinity}, as well as those built from them with the Quillen bracket construction of \S\ref{sss:Quillen-bracket-construction}, are essentially small following the standard general arguments of \cite[Rem.~1.2.7]{Galatius}. Namely, the objects of these categories are the disjoint unions of a finite number of $3$-balls, $3$--dimensional compact handlebodies and their sequential diagrams of the form \eqref{eq:def-infinite-in-barGH}, plus some additional data (such as marked discs, marked points, etc.).
Then, fixing a sufficiently large set $\Omega$ (i.e.~its cardinality is at least $\lvert \bR \rvert$), we consider the full subcategory $\cC$ whose objects are only those where the underlying set of a $3$-manifold is a subset of $\Omega$. This category $\cC$ is small and its inclusion into the whole (large) category is essentially surjective on objects, and so it is an equivalence. We shall thus consider this type of corresponding small category whenever we need our categories to be small, for instance when considering functor categories in \S\ref{ss:finite-degree}.
Moreover, assuming the axiom of choice, all the categories associated to handlebodies we consider have a skeleton; see \cite[Prop.~2.6.4]{Richter} for instance. Hence, up to equivalence, we may always work with these categories as if they were skeletal. 
In particular, similarly to \cite{RWW,Krannich}, we use the notation $\Aut_{\cC}(X)$ of the automorphisms of an object $X$ to also denote its class of isomorphisms in $\cC$ (which coincide when $\cC$ is skeletal).
We will not dwell further on these considerations, as these minor category theory details and mild identifications do not impact our methods and results, nor add value to our work.
\end{convention}

\paragraph*{Module structure.}
We now define a right $(\sH^{\ge 2},\#,I_{2})$-module structure over $\overline{\sH}^{\ge 2}$ as follow. We define $-\# -\colon \overline{\sH}^{\ge 2}\times\sH^{\ge 2} \to \overline{\sH}^{\ge 2}$ by the assignments of Definition~\ref{def:monoidal-structure-cG} for the objects of $\sH^{\ge 2}$, and by assigning $(V_{\infty,1}\natural V)\# W$ to be the directed colimit in the category $\langle \sH^{\ge 1},\sH^{\ge 1}]$ of the diagram
\begin{equation}\label{eq:def-sharp-infinite-object}
\begin{tikzcd}
    V\# W\arrow[rr,"\iota_{1}(V\# W)"]&& V_{1,1}\natural (V\# W)\arrow[rr,"\iota_{2}(V\# W)"]&& \cdots \arrow[rr,"\iota_{i}(V\# W)"]&& V_{1,1}^{\natural i}\natural (V\# W) \arrow[rr,"\iota_{i+1}(V\# W)"]&& \cdots
\end{tikzcd}
\end{equation}
for any infinite object $V_{\infty,1}\natural V$ and $W$ an object of $\sH^{\ge 2}$. We then set $-\# -\colon \overline{\sH}^{\ge 2}\times\sH^{\ge 2} \to \overline{\sH}^{\ge 2}$ on morphisms in the evident way, i.e.~as in Definition~\ref{def:monoidal-structure-cG} for morphisms $\sH^{\ge 2}$ and thanks to the universal property of a colimit for a diagram the form \eqref{eq:morphism-infty-handlebody} applied to \eqref{eq:def-sharp-infinite-object} for morphisms involving infinite objects in the first variable.
\begin{lem}\label{lem:right-module-infinite}
The groupoid $\overline{\sH}^{\ge 2}$ has a well-defined right $(\sH^{\ge 2},\#,I_{2})$-module structure with the above assignments.
\end{lem}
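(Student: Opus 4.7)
The plan is to verify, in order: (i) that on objects the bifunctor $-\#-$ produces an object of $\overline{\sH}^{\ge 2}$; (ii) that it is a well-defined bifunctor (independence of choices, preservation of identity and composition); and (iii) the unit and associativity axioms of a right module. The key structural observation is that the diagram \eqref{eq:def-sharp-infinite-object} is literally the defining diagram \eqref{eq:def-infinite-in-barGH} applied to $V\# W$ rather than $V$. Hence the colimit $(V_{\infty,1}\natural V)\#W$ is canonically identified with $V_{\infty,1}\natural(V\# W)$, and since $V,W\in\obj(\sH^{\ge 2})$ forces $V\# W\in\obj(\sH^{\ge 2})$ (the number of marked discs of $V\# W$ is $r_V+r_W-2\ge 2$), this output indeed lies in $\overline{\sH}^{\ge 2}$.

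For the action on morphisms, the nontrivial case is when the source has an infinite component. Given a morphism $\phi\colon V_{\infty,1}\natural V\to V_{\infty,1}\natural V'$ represented by a compatible family $\{\phi_i\}_{i\ge j}$ as in \eqref{eq:morphism-infty-handlebody}, and $\psi\in\Hom_{\sH^{\ge 2}}(W,W')$, I define $\phi\#\psi$ via the universal property of the colimit applied to the level-wise morphisms $\{\phi_i\#\psi\}_{i\ge j}$. Functoriality of the bifunctor $\natural_{2}$ on $\sH^{\ge 2}$ (Proposition~\ref{prop:monoidal-structure-G-cH}) shows that applying $-\#\psi$ to the commuting squares of \eqref{eq:morphism-infty-handlebody} again yields commuting squares, so the family assembles into a well-defined colimit morphism between the diagrams defining $(V_{\infty,1}\natural V)\# W$ and $(V_{\infty,1}\natural V')\# W'$. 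Independence of the representative $\{\phi_i\}$ (changing $j$, or altering $\phi_i$ by an isotopy extension) follows from the same functoriality applied to the equivalence data, and preservation of identities and composition reduces to the analogous level-wise properties in $\sH^{\ge 2}$ plus the universal property of the colimit.

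Finally, for the module axioms, the unitor $(V_{\infty,1}\natural V)\# I_2\cong V_{\infty,1}\natural V$ is obtained by assembling the level-wise unitors $(V_{1,1}^{\natural i}\natural V)\# I_2\cong V_{1,1}^{\natural i}\natural V$ from Proposition~\ref{prop:monoidal-structure-G-cH}, which are natural with respect to the stabilisation maps $\iota_{i}$ and hence induce an isomorphism of colimit diagrams. Similarly, the associator $((V_{\infty,1}\natural V)\# W_1)\# W_2\cong(V_{\infty,1}\natural V)\#(W_1\# W_2)$ is assembled level-wise from the associators of $\sH^{\ge 2}$. The triangle and pentagon coherence axioms for the module structure are then automatic, since they hold in each term of the defining diagram by Proposition~\ref{prop:monoidal-structure-G-cH} and are preserved under passage to the colimit.

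The main obstacle is the bookkeeping for morphisms involving infinite objects: one has to check that the compatibility squares of \eqref{eq:morphism-infty-handlebody} survive the operation $-\#\psi$ and that the resulting colimit morphism is independent of the choice of representing family $\{\phi_i\}_{i\ge j}$. Once the identification of \eqref{eq:def-sharp-infinite-object} with the defining diagram of $V_{\infty,1}\natural(V\# W)$ has been made and functoriality in $\sH^{\ge 2}$ is invoked, all remaining verifications are straightforward applications of the universal property of directed colimits in $\langle\sH^{\ge 1},\sH^{\ge 1}]$.
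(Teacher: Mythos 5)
Your proposal is correct and follows essentially the same route as the paper's proof: reduce to the infinite objects, identify $(V_{\infty,1}\natural V)\# W$ with $V_{\infty,1}\natural(V\# W)$ via level-wise isomorphisms of the form \eqref{eq:natural-iso-decomposition} (the paper's $\psi_{i,W}$), define the action on morphisms level-wise through the universal property of the colimit, and deduce unitality and associativity from Proposition~\ref{prop:monoidal-structure-G-cH} by diagram chasing. The only slight difference is bookkeeping emphasis — the paper invokes the isomorphisms $\psi_{i,W}$ explicitly where you leave that identification implicit when forming $\phi_i\#\psi$ — but the substance is the same.
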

\begin{proof}
By restriction to the full subcategory $\sH^{\ge 2}\subset \overline{\sH}^{\ge 2}$, it follows from Proposition~\ref{prop:monoidal-structure-G-cH} that the operation $-\# -$ gives $\sH^{\ge 2}$ a well-defined right $(\sH^{\ge 2},\#,I_{2})$-module structure. Hence, it suffices to check that this structure holds for the infinite objects $V_{\infty,1}\natural V$ of $\overline{\sH}^{\ge 2}$. Recall that connected objects $V$ and $W$ of $\sH^{\ge 2}$ are of the form $V^{s}_{h,r}$ and $V^{s'}_{h',r'}$ for fixed $h,h',s,s'\ge 0$ and $r,r'\ge 2$, so there is a natural isomorphism $\psi_{i,W}\colon (V^{\natural i}_{1,1}\natural V)\# W \cong V^{\natural i}_{1,1}\natural (V\# W)$ obtained by composing isomorphisms of the form \eqref{eq:natural-iso-decomposition}. Then, by applying these isomorphisms $\psi_{i,W}$ for all $i\ge 0$ to the diagram \eqref{eq:def-sharp-infinite-object}, the functoriality of $-\# -$ with respect to diagrams of the form \eqref{eq:morphism-infty-handlebody} for the first summand and the morphisms of $\sH^{\ge 2}$ in the second summand, as well as the associativity and unitality axioms for a module structure, are straightforward consequences of Proposition~\ref{prop:monoidal-structure-G-cH} by some obvious diagram chasings along with the universal properties of the colimits involved in the definitions.
\end{proof}

Finally, we will make use of the following property in $\overline{\sH}^{\ge 2}$ at several points. 
\begin{lem}\label{lem:stabilising-handlebody-of-inf-rank}
For any $V\in\obj(\sH^{\ge 2})$, there exists an isomorphism in $\overline{\sH}^{\ge 2}$
\begin{equation}\label{eq:canonical-iso-dash-natural}
(V_{\infty,1}\natural V)\# V_{0,2}\cong V_{\infty,1}\natural V.
\end{equation}
\end{lem}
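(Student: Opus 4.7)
The plan is to reduce the statement to the natural decomposition isomorphisms of~\eqref{eq:natural-iso-decomposition}. Writing $V\cong V^s_{g,r}$ in $\sH^{\ge 2}$ for some $g\ge 0$, $r\ge 2$ and $s\ge 0$, applying the two forms of~\eqref{eq:natural-iso-decomposition} in succession yields
\[
V\#V_{0,2}\;\cong\; V^s_{0,r}\# V_{0,2}^{\#(g+1)}\;\cong\; V^s_{g+1,r}\;\cong\; V_{1,1}^{\natural(g+1)}\natural V^s_{0,r}\;\cong\; V_{1,1}\natural V,
\]
producing a key isomorphism $\psi_V\colon V\#V_{0,2}\xrightarrow{\cong} V_{1,1}\natural V$ in $\sH^{\ge 2}$.

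Using $\psi_V$, I then compare the sequential diagrams defining the two infinite objects. By Lemma~\ref{lem:right-module-infinite} and the definition of the $\#$-action on $\overline{\sH}^{\ge 2}$, the object $(V_{\infty,1}\natural V)\#V_{0,2}$ is the directed colimit in $\langle\sH^{\ge 1},\sH^{\ge 1}]$ of
\[
V\#V_{0,2}\xrightarrow{\iota_1(V\#V_{0,2})} V_{1,1}\natural(V\#V_{0,2})\xrightarrow{\iota_2(V\#V_{0,2})} V_{1,1}^{\natural 2}\natural(V\#V_{0,2})\to\cdots,
\]
while $V_{\infty,1}\natural V$ is the colimit of
\[
V\xrightarrow{\iota_1(V)} V_{1,1}\natural V\xrightarrow{\iota_2(V)} V_{1,1}^{\natural 2}\natural V\to\cdots.
\]
Setting $\phi_i:=\id_{V_{1,1}^{\natural i}}\natural\psi_V\colon V_{1,1}^{\natural i}\natural(V\#V_{0,2})\xrightarrow{\cong} V_{1,1}^{\natural(i+1)}\natural V$ for each $i\ge 0$, the naturality of $\iota_\bullet$ in its second argument and the functoriality of $\natural$ in its first variable imply that the $\phi_i$'s assemble into a commutative ladder of the shape required in~\eqref{eq:morphism-infty-handlebody}, with shift parameters $j=0$ and $j'=1$. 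This data specifies the desired morphism in $\overline{\sH}^{\ge 2}$, and the two-sided inverse is constructed analogously from $\psi_V^{-1}$, with $j=1$ and $j'=0$.

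The main obstacle is essentially categorical bookkeeping: one must verify that the ladder does commute in the Quillen bracket category. This reduces to the observation that each $\iota_i(W)$ is represented in $\langle\sH^{\ge 1},\sH^{\ge 1}]$ by the class $[V_{1,1},\id]$, so that tensoring on the left by $V_{1,1}$ preserves the structure maps, whence the two composites $\iota_{i+2}(V)\circ\phi_i$ and $\phi_{i+1}\circ\iota_{i+1}(V\#V_{0,2})$ agree as morphisms. All the topological content is concentrated in the natural decomposition isomorphisms~\eqref{eq:natural-iso-decomposition}, which were already recorded as a direct consequence of Definition~\ref{def:monoidal-structure-cG}, so no further manifold-theoretic input is required.
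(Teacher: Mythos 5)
Your proposal is correct and follows essentially the same route as the paper: both construct an isomorphism $V\# V_{0,2}\cong V_{1,1}\natural V$ by composing isomorphisms of the form \eqref{eq:natural-iso-decomposition}, extend it levelwise by $\id_{V_{1,1}^{\natural i}}\natural(-)$ to a commutative ladder between the two defining sequential diagrams, and conclude via the universal property of the colimits. The only difference is cosmetic — you make the shift parameters $j=0$, $j'=1$ and the commutativity check explicit, which the paper leaves implicit.
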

\begin{proof}
By Definition~\ref{def:groupoids-handlebody-infinity}, the object $V_{\infty,1}\natural V$ is the diagram \eqref{eq:def-infinite-in-barGH} with $V$ an object of $\sH^{\ge 2}$ of the form $V^{s}_{h,r}$ for some fixed $r\ge 2$, $h,s\ge 0$. Note that, for any such $V\in\obj(\sH^{\ge 2})$, there exists an isomorphism $\phi_{V}\colon V\# V_{0,2}\cong V_{1,1}\natural V$ in $\sH^{\ge 2}$ obtained by composing isomorphisms of the form \eqref{eq:natural-iso-decomposition}. Hence, we have a sequence of isomorphisms
\[
\begin{tikzcd}[column sep=.7em]
    V\# V_{0,2}\arrow{d}{\phi_{V}}[swap]{\cong}\arrow[rrr] &&& V_{1,1}\natural (V\# V_{0,2})\arrow{d}{\id_{V_{1,1}}\natural\phi_{V}}[swap]{\cong}\arrow[rrr] &&&\cdots\\
    V_{1,1}\natural V\arrow[rrr] &&&V_{1,1}^{\natural 2}\natural V\arrow[rrr] &&& \cdots
\end{tikzcd}
\]
which induces the isomorphism \eqref{eq:canonical-iso-dash-natural} by the universal properties of the involved colimits.
\end{proof}

\subsection{Finite degree coefficient systems}\label{ss:finite-degree}

We review here the notion of \emph{finite degree coefficient system}, designed for twisted coefficients in order to deal with twisted homological stability; see \S\ref{sss:HS-meta-theorem}. This is a classical concept appearing in the literature on homological stability, in particular in \cite[\S4.1--\S4.4]{RWW} and \cite[\S4.1]{Krannich} to which we refer the reader for further details.
We consider a braided monoidal groupoid $(\cG,\odot,0)$ with no zero-divisors and such that $\mathrm{Aut}_{\cG}(0)=\{\id_{0}\}$, and a right $(\cG,\odot,0)$-module $(\cM,\odot)$ such that $\cM$ is a groupoid.
In order to introduce the notion of coefficient system of finite degree, we need to work over the following groupoids:
\begin{notation}\label{nota:M-AX-G-X}
For objects $A \in\obj(\cM)$ and $X\in\obj(\cG)$, we write $A_{n}:=A\odot X^{\odot n}$ for each integer $n\ge 0$. We also denote by $\cG_{X}$ and $\cM_{A,X}$ the full subgroupoids of $\cG$ and $\cM$ on the objects $\{X^{\odot n}\}_{n\in \bN}$ and $\{A_{n}\}_{n\in \bN}$ respectively.
\end{notation}
\begin{lem}\label{lem:G-X_M-X-Quillen}
The subgroupoid $\cG_{X}$ inherits the braided monoidal structure $(\odot,0)$ of $\cG$, and the groupoid $\cM_{A,X}$ is a compatible $(\cG_{X},\odot)$-submodule of $\cM$. Furthermore, the category $[ \cM_{A,X},\cG_{X} \rangle$ is isomorphic to the full subcategory of $ [ \cM,\cG \rangle$ on the objects $\{A_{n}\}_{n\in \bN}$ denoted by $ [ \cM,\cG \rangle_{A,X}$.
\end{lem}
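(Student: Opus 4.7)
The plan is to verify the three assertions of the lemma in sequence, each being largely a matter of unpacking definitions.

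For the first assertion, I will check closure of $\cG_X$ under the monoidal operation. The monoidal unit $0 = X^{\odot 0}$ lies in $\cG_X$, and the product $X^{\odot n}\odot X^{\odot m}$ is canonically isomorphic (via the associator of $\cG$) to $X^{\odot (n+m)}$, which again lies in $\cG_X$. Since $\cG_X$ is a \emph{full} subgroupoid of $\cG$, the associator, unitor and braiding isomorphisms of $\cG$ between objects of the form $X^{\odot n}$ automatically belong to $\cG_X$, so $\cG_X$ acquires its braided monoidal structure by restriction; the pentagon, triangle and hexagon axioms hold in $\cG_X$ because they hold in $\cG$.

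For the second assertion, the right $\cG$-action on $\cM$ restricts to a functor $\cM_{A,X}\times\cG_X\to\cM_{A,X}$ since $A_n\odot X^{\odot m}\cong A_{n+m}$ again lies in $\cM_{A,X}$. Associativity and unitality of the module action, and their compatibility with the inherited monoidal structure on $\cG_X$, hold by restriction of the corresponding coherence isomorphisms from $\cM$ and $\cG$.

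For the third assertion, I will define the evident inclusion functor $\iota\colon [\cM_{A,X},\cG_X\rangle\to [\cM,\cG\rangle_{A,X}$, which is the identity on objects and sends a representative $(\phi,X^{\odot k})$, with $\phi\in\Hom_{\cM}(A_n\odot X^{\odot k},A_m)$, to the same pair viewed in the colimit over the larger category $\cG$. This assignment descends to equivalence classes because any isomorphism $\chi\colon X^{\odot k_1}\to X^{\odot k_2}$ in $\cG_X$ remains such in $\cG$. To see that $\iota$ is an isomorphism of categories, it suffices to verify bijectivity on each $\Hom(A_n,A_m)$: injectivity follows from fullness of $\cG_X$ in $\cG$, so that any $\chi\colon X^{\odot k_1}\to X^{\odot k_2}$ in $\cG$ already lies in $\cG_X$; surjectivity amounts to showing that any $Y\in\cG$ equipped with an isomorphism $\phi\colon A_n\odot Y\to A_m$ in the groupoid $\cM$ is itself isomorphic, in $\cG$, to $X^{\odot (m-n)}$, after which picking such an isomorphism $\chi\colon X^{\odot(m-n)}\to Y$ produces the required representative $[\phi\circ(\id_{A_n}\odot\chi), X^{\odot(m-n)}]$.

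The main obstacle is this surjectivity argument, which must extract, from the isomorphism $A\odot X^{\odot n}\odot Y\cong A\odot X^{\odot m}$ available in $\cM$, the structural conclusion $Y\cong X^{\odot(m-n)}$ in $\cG$. This is where the hypotheses on $\cG$ (no zero-divisors and triviality of $\Aut_{\cG}(0)$) are crucially used, providing the cancellation built into the structure of the braided monoidal groupoid.
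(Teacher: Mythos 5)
Your treatment of the first two assertions and of injectivity on morphism sets is fine and coincides with the paper's (very brief) argument: fullness of $\cG_{X}$ in $\cG$ and of $\cM_{A,X}$ in $\cM$ gives the restricted braided monoidal and module structures, and it also gives injectivity of the comparison map on hom-sets. The genuine gap is the step you yourself single out as the main obstacle: surjectivity. You need that every $Y\in\obj(\cG)$ admitting an isomorphism $A_{n}\odot Y\to A_{m}$ in $\cM$ is isomorphic in $\cG$ to $X^{\odot(m-n)}$, and you assert that this is ``provided'' by the standing hypotheses that $\cG$ has no zero-divisors and $\Aut_{\cG}(0)=\{\id_{0}\}$. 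Those hypotheses do not imply any such cancellation/recognition statement. In this framework cancellation is an independent condition: it is precisely the flavour of condition (C) of Definition~\ref{def:homogeneity} (local homogeneity), which is introduced as an extra hypothesis and is verified for the handlebody modules only in Proposition~\ref{prop:homogeneity-handlebody}, using genuinely geometric input (classification of surfaces and the fact that handlebodies are determined by their boundaries); it is never deduced from the no-zero-divisor and trivial-unit-automorphism assumptions, and it cannot be. For instance, let $\cG$ be the discrete symmetric monoidal groupoid on the monoid $\bN^{2}$, let $\cM$ be the discrete groupoid on $\bN$ with $(a,b)$ acting by addition of $a+b$, and take $A=0$, $X=(1,0)$: then $\cG$ has no zero-divisors and trivial automorphisms of the unit, yet $Y=(0,m)$ satisfies $A_{n}\odot Y=A_{n+m}$ while $Y\not\cong X^{\odot m}$ in $\cG$. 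So the argument as written does not close at its central point.

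Note also that the paper's own proof does not take your route: it identifies the two categories by observing that they have the same objects and by comparing colimits indexed over $\cG_{X}$ with morphisms taken in $\cM$ versus $\cM_{A,X}$, where fullness alone is invoked; it does not attempt to push an arbitrary representative $[\phi,B]$ with $B\in\obj(\cG)$ back into $\cG_{X}$ by a cancellation argument. Your strategy, which enlarges the indexing groupoid to all of $\cG$ and then reduces to $\cG_{X}$, requires an input that the stated hypotheses do not supply; to complete it you would have to either add a cancellation-type hypothesis of the appropriate form (compare Definition~\ref{def:homogeneity}(C)) or establish it in the concrete categories of interest, as the paper does later by geometric means.
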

\begin{proof}
The first statements are straightforwardly verified from the fact that $\cG_{X}$ and $\cM_{A,X}$ are full subgroupoids of $\cG$ and $\cM$ respectively, generated by the monoidal structure $\odot$ with the object $X$ iterated over $0$ and $A$ respectively.
Then, the categories $[ \cM_{A,X},\cG_{X} \rangle$ and $[ \cM,\cG \rangle_{A,X}$ have the same objects, while $\colim_{\cG_{X}}\left(\Hom_{\cM_{X}}(A_{n}\odot -,A_{n+m})\right)$ and $\colim_{\cG_{X}}\left(\Hom_{\cM_{A,X}}(A_{n}\odot -,A_{n+m})\right)$ are obviously isomorphic for all integers $n,m\ge 0$ because $\cG_{X}$ and $\cM_{A,X}$ are full subgroupoids of $\cG$ and $\cM$ respectively. This provides a canonical isomorphism between these categories by the definition of the Quillen bracket construction in \S\ref{sss:Quillen-bracket-construction}.
\end{proof}
We fix two objects $A \in\obj(\cM)$ and $X\in\obj(\cG)$ for the remainder of \S\ref{ss:finite-degree}. A \emph{coefficient system} over the category $[ \cM_{A,X},\cG_{X} \rangle$ defined in \S\ref{sss:Quillen-bracket-construction} is a functor $ [ \cM_{A,X},\cG_{X} \rangle \to \Ab$.
For a fixed coefficient system $F$ over $[ \cM_{A,X},\cG_{X} \rangle$, it follows from \eqref{item:extension-module-structure} of Proposition~\ref{prop:general-properties-Quillen} that we have a well-defined functor $\tau_{X}(F):=F(-\odot \id_{X})\colon[ \cM_{A,X},\cG_{X} \rangle \to \Ab$ (called the \emph{translation functor} or \emph{upper suspension} of $F$) defined by assigning $A_{n}\mapsto F(A_{n} \odot X)$ on objects and $[\phi,X^{\odot m}]\mapsto [\phi,X^{\odot m}] \odot \id_{X}$ on morphisms. There is also a well-defined natural transformation $i_{X}(F)\colon F \to \tau_{X}(F)$ in $\Fct([ \cM_{A,X},\cG_{X} \rangle,\Ab)$ induced by the canonical morphisms $\{[\id_{A_{n+1}},X]\colon A_{n} \to A_{n+1}\}_{n\in \bN}$ of $[ \cM_{A,X} ,\cG_{X} \rangle$ (see \S\ref{sss:Quillen-bracket-construction}), i.e.~$i_{X}(F)_{n}:=F([\id_{A_{n+1}},X])$ for each integer $n\ge 0$. Since the functor category $\Fct([ \cM_{A,X},\cG_{X} \rangle,\Ab)$ is abelian (see \cite[\S VIII.3]{Gabriel}), the cokernel $\Cok(i_{X}(F))$ and the kernel $\Ker(i_{X}(F))$ of the natural transformation $i_{X}(F)$ are well-defined coefficient systems over $[ \cM_{A,X},\cG_{X} \rangle$, called the \emph{difference} functor and the \emph{evanescence} functor of $F$ respectively, and denoted by $\delta_{X}(F)$ and $\kappa_{X}(F)$ respectively.

\begin{defn}\label{def:finite-deg-coeff-system}
We now recursively define the notion of \emph{finite degree} for a coeﬃcient system $F\colon [ \cM_{A,X},\cG_{X} \rangle \to \Ab$ as follows. 
\begin{itemizeb}
\item A coefficient system $F$ is of finite degree $-1$ if $F=0$.
\item For an integer $d\ge 0$, a coefficient system $F$ is of finite degree less than or equal to $d$ if the natural transformation $i_{X}(F)\colon F\to F(-\odot X)$ is injective in $\Fct([ \cM_{A,X},\cG_{X} \rangle,\Ab)$ (i.e.~$\kappa_{X}(F)=0$) and the difference functor $\delta_{X}(F)$ is a finite coefficient system of degree less than or equal to $d-1$.
\end{itemizeb}
Furthermore, a coefficient system $F$ of finite degree $r$ is \emph{split} if the natural transformation $i_{X}(F)$ is \emph{split} injective in the category of coefficient systems, and the difference functor $\delta_{X}(F)$ is a split coefficient system of degree $r-1$.
\end{defn}

\begin{eg}\label{eg:Z_H_poly_functor}
The constant functor at $\bZ$ defines a split coefficient system $\bZ\colon [ \cM_{A,X},\cG_{X} \rangle \to \Ab$ of finite degree $0$.
\end{eg}

\begin{rmk}\label{rmk:generalisation-finite-degree}
The definition of finite degree coefficient systems presented here follows that of \cite[Def.~4.6]{Krannich}. That of \cite[Def.~4.10]{RWW} uses a notion of \emph{lower suspension} instead of the upper suspension $\tau_{X}(F):=F(-\odot \id_{X})$. These two suspensions may be related via the braiding of $\cG$ (see \cite[(4.2)]{RWW}) and these two approaches are equivalent; we refer the reader to \cite[Rem.~4.11,\S7.3]{Krannich} for more details. On another note, we consider here finite degree coefficient systems \emph{at $0$} in the terminology of \cite[Def.~4.6]{Krannich}, i.e.~the above defining conditions could rather be set on $F(X^{n})$ for all $n\ge N$ for a fix bound $N\in\bN$. Although we could extend the framework and results of the present paper using this slightly more general notion of coefficient systems, we prefer this restriction for simplicity and because it seems more natural.
\end{rmk}

\subsubsection{First homologies coefficient systems}\label{sss:first-homologies-coeff-systems}

Some first examples of non-trivial finite degree coefficient systems are given by the first homology groups of the surfaces and the handlebodies that we introduce in this section; see Proposition~\ref{prop:H-Sigma-V_functors}.

First of all, we denote by $\hMan$ the homotopy category of pairs of topological manifolds, where the objects are pairs $(M,A)$, where $M$ is a topological manifolds and $A\subset M$ is a submanifold, and where morphisms are homotopy classes of continuous maps of pairs $(M,A)\to (M',A')$; see \cite[\S3]{Spanier} for instance. We recall that the first relative homology defines a functor $H_{1}(-,-;\bZ) \colon \hMan \to \Ab$; see \cite[\S2.3]{Hatcherbook} for instance. We note from Definition~\ref{def:groupoid-handlebody} that $\sH$ is a category where each object $(V,r,s,i,j)$ has an underlying pair of manifolds $(V,\cD_{r}\sqcup\cP_{s})$, where $\cD_{r}=\Image(i)$ and $\cP_{s}=\Image(j)$ for $r\ge 1$ and $s\ge 0$ are subsets of $\partial V$, and whose morphisms are homotopy classes of continuous maps of pairs (plus some additional data), so there is a canonical forgetful functor $\fF\colon \sH\to \hMan$.
Furthermore, let $\partial\fF\colon \sH\to\hMan$ be the functor defined by assigning $(V,r,s,i,j)\mapsto (\partial V\setminus (\mathring{\cD}_{r}\sqcup\cP_{s}),\emptyset)$ on objects, and by restriction to the boundary on morphisms (recalling that the diffeomorphisms of smooth manifolds are boundary-preserving, and so are their homotopy classes). Therefore, we consider the composite functors
\[
H_{1}(-,-;\bZ)\circ \partial\fF \colon \sH \to \hMan \to \Ab \textrm{ and } H_{1}(-,-;\bZ)\circ \fF \colon \sH \to \hMan \to \Ab,
\]
that we denote by $H_{\Sigma}$ and $H_{\cV}$ respectively.
Namely, these functors are defined by assigning to each object $(V,r,s,i,j)\in\obj(\sH)$ the first homology groups $H_{1}(\partial V\setminus  (\mathring{\cD}_{r}\sqcup\cP_{s});\bZ)$ and $H_{1}(V, \cD_{r}\sqcup\cP_{s};\bZ)$ respectively, together with the natural action of the isotopy classes of the diffeomorphism groups.

\begin{notation}\label{nota:Z-V-V'}
We write $Z$ for either the handlebody with a marked disc $V_{1,1}$ or the $3$-ball with two marked discs $V_{0,2}$. For a fixed compact handlebody $V$ of $\sH$, taking up Notation~\ref{nota:M-AX-G-X}, we consider the monoidal groupoid $(\sH^{\ge i}_{Z},\natural_{i},I_{i})$ and right-module $(\sH^{\ge i}_{V,Z},\natural_{i})$ over it, with the following assignments:
\begin{itemizeb}
    \item if $Z=V_{1,1}$, then $i=1$;
    \item if $Z=V_{0,2}$, then $i=2$ and $r\ge 2$.
\end{itemizeb}
Although we obviously have $\sH^{\ge i}_{V,Z}=\sH_{V,Z}$ as categories, we use the notation $\sH^{\ge i}_{V,Z}$ in order to indicate which monoidal structure is used when relevant.
\end{notation}

We restrict the functors $H_{\Sigma}\colon\sH \to \Ab$ and $H_{\cV}\colon\sH \to \Ab$ to the subgroupoid $\sH_{V,Z}$ of $\sH$. Also, we note from \eqref{item:maximal-subgroupoid} of Proposition~\ref{prop:general-properties-Quillen} that there is a fully faithful canonical functor.
\begin{equation}\label{eq:maximal-subgroupoid-functor}
\fC_{Z}\colon\sH_{V,Z}\hookrightarrow [ \sH^{\ge i}_{V,Z} , \sH^{\ge i}_{Z} \rangle,
\end{equation}
defined as the identity on objects and by $\phi\mapsto [\phi,0]$ for all $\phi\in \Aut_{\sH_{V,Z}}(V\natural_{i}Z^{\natural_{i} n})$ where $n\ge 0$. The functors $H_{\Sigma}$ and $H_{\cV}$ induce coefficients systems as follows:
\begin{prop}\label{prop:H-Sigma-V_functors}
For fixed integers $d\ge 1$ $r\ge 1$ and $s\ge 0$, the $d$-fold tensor powers of the functors $H_{\Sigma}$ and $H_{\cV}$ in $\Fct(\sH_{V,Z},\Ab)$ lift along $\fC_{Z}$ to finite degree $d$ coefficient systems $[ \sH^{\ge i}_{V,Z} , \sH^{\ge i}_{Z} \rangle \to \Ab$ for each $i\in \{1,2\}$, denoted by $T^{d}H_{\Sigma}$ and $T^{d}H_{\cV}$ respectively.
Moreover, the coefficient systems $T^{d}H_{\Sigma}\colon [\sH^{\ge 1}_{V,V{1,1}}, \sH^{\ge 1}_{V{1,1}}\rangle \to \Ab$ and $T^{d}H_{\cV}\colon [\sH^{\ge i}_{V,Z}, \sH^{\ge i}_{Z}\rangle \to \Ab$ for $i \in {1,2}$ are split.
\end{prop}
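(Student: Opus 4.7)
The plan is to verify the statement first for $d=1$ (i.e.~for $H_{\Sigma}$ and $H_{\cV}$ themselves), then deduce the general case via \cite[Cor.~1.5]{AnnexLS1}, which asserts that taking $d$-fold tensor powers preserves (split) finite degree and multiplies it by $d$.

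First, I would lift $H_{\Sigma}$ and $H_{\cV}$ from $\sH_{V,Z}$ to the Quillen bracket category $[\sH^{\ge i}_{V,Z},\sH^{\ge i}_{Z}\rangle$. A morphism $[\phi,Z^{\natural_{i}n}]\colon V_{m}\to V_{m+n}$ is represented by an isomorphism $\phi\colon V_{m}\natural_{i}Z^{\natural_{i}n}\xrightarrow{\sim} V_{m+n}$ of $\sH^{\ge i}_{V,Z}$, which decomposes in $\hMan$ as the canonical inclusion of pairs $V_{m}\hookrightarrow V_{m}\natural_{i}Z^{\natural_{i}n}$ followed by $\phi$. I set
\[
H_{\Sigma}([\phi,Z^{\natural_{i}n}]):=H_{\Sigma}(\phi)\circ H_{\Sigma}\bigl(V_{m}\hookrightarrow V_{m}\natural_{i}Z^{\natural_{i}n}\bigr),
\]
and analogously for $H_{\cV}$. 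Well-definedness under the equivalence of pairs follows from the observation that any $\chi\in\Aut_{\sH^{\ge i}_{Z}}(Z^{\natural_{i}n})$, realised as $\id_{V_{m}}\natural_{i}\chi \in \Aut_{\sH}(V_{m}\natural_{i}Z^{\natural_{i}n})$, is the identity on the $V_{m}$-factor and thus acts trivially on the image of the inclusion-induced map on $H_{1}$; functoriality under composition is a direct verification from the formulas in \S\ref{sss:Quillen-bracket-construction}.

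Next, I would show that $H_{\Sigma}$ and $H_{\cV}$ are of degree $1$. For each choice of $Z\in\{V_{1,1},V_{0,2}\}$ and corresponding $i\in\{1,2\}$, the stabilisation $V_{m}\hookrightarrow V_{m}\natural_{i}Z$ adds a $1$-handle: the genus increases by one while the numbers of marked discs and points are preserved. A Mayer--Vietoris computation (or the long exact sequence of the relevant pair) shows that the inclusion-induced map on $H_{1}$ is split injective with cokernel a free abelian group of rank one for $H_{\cV}$ (generated by the core circle of the new handle) and of rank two for $H_{\Sigma}$ (generated by the $(a,b)$-cycles of the new handle in the boundary surface). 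In particular $\kappa_{Z}(F)=0$. Using the braiding $\beta^{i}$ of $(\sH^{\ge i},\natural_{i},I_{i})$ to identify the ``newest'' handle across further stabilisations with a fixed one, the difference functors $\delta_{Z}H_{\cV}$ and $\delta_{Z}H_{\Sigma}$ become naturally isomorphic to the constant functors at $\bZ$ and $\bZ^{2}$ respectively, hence of degree $0$ by Example~\ref{eg:Z_H_poly_functor}. Therefore $H_{\cV}$ and $H_{\Sigma}$ are of finite degree $1$, and by \cite[Cor.~1.5]{AnnexLS1} the tensor powers $T^{d}H_{\cV}$ and $T^{d}H_{\Sigma}$ are of finite degree $d$.

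For the split claim, candidate splittings of $i_{Z}(F)$ are obtained from retractions of the attached handle onto $V_{m}$ (collapsing the added $V_{1,1}$ or $V_{0,2}$ onto the gluing region), which induce left inverses to the stabilisation on $H_{1}$. These retractions are $\Aut_{\sH}(V_{m})$-equivariant (the automorphisms act as the identity on the attached piece via $\id_{Z}$) and commute with subsequent stabilisations (the retraction of the $m$-th new handle is supported disjointly from later attachments), so they assemble into a splitting of $i_{Z}(F)$ as a natural transformation in $\Fct([\sH^{\ge i}_{V,Z},\sH^{\ge i}_{Z}\rangle,\Ab)$. For $H_{\cV}$ (with either $\natural$ or $\#$) and $H_{\Sigma}$ (with $\natural$), the new generators lie entirely in the attached piece, so the retraction is canonical; combined with the fact that $\delta_{Z}(F)$ is equivalent to a constant functor (trivially split of degree $0$), this proves that $F$ is split. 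The tensor powers are then split of degree $d$ by \cite[Cor.~1.5]{AnnexLS1}.

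The main obstacle is the naturality of the splittings with respect to \emph{all} morphisms in the Quillen bracket, not merely the automorphisms of each $V_{m}$. This is precisely why the split claim is absent for $H_{\Sigma}$ with $\#$: whereas the meridian of the new handle is supported in the attached cylindrical piece, the new longitudinal cycle in the boundary surface necessarily traverses $\partial V_{m}$ between the two glued discs, and no such connecting path can be chosen naturally with respect to the handlebody group action, so the candidate splitting fails equivariance.
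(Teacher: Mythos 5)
Your overall strategy is the same as the paper's: treat $d=1$, lift to the Quillen bracket by factoring $[\phi,Z^{\natural_{i}n}]$ as the canonical inclusion followed by an isomorphism, compute $\kappa_{Z}$ and $\delta_{Z}$ by a handle-addition homology computation, and invoke \cite[Cor.~1.5]{AnnexLS1} for the tensor powers; your $i=1$ analysis and your reading of which cases are asserted to be split are both correct. The genuine gap is in the $\#$-stabilisation case ($i=2$, $Z=V_{0,2}$). First, your identification of the cokernel there is wrong: gluing $V_{0,2}$ along the first two marked discs sends the old class $[c_{2}]$ (the boundary of the second marked disc) to the \emph{new meridian} $[a_{h+1}]$, so on $H_{\Sigma}$ the stabilisation is not the inclusion of a summand whose complement is spanned by the $(a,b)$-cycles of the new handle; the cokernel has rank two as you say, but it is generated by the new $[c_{2}]$ and the new longitude $[b_{h+1}]$. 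Second, and more seriously, your proof that $\delta_{Z}(H)$ is (naturally isomorphic to) a constant functor rests on the cokernel being generated by classes supported in the attached copy of $Z$, so that morphisms of the form $\phi\natural_{i}\id_{Z}$ act trivially by disjointness of supports. For $i=2$ this premise fails: the new longitude necessarily traverses the old handlebody between the two glued discs, and for $H_{\Sigma}$ so does the new $[c_{2}]$. Your own closing paragraph makes exactly this observation to explain why the splitting fails for $H_{\Sigma}$ with $\#$, but the same issue already blocks your argument that $\delta_{V_{0,2}}(H_{\Sigma})$ is of degree $0$ — and that finite-degree claim for $i=2$ is part of the proposition (and is what feeds into the coefficient-bisystem examples later). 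The paper closes this by an explicit check on generators of the mapping class group: among the Lickorish-type generators only the Dehn twist $T(c_{1,2})$ meets the supports of $c_{2}$ and $b_{h+1}$, and it acts trivially on the cokernel (it adds a meridian class to $[b_{h+1}]$, which dies in the quotient), so the difference functor is indeed constant; some argument of this kind is unavoidable for $H_{\Sigma}$ with $\#$.

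Two smaller repairs in the same direction: for $H_{\cV}$ with $i=2$ the generator of the cokernel should be taken to be the relative class of an arc in the attached $V_{0,2}$ joining its two marked discs, not the ``core circle'' (which closes up through the old handlebody); with the arc representative your disjoint-support and canonical-splitting arguments do go through, and this is exactly the canonical splitting the paper uses. Finally, your splitting-by-retraction mechanism is dubious for $H_{\Sigma}$ even at $i=1$, since a retraction of the $3$-manifold onto $V_{m}$ does not induce a map on the boundary surface; the correct (and simpler) argument there is the natural direct-sum decomposition $H_{\Sigma}(V\natural V_{1,1})\cong H_{\Sigma}(V)\oplus H_{\Sigma}(V_{1,1})$ coming from the subsurface decomposition, which is what the paper uses.
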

\begin{proof}
Firstly, we note from \S\ref{sss:Quillen-bracket-construction} that a morphism $[\phi,Z^{\natural_{i}g'-g}]\colon V\natural_{i} Z^{\natural_{i}g} \to V\natural_{i} Z^{\natural_{i}g'}$ in the category $[ \sH^{\ge i}_{V,Z},\sH^{\ge i}_{Z} \rangle$ is the equivalence class consisting of the diffeomorphisms of $V\natural_{i} Z^{\natural_{i}g'}$ which are isotopic to $\phi$ on the closed submanifold $V$ of $V\natural_{i} Z^{\natural_{i}(g'-g)}$.
Hence, the category $[ \sH^{\ge i}_{V,Z},\sH^{\ge i}_{Z} \rangle$ is isomorphic to the category with the same objects, and whose morphisms are the isotopy classes of proper smooth embeddings $V\hookrightarrow V\natural_{i} Z^{\natural_{i}(g'-g)}$ which preserve the sets $\cD_{r}$ and $\cP_{s}$ associated to $V$; see also \cite[Prop.~1.76]{PalmerSoulie} for further details.
We deduce that there is a canonical forgetful functor $\Tilde{\fF} \colon [ \sH^{\ge i}_{V,Z},\sH^{\ge i}_{Z} \rangle \to \hMan$, such that by definition $\Tilde{\fF} \circ \fC_{Z} = \fF$.
Furthermore, by classification of surfaces (see \cite{Richards,Brown-Messer}), the space $\partial V\setminus (\mathring{\cD}_{r}\sqcup\cP_{s})$ is homeomorphic to the surface $\Sigma^{s}_{h,r}$ for some $h\ge 0$. We also recall that the monoidal structure $\natural_{i}$ of Definition~\ref{def:monoidal-structure-cG} has a clear analogous counterpart for the surfaces and their mapping class groups (that we denote in the same way), induced by gluing surfaces along half-circles rather than half-discs; see \cite[\S5.6.1]{RWW} for further details. Then, we note that a proper smooth embedding $\phi\colon V\hookrightarrow V\natural_{i} Z^{\natural_{i}(g'-g)}$ preserving $\cD_{r}\sqcup\cP_{s}$ induces by restriction to the boundary a proper smooth embedding $\varphi_{\partial}\colon \Sigma^{s}_{h,r} \hookrightarrow \Sigma^{s}_{h,r} \natural_{i} (\partial Z)^{\natural_{i}(g'-g)}$ preserving $\cP_{s}$. Hence, we define a functor $\Tilde{\partial\fF} \colon \colon [ \sH^{\ge i}_{V,Z},\sH^{\ge i}_{Z} \rangle \to \hMan$ by assigning each $(V\natural_{i} Z^{\natural_{i}(g'-g)},\cD_{r}\sqcup\cP_{s})$ to $(\Sigma^{s}_{h,r} \natural_{i} (\partial Z)^{\natural_{i}(g'-g)},\emptyset)$ for object, and sending the homotopy class of each $\varphi$ to the homotopy class of $[\varphi_{\partial}]$. Then, it is straightforward to check from these assignments that we thus have constructed a lift of $\partial\fF$ to $[ \sH^{\ge i}_{V,Z},\sH^{\ge i}_{Z} \rangle$, in the sense that $\Tilde{\partial\fF} \circ \fC_{Z} = \partial\fF$. Therefore, the functors $H_{\Sigma}$ and $H_{\cV}$ of $\Fct(\sH_{V,Z},\Ab)$ along $\fC_{Z}$ are given by the composites $H_{1}(-,-;\bZ)\circ \Tilde{\partial\fF}$ and $H_{1}(-,-;\bZ)\circ \Tilde{\fF}$ respectively, which we may postcompose by the $d$-fold tensor power functor $T^{d}\colon \Ab \to \Ab$.

Now, we denote by $H\colon \sH_{V,Z} \to \Ab$ either $H_{\Sigma}$ or $H_{\cV}$, and by $V_{g}$ the object $V\natural_{i} Z^{\natural_{i}g}$ for some integer $g\ge 0$. Let us analyse the natural transformation $i_{Z}(H)$ in each situation. First, denoting by $h$ the genus of the compact handlebody $V_{g}\natural_{i} Z$, we consider the homology classes
\begin{equation}\label{eq:standard-basis-handlebody-Sigma}
\cB^{\Sigma}_{h+1}=\{[a_{1}],[b_{1}],\ldots,[a_{h+1}],[b_{h+1}],[c_{2}],\ldots,[c_{r}],[d_{1}],\ldots,[d_{s}]\};
\end{equation}
\begin{equation}\label{eq:standard-basis-handlebody-cV}
\cB^{\cV}_{h+1}=\{[b_{1}],\ldots,[b_{h+1}],[c'_{2}],\ldots,[c'_{r}],[d'_{1}],\ldots,[d'_{s}]\}.
\end{equation}
Here:
\begin{itemizeb}
    \item $\{a_{i},b_{i}\mid 1\leq i\leq h+g+1\}$ is a standard system of \emph{meridians} and \emph{longitudes} for the surface $\partial(V_{g}\natural_{i} Z)$. Namely for each handle, these are the homotopy classes of simple closed curves in $\partial V_{1,1}$, that respectively bound a nonseparating disc in $V_{1,1}$ for the meridians and the genus for the longitudes.
    \item $\{c_{i},d_{j}\mid 2\leq i\leq r,1\leq j\leq s\}\}$ is a standard system of simple closed curves of $\partial(V_{g}\natural_{i} Z)$ bounding the $r-1$ marked discs $\{\cD^{2}_{V},\ldots,\cD^{r}_{V}\}$ (i.e.~there is no generator for the first marked disc $\cD^{1}_{V}$) and the $s$ marked points of $V_{g}\natural_{i} Z$ respectively.
    \item $\{c'_{i},d'_{j}\mid 2\leq i\leq r,1\leq j\leq s\}\}$ is a standard system of simple closed curves of $\partial(V_{g}\natural_{i} Z)$, where each $c'_{i}$ joins the marked discs $\cD^{i}_{V}$, while each $d'_{j}$ connects the marked point $p^{j}_{V}\in \cP_{s}$ to the marked disc $\cD^{1}_{V}$.
\end{itemizeb}
It is an elementary cellular homology calculation that $H(V_{g}\natural_{i} Z)$ is a finitely generated free abelian group, with a standard basis given by $\cB_{h+1}$ for $H=H_{\Sigma}$, and $\cB_{h+1}\setminus\{[a_{1}],\ldots,[a_{h+1}]\}$ for $H=H_{\cV}$.
Also, a standard basis for $H(V_{g}\natural_{i} Z)$ is similarly provided by the sets $\cB_{h}$ and $\cB'_{h}$ for $H=H_{\Sigma}$ and $H=H_{\cV}$ respectively, i.e.~the bases \eqref{eq:standard-basis-handlebody-Sigma} (resp.~\eqref{eq:standard-basis-handlebody-cV}) minus the classes $[a_{h+1}]$ and $[b_{h+1}]$ (resp.~ $[b_{h+1}]$). Futhermore, for any $n\ge 0$, for a morphism $[\phi,Z^{\natural_{i}n}]\colon V_{g}\to V_{g}\natural_{i} Z^{\natural_{i}n}$ and a representative $\tilde{\phi}$ of this isotopy class, the map $H([\phi,Z^{\natural_{i}n}]\natural_{i} \id_{Z})$ is given by the induced action on the homology classes of the embedding $\tilde{\phi}\colon V_{g}\hookrightarrow V_{g}\natural_{i} Z^{\natural_{i}n}$ extended by the identity over the complement $(n+1)$-st copy of $Z$. In particular, for $n=1$ and $\phi=\id_{V_{g}\natural_{i} Z}$, the morphism $[\id_{V_{g}\natural_{i} Z},Z]$ may be viewed as the proper smooth embedding $V_{g}\hookrightarrow V_{g}\natural_{i} Z$. Also, we note that the support of the action of $H([\phi,Z^{\natural_{i}n}]\natural_{i} \id_{Z})$ is disjoint from the submanifold $Z$. By a straightforward analysis on simple closed curves defining these bases, we deduce that $\delta_{Z}(H)$ is computed as follows.

For $i=1$: for each $g\ge 0$, there is a canonical isomorphism $\delta_{V_{1,1}}(H)(V_{g})\cong H(V_{1,1})$ and the morphism $H([\id_{V_{g}\natural V_{1,1}},V_{1,1}])$ is the canonical inclusion $H(V_{g}) \hookrightarrow H(V_{g})\oplus H(V_{1,1})$. Moreover, by disjointness of the support of the action, the morphism $H([\phi,V_{1,1}^{\natural n}]\natural \id_{V_{1,1}})$ decomposes as $H([\phi,V_{1,1}^{\natural n}])\oplus \id_{H(V_{1,1})}$ for each morphism $[\phi,V_{1,1}^{\natural n}]\colon V_{g}\to V_{g}\natural V_{1,1}^{\natural n}$. Therefore, the natural transformation $i_{V_{1,1}}(H)\colon H\to H(-\natural V_{1,1})$ is split injective in $\Fct([ \sH^{\ge 1}_{V,V_{1,1}} , \sH^{\ge 1}_{V_{1,1}} \rangle,\Ab)$, and the difference functor $\delta_{V_{1,1}}(H)$ is constant with value $H(V_{1,1})$, hence a split coefficient system of finite degree $0$ (see Example~\ref{eg:Z_H_poly_functor}).

For $i=2$: for each $g\ge 0$, the morphism $H_{\cV}([\id_{V_{g}\# V_{0,2}},V_{0,2}])$ is the canonical injection of classes $\cB'_{h}\hookrightarrow \cB'_{h+1}$, while the map $H_{\Sigma}([\id_{V_{g}\# V_{0,2}},V_{0,2}])$ is induced by the canonical injection of classes of $\cB_{h}\setminus\{[c_{2}]\}\hookrightarrow \cB_{h+1}\setminus\{[c_{2}]\}$ along with $H([\id_{V_{g}\# V_{0,2}},V_{0,2}])([c_{2}])=[a_{h+1}]$. We deduce that $\delta_{V_{0,2}}(H_{\cV})(V_{g})\cong \langle [b_{h+1}]\rangle \cong \bZ$ and $\delta_{V_{0,2}}(H_{\Sigma})(V_{g})\cong \langle [c_{2}],[b_{h+1}]\rangle \cong \bZ^{2}$, while $\kappa_{V_{0,2}}(H_{\cV})=\kappa_{V_{0,2}}(H_{\Sigma})=0$.
Furthermore, we consider the standard generating set of the mapping class group of the surface $\partial V_{g}$, given by the Lickorish generators plus the $r+s$ Dehn twists along the meridians of the $h$-th handle, bounding and separating the marked disc and marked points (similarly to \cite[Fig.~4.10]{farbmargalit}); see \cite[\S4.4.2--\S4.4.4]{farbmargalit}. The only one of these generators whose support is not disjoint from the homology classes $[c_{2}]$ and $[b_{h+1}]$ is the Dehn twist $T(c_{1,2})$ along the meridian of the $h$-th handle which separates the marked disc $\cD^{1}_{V}$ and $\cD^{2}_{V}$. By an elementary analysis on the homology classes, we compute that $T(c_{1,2})([c_{2}])=[c_{2}]$ and $T(c_{1,2})([b_{h+1}])=[b_{h+1}]+[a_{h}]$, and so the mapping class group of the surface $\partial V_{g}$ acts trivially on $\delta_{V_{0,2}}(H)(V_{g})$. Collecting all these observations and using Lemma~\ref{lem:H-injects-into-Gamma}, we conclude that the map $\delta_{V_{0,2}}(H)([\phi,V_{0,2}^{\# n}])$ is the identity map for each morphism $[\phi,V_{0,2}^{\# n}]\colon V_{g}\to V_{g}\natural V_{0,2}^{\# n}$, and that $\delta_{V_{0,2}}(H_{\cV})(V_{g})$ canonically splits when $H=H_{\cV}$. Therefore, the natural transformation $i_{V_{0,2}}(H)\colon H\to H(-\# V_{0,2})$ is injective in $\Fct([ \cM_{A,V_{0,2}},\cG_{V_{0,2}} \rangle,\Ab)$, and the difference functor $\delta_{V_{0,2}}(H)$ is constant with value $\delta_{V_{0,2}}(H)(V_{g})$, hence a split coefficient system of finite degree $0$ (see Example~\ref{eg:Z_H_poly_functor}).

Hence, in all cases, the functor $H$ is a coefficient system of finite degree $1$, and it is split when $H=H_{\cV}$, or if $H=H_{\Sigma}$ and $i=1$. Finally, since the target of the functor $H$ factors through the full subcategory of free abelian groups $\ab$, the result on the $d$-fold tensor power $T^{d}H\colon [ \sH^{\ge i}_{V,Z},\sH^{\ge i}_{Z} \rangle\to\ab$ is a classical property of the tensor product of coefficient system; see \cite[Cor.~1.5]{AnnexLS1}.
\end{proof}

\begin{rmk}
An alternative functor $\sH \to \Ab$, analogous but not identical to $H_{\Sigma}$, is defined by assigning to each object $(V,r,s,i,j)\in\obj(\sH)$ the relative homology group $H_{1}(\partial V, \mathring{\cD}_{r}\sqcup\cP_{s};\bZ)$. Repeating mutatis mutandis the proof of Proposition~\ref{prop:H-Sigma-V_functors}, this induces a coefficient system $[ \sH^{\ge i}_{V,Z} , \sH^{\ge i}_{Z} \rangle \to \Ab$ of finite degree $d$ for each $i\in \{1,2\}$. We favour the introduction of $H_{\Sigma}$, as it corresponds to a coefficient system previously studied in the context of twisted homological stability for mapping class groups of surfaces; see \cite{Ivanov}, \cite[\S1.1,Ex.~1]{CohenMadsen} and \cite[Ex.~4.3(i)]{Boldsen}.
\end{rmk}

\subsubsection{Double coefficient systems and coefficient bisystems}\label{sss:double-coeff-systems}

Finally, we introduce here the notions of \emph{double coefficient systems} and \emph{coefficient bisystems}; see Definition~\ref{def:double-coeff-system}. These are required to prove the twisted homological stability results of \S\ref{ss:handle-stabilisation-sharp} and to extend a functor over $\sH$ to some subcategories of the Quillen bracket construction $[\overline{\sH}^{\ge 2}, \sH^{\ge 2} \rangle$.

We consider a functor $F\colon \sH \to \Ab$ and a $3$--dimensional compact handlebody $V$ of $\sH$.
By composing isomorphisms of the form \eqref{eq:natural-iso-decomposition}, we have obvious isomorphisms $(V^{\natural g}_{1,1}\natural V)\# V^{\# m}_{0,2}\cong V\# V^{\# g+m}_{0,2}\cong V^{\natural g+m}_{1,1}\natural V \cong V^{\natural g}_{1,1}\natural (V\# V^{\# m}_{0,2})$ for all integers $g,m\ge 0$, and we denote by $V_{g\mid m}$ this isomorphism class of objects for brevity. We also recall from Definition~\ref{def:groupoids-handlebody-infinity} that $\iota_{g}(V_{0\mid m})$ denotes the morphism $[V_{1,1},\id_{V_{g\mid m}}]$ of $\Hom_{\langle \sH^{\ge 1},\sH^{\ge 1}]}(V_{g-1\mid m}, V_{g\mid m})$ for each $m\ge 0$. We take up the framework set in Notations~\ref{nota:M-AX-G-X} and \ref{nota:Z-V-V'}. We note that, by composing isomorphisms of the form \eqref{eq:natural-iso-decomposition}, the groupoid $\sH^{\ge 1}_{V,V_{1,1}}$ is isomorphic to the full subgroupoid of $\sH$ on the objects $\{V^{\natural n}_{1,1}\natural V\}_{n\in \bN}$, so it is naturally equipped with a left $(\sH^{\ge 1}_{V_{1,1}},\natural,I_{1})$-module structure $(\sH^{\ge 1}_{V,V_{1,1}},\natural)$.

\begin{defn}\label{def:double-coeff-system}
For $d\ge 0$ an integer and $V$ a compact handlebody of $\sH^{\ge 2}$, a functor $F\colon \sH \to \Ab$ is a \emph{double coefficient system at $V$ of degree $D$} if it satisfies the following properties.
\begin{compactenum}[(i)]
    \item\label{item:extension-right} The restriction of $F$ to $\sH^{\ge 2}_{V,V_{0,2}}$ lifts to a coefficient system $[ \sH^{\ge 2}_{V,V_{0,2}} , \sH^{\ge 2}_{V_{0,2}} \rangle  \to \Ab$ of finite degree $d$.
    \item\label{item:extension-left} The restriction of $F$ to $\sH^{\ge 1}_{V,V_{1,1}}$ lifts to a coefficient system $\langle \sH^{\ge 1}_{V_{1,1}}, \sH^{\ge 1}_{V,V_{1,1}} ]  \to \Ab$ of finite degree $d'$, and the maximum of the finite degrees $\mathrm{max}(d,d')$ is equal to $D$
    \item\label{item:extension-mixed} For fixed integers $m\ge 1$ and $n\ge 0$, the functor $F$ defines a commutative diagram of abelian groups:
    \begin{equation}\label{eq:commutative-diag-extension-functor}
    \begin{tikzcd}[column sep=.7em]
    \cdots \arrow[rrrrrrr,"F(\iota_{g-1}(V_{0\mid n}))"]
    &&&&&&&
    F(V_{g-1\mid n})\arrow[rrrrrrrrr,"F(\iota_{g}(V_{0\mid n}))"]\arrow{d}{F([\id_{V_{g-1\mid n+m}}, V^{\# m}_{0,2}])}
    &&&&&&&&& F(V_{g\mid n})\arrow[rrrrrrr,"F(\iota_{g+1}(V_{0\mid n}))"]\arrow{d}{F([\id_{V_{g\mid n+m}}, V^{\# m}_{0,2}])}
    &&&&&&& \cdots
    \\
    \cdots \arrow[rrrrrrr,"F(\iota_{g+1}(V_{0\mid n+m}))"']
    &&&&&&& F(V_{g-1\mid n+m})\arrow[rrrrrrrrr,"F(\iota_{g}(V_{0\mid n+m}))"']
    &&&&&&&&& F(V_{g\mid n+m}) \arrow[rrrrrrr,"F(\iota_{g+1}(V_{0\mid n+m}))"']
    &&&&&&& \cdots
    \end{tikzcd}
    \end{equation}
    Namely, the vertical and horizontal arrows are well-defined by Conditions~\eqref{item:extension-right} and \eqref{item:extension-left} respectively, and we assume that the assignments of $F$ are such that the diagram \eqref{eq:commutative-diag-extension-functor} is commutative.
\end{compactenum}
Moreover, if the restriction of $F$ to $\sH^{\ge 1}_{V,V_{1,1}}$ also lifts to a coefficient system $[ \sH^{\ge 1}_{V,V_{1,1}} , \sH^{\ge 1}_{V_{1,1}} \rangle  \to \Ab$ of finite degree $D'$, then we say that the functor $F\colon \sH \to \Ab$ is a \emph{coefficient bisystem} at $V$ of degree $\mathrm{max}(D,D')$.
\end{defn}
\begin{rmk}
Although Definition~\ref{def:double-coeff-system}\eqref{item:extension-mixed} is mild and easy to check in our examples (see Example~\ref{eg:extending-functors-first-homologies}), it is not automatic and does not simply follow from the functoriality of $F$. Indeed, an analogous commutative diagram to \eqref{eq:commutative-diag-extension-functor} before applying $F$ does not make sense, since the vertical maps are images of morphisms in $[ \sH^{\ge 2}_{V,V_{0,2}} , \sH^{\ge 2}_{V_{0,2}} \rangle$, while the horizontal ones come from $\langle \sH^{\ge 1}_{V_{1,1}}, \sH^{\ge 1}_{V,V_{1,1}} ]$. We believe it would be possible to introduce a generalisation of \S\ref{sss:Quillen-bracket-construction} encompassing these two categories, which would contain the necessary morphisms for the diagram \eqref{eq:commutative-diag-extension-functor} before applying $F$ to be well-defined and commutative, but this is out of the scope of this paper. Furthermore, the additional property for a double coefficient system to be a coefficient bisystem generally follows automatically from Definition~\ref{def:double-coeff-system}\eqref{item:extension-left}; see Example~\ref{eg:extending-functors-first-homologies} for an illustration.
\end{rmk}
\begin{eg}\label{eg:extending-functors-first-homologies}
For each $d\ge 1$, the functors $T^{d}H_{\Sigma}$ and $T^{d}H_{\cV}$ of $\Fct(\sH,\Ab)$ introduced in \S\ref{sss:first-homologies-coeff-systems} are both coefficient bisystems of degree $d$ at any compact handlebody. Indeed, Proposition~\ref{prop:H-Sigma-V_functors} with $Z=V_{0,2}$ and $Z=V_{1,1}$ respectively implies that Definition~\ref{def:double-coeff-system}\eqref{item:extension-right} and the further property to have a coefficient bisystem are verified. Also, a mutatis mutandis routine adaptation of the first part of the proof of Proposition~\ref{prop:H-Sigma-V_functors} with $Z=V_{1,1}$ shows that Definition~\ref{def:double-coeff-system}\eqref{item:extension-left} is also satisfied. Moreover, for all integers $g\ge 1$ and $n\ge 0$, denoting by $H$ either $H_{\Sigma}$ or $H_{\cV}$, it directly follows from their assignments that the composites $T^{d}H(\iota_{g}(V_{0\mid n+m}))\circ T^{d}H([\id_{V_{g-1\mid n+m}}, V^{\# m}_{0,2}])$ and $T^{d}H([\id_{V_{g\mid n+m}}, V^{\# m}_{0,2}]) \circ T^{d}H(\iota_{g}(V_{0\mid n}))$ are both the maps induced in homology by the proper smooth embedding $V_{g-1 \mid n} \hookrightarrow V_{1,1} \natural V_{g-1 \mid n} \# V^{\# m}_{0,2}$, and so they are equal.
Hence the abelian group diagram \eqref{eq:commutative-diag-extension-functor} is commutative, and thus Definition~\ref{def:double-coeff-system}\eqref{item:extension-mixed} is satisfied.
\end{eg}
We consider a double coefficient system $F\colon \sH \to \Ab$ at a compact handlebody $V$ of $\sH^{\ge 2}$ of degree $d$. We extend the restriction of $F$ along $\sH^{\ge 2}\hookrightarrow \sH$ to the groupoid $\overline{\sH}^{\ge 2}$ introduced in Definition~\ref{def:groupoids-handlebody-infinity}, denoting it by $\overline{F}$, as follows. Recall that the category of abelian groups $\Ab$ is cocomplete (see \cite[\S VIII.3.]{MacLane1} for instance) and that the functor $F$ defines a coefficient system $\langle \sH^{\ge 1}_{V_{1,1}}, \sH^{\ge 1}_{V,V_{1,1}} ]  \to \Ab$ by Definition~\ref{def:double-coeff-system}\eqref{item:extension-left}.
\begin{itemizeb}
    \item For each integer $n\ge 0$, we assign $\overline{F}((V_{\infty,1}\natural V)\# V^{\# n}_{0,2})$ to be the colimit of abelian groups defined by the diagram
\begin{equation}\label{eq:assignment-extension-infinity}
\begin{tikzcd}
    \cdots\arrow[rr,"F(\iota_{g}(V_{0\mid n}))"]&& F(V_{g\mid n})\arrow[rr,"F(\iota_{g+1}(V_{0\mid n}))"]&& F(V_{g+1\mid n}) \arrow[rr,"F(\iota_{g+2}(V_{0\mid n}))"]&& \cdots.
\end{tikzcd}
\end{equation}
    In other words, we take the colimit of the sequence defined by applying $F$ to \eqref{eq:def-infinite-in-barGH}.
    \item For an isomorphism $\phi$ of $(V_{\infty,1}\natural V)\# V^{\# n}_{0,2}$, by Definition~\ref{def:groupoids-handlebody-infinity}, there exists an integer $j\ge 0$ and a set of morphisms $\{\phi_{i}\in \Hom_{\sH}(V^{\natural i+j}_{1,1}\natural (V\# V^{\# n}_{0,2}),V^{\natural i+j}_{1,1}\natural (V\# V^{\# n}_{0,2}))\}_{i\ge 0}$ such that the associated diagram of the form \eqref{eq:morphism-infty-handlebody} is commutative. We assign $\overline{F}(\phi)$ to be the colimit induced by the sequence of abelian group morphisms $\{F(\phi_{i})\}_{i\ge 0}$, i.e.~the image by applying $F$ of a diagram of the form \eqref{eq:morphism-infty-handlebody}.
    \item By Definition~\ref{def:double-coeff-system}\eqref{item:extension-mixed}, we assign $\overline{F}([\id_{(V_{\infty,1}\natural V)\# V^{\# n+m}_{0,2}},V^{\# m}_{0,2}])$ for each $m\ge 1$ and $n\ge 0$ to be the colimit of the vertical abelian group maps defined by diagram \eqref{eq:commutative-diag-extension-functor}. Since any morphism of $[\overline{\sH}^{\ge 2}_{V_{\infty,1}\natural V,V_{0,2}},\sH^{\ge 2}_{V_{0,2}}\rangle$ is of the form $[\phi,V^{\# m}_{0,2}]=\phi\circ [\id_{(V_{\infty,1}\natural V)\# V^{\# m}_{0,2}},V^{\# n+m}_{0,2}]$ with $\phi$ an automorphism of $(V_{\infty,1}\natural V)\# V^{\# n+m}_{0,2}$ and $m,n\ge 0$, we assign $\overline{F}([\phi,V^{\# m}_{0,2}])=\overline{F}(\phi)\circ \overline{F}([\id_{(V_{\infty,1}\natural V)\# V^{\# n+m}_{0,2}},V^{\# m}_{0,2}])$ if $m\ge 1$ while $\overline{F}(\phi)$ is defined at the previous point otherwise.
\end{itemizeb}
The composition axiom for the above assignments on morphisms is obviously satisfied by construction, so we have a well-defined functor $\overline{F}\colon [\overline{\sH}^{\ge 2}_{V_{\infty,1}\natural V,V_{0,2}},\sH^{\ge 2}_{V_{0,2}}\rangle \to \Ab$.

\begin{prop}\label{prop:extension-coeff-system-infty}
The coefficient system $\overline{F} \colon [\overline{\sH}^{\ge 2}_{V_{\infty,1}\natural V,V_{0,2}},\sH^{\ge 2}_{V_{0,2}}\rangle \to \Ab$ is of finite degree $d$.
\end{prop}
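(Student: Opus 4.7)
My plan is to proceed by induction on $d \ge -1$, exploiting the fact that sequential colimits in $\Ab$ are exact and hence commute with the formation of kernels and cokernels. The first step is to unpack the construction of $\overline{F}$: for each $n \ge 0$, the abelian group $\overline{F}((V_{\infty,1}\natural V)\# V^{\# n}_{0,2})$ is by definition the sequential colimit of the system $\{F(V_{g\mid n})\}_{g\ge 0}$ with connecting maps $F(\iota_g(V_{0\mid n}))$. The key point is that, by condition~\eqref{item:extension-mixed} of Definition~\ref{def:double-coeff-system}, the stabilisation morphisms $F([\id_{V_{g\mid n+1}}, V_{0,2}])$ assemble into a ladder over this system, so that the induced morphism $\overline{F}([\id, V_{0,2}])$ is identified with the sequential colimit of these stabilisation maps.

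Exactness of sequential colimits in $\Ab$ then yields, for every $n \ge 0$, canonical isomorphisms
\begin{align*}
\kappa_{V_{0,2}}(\overline{F})((V_{\infty,1}\natural V)\# V^{\# n}_{0,2}) &\cong \colim_{g} \kappa_{V_{0,2}}(F)(V_{g\mid n}),\\
\delta_{V_{0,2}}(\overline{F})((V_{\infty,1}\natural V)\# V^{\# n}_{0,2}) &\cong \colim_{g} \delta_{V_{0,2}}(F)(V_{g\mid n}),
\end{align*}
naturally in the morphisms of $[\overline{\sH}^{\ge 2}_{V_{\infty,1}\natural V,V_{0,2}},\sH^{\ge 2}_{V_{0,2}}\rangle$. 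Iterating the same argument, for every integer $k \ge 0$ one obtains analogous colimit descriptions of $\delta_{V_{0,2}}^{(k)}(\overline{F})$ and $\kappa_{V_{0,2}}(\delta_{V_{0,2}}^{(k)}(\overline{F}))$ in terms of the corresponding iterated functors applied to $F$ at the finite objects $V_{g\mid n}$.

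To conclude, the base case $d = -1$ is immediate: if $F$ restricted to $\sH^{\ge 2}_{V,V_{0,2}}$ vanishes, then so do all the colimits, and $\overline{F} = 0$. For the inductive step, condition~\eqref{item:extension-right} of Definition~\ref{def:double-coeff-system} combined with Definition~\ref{def:finite-deg-coeff-system} unpacks to the vanishings $\kappa_{V_{0,2}}(\delta_{V_{0,2}}^{(k)}(F)) = 0$ for all $0 \le k \le d$ and $\delta_{V_{0,2}}^{(d+1)}(F) = 0$. Substituting these into the iterated colimit formulas propagates the same vanishings to $\overline{F}$, establishing that it is of finite degree $d$.

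The main technical hurdle will be the rigorous verification in the first step that $\overline{F}([\id, V_{0,2}])$ coincides with the sequential colimit of the maps $F([\id_{V_{g\mid n+1}}, V_{0,2}])$: this requires a careful diagram chase exploiting the commutativity of~\eqref{eq:commutative-diag-extension-functor} to interchange the horizontal $\natural$-structure (used to define the colimit) with the vertical $\#$-stabilisation. Without this compatibility, encoded precisely in condition~\eqref{item:extension-mixed} of Definition~\ref{def:double-coeff-system}, the colimits of kernels and cokernels could not be identified with the kernels and cokernels of the colimit, and the exactness argument would break down.
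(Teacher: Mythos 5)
Your proposal is correct and is essentially the paper's own argument: identify $i_{V_{0,2}}(\overline{F})$ at each infinite object with the colimit of the maps $i_{V_{0,2}}(F)_{V_{g\mid n}}$ via the ladder \eqref{eq:commutative-diag-extension-functor}, use that filtered colimits in $\Ab$ commute with kernels and cokernels, and propagate the vanishing conditions coming from Definition~\ref{def:double-coeff-system}\eqref{item:extension-right}; your explicit induction on $d$ merely spells out what the paper leaves implicit. Note that the ``technical hurdle'' you flag is in fact definitional, since $\overline{F}([\id,V_{0,2}])$ is constructed precisely as the colimit of the vertical maps of \eqref{eq:commutative-diag-extension-functor}, so no extra diagram chase is needed there.
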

\begin{proof}
By the above construction, the map $i_{V_{0,2}}(\overline{F})_{(V_{\infty,1}\natural V)\# V^{\# n}_{0,2}}$ for each integer $n\ge 0$ is the colimit of the abelian group morphisms $\{i_{V_{0,2}}(F)_{V_{g\mid n}}=F([\id_{V_{g\mid n+1}}, V_{0,2}])\}_{g\ge 0}$ defining the commutative diagram \eqref{eq:commutative-diag-extension-functor} with $m=1$. We recall that filtered colimits commute with kernels and cokernels; see \cite[\S IX.2.]{MacLane1} for instance. Then, by considering the colimit of the cokernels and kernels of the vertical maps in the commutative diagram \eqref{eq:commutative-diag-extension-functor} with $m=1$, it follows from Definition~\ref{def:double-coeff-system}\eqref{item:extension-right} that $\delta_{V_{0,2}}\overline{F}$ is a coefficient system of finite degree at most $d-1$ and that $\kappa_{V_{0,2}}\overline{F}$ vanishes.
\end{proof}

\subsection{Framework for twisted homological stability}\label{ss:framework-HS}

This section presents the meta-method we follow to prove our homological stability results for handlebody groups. For all of \S\ref{ss:framework-HS}, we consider a braided monoidal groupoid $(\cG,\odot,0)$ with no zero-divisors and such that $\mathrm{Aut}_{\cG}(0)=\{\id_{0}\}$, and a right-module $(\cM,\odot)$ over it, where $\cM$ is a groupoid.
We also fix objects $A \in\obj(\cM)$ and $X\in\obj(\cG)$, and a coefficient system $F\colon [ \cM_{A,X},\cG_{X} \rangle\to\Ab$. Recall from Notation~\ref{nota:M-AX-G-X} that $\cG_{X}$ and $\cM_{A,X}$ respectively denote the full subgroupoids of $\cG$ and $\cM$ on the objects $\{X^{\odot n}\}_{n\in \bN}$ and $\{A_{n}\}_{n\in \bN}$, where $A_{n}=A\odot X^{\odot n}$ for each $n\ge 0$.
\begin{notation}\label{nota:standard-notation-HS}
For each integer $n\ge 0$, we write $G_{n}$ for the group $\Aut_{\cM}(A_{n})$, $F_{n}$ for the abelian group $F(A_{n})$ and $c_{n}$ for the canonical morphism $[\id_{A_{n+1}},X]\in\Hom_{[ \cM_{A,X} ,\cG_{X} \rangle}(A_{n},A_{n+1})$ (see \S\ref{sss:Quillen-bracket-construction}).
\end{notation}

\subsubsection{Twisted homological stability meta-theorem}\label{sss:HS-meta-theorem}

We recall here the general framework from \cite{RWW} and \cite[\S7]{Krannich} to prove homological stability for sequences of groups coming from a monoidal groupoid as in \S\ref{sss:categorical_framework}; see Theorem~\ref{thm:RWW-main-thm}.

For each integer $n\ge 0$, the right $(\cG,\odot,0)$-module structure of $\cM$ induces a canonical group morphism $\st_{n}:=(-)_{n}\odot\id_{X}\colon G_{n}\to G_{n+1}$ called a \emph{stabilisation map}, while the functoriality of $F$ ensures that the morphism $F(c_{n})$ is $G_{n}$-equivariant along the map $\st_{n}$. Therefore, it follows from the functoriality of group homology (see \cite[Chap.~III,\S8]{Brown} for instance) that the maps $\st_{n}$ and $F(c_{n})$ induce a map
\begin{equation}\label{eq:stabilisation-HS}
(\st_{n};F(c_{n}))_{i}\colon H_{i}(G_{n};F_{n})\to H_{i}(G_{n+1};F_{n+1}).
\end{equation}
for each $i\ge 0$ and $n\ge 0$.
Theorem~\ref{thm:RWW-main-thm} roughly states that if $F$ is of finite degree, the map \eqref{eq:stabilisation-HS} is an isomorphism in a range determined by the connectivity of a semi-simplicial set associated to this data.

\paragraph*{Local homogeneity.} The framework to prove homological stability for the family of groups $\{G_{n}\}_{n\in \bN}$ requires the mild assumption that the groupoid $\cM$ has the following property.
\begin{defn}\label{def:homogeneity}
We say that $(\cM,\odot)$ is \emph{locally homogeneous} at $(A,X)$ if the two following conditions hold:
\begin{compactenum}[(i)]
\item\label{(I)} \emph{Injectivity}: for each $n\ge 1$ and for each $0\le p<n$, the map $(-)_{n-p-1}\odot\id_{X^{\odot p+1}}\colon G_{n-p-1}\to G_{n}$ is injective.
\item\label{(C)} \emph{Cancellation}: for each $0\le p<n$, if $Y\in \obj(\cM)$ is such that $Y\odot X^{\odot p+1}\cong A_{n}$, then $Y\cong A_{n-p-1}$.
\end{compactenum}
\end{defn}
\begin{rmk}\label{rmk:equivalence-homogeneity}
The notion of local homogeneity was originally introduced in \cite[Def.~1.2]{RWW}, with conditions equivalent to those of injectivity \eqref{(I)} and cancellation \eqref{(C)} of Definition~\ref{def:homogeneity}; see \cite[Th.~1.10(a)--(b)]{RWW}.
\end{rmk}

\paragraph*{Semi-simplicial set of destabilisations.} The main hypothesis to prove homological stability for the map \eqref{eq:stabilisation-HS} is the high-connectivity of a semi-simplicial set associated to the sets of morphisms of $\cM$ that we now introduce.
\begin{defn}\label{def:semi-simplicial-set-destabilisations}
For any $n\ge 0$, we define the semi-simplicial set of \emph{destabilisations} $W_{n}(A,X)_{\bullet}$ to be the semi-simplicial set whose $p$-simplices are given by
\[
W_{n}(A,X)_{p}:=\colim_{\cM}\left(\Hom_{\cM}(- \odot X^{\odot p+1},A_{n})\right),
\]
for any $0\le p < n$. By an analysis similar to that of the Hom-sets in \S\ref{sss:Quillen-bracket-construction}, we note that any element of $W_{n}(A,X)_{p}$ is thus an equivalence class of pairs $(B,\phi)$, denoted by $[B,\phi]$, where $B\in\obj(\cG)$ and $\phi \in \Aut_{\cG}(B\odot X^{\odot p+1},A_{n})$. The face maps for $0\le i\le p$ are then defined as
\begin{align*}
d_{i}\colon W_{n}(A,X)_{p}&\to W_{n}(A,X)_{p-1}\\
[B,\phi]&\mapsto [B,\phi]\circ [X,(b^{\cG}_{X^{\odot i},X})^{-1}\odot\id_{X^{\odot p-i}}].
\end{align*}

\begin{rmk}\label{rmk:colim-as-Hom+interpretation-W-hom-Quillen}
When we consider $\cG=\cM$ equipped with the canonical right-module structure over itself, there is a canonical isomorphism $W_{n}(A,X)_{p} \cong \Hom_{\langle \cG,\cG ]}(X^{\odot p+1},A_{n})$ for all integers $n\ge 1$ and $0\le p < n$; see \S\ref{sss:Quillen-bracket-construction}.
Also, thanks to the formula of \eqref{item:extension-monoidal-structure} in Proposition~\ref{prop:general-properties-Quillen}, we note that the morphism $[X,(b^{\cG}_{X^{\odot i},X})^{-1}\odot\id_{X^{\odot p-i}}]$ defining the face map $d_{i}$ is equal to the morphism $\id_{X^{\odot i}}\odot [X,\id_{X}]\odot \id_{X^{\odot p-i}}$ of $\langle \cG,\cG ]$, thus matching \cite[Def.~2.1]{RWW}.
\end{rmk}

\paragraph*{Twisted homological stability.}
We now introduce the twisted homological stability ``meta-theorem'' for a general family of groups, that will be applied to prove our results for handlebody groups in \S\ref{s:homological_stability}.
\end{defn}
\begin{thm}\label{thm:RWW-main-thm}
Assume that $\cM$ is locally homogeneous at $(A,X)$ and that there exist integers $a\ge2$ and $k\ge 2$ such that $W_{n}(A,X)_{\bullet}$ is at least $\frac{n-a}{k}$-connected for each $n\ge 1$. If $F\colon [ \cM_{A,X},\cG_{X} \rangle\to\Ab$ is a coefficient system of degree $d$, then
\[(\st_{n};F(c_{n}))_{i} \colon H_{i}(G_{n};F_{n})\to H_{i}(G_{n+1};F_{n+1})\]
is an isomorphism for each $i\le \frac{n+2-a}{k}-d-1$. If $F$ is in addition split, it is an isomorphism for each $i\le \frac{n+2-a-d}{k}-1$.
\end{thm}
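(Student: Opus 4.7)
The plan is to invoke the established twisted homological stability machinery of Randal-Williams--Wahl and Krannich directly; the statement is essentially a compiled form of \cite[Thm.~5.1]{RWW} together with its extension in \cite[\S7]{Krannich}, and the work lies in recognising that the hypotheses (local homogeneity, connectivity of $W_n(A,X)_\bullet$, finite degree of $F$) are precisely the inputs required by that machinery. In sketching the underlying argument, I would rely on three ingredients in turn: (i) the identification of the $G_n$-set $W_n(A,X)_p$ with the coset space $G_n/G_{n-p-1}$, which follows from local homogeneity via the injectivity and cancellation conditions of Definition~\ref{def:homogeneity}; (ii) the connectivity hypothesis, which feeds into a spectral sequence argument; (iii) an induction on the degree $d$ of the coefficient system using the short exact sequence
\[
0 \to F \to \tau_X F \to \delta_X F \to 0
\]
in $\Fct([\cM_{A,X},\cG_X\rangle,\Ab)$, valid because $\kappa_X F = 0$ for a degree $d$ coefficient system.

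Concretely, I would consider the Borel construction $EG_n \times_{G_n} |W_n(A,X)_\bullet|$ with local coefficients in $F_n$, and the associated equivariant homology spectral sequence arising from the skeletal filtration. Shapiro's lemma, combined with the coset identification above, yields
\[
E^1_{p,q} \;=\; H_q(G_{n-p-1};\, F_n),
\]
with $F_n$ viewed as a $G_{n-p-1}$-module via the $(p{+}1)$-fold stabilisation inclusion. The connectivity of $W_n(A,X)_\bullet$ forces the abutment to vanish in a range of slope $1/k$, while the $d_1$ differentials compute as alternating sums of stabilisation maps on group homology with shifted coefficients. Comparing this spectral sequence for $n$ and $n+1$ along the stabilisation map, and tracing the differentials through the short exact sequence above, gives an inductive handle: stability for $F$ in degree $i$ is reduced to stability for $\delta_X F$ in degrees $i$ and $i-1$, plus stability for $\tau_X F$ viewed as a module on the shifted side. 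The use of the braiding $\beta^{\cG}$ is essential in identifying the face-map contributions with the translation functor $\tau_X$.

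The double induction on $(i,d)$ is then run in the standard way: the base $d=-1$ is trivial ($F=0$), while the constant-coefficient case $F=\bZ$ (degree $0$) is obtained from the same spectral sequence without the coefficient twist and coincides with the untwisted meta-theorem \cite[Thm.~3.1]{RWW}. For the inductive step, a five-lemma applied to the long exact sequences in group homology obtained from $0 \to F \to \tau_X F \to \delta_X F \to 0$ on both sides of the stabilisation map, together with the inductive hypothesis applied to $\delta_X F$ (of degree $d-1$), propagates isomorphisms into higher degrees. When $F$ is split, the short exact sequence splits as a direct sum of functors, the long exact sequence degenerates into two independent stability statements, and one gains a degree of stability at each induction step, which is what produces the improved bound $(n+2-a-d)/k - 1$.

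The main obstacle is the careful bookkeeping of stability ranges through this double induction: one must track how the slope $1/k$ connectivity, the degree shift from $F$ to $\delta_X F$, and the homological degree interact, and verify that the inductive range propagates to the claimed bound $i \le (n+2-a)/k - d - 1$. Since this bookkeeping is precisely what is carried out in \cite[\S5]{RWW} and refined in \cite[\S7]{Krannich}, the cleanest presentation is to state the theorem as a direct consequence of those references, having verified that $(\cG,\odot,0)$ and $(\cM,\odot)$ fit into their framework, with the local homogeneity and the semi-simplicial set of destabilisations translated into the precise forms they require (cf.~Remarks~\ref{rmk:equivalence-homogeneity} and \ref{rmk:colim-as-Hom+interpretation-W-hom-Quillen}).
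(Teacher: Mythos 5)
Your proposal is correct and takes essentially the same route as the paper: after verifying that local homogeneity, the connectivity of $W_{n}(A,X)_{\bullet}$ and the finite-degree hypothesis place the data in the Randal-Williams--Wahl/Krannich framework, the paper simply invokes Krannich's Theorem~C (using his Lemma~7.6, i.e.\ the injectivity part of local homogeneity, to identify the homotopy fibre of the canonical resolution with $|W_{n}(A,X)_{\bullet}|$) rather than re-running the spectral-sequence induction you sketch. The only step you leave implicit is the reduction of the case $a>2$ to the standard case $a=2$, which the paper handles by replacing $A$ with $A\odot X^{\odot(a-2)}$ and observing that $W_{n}(A\odot X^{\odot(a-2)},X)_{\bullet}=W_{n+2-a}(A,X)_{\bullet}$; since you defer the range bookkeeping to the references anyway, this is a minor omission.
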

\begin{proof} The case $a=2$ is given by \cite[Thm.~C]{Krannich} as follows. By their definitions in Notation~\ref{nota:M-AX-G-X}, the groupoids $\cG_{X}$ and $\cM_{A,X}$ have canonical functors $\mathfrak{g}_{\cG}\colon \cG_{X}\to\bN$ and $\mathfrak{g}_{\cM}\colon \cM_{A,X}\to\bN$ respectively, compatible with each other in the sense that $\mathfrak{g}_{\cM}(A_{n})= \mathfrak{g}_{\cM}(A_{n-1})+g_{\cG}(X)$ for all $n\ge 1$. As explained at the beginning of \cite[\S7.2]{Krannich}, taking the geometric realisations of $\cM_{A,X}$ and $\cG_{X}$ gives us a graded $E_{1}$-module $\B\cM_{A,X}$ over the graded  $E_{2}$-algebra $\B\cG_{X}$, which is a suitable input for \cite[Thm.~C]{Krannich}. The ranges of stability in this theorem are determined by the connectivity of the \emph{canonical resolution} $R_{\bullet}(\B\cM_{A,X})\to \B\cM_{A,X}$ defined in \cite[\S2]{Krannich}: they hold precisely when, for each $n\ge 1$, the restriction $|R_{\bullet}(\B\cM_{A,X})_{n}|\to (\B\cM_{A,X})_{n}$ to the preimage of the degree $n$ part of $\B\cM_{A,X}$ is $\frac{n-2+k}{k}$-connected, i.e.~the homotopy fibre of $R_{\bullet}(\B\cM_{A,X})\to \B\cM_{A,X}$ is $\frac{n-2}{k}$-connected; see \cite[Rem.~2.17]{Krannich}. By \cite[Lem.~7.6]{Krannich}, the injectivity condition \eqref{(I)} of Definition~\ref{def:homogeneity} of $\cM$ at $(A,X)$ implies that this homotopy fibre is homotopy equivalent to $|W_{n}(A,X)_{\bullet}|$, giving us the stated result.

If $a>2$, we set $A':=A\odot X^{\odot(a-2)}$ and note that $W_{n}(A',X)_{\bullet}=W_{n+2-a}(A,X)_{\bullet}$ by definition, so the result follows from applying the first case to $W_{n}(A',X)_{\bullet}$.
\end{proof}

\begin{rmk}
The Cancellation condition~\eqref{(C)} of Definition~\ref{def:homogeneity} is not necessary to apply \cite[Thm.~C]{Krannich} and thus prove Theorem~\ref{thm:RWW-main-thm}. However, this assumption is needed along with local standardness (see \S\ref{sss:simplicial-complex-destabilisations}) in order to prove Proposition~\ref{prop:LS-iff-Wn-determined-by-vertices} by applying \cite[Prop.~2.6]{RWW}; see also \cite[\S7.3]{Krannich}. Hence we choose to keep that assumption in the standard framework for homological stability in Theorem~\ref{thm:RWW-main-thm}.
\end{rmk}

\subsubsection{Categorical standardness and simplicial complex of destabilisations}\label{sss:simplicial-complex-destabilisations}

The main difficulty in applying Theorem~\ref{thm:RWW-main-thm} is to check the connectivity condition on the semi-simplicial set $W_{n}(A,X)_{\bullet}$ of Definition~\ref{def:semi-simplicial-set-destabilisations}. For this purpose, we present here properties and tools which will be of key use, walking in the footsteps of \cite[\S2.1]{RWW}.

\paragraph*{Local standardness property.}
We recall from \cite[Def.~2.5]{RWW} the notion of local standardness for the groupoid $\cM$, which provides a description of the simplices of the semi-simplicial set $W_{n}(A,X)_{\bullet}$; see Proposition~\ref{prop:LS-iff-Wn-determined-by-vertices}. We fix objects $A \in\obj(\cM)$ and $X\in\obj(\cG)$ and take up Notation~\ref{nota:M-AX-G-X}.
Similarly to the hom-sets of \S\ref{sss:Quillen-bracket-construction}, we note that an element of $\colim_{\cM}(\Hom_{\cM}(- \odot X,A_{n}))$ is an equivalence class $[A_{n-1},\phi]$ of the pair $(A_{n-1},\phi)$ with $\phi \in G_{n}$, where the pairs $(A_{n-1},\phi)$ and $(A_{n-1},\phi')$ are equivalent if there exists an isomorphism $\chi \in G_{n-1}$ such that $\phi'=\phi\circ (\chi\odot\id_{X})$. Hence there is a natural bijection for each $n\ge 1$
\begin{equation}\label{eq:colim-as-quotient-Aut}
\colim_{\cM}\left(\Hom_{\cM}(- \odot X,A_{n})\right)\cong G_{n}/G_{n-1},
\end{equation}
where $G_{n}/G_{n-1}$ denotes the quotient set associated to the action of $G_{n-1}$ by precomposition on the left on $G_{n}$ via the map $(-)_{n-1}\odot\id_{X}\colon G_{n-1}\to G_{n}$.

For each integer $n\ge 2$, we consider the morphism
\[
\theta^{n}_{A,X}\colon \colim_{\cM}\left(\Hom_{\cM}(- \odot X_{1},A_{n-2}\odot X_{1})\right) \to \colim_{\cM}\left(\Hom_{\cM}(- \odot X_{1},A_{n-2}\odot X_{1}\odot X_{2})\right)
\]
defined by $[A_{n-2},\phi]\mapsto [A_{n-2}\odot X_{2},(\phi\odot \id_{X_{2}})\circ(\id_{A_{n-2}}\odot (b^{\cG}_{X_{1} , X_{2}})^{-1})]$, where we write $A_{n} \cong A_{n-2}\odot X_{1}\odot X_{2} \cong A_{n-2}\odot X_{2}\odot X_{1}$ with $X_{1}=X_{2}=X$ as a notational device to distinguish the two copies of $X$ in the monoidal product.

\begin{defn}\label{def:local-standardness}
We consider objects $A \in\obj(\cM)$ and $X\in\obj(\cG)$ such that $\cM$ is locally homogeneous at $(A,X)$. We say that $\cM$ is \emph{locally standard} at $(A,X)$ if the two following conditions hold.
\begin{compactenum}[(i)]
\item\label{(LS1)} The maps $[A\odot X,\id_{A}\odot (b^{\cG}_{X,X})^{-1}]$ and $[A\odot X,\id_{A_{2}}]$ are distinct in $\colim_{\cM}\left(\Hom_{\cM}(- \odot X,A_{2})\right)$.
\item\label{(LS2)} For all $n\ge 2$, the map $\theta^{n}_{A,X}$ is injective.
\end{compactenum} 
\end{defn}

\begin{rmk}\label{rmk:Theta-alternative-formula}
Following Remark~\ref{rmk:colim-as-Hom+interpretation-W-hom-Quillen}, we consider the case where $\cG=\cM$ equipped with the canonical right-module structure over itself. By the formula of \eqref{item:extension-monoidal-structure} in Proposition~\ref{prop:general-properties-Quillen}, we deduce that $[A\odot X,\id_{A}\odot (b^{\cG}_{X,X})^{-1}]= [A,\id_{A}] \odot \id_{X} \odot [X,\id_{X}]$ and that $\theta^{n}_{A,X}([A_{n-2},\phi])=[A_{n-2},\phi]\odot [X_{2},\id_{X_{2}}]$ as morphisms of $\langle \cG,\cG ]$. In particular, Conditions~\eqref{(LS1)} and \eqref{(LS2)} exactly match the conditions $\textbf{LS1}$ and $\textbf{LS2}$ of \cite[Def.~2.5]{RWW} respectively.
\end{rmk}

The following Lemma~\ref{lem:pullback-standardness-LS2} provides an equivalent property to Condition~\eqref{(LS2)} of Definition~\ref{def:local-standardness}, which will be more convenient to check in practice; see Proposition~\ref{prop:local-standardness}.
Beforehand, we note that since the braiding $b^{\cG}_{-,-}$ is a natural isomorphism in $\cG$, it induces an automorphism for each $n\ge 2$
\[
\fC_{n}(X_{1} , X_{2})\colon \Aut_{\cM}(A_{n-1}\odot X_{2} \odot X_{1})\overset{\cong}{\longrightarrow} \Aut_{\cM}(A_{n-1}\odot X_{1} \odot X_{2}),
\]
defined by $\phi \mapsto (\id_{A_{n-2}}\odot b^{\cG}_{X_{1} , X_{2}})\circ \phi \circ (\id_{A_{n-2}}\odot (b^{\cG}_{X_{1} , X_{2}})^{-1})$.
\begin{lem}\label{lem:pullback-standardness-LS2}
Condition~\eqref{(LS2)} of Definition~\ref{def:local-standardness} is satisfied if and only if the following diagram is a pullback square for each $n\ge 2$
\begin{equation}\label{eq:pullback-standardness-LS2}
\centering
\begin{split}
\begin{tikzpicture}
[x=1mm,y=1mm]
\node (tl) at (0,15) {$\Aut_{\cM}(A_{n-2})$};
\node (tr) at (80,15) {$\Aut_{\cM}(A_{n-2}\odot X_{2})$};
\node (bl) at (0,0) {$\Aut_{\cM}(A_{n-2}\odot X_{1})$};
\node (br) at (80,0) {$\Aut_{\cM}(A_{n-2}\odot X_{2} \odot X_{1})$,};
\draw[->] (tl) to node[above,font=\small]{$(-)_{n-2}\odot\id_{X_{2}}$} (tr);
\draw[->] (bl) to node[above,font=\small]{$\fC_{n}(X_{1} , X_{2})\circ ((-)_{n-1}\odot\id_{X_{2}})$} (br);
\draw[->] (tl) to node[left,font=\small]{$(-)_{n-2}\odot\id_{X_{1}}$} (bl);
\draw[->] (tr) to node[right,font=\small]{$(-)_{n-1}\odot\id_{X_{1}}$} (br);
\node at (5,10) {$\lrcorner$};
\end{tikzpicture}
\end{split}
\end{equation}
\end{lem}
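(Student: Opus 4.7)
The plan is to translate both the injectivity of $\theta^{n}_{A,X}$ from Definition~\ref{def:local-standardness}\eqref{(LS2)} and the pullback property of the square \eqref{eq:pullback-standardness-LS2} into a single explicit assertion about subsets of $\Aut_{\cM}(A_{n-2}\odot X_{1})$, and then conclude by showing that both conditions are equivalent to that common assertion. Throughout, I would write $P$ for the pullback set
\[
P:=\{(g,h)\in\Aut_{\cM}(A_{n-2}\odot X_{2})\times\Aut_{\cM}(A_{n-2}\odot X_{1})\mid g\odot\id_{X_{1}}=\fC_{n}(X_{1},X_{2})(h\odot\id_{X_{2}})\}
\]
defined by the cospan tr\,$\to$\,br\,$\leftarrow$\,bl in \eqref{eq:pullback-standardness-LS2}, and denote by $p_{2}\colon P\to\Aut_{\cM}(A_{n-2}\odot X_{1})$ the projection to the second factor. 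The common assertion will be that $p_{2}(P)$ coincides with the image of the stabilisation map $(-)\odot\id_{X_{1}}\colon\Aut_{\cM}(A_{n-2})\to\Aut_{\cM}(A_{n-2}\odot X_{1})$.

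To handle \eqref{(LS2)}, I would first use the natural bijection \eqref{eq:colim-as-quotient-Aut} to rewrite the source and target of $\theta^{n}_{A,X}$ as quotients $G_{n-1}/G_{n-2}$ and $G_{n}/G_{n-1}$ respectively, with $G_{n-1}=\Aut_{\cM}(A_{n-2}\odot X_{1})$ on the source side and $G_{n-1}=\Aut_{\cM}(A_{n-2}\odot X_{2})$ on the target side (reflecting the role of the two copies of $X$). Unpacking the defining formula $\theta^{n}_{A,X}([A_{n-2},\phi])=[A_{n-2}\odot X_{2},(\phi\odot\id_{X_{2}})\circ(\id_{A_{n-2}}\odot(b^{\cG}_{X_{1},X_{2}})^{-1})]$ and rearranging by conjugation with $\id_{A_{n-2}}\odot b^{\cG}_{X_{1},X_{2}}$, one finds that $\theta^{n}_{A,X}([\phi_{1}])=\theta^{n}_{A,X}([\phi_{2}])$ holds precisely when there exists $\chi\in\Aut_{\cM}(A_{n-2}\odot X_{2})$ with $(\chi,\phi_{2}^{-1}\phi_{1})\in P$. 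Taking $\phi_{2}=\id$ in this equivalence shows that injectivity of $\theta^{n}_{A,X}$ is equivalent to $p_{2}(P)\subseteq\Image((-)\odot\id_{X_{1}})$.

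To handle the pullback property, I would analyse the canonical comparison $\Aut_{\cM}(A_{n-2})\to P$ given by $\eta\mapsto(\eta\odot\id_{X_{2}},\eta\odot\id_{X_{1}})$. The key computation, using the bifunctoriality of $\odot$ and the naturality of the braiding $b^{\cG}$, is the identity
\[
\fC_{n}(X_{1},X_{2})(\eta\odot\id_{X_{1}\odot X_{2}})=\eta\odot\id_{X_{2}\odot X_{1}},
\]
which both exhibits the comparison as well-defined and gives the automatic reverse inclusion $\Image((-)\odot\id_{X_{1}})\subseteq p_{2}(P)$. Injectivity of the comparison map follows from local homogeneity condition~\eqref{(I)} of Definition~\ref{def:homogeneity}. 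Moreover, if one starts from $(g,h)\in P$ with $h=\eta\odot\id_{X_{1}}$, then the same naturality identity combined with condition~\eqref{(I)} forces $g=\eta\odot\id_{X_{2}}$. Consequently, surjectivity of $\Aut_{\cM}(A_{n-2})\to P$ is equivalent to $p_{2}(P)\subseteq\Image((-)\odot\id_{X_{1}})$, which is the same common assertion as in the previous paragraph.

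The main obstacle I anticipate is purely notational: faithfully distinguishing the two copies $X_{1}$ and $X_{2}$ at every step, and keeping track of the direction of the braiding $b^{\cG}_{X_{1},X_{2}}$ in the defining formula for $\fC_{n}(X_{1},X_{2})$. Once that bookkeeping is in order, each individual step reduces to a short manipulation; the crucial input is the naturality identity displayed above, without which the comparison map $\Aut_{\cM}(A_{n-2})\to P$ would not even be well-defined.
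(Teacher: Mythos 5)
Your proposal is correct and takes essentially the same route as the paper: both proofs compare the square with the explicit pullback $P$ of the cospan via the canonical comparison map $\Aut_{\cM}(A_{n-2})\to P$, use the coset description \eqref{eq:colim-as-quotient-Aut} to translate injectivity of $\theta^{n}_{A,X}$ into a statement about which elements of $\Aut_{\cM}(A_{n-2}\odot X_{1})$ lie in the image of $(-)_{n-2}\odot\id_{X_{1}}$, and invoke the injectivity condition \eqref{(I)} of local homogeneity for the converse direction. The only difference is organisational: you argue element-wise with the condition $p_{2}(P)\subseteq\Image((-)_{n-2}\odot\id_{X_{1}})$ (together with the bifunctoriality identity $\fC_{n}(X_{1},X_{2})(\eta\odot\id_{X_{1}\odot X_{2}})=\eta\odot\id_{X_{2}\odot X_{1}}$), whereas the paper routes the same equivalence more abstractly through the auxiliary object $\Ker(\theta^{n}_{A,X})$ and a surjection $P\to\Ker(\theta^{n}_{A,X})$; your bookkeeping worry about the direction of $\fC_{n}(X_{1},X_{2})$ is harmless, as the two conventions differ only by applying the (bijective) conjugation to one side of the pullback condition.
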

\begin{proof}
We fix $n\ge 2$. First of all, we note that $\id_{X_{1}\odot X_{2}} \circ (b^{\cG}_{X_{1},X_{2}})^{-1}=(b^{\cG}_{X_{1},X_{2}})^{-1}\circ \id_{X_{2}\odot X_{1}}$ by definition of the braiding $b^{\cG}_{-,-}$. This implies the commutativity of the diagram \eqref{eq:pullback-standardness-LS2} by definition of the conjugate $\fC_{n}(X_{1} , X_{2})$.
Furthermore, using the natural bijection \eqref{eq:colim-as-quotient-Aut}, it follows from the universal property of a quotient set that the morphism $\theta^{n}_{A,X}$ is the induced map between the quotients of the vertical maps of the diagram \eqref{eq:pullback-standardness-LS2}.

Now, let us prove the equivalence between Condition~\eqref{(LS2)} and the fact that the commutative diagram \eqref{eq:pullback-standardness-LS2} is a pullback square. We recall that the category of groups is complete (see \cite[\S V.1.]{MacLane1} for instance), so the pullback $P$ of the maps $\fC_{n}(X_{1} , X_{2})\circ ((-)_{n-1}\odot\id_{X_{2}})$ and $(-)_{n-1}\odot\id_{X_{1}}\colon \Aut_{\cM}(A_{n-2}\odot X_{2})\to \Aut_{\cM}(A_{n-2}\odot X_{2} \odot X_{1})$ targeting the bottom-right corner of the diagram \eqref{eq:pullback-standardness-LS2} is well-defined.
By diagram chasing, it follows from the universal properties of the kernel $ \Ker(\theta^{n}_{A,X})$ and of the pullback $P$ that there exist unique maps $\Upsilon\colon\Aut_{\cM}(A_{n-2}) \to P$ and $p\colon P \to \Ker(\theta^{n}_{A,X})$ respecting the kernel, pullback and quotient set structures. In particular, a clear diagram chasing factoring through the quotient $\Aut_{\cM}(A_{n-2}\odot X_{2})/\Aut_{\cM}(A_{n-2})$ shows that the composite $p\circ \Upsilon$ is the zero group morphism $0_{\Grp}$.

Moreover, picking a lift $\xi\colon \Ker(\theta^{n}_{A,X}) \to \Aut_{\cM}(A_{n-2}\odot X_{1})$ of the canonical inclusion $\Ker(\theta^{n}_{A,X}) \subset \colim_{\cM}\left(\Hom_{\cM}(- \odot X_{1},A_{n-2}\odot X_{1})\right)$, the composite $\fC_{n}(X_{1} , X_{2})\circ ((-)_{n-1}\odot\id_{X_{2}})\circ \xi$ then lifts to $\Aut_{\cM}(A_{n-2}\odot X_{2})$ along $(-)_{n-1}\odot\id_{X_{1}}$, because its image in the quotient $\Aut_{\cM}(A_{n-2}\odot X_{2}\odot X_{1})/\Aut_{\cM}(A_{n-2}\odot X_{2})$ is zero by the commutativity of the diagram \eqref{eq:pullback-standardness-LS2}.
Hence, by the universal property of the pullback $P$, there exists a unique map $\xi'\colon \Ker(\theta^{n}_{A,X}) \to P$ respecting the pullback structure. Then, it follows from a diagram chasing and the universal property of the kernel $\Ker(\theta^{n}_{A,X})$ that $p\circ \xi'=\id_{\Ker(\theta^{n}_{A,X})}$, and so the morphism $p$ is surjective.

By the universal property of the pullback $P$, if the commutative diagram \eqref{eq:pullback-standardness-LS2} is a pullback square, then the map $\Upsilon$ is an isomorphism. This along with the fact that $p\circ \Upsilon=0_{\Grp}$ implies that $p=0_{\Grp}$. It then follows from the surjectivity of $p$ that $\Ker(\theta^{n}_{A,X})=0$, i.e.~Condition~\eqref{(LS2)} is satisfied.
Conversely, if $\theta^{n}_{A,X}$ is injective, then the composite $P\overset{\upsilon}{\to} \Aut_{\cM}(A_{n-2}\odot X_{1})\twoheadrightarrow \Aut_{\cM}(A_{n-2}\odot X_{1})/\Aut_{\cM}(A_{n-2})$ (where $\upsilon$ is the defining map of the pullback $P$, while the second one is the canonical surjection induced by \eqref{eq:colim-as-quotient-Aut}) is equal to $0_{\Grp}$. So there exists a lift $\upsilon'\colon P \to \Aut_{\cM}(A_{n-2})$ of $\upsilon$, such that $\upsilon = ((-)_{n-2}\odot\id_{X_{1}}) \circ \upsilon'$, and we know by definition that $\upsilon \circ \Upsilon = (-)_{n-2}\odot\id_{X_{1}}$. By diagram chasings, we deduce from the universal property of the pullback $P$ that $\Upsilon\circ \upsilon'=\id_{P}$, while it follows from the injectivity of $(-)_{n-2}\odot\id_{X_{1}}$ (by the injectivity condition \eqref{(I)} of Definition~\ref{def:homogeneity}, since $(\cM,\odot)$ is locally homogeneous at $(A,X)$) that $\upsilon'\circ \Upsilon=\Aut_{\cM}(A_{n-2})$. Hence $\Upsilon$ is an isomorphism respecting the pullback structure, and so the commutative diagram \eqref{eq:pullback-standardness-LS2} is a pullback square.
\end{proof}
Furthermore, the following result highlights the significance of the notion of local standardness:
\begin{prop}\label{prop:LS-iff-Wn-determined-by-vertices}
We consider objects $A \in\obj(\cM)$ and $X\in\obj(\cG)$ such that $\cM$ is locally homogeneous at $(A,X)$. Then the right $(\cG,\odot,0)$-module $(\cM,\odot)$ is locally standard at $(A,X)$ if and only if all simplices of $W_{n}(A, X)_{\bullet}$ for all $n$ are determined by their (ordered sets of) vertices and their vertices are all distinct. 
\end{prop}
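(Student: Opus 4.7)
The plan is to follow the strategy of \cite[Prop.~2.6]{RWW}, adapting it from the case where $\cM = \cG$ with its canonical self-action to the more general situation of a right $(\cG,\odot,0)$-module $(\cM,\odot)$. Thanks to Remark~\ref{rmk:Theta-alternative-formula} and the pullback reformulation of Condition~\eqref{(LS2)} provided by Lemma~\ref{lem:pullback-standardness-LS2}, our local standardness conditions reduce to those of \cite[Def.~2.5]{RWW} when $\cM = \cG$, so the argument there should carry over to the module setting once we have these translations in hand.

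Using the natural bijection \eqref{eq:colim-as-quotient-Aut} (extended inductively to higher simplicial degrees), I would first identify each $p$-simplex of $W_{n}(A,X)_\bullet$ with an equivalence class $[A_{n-p-1},\phi]$, where $\phi\in G_{n}$; here the Cancellation condition~\eqref{(C)} of local homogeneity is used to normalise $B$ to $A_{n-p-1}$. The vertices of such a simplex are then a $(p+1)$-tuple of cosets in $G_{n}/G_{n-1}$, obtained by iteratively precomposing $\phi$ with the braiding-plus-identity morphisms $[X,(b^{\cG}_{X^{\odot i},X})^{-1}\odot \id_{X^{\odot p-i}}]$ appearing in the face maps of Definition~\ref{def:semi-simplicial-set-destabilisations}.

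For the ``only if'' direction, I would first use Condition~\eqref{(LS1)} together with the injectivity clause~\eqref{(I)} of local homogeneity to check that the two vertices of the standard $1$-simplex $[A\odot X,\id_{A_{2}}]$ are distinct, and then propagate distinctness to vertices of any higher-dimensional simplex by naturality and a disjoint-support argument. For determination by vertices, I would argue by induction on $p$: two $p$-simplices sharing all ordered vertices share all $(p-1)$-faces, so by the inductive hypothesis they coincide as elements of $W_{n}(A,X)_{p-1}$; the pullback characterisation of Lemma~\ref{lem:pullback-standardness-LS2} then forces the two original $p$-simplices to agree up to the $G_{n-p-1}$-action that is quotiented out in $W_{n}(A,X)_{p}$. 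For the converse, I would specialise the hypothesis to small cases: distinctness of the vertices of $[A\odot X,\id_{A_{2}}]$ in $W_{2}(A,X)_\bullet$ yields Condition~\eqref{(LS1)} directly, while the determination of $1$-simplices by their vertices in each $W_{n}(A,X)_\bullet$ translates, via Lemma~\ref{lem:pullback-standardness-LS2}, into the injectivity of $\theta^{n}_{A,X}$, i.e.~Condition~\eqref{(LS2)}.

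The main obstacle will be the combinatorial bookkeeping of braidings hidden in the face maps and the careful handling of the equivalence relation defining simplices of $W_{n}(A,X)_\bullet$; but once the pullback reformulation of Lemma~\ref{lem:pullback-standardness-LS2} and the identification~\eqref{eq:colim-as-quotient-Aut} are in place, the proof is essentially an unwinding of definitions that proceeds in direct parallel with \cite[Prop.~2.6]{RWW}.
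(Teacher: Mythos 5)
Your proposal follows exactly the route the paper takes: the paper simply observes that this statement is \cite[Prop.~2.6]{RWW} (together with the characterisation of Condition~\eqref{(LS2)} from \cite[Lem.~2.7]{RWW}, which plays the role of your appeal to Lemma~\ref{lem:pullback-standardness-LS2}) and that its proof repeats verbatim in the present module framework, which is precisely the adaptation you sketch via the identification \eqref{eq:colim-as-quotient-Aut} and local homogeneity. So the proposal is correct and is essentially an unwinding of the same argument the paper cites.
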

\begin{proof}
This result corresponds to \cite[Prop.~2.6]{RWW}, in particular using \cite[Lem.~2.7]{RWW} which gives a equivalent characterisation of Condition~\eqref{(LS2)} of Definition~\ref{def:local-standardness}. The proofs of these results repeat verbatim for the present framework equivalent to that of \cite[\S2.1]{RWW}.
\end{proof}

\paragraph*{Simplicial complex of destabilisations.} In the cases we consider in \S\ref{s:homological_stability}, we see that the connectivity of the semi-simplicial set $W_{n}(A,X)_{\bullet}$ of Definition~\ref{def:semi-simplicial-set-destabilisations} is determined by that of its underlying simplicial complex; see \S\ref{sss:connectivity-ss} and \S\ref{sss:infinite-HS}. We introduce the following notation:
\begin{notation}\label{nota:simplicial-complex-destabilisation}
For each integer $n\ge 0$, we write $S_{n}(A,X)_{\bullet}$ for the underlying simplicial complex of $W_{n}(A,X)_{\bullet}$. In other words, this is the simplicial complex whose set of vertices is given by
\[
S_{n}(A,X)_{0}:=\colim_{\cM}\left(\Hom_{\cM}(- \odot X,A_{n})\right),
\]
and where a $(p+1)$-tuple $([B_{0},\phi_{0}],\ldots,[B_{p},\phi_{p}])$ of vertices spans a $p$-simplex if and only if there exists $[B,\phi]\in\colim_{\cM}\left(\Hom_{\cM}(- \odot X^{\odot p+1},A_{n})\right)$ such that 
\[
[B_{i},\phi_{i}]=[B,\phi]\circ [X^{\odot p},\id_{X^{\odot i}}\odot(b^{\cG}_{X,X^{\odot p-i}})^{-1}]
\]
for each $0\le i\le p$.
For each $p\ge 0$, there is a surjection $\pi_{p}\colon W_{n}(A,X)_{p}\to S_{n}(A,X)_{p}$ and we write
\[
\pi\colon |W_{n}(A,X)_{\bullet}|\to|S_{n}(A,X)_{\bullet}|
\]
for the induced map on geometric realisations.
\end{notation}

\subsubsection{Homogeneity and standardness properties for handlebody groups}\label{sss:hom-stdness-properties}

In this section, we verify that the key properties of Definitions~\ref{def:homogeneity} and \ref{def:local-standardness} are satisfied by the categories associated to the handlebody groups we have defined in \S\ref{sss:categorical_framework}, which are necessary to prove our homological stability results in \S\ref{s:homological_stability}.

\paragraph*{Stabilisation maps.}
We introduce here the suitable stabilisation maps of the form $\st_{n}:=(-)_{n}\odot\id_{X}\colon G_{n}\to G_{n+1}$ for homological stability, in the context of handlebody groups. Beforehand, we recall that the monoidal structures $\natural$ and $\#$ of Definition~\ref{def:monoidal-structure-cG} have clear analogous counterparts for the mapping class groups of surfaces, induced by gluing surfaces along half-intervals rather than half-discs. We refer the reader to \cite[\S5.6.1]{RWW} for further details and abuse the notation by using the same symbol for these operations. 
Now, for integers $g,s\ge 0$ and $r\ge 1$, these monoidal structures induce group homomorphisms
\begin{equation}\label{eq:canonical_stabilisation-natural}
\sigma^{s}_{g,r}:=(-)_{g}\natural\id_{V_{1,1}}\colon \cH^{s}_{g,r}\to\cH^{s}_{g+1,r} \,\,\,\,\text{ and }\,\,\,\,
\varsigma^{s}_{g,r}:=(-)_{g}\natural\id_{\Sigma_{1,1}}\colon \MCG^{s}_{g,r}\to\MCG^{s}_{g+1,r},
\end{equation}
and if $r\ge 2$
\begin{equation}\label{eq:canonical_stabilisation-sharp}
\rho^{s}_{g,r}:=(-)_{g}\#\id_{V_{0,2}}\colon \cH^{s}_{g,r}\to\cH^{s}_{g+1,r} \,\,\,\,\text{ and }\,\,\,\,
\varrho^{s}_{g,r}:=(-)_{g}\#\id_{\Sigma_{0,2}}\colon \MCG^{s}_{g,r}\to\MCG^{s}_{g+1,r}.
\end{equation}
We also recall from Lemma~\ref{lem:H-injects-into-Gamma} that we have a canonical group injection $\tti^{s}_{g,r}\colon\cH^{s}_{g,r}\hookrightarrow\MCG^{s}_{g,r}$ relating handlebody groups and mapping class groups of surfaces.
\begin{prop}\label{prop:pullback-square-handlebody-MCG}
For integers $g,s\ge 0$ and $r\ge 1$, the maps \eqref{eq:canonical_stabilisation-natural} and \eqref{eq:canonical_stabilisation-sharp} are injective and define the following pullback commutative squares of injective group homomorphisms
\begin{equation}\label{eq:commutative-square-injections-sigma}
\centering
\begin{split}
\begin{tikzpicture}
[x=1mm,y=1mm]
\node (tl) at (0,12) {$\cH^{s}_{g,r}$};
\node (tr) at (35,12) {$\cH^{s}_{g+1,r}$};
\node (bl) at (0,0) {$\MCG^{s}_{g,r}$};
\node (br) at (35,0) {$\MCG^{s}_{g+1,r},$};
\draw[right hook->] (tl) to node[above,font=\small]{$\sigma^{s}_{g,r}$} (tr);
\draw[right hook->] (bl) to node[above,font=\small]{$\varsigma^{s}_{g,r}$} (br);
\draw[right hook->] (tl) to node[left,font=\small]{$\tti^{s}_{g,r}$} (bl);
\draw[right hook->] (tr) to node[right,font=\small]{$\tti^{s}_{g+1,r}$} (br);
\node at (4,8) {$\lrcorner$};
\end{tikzpicture}
\end{split}
\end{equation}
and if $r\ge 2$
\begin{equation}\label{eq:commutative-square-injections-rho}
\centering
\begin{split}
\begin{tikzpicture}
[x=1mm,y=1mm]
\node (tl) at (0,12) {$\cH^{s}_{g,r}$};
\node (tr) at (35,12) {$\cH^{s}_{g+1,r}$};
\node (bl) at (0,0) {$\MCG^{s}_{g,r}$};
\node (br) at (35,0) {$\MCG^{s}_{g+1,r}$.};
\draw[right hook->] (tl) to node[above,font=\small]{$\rho^{s}_{g,r}$} (tr);
\draw[right hook->] (bl) to node[above,font=\small]{$\varrho^{s}_{g,r}$} (br);
\draw[right hook->] (tl) to node[left,font=\small]{$\tti^{s}_{g,r}$} (bl);
\draw[right hook->] (tr) to node[right,font=\small]{$\tti^{s}_{g+1,r}$} (br);
\node at (4,8) {$\lrcorner$};
\end{tikzpicture}
\end{split}
\end{equation}
\end{prop}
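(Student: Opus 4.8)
The plan is to establish that both squares commute for formal reasons, deduce injectivity of $\sigma^{s}_{g,r}$ and $\rho^{s}_{g,r}$ from injectivity of the corresponding surface stabilisations, and prove the pullback property by a hands-on geometric argument. For commutativity, note that by \eqref{eq:canonical_stabilisation-natural}--\eqref{eq:canonical_stabilisation-sharp} and Definition~\ref{def:monoidal-structure-cG} the maps $\sigma^{s}_{g,r}$ and $\varsigma^{s}_{g,r}$ (resp.\ $\rho^{s}_{g,r}$ and $\varrho^{s}_{g,r}$) are given by extending a diffeomorphism by the identity on the attached summand $V_{1,1}$, resp.\ $\Sigma_{1,1}$ (resp.\ the attached $3$-ball $V_{0,2}$, resp.\ $\Sigma_{0,2}$), whereas $\tti^{s}_{g,r}$ and $\tti^{s}_{g+1,r}$ restrict to the boundary, and restriction to the boundary commutes with extension by the identity. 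Now $\tti^{s}_{g,r}$ and $\tti^{s}_{g+1,r}$ are injective by Lemma~\ref{lem:H-injects-into-Gamma}, and the surface stabilisations $\varsigma^{s}_{g,r}$, $\varrho^{s}_{g,r}$ are injective---this is part of the homogeneity of the braided monoidal groupoid of surfaces underlying the Randal-Williams--Wahl framework for surfaces, see \cite[\S5.6]{RWW} and \cite{HatcherWahl}---so $\sigma^{s}_{g,r}$ and $\rho^{s}_{g,r}$ are injective, since by the commutativity just shown they become injective after postcomposition with $\tti^{s}_{g+1,r}$.

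It remains to prove that \eqref{eq:commutative-square-injections-sigma} (resp.\ \eqref{eq:commutative-square-injections-rho}) is a pullback square, i.e.\ that the canonical map $\cH^{s}_{g,r}\to\cH^{s}_{g+1,r}\times_{\MCG^{s}_{g+1,r}}\MCG^{s}_{g,r}$ is a bijection; its injectivity is immediate from that of $\tti^{s}_{g,r}$. For surjectivity, fix $[\psi]\in\cH^{s}_{g+1,r}$ and $[\bar{\alpha}]\in\MCG^{s}_{g,r}$ with $\tti^{s}_{g+1,r}([\psi])=\varsigma^{s}_{g,r}([\bar{\alpha}])$; I must produce $[\phi]\in\cH^{s}_{g,r}$ with $\sigma^{s}_{g,r}([\phi])=[\psi]$ and $\tti^{s}_{g,r}([\phi])=[\bar{\alpha}]$. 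Pick a properly embedded disc $D\subset V_{g+1}$ separating it into the subhandlebody $V_{g}$ and the attached copy of $V_{1,1}$, with $\partial D$ disjoint from the marked discs and points and contained in the part of $\partial V_{g+1}$ on which $\varsigma^{s}_{g,r}([\bar{\alpha}])=\bar{\alpha}\natural\mathrm{id}_{\Sigma_{1,1}}$ has a representative equal to the identity (which is possible, that part containing the subsurface coming from $\partial V_{1,1}$ together with a collar of the gluing region).

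Since $\tti^{s}_{g+1,r}([\psi])=\varsigma^{s}_{g,r}([\bar{\alpha}])$, lifting an isotopy of surface diffeomorphisms along the restriction fibration \eqref{eq:fibration_restriction_diffs} for $V_{g+1}$ lets me replace $\psi$, within its class in $\cH^{s}_{g+1,r}$, by a diffeomorphism that is the identity on that part of $\partial V_{g+1}$; in particular $\psi(\partial D)=\partial D$, so $\psi(D)$ is a properly embedded disc with boundary $\partial D$. As handlebodies are irreducible, $\psi(D)$ is isotopic to $D$ rel $\partial D$ through an isotopy supported away from the marked discs and points, and composing with it and then with a further isotopy supported near $D$ (the mapping class group of a disc rel boundary being trivial), I may assume $\psi(D)=D$ and $\psi|_{D}=\mathrm{id}$. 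Then $\psi$ preserves each of the two pieces (being the identity on $D$ and on the part of $\partial V_{g+1}$ coming from $\partial V_{1,1}$), restricting to $\phi:=\psi|_{V_{g}}\in\Diff(V_{g};\cD_{r}\sqcup\cP_{s})$ and to a diffeomorphism of $V_{1,1}$ fixing $\partial V_{1,1}$ pointwise; the latter is isotopic to $\mathrm{id}$ rel $\partial V_{1,1}$ because $\pi_{0}\Diff(V_{1,1};\partial V_{1,1})=0$ by \cite[Proof of Th.~1, Step~4]{Hatcherincompressible}, so $[\psi]=\sigma^{s}_{g,r}([\phi])$ in $\cH^{s}_{g+1,r}$. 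Finally, by commutativity $\varsigma^{s}_{g,r}(\tti^{s}_{g,r}([\phi]))=\tti^{s}_{g+1,r}([\psi])=\varsigma^{s}_{g,r}([\bar{\alpha}])$, whence $\tti^{s}_{g,r}([\phi])=[\bar{\alpha}]$ by injectivity of $\varsigma^{s}_{g,r}$; thus $[\phi]$ does the job. The square \eqref{eq:commutative-square-injections-rho} is handled identically, with $V_{1,1}$ replaced by the attached $3$-ball $V_{0,2}$, the disc $D$ by the co-core disc of the resulting $1$-handle, and $\pi_{0}\Diff(\bD^{3};\partial\bD^{3})=0$ used at the end.

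I expect the geometric surjectivity step to be the main obstacle, and within it the $\#$ case: there $V_{0,2}$ meets $V_{g}$ along two half-discs rather than along a single disc, so once $\psi$ has been made standard on the frontier annulus and on the co-core disc one must check carefully that it then preserves $V_{g}$ and $V_{0,2}$ and can be further isotoped to the identity on $V_{0,2}$. One should also verify that each isotopy of $\psi$ above can be taken to fix the marked discs and points throughout; this is routine. (An alternative route to the pullback property is to recognise the square of topological groups built from the fibrations \eqref{eq:fibration_restriction_diffs} for $V_{g}$ and $V_{g+1}$ as a homotopy pullback, using weak contractibility of $\Diff(V_{h};\partial V_{h})$ for all $h\ge0$ and vanishing of $\pi_{1}$ of the components of $\Diff(\Sigma_{g+1};\cD_{r}\sqcup\cP_{s})$; however, making this fibrewise comparison an equivalence again comes down to the same geometric statement that a surface stabilisation extending over $V_{g+1}$ restricts to one extending over $V_{g}$.)
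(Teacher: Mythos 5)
Your proposal is correct and follows essentially the same route as the paper: commutativity and injectivity are deduced formally from the definitions and from injectivity of the surface stabilisations together with Lemma~\ref{lem:H-injects-into-Gamma}, and the pullback property reduces to showing that a class in $\cH^{s}_{g+1,r}$ whose boundary restriction lies in the image of $\varsigma^{s}_{g,r}$ (resp.\ $\varrho^{s}_{g,r}$) is itself a stabilised class. The only difference is that the paper states this last geometric claim without proof, whereas you justify it by cutting along a separating (resp.\ co-core/attaching) disc, using irreducibility of handlebodies and $\pi_{0}\Diff(V_{h};\partial V_{h})=0$ — a welcome amplification, with the two-half-disc bookkeeping in the $\#$ case being the only point you rightly flag as needing extra care.
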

\begin{proof}
First, that the morphisms $\varsigma^{s}_{g,r}$ and $\varrho^{s}_{g,r}$ for mapping class groups of surfaces are injective is a classical property of these maps. Proofs (or some mutatis mutandis adaptations) of these properties may be found in \cite[Prop.~5.18(i)]{RWW} or \cite[Lem.~4.2]{PalmerWu} for $\varsigma^{s}_{g,r}$, and in \cite[Prop.~3.4]{HarrVistrupWahl-DisorderedArcs} for $\varrho^{s}_{g,r}$. Also, it is a straightforward consequence of the definitions of the stabilisation morphisms \eqref{eq:canonical_stabilisation-natural} and \eqref{eq:canonical_stabilisation-sharp} as extensions by the identity, along with those of the canonical injections $\tti^{s}_{g,r}$ and $\tti^{s}_{g+1,r}$, that the diagrams \eqref{eq:commutative-square-injections-sigma} and \eqref{eq:commutative-square-injections-rho} are commutative. In particular, we know that the vertical arrows and the bottom horizontal arrows of both \eqref{eq:commutative-square-injections-sigma} and \eqref{eq:commutative-square-injections-rho} are injective, so the maps $\sigma^{s}_{g,r}$ and $\rho^{s}_{g,r}$ are also injective (since they are the first factor of a composition which is a monomorphism).

It now remains to prove that the commutative diagrams \eqref{eq:commutative-square-injections-sigma} and \eqref{eq:commutative-square-injections-rho} are pullback squares. Let us focus on the case \eqref{eq:commutative-square-injections-sigma}, as the proof for \eqref{eq:commutative-square-injections-rho} repeats verbatim what follows by replacing $\sigma^{s}_{g,r}$ and $\varsigma^{s}_{g,r}$ with $\rho^{s}_{g,r}$ and $\varrho^{s}_{g,r}$ respectively.
Since monomorphisms are stable under pullback (see \cite[Prop.~I.7.1]{Mitchell} for instance), it follows from the general structure of pullbacks in the category of sets (see \cite[\S5.3, Ex.~5.9]{Awodey} for example) that the pullback $\MCG^{s}_{g,r} \times_{\MCG^{s}_{g+1,r}} \cH^{s}_{g+1,r}$ of $\varsigma^{s}_{g,r}$ and $\tti^{s}_{g+1,r}$ is isomorphic to the intersection subgroup $\MCG^{s}_{g,r} \cap \cH^{s}_{g+1,r}$. Note that, for an element $\phi\in \cH^{s}_{g+1,r}$, if the restriction to the boundary $\tti^{s}_{g+1,r}(\phi)$ lifts to $\MCG^{s}_{g,r}$ along $\varsigma^{s}_{g,r}=(-)_{g}\natural\id_{\Sigma_{1,1}}$, then $\phi$ is necessarily of the form $\phi'\natural \id_{V_{1,1}}=\sigma^{s}_{g,r}(\phi')$ where $\phi'\in \cH^{s}_{g,r}$. This defines an injection $\MCG^{s}_{g,r} \cap \cH^{s}_{g+1,r} \hookrightarrow \cH^{s}_{g,r}$, which has a canonical inverse induced by the injections $\tti^{s}_{g,r}$ and $\sigma^{s}_{g,r}$. Hence there is an isomorphism $\MCG^{s}_{g,r} \cap \cH^{s}_{g+1,r} \cong \cH^{s}_{g,r}$, such that the induced diagram connecting the pullback square $\MCG^{s}_{g,r} \times_{\MCG^{s}_{g+1,r}} \cH^{s}_{g+1,r}$ to \eqref{eq:commutative-square-injections-sigma} is commutative by construction. We then deduce from the universal property of a pullback that \eqref{eq:commutative-square-injections-sigma} is a pullback square.
\end{proof}

\paragraph*{Local homogeneity.} We now check that the groupoids associated to the handlebody groups satisfy the local homogeneity property of Definition~\ref{def:homogeneity}.

\begin{prop}\label{prop:homogeneity-handlebody}
For any $s\ge 0$, the following categories are locally homogeneous:
\begin{compactenum}[(a)]
    \item\label{item:first-stabilisation} the right $(\sH^{\ge 1},\natural,I_{1})$-module $(\sH^{\ge 1},\natural)$ at $(V^{s}_{0,r},V_{1,1})$ for any $r\ge 1$;
    \item\label{item:second-stabilisation} the right $(\sH^{\ge 2},\#,I_{2})$-module $(\sH^{\ge 2},\#)$ at $(V^{s}_{0,r},V_{0,2})$ for any $r\ge 2$;
    \item\label{item:infinite-stabilisation} the right $(\sH^{\ge 2},\#,I_{2})$-module $(\overline{\sH}^{\ge 2},\#)$ at $(V^{s}_{\infty,r},V_{0,2})$ for any $r\ge 2$.
\end{compactenum}
\end{prop}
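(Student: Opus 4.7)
The plan is to verify, for each of the three items separately, the injectivity condition \eqref{(I)} and the cancellation condition \eqref{(C)} of Definition~\ref{def:homogeneity}.

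For injectivity, the map to analyse is $(-)_{n-p-1}\odot\id_{X^{\odot p+1}}\colon G_{n-p-1}\to G_{n}$. In items \eqref{item:first-stabilisation} and \eqref{item:second-stabilisation}, this is the $(p+1)$-fold composition of the stabilisation homomorphisms $\sigma^{s}_{g,r}$, respectively $\rho^{s}_{g,r}$, each of which is injective by Proposition~\ref{prop:pullback-square-handlebody-MCG}, so the composition is injective. In item \eqref{item:infinite-stabilisation}, an automorphism of $A_{n-p-1}=V^{s}_{\infty,r}\#V^{\#n-p-1}_{0,2}$ is represented, via the isomorphisms \eqref{eq:natural-iso-decomposition}, by a coherent system $\{\phi_i\in \cH^{s}_{g(i),r}\}_{i\ge 0}$ as in Definition~\ref{def:groupoids-handlebody-infinity}, and the stabilisation map sends such a system to $\{\phi_i\# \id_{V^{\#p+1}_{0,2}}\}$. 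If the latter represents the identity in $\Aut_{\overline{\sH}^{\ge 2}}(A_n)$, then for all sufficiently large $i$ we have $\phi_i\#\id_{V^{\#p+1}_{0,2}}=\id$ at the finite level, whence $\phi_i=\id$ by iterated injectivity of $\rho^{s}_{g,r}$, and hence $\phi=\id$.

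For cancellation in \eqref{item:first-stabilisation} and \eqref{item:second-stabilisation}, I appeal to the classification of objects of $\sH$: up to isomorphism, a connected object is determined by its genus together with the combinatorial data of marked discs and marked points. Suppose $Y\in\obj(\sH^{\ge 1})$ satisfies $Y\natural V^{\natural p+1}_{1,1}\cong V^{s}_{n,r}$. Since the right-hand side is connected and $\natural$ identifies components only along single marked discs (never merging distinct components of $Y$ with each other), $Y$ must itself be connected; writing $Y=V^{s'}_{g',r'}$, the operation $\natural V_{1,1}$ preserves the marked points and the count of marked discs while adding one to the genus, so $g'+p+1=n$, $r'=r$ and $s'=s$, giving $Y\cong V^{s}_{n-p-1,r}=A_{n-p-1}$. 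The argument for \eqref{item:second-stabilisation} is essentially identical with $\#V_{0,2}$ in place of $\natural V_{1,1}$, using that $\#V_{0,2}$ also raises the genus by one while leaving the marked data invariant.

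For cancellation in \eqref{item:infinite-stabilisation}, suppose $Y\in\obj(\overline{\sH}^{\ge 2})$ with $Y\#V^{\#p+1}_{0,2}\cong A_n$. By iterating Lemma~\ref{lem:stabilising-handlebody-of-inf-rank}, $A_n\cong V^{s}_{\infty,r}$, which is not compact, so $Y$ cannot lie in $\sH^{\ge 2}$ and must be of the form $Y=V_{\infty,1}\natural V^{s'}_{g',r'}$. Unpacking the module structure of Lemma~\ref{lem:right-module-infinite}, $Y\#V^{\#p+1}_{0,2}$ is the colimit of a sequential diagram cofinal with the one defining $V^{s'}_{\infty,r'}$, so $Y\#V^{\#p+1}_{0,2}\cong V^{s'}_{\infty,r'}$. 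Since the integers $(r,s)$ are invariants of an infinite object, being preserved under all morphisms by Definition~\ref{def:groupoids-handlebody-infinity}, we deduce $r'=r$ and $s'=s$, and a final application of Lemma~\ref{lem:stabilising-handlebody-of-inf-rank} yields $Y\cong V^{s}_{\infty,r}\cong A_{n-p-1}$. The main obstacle is precisely this item \eqref{item:infinite-stabilisation}, where the infinite object $V^{s}_{\infty,r}$ carries no elementary combinatorial genus invariant and the cancellation argument must be threaded through the colimit definition; the crucial inputs are Lemma~\ref{lem:stabilising-handlebody-of-inf-rank}, which absorbs $\#V_{0,2}$ into the infinite object, together with the preservation of the marked boundary data $(r,s)$ under the structural morphisms of $\overline{\sH}^{\ge 2}$.
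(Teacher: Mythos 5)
Your proposal is correct and takes essentially the same route as the paper: injectivity in (a) and (b) via the injectivity of the stabilisation maps from Proposition~\ref{prop:pullback-square-handlebody-MCG}, injectivity in (c) by descending an equality of colimit morphisms to the finite stages and invoking case (b), and cancellation via genus/marked-data counting for compact objects and via absorption of $\#V_{0,2}$ into the infinite object (Lemma~\ref{lem:stabilising-handlebody-of-inf-rank}) for (c). The only differences are citation-level: where you assert a ``classification of objects of $\sH$'' the paper derives the compact cancellation from the classification of surfaces together with Bonahon's theorem that handlebodies are determined by their boundaries, and where you assert that the identity at the colimit forces $\phi_i\#\id=\id$ at finite stages the paper justifies this by showing the structural maps of the colimit are monomorphisms.
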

\begin{proof}
We fix $r\ge 1$ and $s\ge 0$. We start by proving the local homogeneity properties \eqref{item:first-stabilisation}--\eqref{item:second-stabilisation}.
Unwinding the definitions, the injectivity condition \eqref{(I)} for the properties \eqref{item:first-stabilisation} and \eqref{item:second-stabilisation} correspond to the injectivities for each $0\le p<n$ of the iterates $\sigma^{s}_{n-1,r} \circ \cdots \circ \sigma^{s}_{n-p+1,r}$ (for $r\ge 1$) and $\rho^{s}_{n-1,r}\circ \cdots \circ \rho^{s}_{n-p+1,r}$ (for $r\ge 2$) of the stabilisation maps \eqref{eq:canonical_stabilisation-natural} and \eqref{eq:canonical_stabilisation-sharp} respectively. These facts then follow from Proposition~\ref{prop:pullback-square-handlebody-MCG}.
For the cancellation condition \eqref{(C)}, we consider an object $Y$ of either $\sH^{\ge 1}$ or $\sH^{\ge 2}$, such that $Y\natural V_{1,1}^{\natural p+1}\cong V^{s}_{g,r}$ or $Y\# V_{0,2}^{\# p+1}\cong V^{s}_{g,r}$ respectively. By the definitions of $\natural$ and $\#$ as boundary connected sums (see Definition~\ref{def:monoidal-structure-cG}), it follows from the classification of surfaces (see \cite{Richards,Brown-Messer}) that the boundary of $Y$ is in both cases a compact connected orientable surface of genus $g-p-1$, which has $r$ marked discs and $s$ marked points. Hence the object $Y$ cannot be a disjoint union of $3$-balls, and is thus a compact handlebody; see Definition~\ref{def:groupoid-handlebody}. Also, it follows from \cite[Th.~2.1]{Bonahon} that compact handlebodies are determined up to diffeomorphisms by their boundaries. We deduce that $Y\cong V^{s}_{g-p-1,r}$ in any case, which finish the proof of \eqref{item:first-stabilisation} and \eqref{item:second-stabilisation}.

We now deal with the case \eqref{item:infinite-stabilisation}. Let $\phi$ and $\phi'$ be elements of $\Aut_{\overline{\sH}^{\ge 2}}(V^{s}_{\infty,r}\# V_{0,2}^{\#(n-p-1)})$ such that $\phi\#\id_{V_{0,2}^{\#(p+1)}}=\phi'\#\id_{V_{0,2}^{\#(p+1)}}$.
We deduce from Definition~\ref{def:groupoids-handlebody-infinity} that there exist an integer $j\ge 0$ and sets of morphisms $\{\phi_{g}\}_{g\ge j}$ and $\{\phi'_{g}\}_{g\ge j}$ with $\phi_{g},\phi'_{g}\in \Aut_{\sH^{\ge 2}}(V^{s}_{g+n-p-1,r})$ making the corresponding diagrams of the form \eqref{eq:morphism-infty-handlebody} commutative, such that $\phi=\colim_{g\ge j}(\phi_{g})$ and $\phi'=\colim_{g\ge j}(\phi'_{g})$. Since the maps $\iota_{i}(V^{s}_{0,r}\# V_{0,2}^{\# n})$ are monomorphisms in the category $\langle \sH^{\ge 1},\sH^{\ge 1}]$ for all $i\ge 0$, it directly follows from the explicit description of the form $\coprod_{g\ge 0} (V^{s}_{g,r}\#V^{\# n}_{0,2})/ \sim$ of the colimit $V^{s}_{\infty,r}\# V_{0,2}^{\# n}$ (see \cite[Prop.~2.13.3]{Borceux1}) that the structural map $\alpha_{i} \colon V^{s}_{i,r}\# V_{0,2}^{\# n}\to V^{s}_{\infty,r}\# V_{0,2}^{\# n}$ is a monomorphism for each $i\ge 0$. Restricting the equality $\phi\#\id_{V_{0,2}^{\#(p+1)}}=\phi'\#\id_{V_{0,2}^{\#(p+1)}}$ along each $\alpha_{g}$ for $g\ge j$ then implies that we have an equality $\phi_{g}\#\id_{V_{0,2}^{\#(p+1)}}=\phi'_{g}\#\id_{V_{0,2}^{\#(p+1)}}$ for each $g\ge j$. Since the map $(-)_{g+n-p-1}\#\id_{V_{0,2}^{\#(p+1)}}$ is injective by Proposition~\ref{prop:homogeneity-handlebody}\eqref{item:second-stabilisation}, we deduce that $\phi_{g}=\phi'_{g}$ for all $g\ge j$, and so $\phi=\phi'$ by the universal property of a colimit: this proves the injectivity condition \eqref{(I)} for the case \eqref{item:infinite-stabilisation}.
Furthermore, for an object $Y$ of $\overline{\sH}^{\ge 2}$ such that
$Y\# V_{0,2}^{\#(p+1)}\cong V^{s}_{\infty,r}\# V_{0,2}^{\# n}$, it is immediate from the definition of $-\# -$ (see Lemma~\ref{lem:right-module-infinite}) that $Y$ is necessarily an infinite object of $\overline{\sH}^{\ge 2}$. By applying the isomorphism \eqref{eq:canonical-iso-dash-natural} of Lemma~\ref{lem:stabilising-handlebody-of-inf-rank}, we deduce that $Y\cong Y\# V_{0,2}^{\#(p-1)}\cong V^{s}_{\infty,r}\# V_{0,2}^{\# n}\cong V^{s}_{\infty,r}$, and so the cancellation condition~\eqref{(C)} holds for \eqref{item:infinite-stabilisation}.
\end{proof}

\paragraph*{Standardness.} We finally verify that the local standardness property of Definition~\ref{def:local-standardness} hold for the groupoids associated to the handlebody groups.

\begin{prop}\label{prop:local-standardness}
For any $s\ge 0$, the following categories are locally standard:
\begin{compactenum}[(a)]
    \item\label{item:first-stabilisation-standard} the right $(\sH^{\ge 1},\natural,I_{1})$-module $(\sH^{\ge 1},\natural)$ at $(V^{s}_{g,r},V_{1,1})$ for any $g\ge 0$ and $r\ge 1$;
    \item\label{item:second-stabilisation-standard} the right $(\sH^{\ge 2},\#,I_{2})$-module $(\sH^{\ge 2},\#)$ at $(V^{s}_{g,r},V_{0,2})$ for any $g\ge 0$ and $r\ge 2$;
    \item\label{item:infinite-stabilisation-standard} the right $(\sH^{\ge 2},\#,I_{2})$-module $(\overline{\sH}^{\ge 2},\#)$ at $(V^{s}_{\infty,r},V_{0,2})$ for any $r\ge 2$.
\end{compactenum}
\end{prop}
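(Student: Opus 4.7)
The strategy is to reduce each local standardness condition to the corresponding condition for the categorical framework of mapping class groups of surfaces, established in \cite[\S5.6]{RWW} (for $\natural$) and adaptable from \cite{HarrVistrupWahl-DisorderedArcs} (for $\#$). The main bridge between handlebody groups and surface mapping class groups is provided by Proposition~\ref{prop:pullback-square-handlebody-MCG}, which presents each stabilisation inclusion of handlebody groups as a pullback along the analogous inclusion for surface mapping class groups.

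\textbf{Cases (a) and (b).} Fix $i\in\{1,2\}$ and set $(\cM,\odot)=(\sH^{\ge i},\natural_{i})$ with $X=V_{1,1}$ or $X=V_{0,2}$ accordingly, and $A=V^{s}_{g,r}$. For Condition~\eqref{(LS1)}, assume for contradiction that there exists $\chi\in\cH^{s}_{g+1,r}$ such that $\id_{A}\natural_{i}(b^{\sH}_{X,X})^{-1}=\chi\natural_{i}\id_{X}$ in $\cH^{s}_{g+2,r}$. Applying the injection $\tti^{s}_{g+2,r}$ of Lemma~\ref{lem:H-injects-into-Gamma}, and using that the restriction of the braiding of $\sH^{\ge i}$ to the boundary surface coincides with the braiding of the corresponding monoidal groupoid of decorated surfaces (both arising from the same topological disc-swapping diffeomorphism), the analogous identity holds in $\MCG^{s}_{g+2,r}$, which contradicts Condition~\eqref{(LS1)} for the surface framework. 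For Condition~\eqref{(LS2)}, by Lemma~\ref{lem:pullback-standardness-LS2} it suffices to show that the diagram \eqref{eq:pullback-standardness-LS2} for the handlebody groups is a pullback square. Assemble this square together with the analogous one for surface mapping class groups into a commutative cube, with the four vertical edges given by the canonical injections $\tti^{s}_{*,r}$. By Proposition~\ref{prop:pullback-square-handlebody-MCG}, all four vertical faces of this cube are pullback squares, and the bottom face (for mapping class groups) is a pullback by the established local standardness of the surface framework. A direct diagram chase then shows that the top face is a pullback: given an element $(\phi_{1},\phi_{2})\in\cH^{s}_{n-1,r}\times_{\cH^{s}_{n,r}}\cH^{s}_{n-1,r}$, its image in the mapping class groups lifts to some $\eta\in\MCG^{s}_{n-2,r}$ by the pullback property of the bottom face; this $\eta$ lies in $\cH^{s}_{n-2,r}$ by the pullback property of a vertical face, yielding the required lift, which is unique by injectivity of the stabilisation maps.

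\textbf{Case (c).} We adapt the previous argument by passage to the filtered colimit. By combining Definition~\ref{def:groupoids-handlebody-infinity} and Lemma~\ref{lem:right-module-infinite}, for each $n\ge 0$ the automorphism group $\Aut_{\overline{\sH}^{\ge 2}}(V^{s}_{\infty,r}\# V^{\# n}_{0,2})$ is naturally a filtered colimit of handlebody groups $\cH^{s}_{g,r}$ along the stabilisation maps $\sigma^{s}_{*,r}$, and this identification is compatible with the module structure. Consequently, the square \eqref{eq:pullback-standardness-LS2} in the infinite setting is obtained as the filtered colimit of the corresponding squares at each finite stage already treated in case (b). Since filtered colimits commute with finite limits in the category of groups, the infinite square is thus the colimit of pullback squares and hence itself a pullback, establishing Condition~\eqref{(LS2)}. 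Condition~\eqref{(LS1)} follows similarly: the description of morphisms in $\overline{\sH}^{\ge 2}$ (see Definition~\ref{def:groupoids-handlebody-infinity}) and of the infinite module structure (Lemma~\ref{lem:right-module-infinite}) shows that any hypothetical witnessing isomorphism $\chi$ of $V^{s}_{\infty,r}\# V_{0,2}$ must arise, via some compatible system, from an automorphism at a finite stage $\cH^{s}_{g,r}$ satisfying the corresponding finite equation, contradicting Condition~\eqref{(LS1)} for case (b).

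The main obstacle is in case (c), namely to carefully identify the square \eqref{eq:pullback-standardness-LS2} for the infinite object as a filtered colimit of the finite squares from case (b), in a manner compatible both with the braiding of $\sH^{\ge 2}$ and with the right module structure on $\overline{\sH}^{\ge 2}$ constructed in Lemma~\ref{lem:right-module-infinite}. Once this identification is established, the remaining steps are a routine application of the commutation of filtered colimits with finite limits in $\Grp$.
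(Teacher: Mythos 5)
Your proof of Condition~\eqref{(LS2)} in cases (a) and (b) is essentially the paper's own argument: reformulate via Lemma~\ref{lem:pullback-standardness-LS2}, form the cube over the analogous square for surface mapping class groups, use Proposition~\ref{prop:pullback-square-handlebody-MCG} for the vertical faces, and conclude by the pullback lemma / diagram chase. Where you genuinely diverge is Condition~\eqref{(LS1)} and the packaging of case (c). For \eqref{(LS1)} the paper does not pass to surfaces at all: it computes $H_{1}(V^{s}_{g,r}\natural_{i}Z_{1}\natural_{i}Z_{2},\cD_{r}\sqcup\cP_{s};\bZ)$ directly and observes that $\phi\natural_{i}\id_{Z_{2}}$ acts trivially on the class supported on $Z_{2}$ while the braiding does not; this is self-contained and works uniformly for $\natural$ and $\#$ and for all $r,s$. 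Your reduction via $\tti^{s}_{g+2,r}$ is sound in principle (the handlebody braiding does restrict to the surface braiding), but it makes the result conditional on \eqref{(LS1)} for the decorated surface frameworks — in particular for the $\#$-type stabilisation with $r$ marked discs and $s$ points — which is not literally available in the references you cite (RWW \S5.6 treats a more restricted decoration, and the disordered-arcs paper addresses injectivity/pullback-type statements rather than \eqref{(LS1)}); supplying it would amount to redoing on the boundary essentially the same first-homology computation the paper performs inside the handlebody, so the direct argument is both cheaper and safer. For case (c) the paper argues directly on the colimit Hom-sets: it uses that the structural maps $V^{s}_{i,r}\#V_{0,2}^{\#n}\to V^{s}_{\infty,r}\#V_{0,2}^{\#n}$ are monomorphisms to restrict a hypothetical equality (for \eqref{(LS1)}) or an equality $\theta^{n}(\varphi)=\theta^{n}(\varphi')$ (for \eqref{(LS2)}) to all sufficiently large finite stages, and then quotes case (b). Your version — identify the square \eqref{eq:pullback-standardness-LS2} for $V^{s}_{\infty,r}$ as a filtered colimit of the finite squares and use that filtered colimits commute with pullbacks in $\Grp$ — reaches the same conclusion, but note two points you have correctly flagged as the delicate ones: the colimit is along the \emph{left} stabilisation $V_{1,1}\natural(-)$ (the maps $\iota_{g}$), not $\sigma^{s}_{g,r}$, and the compatibility of this colimit with the horizontal maps $(-)\#\id_{V_{0,2}}$ and their braiding-conjugates is exactly the commutation of left-$\natural$ with right-$\#$ stabilisation (cf.\ Lemma~\ref{lem:sigma-bar-rho-commute}), together with the identification of $\Aut_{\overline{\sH}^{\ge 2}}$ of an infinite object with the colimit of the finite handlebody groups; the paper's route via monomorphic structural maps avoids having to set up this identification of squares. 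So: correct modulo supplying the surface \eqref{(LS1)} inputs (or, better, replacing that step by the direct $H_{1}$ argument), with the case (c) bookkeeping made explicit.
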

\begin{proof}
We first treat Condition~\eqref{(LS1)}. We take the conventions of Notation~\ref{nota:Z-V-V'}. It follows from an elementary computation that $H_{1}(V^{s}_{g,r}\natural_{i} Z_{1} \natural_{i} Z_{2}, \cD_{r}\sqcup \cP_{s} ;\bZ)$ is isomorphic to the free abelian group $\bZ^{2g+r-1+s}$, where each $Z_{i}$ supports the generator of exactly one copy of $\bZ$.
For any $\phi\in \Aut_{\sH^{\ge i}}(V^{s}_{g,r}\natural_{i} Z_{1})$, the morphism $\phi\natural_{i} \id_{Z_{2}}$ acts trivially on the homology class supported on $Z_{2}$, whereas the braiding $(b^{\sH^{\ge i}}_{Z_{1},Z_{2}})^{-1}$ acts non-trivially on it. Therefore the equivalence classes $[V^{s}_{g+1,r},\id_{V^{s}_{g,r}}\natural (b^{\sH^{\ge i}}_{Z_{1},Z_{2}})^{-1}]$ and $[V^{s}_{g+1,r},\id_{V^{s}_{g+2,r}}]$ are distinct in $\colim_{\sH^{\ge i}}\left(\Hom_{\sH^{\ge i}}(- \natural_{i} X,A_{2})\right)$, which proves Condition~\eqref{(LS1)} for the cases \eqref{item:first-stabilisation-standard} and \eqref{item:second-stabilisation-standard}.
Furthermore, if we assume that the equivalence classes $[V^{s}_{\infty,r}\# V_{0,2},\id_{V^{s}_{\infty,r}}\natural (b^{\sH^{\ge 2}}_{V_{0,2},V_{0,2}})^{-1}]$ and $[V^{s}_{\infty,r}\# V_{0,2},\id_{V^{s}_{\infty,r}}\# \id_{V_{0,2}} \# \id_{V_{0,2}}]$ are equal in $\colim_{\overline{\sH}^{\ge 2}}(\Hom_{\overline{\sH}^{\ge 2}}(- \# V_{0,2}, V^{s}_{\infty,r}\# V^{\# 2}_{0,2}))$, then there exists $\psi\in \Aut_{\overline{\sH}^{\ge 2}}(V^{s}_{\infty,r}\# V_{0,2})$ such that $\psi \# \id_{V_{0,2}}= \id_{V^{s}_{\infty,r}}\# (b^{\sH^{\ge 2}}_{V_{0,2},V_{0,2}})^{-1}$. Following Definition~\ref{def:groupoids-handlebody-infinity}, there exist an integer $j\ge 0$ and a set of morphisms $\{\psi_{g}\}_{g\ge j}$ with $\psi_{g}\in \Aut_{\sH^{\ge 2}}(V^{s}_{g,r}\# V_{0,2})$ making the corresponding diagrams of the form \eqref{eq:morphism-infty-handlebody} commutative, and such that $\psi=\colim_{g\ge j}(\psi_{g})$.
The maps $\iota_{i}(V^{s}_{0,r}\# V^{\# 2}_{0,2})$ are monomorphisms in the category $\langle \sH^{\ge 1},\sH^{\ge 1}]$ for all $i\ge 0$, so it directly follows from the explicit description of the form $\coprod_{g\ge 0} (V^{s}_{g,r}\#V^{\# 2}_{0,2})/ \sim$ of the colimit $V^{s}_{\infty,r}\# V_{0,2}^{\# 2}$ (see \cite[Prop.~2.13.3]{Borceux1}) that the structural map $\alpha_{i} \colon V^{s}_{i,r}\# V_{0,2}^{\# 2}\to V^{s}_{\infty,r}\# V_{0,2}^{\# 2}$ is a monomorphism for each $i\ge 0$. Restricting the equality $\psi \# \id_{V_{0,2}}= \id_{V^{s}_{\infty,r}}\# (b^{\sH^{\ge 2}}_{V_{0,2},V_{0,2}})^{-1}$ along each $\alpha_{g}$ for $g\ge j$, this implies that the morphisms $\psi_{g} \# \id_{V_{0,2}}$ and $\id_{V^{s}_{g,r}}\# (b^{\sH^{\ge 2}}_{V_{0,2},V_{0,2}})^{-1}$ are equal for each $g\ge j$. This contradicts the case \eqref{item:second-stabilisation-standard}, and so Condition~\eqref{(LS1)} holds for \eqref{item:infinite-stabilisation-standard}.

Let us now deal with the Condition~\eqref{(LS2)}. We start with the case \eqref{item:first-stabilisation-standard}. First, we note from the definitions of the morphisms and repeating verbatim the first part of the proof of Lemma~\ref{lem:pullback-standardness-LS2} that the following diagram is commutative for each $g\ge 0$ and $n\ge 2$
\begin{equation}\label{eq:pullback-standardness-LS2-HMCG}
\centering
\begin{split}
\begin{tikzpicture}
[x=1mm,y=1mm]
\node (tl) at (0,15) {$\cH^{s}_{g+n-2,r}$};
\node (tr) at (50,15) {$\cH^{s}_{g+n-1,r}$};
\node (bl) at (0,0) {$\cH^{s}_{g+n-1,r}$};
\node (br) at (50,0) {$\cH^{s}_{g+n,r}$.};
\draw[right hook->] (tl) to node[above,font=\small]{$\sigma^{s}_{g+n-1,r}$} (tr);
\draw[right hook->] (bl) to node[above,font=\small]{$\fC_{n}(V_{1,1},V_{1,1})\circ \sigma^{s}_{g+n,r}$} (br);
\draw[right hook->] (tl) to node[left,font=\small]{$\sigma^{s}_{g+n-1,r}$} (bl);
\draw[right hook->] (tr) to node[right,font=\small]{$\sigma^{s}_{g+n,r}$} (br);
\end{tikzpicture}
\end{split}
\end{equation}
It is a classical fact that the analogous diagram to \eqref{eq:pullback-standardness-LS2-HMCG} for the mapping class groups of surfaces (i.e.~replacing $\cH$ with $\MCG$, $\sigma$ with $\varsigma$, and $V$ with $\Sigma$), that we denote by $(\eqref{eq:pullback-standardness-LS2-HMCG}_{\MCG}$, is a pullback square; see \cite[Prop.~5.6.1]{RWW} or \cite[Prop.~4.11(2)]{PalmerWu} for instance. We consider the cubical commutative diagram $\eqref{eq:pullback-standardness-LS2-HMCG}\to \eqref{eq:pullback-standardness-LS2-HMCG}_{\MCG}$ induced by the canonical injections of the form $\tti^{s}_{g,r}$. The four commutative squares between \eqref{eq:pullback-standardness-LS2-HMCG} and $\eqref{eq:pullback-standardness-LS2-HMCG}_{\MCG}$ are of the form of diagram \eqref{eq:commutative-square-injections-sigma}, and so they are pullback squares by Proposition~\ref{prop:pullback-square-handlebody-MCG}. It then follows from  the Pullback Lemma (see \cite[Lem.~5.10.2.]{Awodey} for instance) that the commutative diagram~\eqref{eq:pullback-standardness-LS2-HMCG} is a pullback square, and we deduce from Lemma~\ref{lem:pullback-standardness-LS2} that Condition~\eqref{(LS2)} is satisfied for \eqref{item:first-stabilisation-standard}.
The proof for the case \eqref{item:second-stabilisation-standard} simply repeats mutatis mutandis the above arguments and reasoning, replacing $\sigma$ with $\rho$, $\varsigma$ with $\varrho$, $\natural$ with $\#$, $V_{1,1}$ with $V_{0,2}$, and \eqref{eq:commutative-square-injections-sigma} with \eqref{eq:commutative-square-injections-rho}. (In this case, that the analogue of $\eqref{eq:pullback-standardness-LS2-HMCG}_{\MCG}$ is a pullback square follows from a straightforward adaptation of \cite[Prop.~4.3]{PalmerWu} using $\#$ instead of $\natural$ for instance.)
Finally, for the case \eqref{item:infinite-stabilisation-standard}, let us first describe what is an element $\varphi$ of $\colim_{\sH^{\ge 2}}(\Hom_{\sH^{\ge 2}}(- \# V_{0,2},V^{s}_{\infty,r}\# V_{0,2}^{\# n-1}))$ for $n\ge 2$.
We deduce from Definition~\ref{def:groupoids-handlebody-infinity} that there exists an integer $j\ge 0$ and a set of morphisms $\{\varphi_{g}=[V^{s}_{g,r}\# V_{0,2}^{\# n-2},\phi_{g}]\}_{g\ge j}$ with $\phi_{g}\in \Aut_{\sH^{\ge 2}}(V^{s}_{g+n-1,r})$ making the following diagram commutative
such that $\psi=\colim_{g\ge j}(\psi_{g})$
\[
\begin{tikzcd}[column sep=.7em]
    V_{0,2}\arrow{d}{\varphi_{j}}\arrow[rrrrrr,"\id_{V_{0,2}}"]&&&&&&
    V_{0,2}\arrow{d}{\varphi_{j+1}}\arrow[rrrrrr,"\id_{V_{0,2}}"] &&&&&&
    \cdots \arrow[rrrrrr,"\id_{V_{0,2}}"] &&&&&&
    V_{0,2}\arrow{d}{\varphi_{g}} \arrow[rrrrrr,"\id_{V_{0,2}}"] &&&&&&
    \cdots
    \\
    V^{s}_{j+n-1,r}\arrow[rrrrrr,"\iota_{j+1}( V^{\# (n-1)}_{0,2})"] &&&&&&
    V^{s}_{j+n,r}\arrow[rrrrrr,"\iota_{j+2}( V^{\# (n-1)}_{0,2})"] &&&&&& 
    \cdots \arrow[rrrrrr,"\iota_{g}( V^{\# (n-1)}_{0,2})"] &&&&&&
    V^{s}_{g+n-1,r}\arrow[rrrrrr,"\iota_{g+1}( V^{\# (n-1)}_{0,2})"]  &&&&&&
    \cdots.
\end{tikzcd}
\]
Furthermore, similarly to the above proof of Condition~\eqref{(LS1)} for \eqref{item:infinite-stabilisation-standard}, we again note that the structural map $V^{s}_{i,r}\# V_{0,2}^{\# n}\to V^{s}_{\infty,r}\# V_{0,2}^{\# n}$ is a monomorphism for each $i\ge 0$, essentially because since map $\iota_{g}(V^{\# n}_{0,2})$ is a monomorphism and thanks to the description \cite[Prop.~2.13.3]{Borceux1} of $V^{s}_{\infty,r}\# V_{0,2}^{\# n}$.

Now, if $\theta^{n}_{V^{s}_{\infty,r},V_{0,2}}(\varphi)=\theta^{n}_{V^{s}_{\infty,r},V_{0,2}}(\varphi')$ for $\varphi,\varphi'\in\colim_{\sH^{\ge 2}}(\Hom_{\sH^{\ge 2}}(- \# V_{0,2},V^{s}_{\infty,r}\# V_{0,2}^{\# (n-1)}))$, it follows from the above description and the restriction along the structural maps that there exists $J\ge j\ge 0$ such that $\theta^{n}_{V^{s}_{g,r},V_{0,2}}(\varphi_{g})=\theta^{n}_{V^{s}_{g,r},V_{0,2}}(\varphi'_{g})$ for each $g\ge J$. 
Since the map $\theta^{n}_{V^{s}_{g,r},V_{0,2}}$ is injective by Proposition~\ref{prop:local-standardness}\eqref{item:second-stabilisation-standard}, we deduce that $\varphi_{g}=\varphi'_{g}$ for all $g\ge J$, and so $\varphi=\varphi'$ by the universal property of a colimit. Hence Condition~\eqref{(LS2)} holds for \eqref{item:infinite-stabilisation-standard}.
\end{proof}

\section{Homological stability with twisted coefficients}\label{s:homological_stability}

In this section we prove Theorem~\ref{thm:mainA} and Theorem~\ref{thm:mainB}.
Given the categorical properties we have verified in \S\ref{sss:hom-stdness-properties}, we recall from \S\ref{sss:HS-meta-theorem} that proving Theorems~\ref{thm:mainA} and \ref{thm:mainB} in both cases boils down to showing that an associated semi-simplicial set is sufficiently highly connected. In both cases, we are able to obtain the required connectivity from results of \cite{HatcherWahl}.
We start with Theorem~\ref{thm:mainA}, concerning the standard genus stabilisation for surfaces, utilising the monoidal structure $\natural$; see \S\ref{ss:handle-stabilisation-natural}. We next prove twisted homological stability for the second, less standard, genus stabilisation mentioned in the introduction, which comes from the monoidal structure $\#$; see \S\ref{ss:handle-stabilisation-sharp}. Combining these two homological stability results, Theorem~\ref{thm:mainB} follows as a corollary; see \S\ref{ss:HS-marked-discs}.

\subsection{Handle stabilisation with respect to $\natural$}\label{ss:handle-stabilisation-natural}

We consider the standard stabilisation map given by attaching the genus-one handlebody with a marked disc $V_{1,1}$ via the operation $\natural$. For all \S\ref{ss:handle-stabilisation-natural}, we fix integers $r\ge 1$ and $s\ge 0$ and write for brevity $\sH^{\ge 1}_{r,s}$ (resp.~$\sH^{\ge 1}_{1}$) for $\sH^{\ge 1}_{V^{s}_{0,r},V_{1,1}}$ (resp.~$\sH^{\ge 1}_{V_{1,1}}$). In terms of our categorical framework set in \S\ref{sss:categorical_framework}, we work in the category $[ \sH^{\ge 1}_{r,s}, \sH^{\ge 1}_{1} \rangle$ defined by the Quillen bracket construction (see \S\ref{sss:Quillen-bracket-construction}) applied to the right-module $(\sH^{\ge 1}_{r,s},\natural)$ over the braided monoidal category $(\sH^{\ge 1}_{1},\natural,I_{1})$ (see \S\ref{sss:groupoids-handlebodies}).
We recall that the objects of $[ \sH^{\ge 1}_{r,s}, \sH^{\ge 1}_{1} \rangle$ are the handlebodies $V^{s}_{0,r}\natural V_{1,1}^{\natural g} \cong V^{s}_{g,r}$ (see \eqref{eq:natural-iso-decomposition}) for all $g\ge 0$. In particular, we have isomorphisms (where the second one holds by Corollary~\ref{coro:maximal-subgroupoid-of-UGH}):
\[
\cH^{s}_{g,r}\cong\Aut_{\sH^{\ge 1}_{1}}(V^{s}_{g,r})\cong\Aut_{[ \sH^{\ge 1}_{r,s}, \sH^{\ge 1}_{1} \rangle}(V^{s}_{g,r}).
\]
The stabilisation map is then defined by the group injection $\sigma^{s}_{g,r}:=(-)_{g}\natural\id_{V_{1,1}}\colon \cH^{s}_{g,r}\hookrightarrow\cH^{s}_{g+1,r}$; see \eqref{eq:canonical_stabilisation-natural}.
Furthermore, we write $c^{s}_{g,r}$ for the canonical morphism $[\id_{V^{s}_{g+1,r}},V_{1,1}]\colon V^{s}_{g,r} \to V^{s}_{g+1,r}$ of $[ \sH^{\ge 1}_{r,s}, \sH^{\ge 1}_{1} \rangle$. The following twisted homological stability result, whose proof is done in \S\ref{sss:proof-HS-1}, is a more precise version of Theorem~\ref{thm:mainA}:
\begin{thm}\label{thm:HS_stabilistation-1}
We fix integers $r\ge 1$ and $s\ge 0$. Let $F\colon [ \sH^{\ge 1}_{r,s},\sH^{\ge 1}_{1} \rangle\to \Ab$ be a coefficient system of degree $d$ at $0$ (see Definition~\ref{def:finite-deg-coeff-system}). Then
\[
(\sigma^{s}_{g,r};F(c^{s}_{g,r}))_{i}\colon H_{i}(\cH^{s}_{g,r};F(V^{s}_{g,r}))\to H_{i}(\cH^{s}_{g+1,r};F(V^{s}_{g+1,r}))
\]
is an isomorphism for $i\le \frac{g-1}{2}-d-1$. If $F$ is split, it is an isomorphism for $i\le\frac{g-1-d}{2}-1$.
\end{thm}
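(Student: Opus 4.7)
The plan is to apply the meta-theorem~\ref{thm:RWW-main-thm} to the right $(\sH^{\ge 1}_{1},\natural,I_{1})$-module $(\sH^{\ge 1}_{r,s},\natural)$ at the pair $(V^{s}_{0,r}, V_{1,1})$, with parameters $k=2$ and $a=3$. With these parameters, the two stability ranges in Theorem~\ref{thm:RWW-main-thm} specialise exactly to those of the present statement, so the entire proof reduces to verifying that, for every $n\geq 1$, the semi-simplicial set $W_{n}(V^{s}_{0,r}, V_{1,1})_{\bullet}$ is at least $\frac{n-3}{2}$-connected.

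The remaining hypotheses are already in place: local homogeneity at $(V^{s}_{0,r}, V_{1,1})$ was established in Proposition~\ref{prop:homogeneity-handlebody}\eqref{item:first-stabilisation} and local standardness at the same pair in Proposition~\ref{prop:local-standardness}\eqref{item:first-stabilisation-standard}. By Proposition~\ref{prop:LS-iff-Wn-determined-by-vertices}, each simplex of $W_{n}(V^{s}_{0,r}, V_{1,1})_{\bullet}$ is then determined by its ordered set of pairwise distinct vertices, and the standard argument reproducing that of \cite[Prop.~2.14]{RWW} ensures that the canonical projection $\pi\colon |W_{n}(V^{s}_{0,r}, V_{1,1})_{\bullet}|\to |S_{n}(V^{s}_{0,r}, V_{1,1})_{\bullet}|$ of Notation~\ref{nota:simplicial-complex-destabilisation} is a weak homotopy equivalence. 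It therefore suffices to show that the simplicial complex $S_{n}(V^{s}_{0,r}, V_{1,1})_{\bullet}$ is $\frac{n-3}{2}$-connected.

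The next step is to give a geometric interpretation of this combinatorial complex. Using the cancellation property of Definition~\ref{def:homogeneity}\eqref{(C)}, I would identify a vertex of $S_{n}(V^{s}_{0,r}, V_{1,1})_{\bullet}$ with the isotopy class of a properly embedded handlebody $V_{1,1}\hookrightarrow V^{s}_{n,r}$, attached to $\partial V^{s}_{n,r}$ along the right half of the first marked disc, whose complement is diffeomorphic to $V^{s}_{n-1,r}$; a $p$-simplex is then a disjoint system of $p+1$ such tethered handles. Equivalently, each simplex corresponds to a system of properly embedded non-separating meridional discs in $V^{s}_{n,r}$ with a common footprint on the first marked disc. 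This is exactly the kind of combinatorial data whose high connectivity is analysed by \textcite{HatcherWahl}; the additional marked discs and points in $\partial V_{g}$ play only a passive role, as they lie away from the support of the cutting operations and therefore do not disturb the connectivity estimates.

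The main obstacle is the precise identification of $S_{n}(V^{s}_{0,r}, V_{1,1})_{\bullet}$ with one of the complexes of disc systems considered in \cite{HatcherWahl}, together with the transfer of the $\frac{n-3}{2}$-connectivity bound across this identification. Once carried out, Theorem~\ref{thm:RWW-main-thm} applied with $k=2$ and $a=3$ yields both stability ranges of the statement, completing the proof.
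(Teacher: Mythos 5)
Your overall strategy coincides with the paper's: apply Theorem~\ref{thm:RWW-main-thm} to $(\sH^{\ge 1}_{r,s},\natural)$ over $(\sH^{\ge 1}_{1},\natural,I_{1})$ at $(V^{s}_{0,r},V_{1,1})$ with $a=3$, $k=2$, quote Propositions~\ref{prop:no-0-divisors-no-automorphisms-of-0}, \ref{prop:homogeneity-handlebody}\eqref{item:first-stabilisation} and \ref{prop:local-standardness}\eqref{item:first-stabilisation-standard}, and obtain the $\tfrac{g-3}{2}$-connectivity of $W_{g}(V^{s}_{0,r},V_{1,1})_{\bullet}$ from Hatcher--Wahl. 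However, the step you defer as ``the main obstacle'' --- the precise identification of $S_{g}(V^{s}_{0,r},V_{1,1})_{\bullet}$ with the Hatcher--Wahl complex and the transfer of the connectivity bound --- is exactly the technical content of the paper's proof, so the proposal as written does not yet establish the theorem. Concretely, the paper must (i) extend Hatcher--Wahl's connectivity statement to the situation with marked points, via the isomorphism $Y^{A}(V,\cD_{r}\sqcup\cP_{s},x_{0},x_{0})\cong Y^{A}(V,\cD_{r+s},x_{0},x_{0})$ of Lemma~\ref{lem:iso-simplicial-complex-marked-points} (your remark that the marked points ``play only a passive role'' is the correct intuition but is not a proof), and (ii) construct the simplicial map $\Phi^{s}_{g,r}$ from a fixed tethered-disc embedding $\Phi_{0}\colon I+\bD^{2}\to V_{1,1}$ and prove it is an isomorphism; the injectivity there is not formal --- it uses that a mapping class of $V_{1,1}$ fixing the tethered disc lies in $\cH^{2}_{0,1}\hookrightarrow\MCG^{2}_{0,1}\cong\bZ$, generated by a meridian twist, and cannot be a nontrivial power of that twist because it fixes the longitude, hence is trivial. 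None of this is supplied or even sketched in your proposal.

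There is also a genuine problem with your passage from $S_{g}$ to $W_{g}$. Local standardness together with Proposition~\ref{prop:LS-iff-Wn-determined-by-vertices} only says that a simplex of $W_{g}$ is determined by its \emph{ordered} tuple of distinct vertices; it does not make $\pi\colon|W_{g}(V^{s}_{0,r},V_{1,1})_{\bullet}|\to|S_{g}(V^{s}_{0,r},V_{1,1})_{\bullet}|$ a weak homotopy equivalence in general, since a single simplex of the complex could admit many ordered lifts (compare the complex of injective words versus a simplex in the symmetric-group case), and the cited mechanism from \cite{RWW} for comparing the two is a connectivity-transfer statement requiring weak Cohen--Macaulayness of $S_{g}$ (hence control of links), which you do not address. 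The paper's Proposition~\ref{prop:stab1-W_g-S_g-homeomorphic} avoids this by a geometric observation you omit: the orientation near the first marked disc induces a canonical order on the vertices of any simplex of $S_{g}$, so each simplex has exactly one lift, $\pi_{p}$ is a bijection in every degree, and $\pi$ is a homeomorphism. Either supply this canonical-ordering argument or replace it by a genuine wCM argument for the Hatcher--Wahl complex; as it stands the claimed weak equivalence is unjustified.
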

\begin{eg}\label{eg:HS-examples-recover}
Theorem~\ref{thm:HS_stabilistation-1} is a generalisation of the homological stability result \cite[Th.~5.31]{RWW} to any $s\ge 0$. Namely, assigning $s=0$, Theorem~\ref{thm:HS_stabilistation-1} then recovers the following previous works:
\begin{itemizeb}
    \item \cite[Th.~1.8(i)]{HatcherWahl} (for handlebodies) by assigning $F=\bZ$ (see Example~\ref{eg:Z_H_poly_functor});
    \item \cite[Th.~1.2]{IshidaSato} by assigning $r=1$ and $F=H_{\Sigma}$ (see Proposition~\ref{prop:H-Sigma-V_functors});
    \item \cite[Th.~5.31]{RWW} by assigning $r=1$ and for any finite degree coefficient system $F$.
\end{itemizeb}
Moreover, for fixed $r\ge 1$ and $s\ge 0$, the functors $T^{d}H_{\Sigma}$ and $T^{d}H_{\cV}$, which encode the homological representations $\{H^{\otimes d}_{1}(\partial V\setminus  (\mathring{\cD}_{r}\sqcup\cP_{s});\bZ)\}_{g\ge0}$ and $\{H^{\otimes d}_{1}(V, \cD_{r}\sqcup\cP_{s};\bZ)\}_{g\ge0}$ of the handlebody groups $\{\cH^{s}_{g,r}\}_{g\ge0}$ respectively, are split coefficient systems at $V^{s}_{0,r}$ of finite degree $d \ge 0$; see Proposition~\ref{prop:H-Sigma-V_functors}. Twisted homological stability with these coefficients thus follows from Theorem~\ref{thm:HS_stabilistation-1}.
\end{eg}

\subsubsection{Properties of the simplicial complex of destabilisations}\label{sss:connectivity-ss}

In order to prove Theorem~\ref{thm:HS_stabilistation-1}, we first study the simplicial complex of destabilisations introduced in Notation~\ref{nota:simplicial-complex-destabilisation} involved in this context. Namely, we prove that this simplicial complex is isomorphic to a simplicial complex originally defined by Hatcher and Wahl \cite{HatcherWahl} (see Proposition~\ref{prop:stab1-S-g-iso-Y}), which satisfies a high-connectivity property (see Theorem~\ref{thm:HatcherWahl-connectivity1}).

\paragraph*{An alternative simplicial complex.}
First, here follows the simplicial complex following that introduced in \cite[\S8.2]{HatcherWahl}:
\begin{defn}\label{def:sc-stabilisation}
We consider an object $(V,r,s,i,j)$ of $\sH$ where $V$ is a compact handlebody, the set $\Image(i)\sqcup\Image(j)=\cD_{r}\sqcup\cP_{s}$ and we fix points $x_{0},x_{1}\in\partial\cD_{r}$ (where we allow $x_{0}=x_{1}$). Recalling that $I$ denotes the unit interval $[0,1]$, let $I+\bD^{2}$ be the space defined from the union $I\sqcup \bD^{2}$ by identifying $1/2\in I$ with some base-point $p_{0}\in\partial\bD^{2}$. We consider maps $f\colon I+\bD^{2}\to V$ such that:
\begin{itemizeb}
    \item the restriction of $f$ to $\mathring{I}+\bD^{2}$ is an embedding into $V\setminus \cD_{r}\sqcup\cP_{s}$, while the restrictions of $f$ to $I$ and $\bD^{2}$ are smooth; 
    \item $f(I)$ intersects $f(\partial\bD^{2})$ transversely and $f(\partial\bD^{2})$ does not separate $\partial V$;
    \item $f(I+\partial\bD^{2})\subset\partial V$ intersects $\cD_{r}\sqcup\cP_{s}$ only at $f(0)=x_{0}$ and $f(1)=x_{1}$.
\end{itemizeb}
We define $Y^{A}(V,\cD_{r}\sqcup\cP_{s},x_{0},x_{1})$ to be the simplicial complex whose vertices are isotopy classes of such maps $f$, where $(f_{0},\ldots,f_{p})$ forms a $p$-simplex if:
\begin{itemizeb}
    \item the maps have pairwise disjoint images (outside of the endpoints);
    \item the union $f_{0}(\partial\bD^{2})\cup\cdots\cup f_{p}(\partial\bD^{2})$ is a \emph{coconnected} system of discs in $V$, i.e.~it does not separate $\partial V$.
    \end{itemizeb}
\end{defn}
Hatcher and Wahl do not consider marked points in \cite[\S8.2]{HatcherWahl}, i.e.~they consider the simplicial complex of Definition~\ref{def:sc-stabilisation} with the assignment $\cP_{s}=\emptyset$. However, their restricted definition recovers our more general one as follows:
\begin{lem}\label{lem:iso-simplicial-complex-marked-points}
For any object $(V,r,s,i,j)$ of $\sH$, there is an isomorphism of simplicial complexes
\[
\Psi^{s}_{V,r} \colon Y^{A}(V,\cD_{r}\sqcup\cP_{s},x_{0},x_{1}) \overset{\cong}{\longrightarrow} Y^{A}(V,\cD_{r+s},x_{0},x_{1}).
\]
\end{lem}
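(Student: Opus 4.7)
The plan is to realise $\Psi^{s}_{V,r}$ via a fixed choice of disc thickenings of the marked points in $\partial V$, exploiting the fact that in a surface, avoiding a point and avoiding a small disc around that point are equivalent up to isotopy. Concretely, I would start by fixing pairwise disjoint, smoothly embedded closed discs $D_{1},\ldots,D_{s}\subset \partial V\setminus\cD_{r}$ with $p_{k}\in \mathring{D}_{k}$; setting $\cD_{r+s}:=\cD_{r}\sqcup\bigsqcup_{k=1}^{s} D_{k}$ then provides the boundary data of an object of $\sH$ with $r+s$ marked discs and no marked points, against which $Y^{A}(V,\cD_{r+s},x_{0},x_{1})$ is defined (note that $x_{0},x_{1}\in\partial\cD_{r}\subset\partial\cD_{r+s}$, so the endpoint conditions match).

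The forgetful direction $\Phi\colon Y^{A}(V,\cD_{r+s},x_{0},x_{1})\to Y^{A}(V,\cD_{r}\sqcup\cP_{s},x_{0},x_{1})$ is essentially tautological: since $\cP_{s}\subset\bigsqcup_{k} D_{k}\subset \cD_{r+s}$, any map $f\colon I+\bD^{2}\to V$ representing a vertex of the target complex automatically satisfies the avoidance conditions of Definition~\ref{def:sc-stabilisation} for $\cD_{r}\sqcup\cP_{s}$, and the simplex conditions (pairwise disjointness of images and coconnectedness of the boundary curves in $\partial V$) do not depend on the marked data. To go the other way, given a vertex $[f]$ of the source complex whose arc $f(I)$ and curve $f(\partial\bD^{2})$ in $\partial V$ avoid the points $p_{k}$, I would perform a small ambient isotopy of $\partial V$ supported in a neighbourhood of each $p_{k}$ to push these 1-submanifolds off each disc $D_{k}$, and extend by the identity to $V$; this produces a representative which is a vertex of $Y^{A}(V,\cD_{r+s},x_{0},x_{1})$. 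By construction, $\Phi$ and the candidate $\Psi^{s}_{V,r}$ are both the identity at the level of underlying maps, so they will be mutual inverses provided both are well-defined on isotopy classes.

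The main obstacle is thus the well-definedness of $\Psi^{s}_{V,r}$: any isotopy between two such representatives through maps avoiding only the points $p_{k}$ must be refinable to an isotopy through maps avoiding the whole discs $D_{k}$. This is a standard parametric isotopy extension argument in the surface $\partial V$: the inclusion $\partial V\setminus\bigsqcup_{k} D_{k}\hookrightarrow\partial V\setminus\cP_{s}$ is a homotopy equivalence (each punctured neighbourhood $D_{k}\setminus\{p_{k}\}$ deformation retracts onto $\partial D_{k}$), and one can compose any admissible isotopy of arcs and curves with an ambient isotopy of $\partial V$, fixed near $x_{0},x_{1}$ and supported in slightly larger neighbourhoods of the $p_{k}$, which shrinks each $D_{k}$ into itself. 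The same argument applies simultaneously to the finitely many pairwise disjoint arcs and boundary curves representing a given simplex, so the construction extends compatibly to simplices and commutes with the face maps. Once this refinement is in place, the verifications that $\Phi\circ\Psi^{s}_{V,r}=\id$ and $\Psi^{s}_{V,r}\circ\Phi=\id$, as well as the preservation of the simplex conditions, are immediate, and $\Psi^{s}_{V,r}$ is the desired simplicial isomorphism.
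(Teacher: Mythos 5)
Your proposal is correct and follows essentially the same route as the paper: one direction is the forgetful map (a configuration avoiding the discs automatically avoids the marked points, recovered in the paper by recentring each extra disc at a marked point), and the other pushes the arcs and boundary curves off small discs chosen around the marked points by an isotopy supported near those points. The only difference is cosmetic — you fix the thickening discs once and for all and spell out the well-definedness on isotopy classes (refining point-avoiding isotopies to disc-avoiding ones), which the paper dismisses as a straightforward routine check.
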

\begin{proof}
For any $f\in Y^{A}(V,\cD_{r}\sqcup\cP_{s},x_{0},x_{1})$, we define $\Psi^{s}_{V,r}(f)$ as follows. By the conditions of Definition~\ref{def:sc-stabilisation}, the set $\Image(f)$ does not intersect $\cP_{s}$ and, for each marked point $p_{i}\in \cP_{s}$, we may choose an open neighbourhood $\cN(p_{i})\subset V$ of $p_{i}$ such that the map $f$ is a smooth embedding in $\cN(p_{i})$. Also, since $V$ is a smooth manifold with boundary, we may pick a closed disc $\bD^{2}_{p_{i}}\subset \partial V \cap \cN(p_{i})$ centered on $p_{i}$ for each $1\le i\le s$.
Therefore, either $\Image(f)\cap \bD^{2}_{p_{i}} = \emptyset$, or we may apply an isotopy of $\cN(p_{i})$ to $f$ defining isotopic map $f'$ so that $\Image(f')$ does not intersect $\bD^{2}_{p_{i}}$. Denoting by $[f]$ the isotopy class of $f$, we then assign $\Psi^{s}_{V,r}(f)$ to be the isotopy class of the map obtained by this procedure. It is a straightforward routine to check from the conditions of Definition~\ref{def:sc-stabilisation} that $\Psi^{s}_{V,r}(f)$ belongs to $Y^{A}(V,\cD_{r+s},x_{0},x_{1})$, and $\Psi^{s}_{V,r}([f])=\Psi^{s}_{V,r}([g])$ for $g\colon I+\bD^{2}\to V$ such that $[f]=[g]$, and so $\Psi^{s}_{V,r}$ is a well-defined morphism of simplicial complexes. Now, the inverse map of $\Psi^{s}_{V,r}$ is the obvious morphism of simplicial complexes induced by fixing the marked point $p_{i}$ to be center of the marked disc $\cD^{i}_{V}$ and forgetting all the other points $\cD^{i}_{V}\setminus p_{i}$ for each $1\le i\le s$, and so the map $\Psi^{s}_{V,r}$ is clearly bijective.
\end{proof}
Then, the following fundamental connectivity property is a direct consequence of \cite[Thm.~8.5]{HatcherWahl}, using the isomorphism of simplicial complexes of Lemma~\ref{lem:iso-simplicial-complex-marked-points}:
\begin{thm}\label{thm:HatcherWahl-connectivity1}
Let $(V,r,s,i,j)$ be an object of $\sH$, $\cD_{r}\sqcup\cP_{s}$ and $x_{0}=x_{1}\in\partial\cD_{r}$ as in Definition~\ref{def:sc-stabilisation}. For $h\ge 0$ the number of $\bS^{1}\times \bD^{2}$-boundary connected summands in $V$, the simplicial complex $Y^{A}(V,\cD_{r}\sqcup\cP_{s},x_{0},x_{0})$ is $\frac{h-3}{2}$-connected.
\end{thm}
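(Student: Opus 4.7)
The plan is to reduce the statement to the analogous connectivity result of Hatcher and Wahl \cite[Thm.~8.5]{HatcherWahl}, which treats the case where there are no marked points (i.e.~$\cP_{s}=\emptyset$). Since the simplicial complex isomorphism $\Psi^{s}_{V,r}$ of Lemma~\ref{lem:iso-simplicial-complex-marked-points} identifies $Y^{A}(V,\cD_{r}\sqcup\cP_{s},x_{0},x_{0})$ with $Y^{A}(V,\cD_{r+s},x_{0},x_{0})$, and connectivity is a property of the underlying homotopy type, the conclusion for the former will follow immediately from that for the latter.

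More precisely, first I would observe that \cite[Thm.~8.5]{HatcherWahl} is stated in their notation for a handlebody $V$ with a collection of disjoint marked discs on its boundary (but no marked points), distinguishing a basepoint $x_{0}$ on the boundary of one such disc; it asserts that the corresponding arc-disc complex is $\tfrac{h-3}{2}$-connected, where $h$ is the genus of $V$. Applied to our situation with $r+s$ marked discs $\cD_{r+s}$, this gives the $\tfrac{h-3}{2}$-connectivity of $Y^{A}(V,\cD_{r+s},x_{0},x_{0})$.

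Next, by Lemma~\ref{lem:iso-simplicial-complex-marked-points}, the map
\[
\Psi^{s}_{V,r}\colon Y^{A}(V,\cD_{r}\sqcup\cP_{s},x_{0},x_{0}) \overset{\cong}{\longrightarrow} Y^{A}(V,\cD_{r+s},x_{0},x_{0})
\]
is an isomorphism of simplicial complexes, hence induces a homeomorphism between their geometric realisations. Transporting the connectivity statement across this homeomorphism yields that $Y^{A}(V,\cD_{r}\sqcup\cP_{s},x_{0},x_{0})$ is itself $\tfrac{h-3}{2}$-connected, completing the argument.

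The only conceivable obstacle is a mismatch of conventions between our setup and that of \cite[\S8.2]{HatcherWahl}: one should double-check that the genus parameter used by Hatcher and Wahl coincides with the number $h$ of $\bS^{1}\times\bD^{2}$ boundary connected summands in our $V$, and that the hypotheses on $x_{0}=x_{1}$ lying on the boundary of one of the marked discs match their ``basepoint'' setup. Once these translations of vocabulary are verified, no further work is required.
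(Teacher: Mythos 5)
Your proposal is correct and is exactly the paper's argument: the theorem is stated there as a direct consequence of \cite[Thm.~8.5]{HatcherWahl} combined with the simplicial complex isomorphism $\Psi^{s}_{V,r}$ of Lemma~\ref{lem:iso-simplicial-complex-marked-points}, which is precisely the reduction you describe. Your closing remark about matching conventions (genus versus number of $\bS^{1}\times\bD^{2}$ boundary connected summands, and the basepoint on $\partial\cD_{r}$) is the only translation needed, and it goes through without issue.
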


\paragraph*{A simplicial complex isomorphism.}
Another key property of the simplicial complex of Definition~\ref{def:sc-stabilisation} is that it is isomorphic to the destabilisation complex of Notation~\ref{nota:simplicial-complex-destabilisation} in this context. From now on, we fix an integer $g\ge 1$ for the rest of \S\ref{sss:connectivity-ss}, and we set $x_{0}=x_{1}$ as the image of the point $(1,0)\in\bD^{2}$ under the parametrisation $i_{1}$ of the first marked disc $\cD_{V^{s}_{g,r}}^{1}$ in $\partial V^{s}_{g,r}$. We define a map of simplicial complexes
\begin{equation}\label{eq:iso-simplicial-complexes}
\Phi^{s}_{g,r}\colon S_{g}(V^{s}_{0,r},V_{1,1})_{\bullet}\to Y^{A}(V_{g},\cD_{r}\sqcup\cP_{s},x_{0},x_{0}),
\end{equation}
as follows. First, let us once and for all choose a map $\Phi_{0}\colon I+\bD^{2}\to V_{1,1}$ satisfying the conditions of Definition~\ref{def:sc-stabilisation}. Namely, we may assume that the parametrisation map $i_{1}\colon \bD^{2}\to\partial V_{1,1}$ defining the marked disc $\cD^{1}_{V^{s}_{g,r}}$ to be such that if $x_{0}=i_{1}(1,0)=(t,y)\in \bS^{1}\times\partial\bD^{2}$, then $\bS^{1}\times\{y\}$ does not intersect $\Image(i_{1})$ elsewhere. Then, we define
\begin{align*}
    \Phi_{0}(x)=\begin{cases}
        (x,y)&\text{ if }x\in I\\
        (1/2,x)&\text{ if }x\in\bD^{2}.
    \end{cases}
\end{align*}
We recall from Notation~\ref{nota:simplicial-complex-destabilisation} that a vertex of $S_{g}(V^{s}_{0,r},V_{1,1})_{\bullet}$ is a morphism $[B,\phi]$ in the colimit over $\sH^{\ge 1}_{r,s}$ of $\Hom_{\sH^{\ge 1}_{r,s}}(- \odot V_{1,1},V^{s}_{g,r})$, i.e.~an equivalence class of pairs $(B,\phi)$ where $\phi\colon B\natural V_{1,1}\to V^{s}_{g,r}$ is a morphism in $\sH^{\ge 1}_{r,s}$. We denote by $\Phi'_{0}$ the isotopy class of the map $I+\bD^{2}\to B\natural V_{1,1}$ defined by postcomposing the above map $\Phi_{0}\colon I+\bD^{2}\to V_{1,1}$ with the canonical embedding $V_{1,1}\hookrightarrow B\natural V_{1,1}$ induced by the boundary connected sum $\natural$. 
\begin{lem}\label{lem:Phi-well-defined}
The assignment $[B,\phi] \mapsto \phi\circ \Phi'_{0}$ for the map \eqref{eq:iso-simplicial-complexes} defines a morphism of simplicial complexes.
\end{lem}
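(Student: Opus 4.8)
The plan is to verify that the assignment $[B,\phi] \mapsto \phi\circ\Phi_0'$ is (i) well-defined on equivalence classes and on isotopy classes, and (ii) compatible with the simplicial structure, i.e.\ sends simplices to simplices.

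\textbf{Well-definedness.} First I would check independence of the choice of representative $(B,\phi)$ of the equivalence class. If $(B_1,\phi_1)$ and $(B_2,\phi_2)$ are equivalent, there is an isomorphism $\chi\in\Hom_{\sH^{\ge 1}_{r,s}}(B_1,B_2)$ with $\phi_1 = \phi_2\circ(\chi\natural\id_{V_{1,1}})$. Since $\chi\natural\id_{V_{1,1}}$ restricts to the identity on the embedded copy of $V_{1,1}$ inside $B_i\natural V_{1,1}$ (by the definition of $\natural$ on morphisms, which extends by the identity near the gluing disc), we get $(\chi\natural\id_{V_{1,1}})\circ\Phi'_0 = \Phi'_0$ up to isotopy, hence $\phi_1\circ\Phi'_0 = \phi_2\circ(\chi\natural\id_{V_{1,1}})\circ\Phi'_0 = \phi_2\circ\Phi'_0$ in $Y^{A}$. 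Next, one must check that the image is actually a vertex of $Y^A(V_g,\cD_r\sqcup\cP_s,x_0,x_0)$: the three conditions of Definition~\ref{def:sc-stabilisation} need verifying for $\phi\circ\Phi_0'$. The embedding condition and smoothness are automatic since $\Phi_0$ satisfies them in $V_{1,1}$ and $\phi$ is a diffeomorphism; the transversality of $f(I)$ with $f(\partial\bD^2)$ is preserved by diffeomorphisms; and $f(I+\partial\bD^2)$ meets $\cD_r\sqcup\cP_s$ only at the prescribed point $x_0$ because $\Phi_0$ was chosen so that its boundary part lies in the curve $\bS^1\times\{y\}$, which meets $\Image(i_1)$ only at $x_0$, and because $\phi$ fixes $\cD_r\sqcup\cP_s$. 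The one genuinely substantive point is that $f(\partial\bD^2)$ does not separate $\partial V_g$: this holds because $\Phi_0(\partial\bD^2)$ bounds a nonseparating disc (the meridian disc of the handle $V_{1,1}$, up to the obvious identification) and $\phi$ carries nonseparating discs to nonseparating discs.

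\textbf{Simplicial compatibility.} For a $p$-simplex of $S_g(V^s_{0,r},V_{1,1})_\bullet$ represented by $[B,\phi]$ with $\phi\colon B\natural V_{1,1}^{\natural p+1}\to V^s_{g,r}$, its vertices are $[B_i,\phi_i] = [B,\phi]\circ[V_{1,1}^{\natural p},\id_{V_{1,1}^{\natural i}}\natural(b^{\sH^{\ge 1}_1}_{V_{1,1},V_{1,1}^{\natural p-i}})^{-1}]$. Applying the map, each vertex goes to $\phi$ postcomposed with the image of $\Phi_0'$ under the appropriate braiding-and-inclusion morphism, which is just the copy of $\Phi_0$ sitting in the $i$-th of the $p+1$ boundary-connected summands of $V_{1,1}$. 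I would argue that these $p+1$ maps have pairwise disjoint images outside endpoints (the summands are glued along disjoint half-discs, so the standard arcs-and-discs live in disjoint regions), and that the union of their boundary circles $\phi(\Phi_0^{(0)}(\partial\bD^2))\cup\cdots\cup\phi(\Phi_0^{(p)}(\partial\bD^2))$ is coconnected in $V_g$, i.e.\ does not separate $\partial V_g$ --- again because the corresponding union of meridian discs in $V_{1,1}^{\natural p+1}$ is coconnected and $\phi$ preserves this. Thus the image tuple spans a $p$-simplex of $Y^A$. Finally, compatibility with face maps follows because both sides are defined via restriction along the braiding morphisms $[V_{1,1},(b^{\sH^{\ge 1}_1}_{V_{1,1}^{\odot i},V_{1,1}})^{-1}\natural\id]$, so the diagram commutes on the nose once the vertex-level computation above is in hand.

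\textbf{Main obstacle.} The routine parts are the algebraic bookkeeping of equivalence classes and braidings; the part requiring genuine care is the geometric verification that the non-separating / coconnectedness conditions of Definition~\ref{def:sc-stabilisation} transport correctly --- both that a single $\phi\circ\Phi'_0$ lands among the admissible vertices, and that a simplex's worth of them forms a coconnected system. This hinges on the (standard but not entirely trivial) fact that under the identification $V^s_{0,r}\natural V_{1,1}^{\natural g}\cong V^s_{g,r}$ the standard arcs-with-discs $\Phi_0$ in the successive $V_{1,1}$ summands realize exactly the picture Hatcher--Wahl use, so that coconnectedness in $V_{1,1}^{\natural p+1}$ matches coconnectedness in $V_g$; I would spell this out using that the gluing half-discs are disjoint and that a disc is nonseparating in the handlebody iff its boundary is nonseparating in the boundary surface.
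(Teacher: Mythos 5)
Your proposal is correct and follows essentially the same route as the paper's proof: well-definedness is obtained from the fact that equivalent representatives differ only on the $B$-summand and hence agree on the embedded copy of $V_{1,1}$ supporting $\Phi'_0$, and simplices map to simplices because the vertex maps land in the disjoint $V_{1,1}$-summands and the diffeomorphism (class) $\phi$ preserves the non-separating/coconnectedness condition. The extra detail you include (verifying the three conditions of the definition of $Y^{A}$ for a single image vertex) is left implicit in the paper but is consistent with its argument.
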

\begin{proof}
First, let us verify that the assignment for $\Phi^{s}_{g,r}([B,\phi])$ does not depend on the choice of representative $(B,\phi)$. We note that $(B',\phi')\sim (B,\phi)$ in the colimit over $\sH^{\ge 1}_{r,s}$ of $\Hom_{\sH^{\ge 1}_{r,s}}(- \odot V_{1,1},V^{s}_{g,r})$ if and only if there is an isomorphism $\psi\colon B\to B'$ in $\sH^{\ge 1}_{r,s}$ such that $\phi=\phi'\circ(\psi\natural\id_{V_{1,1}})$. Since $\phi$ and $\phi'$ may differ only on the left summand, we deduce that $\phi'_{\mid V_{1,1}} = \phi_{\mid  V_{1,1}}$ and so $\Phi^{s}_{g,r}([B,\phi])=\Phi^{s}_{g,r}([B',\phi'])$ by definition of $\Phi'_{0}$.

Next, it remains to show that $\Phi^{s}_{g,r}$ is a well-defined map of simplicial complexes, i.e.~that it sends $p$-simplices to $p$-simplices. We recall from Notation~\ref{nota:simplicial-complex-destabilisation} that a $(p+1)$-tuple $([B_{0},\phi_{0}],\ldots,[B_{p},\phi_{p}])$ of vertices of $S_{g}(V^{s}_{0,r},V_{1,1})_{\bullet}$ defines a $p$-simplex if there is $[B,\phi]\in\colim_{\sH^{\ge 1}_{r,s}}\left(\Hom_{\sH^{\ge 1}_{r,s}}(- \odot V_{1,1}^{\natural p+1},V^{s}_{g,r})\right)$ such that for each $0\le i\le p$
\[
[B_{i},\phi_{i}]=[B,\phi]\circ [V^{\natural p}_{1,1},\id_{V_{i,1}}\natural (b_{V_{1,1},V_{p-i,1}})^{-1}],
\]
where $b_{V_{1,1},V_{p-i,1}}$ is the braiding of $(\sH^{\ge 1}_{1},\natural,I_{1})$ and $[V^{\natural p}_{1,1},\id_{V_{i,1}}\natural (b_{V_{1,1},V_{p-i,1}})^{-1}]$ is the canonical map $V_{1,1}\to V_{i,1} \natural V_{1,1}\natural V_{p-i,1}$ induced by equivalence classes of isomorphisms of the target fixing the source summand.
Therefore, the image of $([B_{0},\phi_{0}],\ldots,[B_{p},\phi_{p}])$ by $\Phi^{s}_{g,r}$ induces maps $f_{0},\ldots,f_{p}\colon I+\bD^{2}\to B\natural V_{1,1}^{\natural p+1}$ whose isotopy classes belong to $Y^{A}(V_{g},\cD_{r}\sqcup\cP_{s},x_{0},x_{0})$, with the relation $\Phi^{s}_{g,r}([B_{i},\phi_{i}])=\phi\circ f_{i}$ and such that the image of $f_{i}$ is in the $i$-th $V_{1,1}$-summand (except the endpoints).
In particular, the maps $f_{0},\ldots,f_{p}$ have pairwise disjoint images (outside of the endpoints), and their restrictions to $\partial\bD^{2}$ define a coconnected system of discs $f_{0}(\partial\bD^{2})\cup\cdots\cup f_{p}(\partial\bD^{2})$ in $B\natural V_{1,1}^{\natural p+1}$. Since $\phi$ is the isotopy class of a diffeomorphism, it preserves nonseparating simple closed curves, and so the union $\phi\circ f_{0}(\partial\bD^{2})\cup\cdots\cup \phi\circ f_{p}(\partial\bD^{2})$ is a coconnected system of $p+1$ discs in $V^{s}_{g,r}$. Hence, by Definition~\ref{def:sc-stabilisation}, the system $(\Phi^{s}_{g,r}([B_{0},\phi_{0}]),\ldots,\Phi^{s}_{g,r}([B_{p},\phi_{p}]))$ is a $p$-simplex in $Y^{A}(V_{g},\cD_{r}\sqcup\cP_{s},x_{0},x_{0})$.
\end{proof}

\begin{prop}\label{prop:stab1-S-g-iso-Y}
The morphism of simplicial complexes $\Phi^{s}_{g,r}$ is an isomorphism.
\end{prop}
\begin{proof}
We start by showing that the map $\Phi^{s}_{g,r}$ is surjective. Let $f\colon I+\bD^{2}\to V^{s}_{g,r}$ be a vertex of $Y^{A}(V_{g},\cD_{r}\sqcup\cP_{s},x_{0},x_{0})$. First, we note from Definition~\ref{def:sc-stabilisation} that any regular neighbourhood of $\Image(f)$ is diffeomorphic to $V_{1,1}$. We can pick such a neighbourhood $\cN(f)$, with a disc $\cD_{\cN(f)}$ in its boundary which intersects the first marked disc $\cD^{1}_{V^{s}_{g,r}}$ of $V^{s}_{g,r}$ exactly in its right-half. We set $B=V^{s}_{g,r}\setminus \cN(f)$, with a first marked disc given by the left-half of $\cD^{1}_{V^{s}_{g,r}}$, while its right-half given by the left hand side of $\cD_{\cN(f)}$. We also equip $B$ with $r-1$ more marked discs in the boundary given by the remaining marked discs in $V^{s}_{g,r}$, and with the marked points $\cP_{s}$. This construction thus induces a diffeomorphism $\phi\colon B\natural \cN(f)\to V^{s}_{g,r}$, preserving marked discs $\cD_{r}$ and points $\cP_{s}$. Hence this defines a vertex $[B,\phi]$ in $S_{g}(V^{s}_{0,r},V_{1,1})_{\bullet}$, such that $\phi([B,\phi])=f$.

Now, let us show that $\Phi^{s}_{g,r}$ is injective. For $[B,\phi]$ and $[B',\phi']$ in the colimit over $\sH^{\ge 1}_{r,s}$ of $\Hom_{\sH^{\ge 1}_{r,s}}(- \odot V_{1,1},V^{s}_{g,r})$, suppose that $\Phi^{s}_{g,r}([B,\phi])=\phi\circ \Phi'_{0}=\phi'\circ \Phi'_{0}=\Phi^{s}_{g,r}([B',\phi'])$. Since $V_{1,1}$ is a regular neighbourhood of the image of $\Phi'_{0}$, we deduce from the equality $\phi\circ \Phi'_{0}=\phi'\circ \Phi'_{0}$ that there exist mapping classes $\varphi\in \Aut(\sH^{\ge 1}_{r,s})(B, B')$ and $\psi\in \cH_{1,1}$ such that $\phi^{-1}\circ\phi'=\varphi\natural\psi$.
More precisely, the map $\psi$ is the isotopy class of a diffeomorphism $\tilde{\psi}$ of $V_{1,1}=\bS^{1}\times \bD^{2}$ pointwise fixing the marked disc in the boundary where  $x_{0}$ lies. Moreover, since $(\phi^{-1}\circ\phi')\circ \Phi'_{0}= \Phi'_{0}$, the representative $\tilde{\psi}$ setwise fixes the embedded disc $\{1/2\}\times\bD^{2}$, as well as a simple closed curve $\gamma\subset \partial V_{1,1}$ along the longitude (i.e.~bounding the genus), based at $x_{0}$ and intersecting $\{1/2\}\times\bD^{2}$ transversely. We cut $V_{1,1}$ along the embedded disc $\{1/2\}\times\bD^{2}$ and since setwise fixing a disc is equivalent to fixing a point up to isotopy, this implies that $\psi$ is an element of $\cH_{0,1}^{2}$.
We recall from Lemma~\ref{lem:H-injects-into-Gamma} that $\cH_{0,1}^{2}\hookrightarrow \MCG_{0,1}^{2}$. It is a classical fact (see \cite[\S9.1]{farbmargalit} for instance) that $\MCG_{0,1}^{2}$ is isomorphic to $\bZ$, which generator $T_{m}$ is the Dehn twist along a meridian of $\Sigma_{1,1}$ (i.e.~a simple closed curve in $\partial V_{1,1}$ that bounding a nonseparating disc in $V_{1,1}$). Since the mapping class $\psi$ fixes the longitude simple closed curve $\gamma$, we deduce that $\psi$ cannot be a non-zero power of $T_{m}$. Hence we have $\psi=\id_{V_{1,1}}$, and so $\phi'=(\varphi\natural\id_{V_{1,1}})\circ \phi$. Therefore, by the equivalence relation in the colimit over $\sH^{\ge 1}_{r,s}$ of $\Hom_{\sH^{\ge 1}_{r,s}}(- \odot V_{1,1},V^{s}_{g,r})$, we conclude that $[B,\phi]=[B,\phi']$.
\end{proof}

\subsubsection{Proof of Theorem~\ref{thm:HS_stabilistation-1}}\label{sss:proof-HS-1}

We now prove Theorem~\ref{thm:HS_stabilistation-1} by applying Theorem~\ref{thm:RWW-main-thm} with the setting $(\cG,\odot,0)=(\sH^{\ge 1}_{1},\natural,I_{1})$, $(\cM,\odot)=(\sH^{\ge 1}_{r,s},\natural)$, $A=V^{s}_{0,r}$ and $X=V_{1,1}$.
First of all, we recall that the necessary hypotheses of having no zero-divisors, $\Aut_{\sH^{\ge 1}_{1}}(I_{1})=\{\id_{I_{1}}\}$ and local homogeneity at $(V^{s}_{0,r},V_{1,1})$ are verified in Propositions~\ref{prop:no-0-divisors-no-automorphisms-of-0} and \ref{prop:homogeneity-handlebody}\eqref{item:first-stabilisation} respectively. It thus only remains to check the high-connectivity condition of Theorem~\ref{thm:RWW-main-thm} for the semi-simplicial set of destabilisations $W_{g}(V^{s}_{0,r},V_{1,1})_{\bullet}$ for each $g\ge 1$. This is a consequence of the following result:
\begin{prop}\label{prop:stab1-W-g-S-g-homeomorphic}
For each $g\ge 1$, there is a homeomorphism
\[
|W_{g}(V^{s}_{0,r},V_{1,1})_{\bullet}|\overset{\cong}{\to} |Y^{A}(V_{g},\cD_{r}\sqcup\cP_{s},x_{0},x_{0})|.
\]
\end{prop}
\begin{proof}
We consider the map $\pi\colon |W_{g}(V^{s}_{0,r}, V_{1,1})_{\bullet}|\to |S_{g}(V^{s}_{0,r})_{\bullet}|$ of Notation~\ref{nota:simplicial-complex-destabilisation}.
For a fixed $p\ge 0$, we recall that a $p$-simplex in $S_{g}(V^{s}_{0,r},V_{1,1})_{p}$ has a canonical order on its vertices, induced by the orientation near the first marked disc in the boundary of $V^{s}_{0,r}\natural V_{1,1}^{\natural g}$. (In terms of the complex $Y^{A}((V_{g},\cD_{r}\sqcup\cP_{s},x_{0},x_{0})$, this is the canonical order on the arcs induced by the orientation.) Since the right $(\sH^{\ge 1}_{1},\natural,I_{1})$-module $(\sH^{\ge 1}_{r,s},\natural)$ is locally standard at $(V^{s}_{g,r},V_{1,1})$ by Proposition~\ref{prop:local-standardness}\eqref{item:first-stabilisation-standard}, it follows from Proposition~\ref{prop:LS-iff-Wn-determined-by-vertices} that, for each $\sigma\in S_{g}(V^{s}_{0,r},V_{1,1})_{p}$, each simplex in $\pi^{-1}_{p}(\sigma)$ is determined by its ordered set of vertices. Therefore, the preimage $\pi^{-1}_{p}(\sigma)$ contains exactly one simplex for each $\sigma\in S_{g}(V^{s}_{0,r},V_{1,1})_{p}$, so the map $\pi_{p}$ is a bijection for each $p\ge 0$ and then $\pi$ is thus a bijection. Combining this with Proposition~\ref{prop:stab1-S-g-iso-Y}, we deduce the intended homeomorphism as the composite $\pi \circ |\Phi^{s}_{g,r}|$.
\end{proof}
Combining Proposition~\ref{prop:stab1-W-g-S-g-homeomorphic} with Theorem~\ref{thm:HatcherWahl-connectivity1}, we deduce that the semi-simplicial set $W_{g}(V^{s}_{0,r},V_{1,1})_{\bullet}$ is $\frac{g-3}{2}$-connected for each $g\ge 1$, which thus proves Theorem~\ref{thm:HS_stabilistation-1} as a consequence of Theorem~\ref{thm:RWW-main-thm}.

\subsection{Handle stabilisation with respect to $\#$}\label{ss:handle-stabilisation-sharp}

We now prove a twisted homological stability result for the less standard genus stabilisation induced by attaching the genus-zero handlebody with two marked discs $V_{0,2}$ via the operation $\#$; see Theorem~\ref{thm:HS_stabilistation-2}. For all \S\ref{ss:handle-stabilisation-sharp}, we fix integers $r\ge 2$ and $s\ge 0$ and write for brevity $\sH^{\ge 2}_{r,s}$ (resp.~$\sH^{\ge 2}_{2}$) for $\sH^{\ge 2}_{V^{s}_{0,r},V_{0,2}}$ (resp.~$\sH^{\ge 2}_{V_{0,2}}$). We consider the category $([\sH^{\ge 2}_{r,s} , \sH^{\ge 2}_{2}\rangle,\#,I_{2})$ constructed from the right-module $(\sH^{\ge 2}_{r,s},\#)$ over the braided monoidal category $(\sH^{\ge 2}_{2},\#,I_{2})$ (see \S\ref{sss:groupoids-handlebodies}) via the Quillen bracket construction (see \S\ref{sss:Quillen-bracket-construction}). The objects of $[\sH^{\ge 2}_{r,s} , \sH^{\ge 2}_{2}\rangle$ are the handlebodies $V^{s}_{0,r}\# V_{0,2}^{\# g} \cong V^{s}_{g,r}$ (see \eqref{eq:natural-iso-decomposition}) for all $g\ge 0$, so we again deduce from Corollary~\ref{coro:maximal-subgroupoid-of-UGH} that we have isomorphisms:
\[
\cH^{s}_{g,r}\cong\Aut_{\sH^{\ge 2}_{r,s}}(V^{s}_{g,r})\cong\Aut_{[\sH^{\ge 2}_{r,s} , \sH^{\ge 2}_{2}\rangle}(V^{s}_{g,r}).
\]
Here, the stabilisation map is given by the group injection $\rho^{s}_{g,r}:=(-)_{g}\#\id_{V_{0,2}}\colon \cH^{s}_{g,r}\to\cH^{s}_{g+1,r}$ (see \eqref{eq:canonical_stabilisation-sharp}). We also write $d^{s}_{g,r}$ for the canonical morphism $[\id_{V^{s}_{g+1,r}},V_{0,2}] \colon V^{s}_{g,r}\to V^{s}_{g+1,r}$ of $[\sH^{\ge 2}_{r,s} , \sH^{\ge 2}_{2}\rangle$. We have the following twisted homological stability result with respect to this stabilisation, whose proof is done in \S\ref{sss:proof-HS-2}:
\begin{thm}\label{thm:HS_stabilistation-2}
We fix integers $r\ge 2$ and $s\ge 0$. Let $F\colon \sH \to\Ab$ be a double coefficient system at $V^{s}_{0,r}$ of degree $d$ (see Definition~\ref{def:double-coeff-system}). Then
\[(\rho^{s}_{g,r};F(d^{s}_{g,r}))_{i}\colon H_{i}(\cH^{s}_{g,r}; F(V^{s}_{g,r}))\to H_{i}(\cH^{s}_{g+1,r};F(V^{s}_{g+1,r}))\]
is an isomorphism for $i\le\frac{g-1}{2}-d-1$. If both coefficient systems are split, it is an isomorphism for $i\le\frac{g-1-d}{2}-1$. 
\end{thm}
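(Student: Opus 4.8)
The plan is to mimic the proof of Theorem~\ref{thm:HS_stabilistation-1} from \S\ref{sss:proof-HS-1}, applying the meta-theorem Theorem~\ref{thm:RWW-main-thm} with $(\cG,\odot,0)=(\sH^{\ge 2}_{2},\#,I_{2})$, $(\cM,\odot)=(\sH^{\ge 2}_{r,s},\#)$, $A=V^{s}_{0,r}$ and $X=V_{0,2}$. The structural hypotheses are already in place: $(\sH^{\ge 2}_{2},\#,I_{2})$ has no zero-divisors and trivial automorphisms of the unit by Proposition~\ref{prop:no-0-divisors-no-automorphisms-of-0}, and local homogeneity at $(V^{s}_{0,r},V_{0,2})$ holds by Proposition~\ref{prop:homogeneity-handlebody}\eqref{item:second-stabilisation}. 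The coefficient system input is precisely what the hypothesis supplies: by Definition~\ref{def:double-coeff-system}\eqref{item:extension-right}, a double coefficient system $F$ at $V^{s}_{0,r}$ of degree $d$ restricts to a coefficient system $[ \sH^{\ge 2}_{r,s}, \sH^{\ge 2}_{2} \rangle \to \Ab$ of finite degree $d$ (and split if $F$ is split), so Theorem~\ref{thm:RWW-main-thm} applies once we have the connectivity estimate, yielding isomorphisms in the claimed ranges with $a=3$, $k=2$.

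So the real content is the connectivity of the semi-simplicial set $W_{g}(V^{s}_{0,r},V_{0,2})_{\bullet}$ for each $g\ge 1$, and I would establish this by identifying its geometric realisation with that of a Hatcher--Wahl-type complex, exactly as in Proposition~\ref{prop:stab1-W_g-S_g-homeomorphic}. Two ingredients are needed. First, local standardness of $(\sH^{\ge 2}_{r,s},\#)$ at $(V^{s}_{g,r},V_{0,2})$, which is Proposition~\ref{prop:local-standardness}\eqref{item:second-stabilisation-standard}; by Proposition~\ref{prop:LS-iff-Wn-determined-by-vertices} this gives that the projection $\pi\colon |W_{g}(V^{s}_{0,r},V_{0,2})_{\bullet}| \to |S_{g}(V^{s}_{0,r},V_{0,2})_{\bullet}|$ is a homeomorphism. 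Second, an isomorphism of simplicial complexes between $S_{g}(V^{s}_{0,r},V_{0,2})_{\bullet}$ and an appropriate variant of the complex $Y^{A}(V_{g},\cD_{r}\sqcup\cP_{s},x_{0},x_{1})$ of Definition~\ref{def:sc-stabilisation}, this time with $x_{0}\ne x_{1}$ chosen in the boundaries of the first and second marked discs respectively --- indeed the $\#$-stabilisation attaches a $3$-ball with two marked discs, and a vertex $[B,\phi]\colon B\# V_{0,2}\to V^{s}_{g,r}$ should correspond to the isotopy class of an arc running between $\cD^{1}_{V}$ and $\cD^{2}_{V}$ together with a coconnected disc, via a regular-neighbourhood argument: a regular neighbourhood of such a configuration is diffeomorphic to $V_{0,2}$. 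The well-definedness and bijectivity arguments should parallel Lemmas~\ref{lem:Phi-well-defined} and Proposition~\ref{prop:stab1-S_g-iso-Y}, the injectivity step again reducing to computing that a diffeomorphism of $V_{0,2}$ fixing the two marked discs and the relevant arc/disc is isotopic to the identity (using $\cH_{0,2}\hookrightarrow\MCG_{0,2}$ from Lemma~\ref{lem:H-injects-into-Gamma} and the known structure of $\MCG_{0,2}$). Once this isomorphism is in hand, the connectivity $\tfrac{g-3}{2}$ of $W_{g}(V^{s}_{0,r},V_{0,2})_{\bullet}$ follows from the Hatcher--Wahl connectivity bound (Theorem~\ref{thm:HatcherWahl-connectivity1}, or the relevant case of \cite[Thm.~8.5]{HatcherWahl} for arcs with distinct endpoints), and Theorem~\ref{thm:HS_stabilistation-2} drops out of Theorem~\ref{thm:RWW-main-thm}.

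I expect the main obstacle to be the arc/disc-complex identification in the $x_{0}\ne x_{1}$ case: one must pin down exactly which variant of the complex $Y^{A}$ of Hatcher--Wahl matches $S_{g}(V^{s}_{0,r},V_{0,2})_{\bullet}$, verify that the connectivity result of \cite{HatcherWahl} covers it (the $\#$-stabilisation corresponds to their stabilisation gluing a pair of pants rather than a handle, so one must cite the correct statement, presumably the same \cite[Thm.~8.5]{HatcherWahl} with arcs joining two distinct boundary discs), and carry out the regular-neighbourhood and injectivity arguments carefully --- in particular checking that setwise-fixing the relevant disc in $V_{0,2}$ and the connecting arc forces triviality of the mapping class. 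The infinite-rank machinery of \S\ref{sss:infinite-handlebodies} and the extension $\overline{F}$ of Proposition~\ref{prop:extension-coeff-system-infty} are not needed for this theorem itself; they are presumably used later to deduce Theorem~\ref{thm:mainB} by comparing the $\natural$- and $\#$-stabilisations, but the proof of Theorem~\ref{thm:HS_stabilistation-2} as stated is a direct application of the meta-theorem once the connectivity input is secured.
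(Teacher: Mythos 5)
There is a genuine gap at the crucial step. Your plan reduces everything to showing that $W_{g}(V^{s}_{0,r},V_{0,2})_{\bullet}$ is roughly $\frac{g-3}{2}$-connected, via an identification of $S_{g}(V^{s}_{0,r},V_{0,2})_{\bullet}$ with the arc-and-disc complex $Y^{A}(V_{g},\cD_{r}\sqcup\cP_{s},x_{0},x_{1})$ with $x_{0}\ne x_{1}$ on the first and second marked discs. That identification is fine (it is exactly Proposition~\ref{prop:iso-simplicial-complex-2} of the paper), but the connectivity input you then invoke does not exist: Theorem~\ref{thm:HatcherWahl-connectivity1} (the adaptation of \cite[Thm.~8.5]{HatcherWahl}) is stated only for $x_{0}=x_{1}$, and for the complex with arcs joining two \emph{distinct} marked discs at finite genus the connectivity is not known --- this is precisely why the paper says at the start of \S\ref{sss:infinite-HS} that one ``cannot proceed quite as for Theorem~\ref{thm:HS_stabilistation-1}''. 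What Hatcher--Wahl do prove (\cite[Thm.~8.6]{HatcherWahl}) is only that the \emph{stable}, infinite-genus complex is contractible, and that is the input the paper actually uses. So the direct application of Theorem~\ref{thm:RWW-main-thm} with $(\sH^{\ge 2}_{r,s},\#)$ at $(V^{s}_{0,r},V_{0,2})$ cannot be completed as you describe.

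Relatedly, your closing claim that the infinite-rank machinery of \S\ref{sss:infinite-handlebodies} and the extension $\overline{F}$ of Proposition~\ref{prop:extension-coeff-system-infty} ``are not needed for this theorem itself'' is the opposite of what happens: they are the heart of the actual proof. The paper first proves Theorem~\ref{thm:stable-stability-for-sharp}, twisted stability for $V^{s}_{\infty,r}$ with respect to $\#$, using contractibility of $S_{g}(V^{s}_{\infty,r},V_{0,2})_{\bullet}$ (Theorem~\ref{thm:sc-infty-contractible}); it then runs the trick of \cite[\S8.4]{HatcherWahl}: the commuting ladder of Lemma~\ref{lem:sigma-bar-rho-commute} between the left-handed stabilisations $\bar{\sigma}^{s}_{g,r}$ and the maps $\rho^{s}_{g,r}$, the left-stabilisation stability result Theorem~\ref{thm:HS_stabilistation-1'}, and the fact that group homology commutes with filtered colimits, force $(\rho^{s}_{g,r};F(d^{s}_{g,r}))_{i}$ to be an isomorphism once the $\bar{\sigma}$-maps are in their stable range. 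This is also why the hypothesis is a \emph{double} coefficient system: conditions~\eqref{item:extension-left} and \eqref{item:extension-mixed} of Definition~\ref{def:double-coeff-system} are exactly what is needed to run the ladder and to form $\overline{F}$ on the infinite object, whereas your argument would only ever use condition~\eqref{item:extension-right} --- a sign that the intended proof must be using more than the direct meta-theorem application.
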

\begin{rmk}\label{rmk:HS_stabilistation-2}
The assumption for the coefficients to form a double coefficient system is due to leveraging (a mild alternative of) Theorem~\ref{thm:HS_stabilistation-1} in the proof; see Theorem~\ref{thm:HS_stabilistation-1'}. Indeed, a key point to prove Theorem~\ref{thm:HS_stabilistation-2} is to use a ``stabilised version'' of the destabilisation complex introduced in \S\ref{sss:infinite-HS}, and then to utilise stability with respect to the analogue of $\sigma^{s}_{g,r}$ adding a handle on the left; see Theorem~\ref{thm:HS_stabilistation-1'}.
\end{rmk}

\subsubsection{Stable twisted homological stability}\label{sss:infinite-HS}

For the proof of Theorem~\ref{thm:HS_stabilistation-2}, we cannot proceed quite as for that of Theorem~\ref{thm:HS_stabilistation-1}, essentially because the connectivity of the corresponding complex of Definition~\ref{def:sc-stabilisation} is not known. We rather follow here the example of Hatcher and Wahl \cite[\S8.2]{HatcherWahl}: instead of directly considering the stablisation maps $\rho^{s}_{g,r}$, we first prove a twisted homological stability result for infinite handlebodies (in the sense of Definition~\ref{eq:def-infinite-in-barGH}); see Theorem~\ref{thm:stable-stability-for-sharp}.

\paragraph*{Stable simplicial complex of destabilisations.} We begin with studying the simplicial complex of destabilisations $S_{g}(V_{\infty},V_{0,2})_{\bullet}$ for each $g\ge 1$ (see Notation~\ref{nota:simplicial-complex-destabilisation}) associated to the infinite object $V^{s}_{\infty,r}$, showing that it is contractible; see Theorem~\ref{thm:sc-infty-contractible}.
To do this, we again use the suitable simplicial complexes of Definition~\ref{def:sc-stabilisation} for this situation. Namely, for each $g\ge 0$, we consider the simplicial complex $Y^{A}(V_{g},\cD_{r}\sqcup\cP_{s},x_{0},x_{1})$ associated to the object $V^{s}_{g,r}=(V_{g},r,s,i,j)$ with $x_{0}=i_{1}(1,0)$ and $x_{1}=i_{2}(1,0)$, where $i_{j}\colon \bD^{2}\to\partial V_{g}$ for $j \in \{1,2\}$ denotes the parametrisation map defining the marked disc $\cD^{j}_{V^{s}_{g,r}}$.
For brevity, we denote by $R^{s}_{g,r}$ the set $\cD_{r}\sqcup\cP_{s}$ of marked discs and marked points associated to $V_{g}$, and we write $\sH^{\ge 1}_{r,s}$ and $\sH^{\ge 1}_{1}$ for $\sH^{\ge 1}_{V^{s}_{0,r},V_{1,1}}$ and $\sH^{\ge 1}_{V_{1,1}}$ respectively.

For each $i\ge 1$, we consider the morphism $\iota_{i}(V^{s}_{0,r})=[V_{1,1},\id_{V^{s}_{i,r}}]\colon V^{s}_{i-1,r}\to V^{s}_{i,r}$ of $\langle \sH^{\ge 1}_{1}, \sH^{\ge 1}_{r,s} ]$. We note from Definition~\ref{def:monoidal-structure-cG} that the image of $(1,0)\in\bD^{2}$ in the parametrisation of any marked disc is the same in each $V_{i}$, as we are adding $V_{1,1}$ from the left, and so $x_{0}$ and $x_{1}$ are left invariant by $\iota_{i}(V^{s}_{0,r})$. Also, we remark that, although they are isomorphic, $R^{s}_{i,r}$ is obtained from $R^{s}_{i-1,r}$ by gluing the left hand side of the marked disc $V_{1,1}$ to the right hand half-disc $\cD^{1}_{V^{s}_{i,r}}$.
Therefore, the morphism $\iota_{i}(V^{s}_{0,r})$ induces by postcomposition a map of simplicial complexes $\iota^{Y}_{i}\colon Y^{A}(V_{i-1},R^{s}_{i-1,r},x_{0},x_{1})\to Y^{A}(V_{i},R^{s}_{i,r},x_{0},x_{1})$. Hence we obtain the following directed system of simplicial complexes:
\begin{equation}\label{eq:infinite-sc-stabilisation}
\begin{tikzcd}
    Y^{A}(V_{0},R^{s}_{0,r},x_{0},x_{1}) \arrow[r,"\iota^{Y}_{1}"]
    & Y^{A}(V_{1},R^{s}_{1,r},x_{0},x_{1})\arrow[r,"\iota^{Y}_{2}"]
    & \cdots \arrow[r,"\iota^{Y}_{i}"]
    & Y^{A}(V_{i},R^{s}_{i,r},x_{0},x_{1}) \arrow[r,"\iota^{Y}_{i+1}"]
    & \cdots
\end{tikzcd}
\end{equation}
Since the category of simplicial complexes is cocomplete, the following definition makes sense similarly to \cite[\S8.2]{HatcherWahl}:
\begin{defn}\label{def:colim-complexes-Y}
We define $Y^{A}(V^{s}_{\infty,r},R^{s}_{\infty,r},x_{0},x_{1})$ as the directed colimit of the diagram \eqref{eq:infinite-sc-stabilisation}.
\end{defn}
In order to study the connectivity of $S_{g}(V^{s}_{\infty,r},V_{0,2})_{\bullet}$, we start by proving the following property:
\begin{prop}\label{prop:iso-simplicial-complex-2}
For each integer $g\ge 1$, there is an isomorphism of simplicial complexes $S_{g}(V^{s}_{0,r},V_{0,2})_{\bullet}\cong Y^{A}(V^{s}_{g,r},R^{s}_{g,r},x_{0},x_{1})$.
\end{prop}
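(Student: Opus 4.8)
The plan is to exhibit a direct isomorphism between the two simplicial complexes following the template already established for $\natural$ in Proposition~\ref{prop:stab1-S_g-iso-Y}, but now with the $\#$-stabilisation by $V_{0,2}$ playing the role of $\natural$-stabilisation by $V_{1,1}$. The key observation is that a vertex of $S_{g}(V^{s}_{0,r},V_{0,2})_{\bullet}$ is an equivalence class $[B,\phi]$ with $\phi\colon B\# V_{0,2}\to V^{s}_{g,r}$ a morphism in $\sH^{\ge 2}_{r,s}$, and that the gluing operation $\#$ attaches $V_{0,2}$ along \emph{both} the first and second marked discs. A regular neighbourhood of an arc-plus-disc configuration $f\colon I+\bD^{2}\to V^{s}_{g,r}$ of Definition~\ref{def:sc-stabilisation} with $x_{0}=i_{1}(1,0)$ and $x_{1}=i_{2}(1,0)$ is diffeomorphic to $V_{0,2}$ (a $3$-ball with two marked discs), precisely because the arc $f(I)$ runs between the two marked discs and the disc $f(\partial\bD^{2})$ is a coconnected (nonseparating) disc. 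So I would fix once and for all a standard model map $\Psi_{0}\colon I+\bD^{2}\to V_{0,2}$ analogous to the $\Phi_{0}$ used in \S\ref{sss:connectivity-ss}, running the arc between the two marked discs of $V_{0,2}$ and parametrising the coconnected disc, and define
\[
\Phi^{s,\#}_{g,r}\colon S_{g}(V^{s}_{0,r},V_{0,2})_{\bullet}\to Y^{A}(V^{s}_{g,r},R_{g},x_{0},x_{1}),\qquad [B,\phi]\mapsto \phi\circ\Psi'_{0},
\]
where $\Psi'_{0}$ is $\Psi_{0}$ postcomposed with the canonical embedding $V_{0,2}\hookrightarrow B\# V_{0,2}$.

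The verification then splits into the same three pieces as in Lemma~\ref{lem:Phi-well-defined} and Proposition~\ref{prop:stab1-S_g-iso-Y}. First, well-definedness on equivalence classes: if $(B',\phi')\sim(B,\phi)$ via an isomorphism $\psi\colon B\to B'$ with $\phi=\phi'\circ(\psi\#\id_{V_{0,2}})$, then $\phi$ and $\phi'$ agree on the $V_{0,2}$-summand, so $\phi\circ\Psi'_{0}=\phi'\circ\Psi'_{0}$. Second, that $p$-simplices go to $p$-simplices: a $p$-simplex of $S_{g}(V^{s}_{0,r},V_{0,2})_{\bullet}$ comes from $[B,\phi]\in\colim_{\sH^{\ge 2}_{r,s}}\Hom(-\# V_{0,2}^{\# p+1},V^{s}_{g,r})$, and the standard model maps placed in the successive $V_{0,2}$-summands of $B\# V_{0,2}^{\# p+1}$ have pairwise disjoint images (outside endpoints) and their boundary discs form a coconnected system; applying the diffeomorphism $\phi$ (which preserves coconnectedness, i.e.\ nonseparating simple closed curves) yields a $p$-simplex of $Y^{A}$, exactly as in the proof of Lemma~\ref{lem:Phi-well-defined}. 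Surjectivity: given a vertex $f$, take a regular neighbourhood $\cN(f)\cong V_{0,2}$ meeting $\cD^{1}_{V^{s}_{g,r}}$ in its right half and $\cD^{2}_{V^{s}_{g,r}}$ in its left half (the two marked discs of $V_{0,2}$), set $B:=V^{s}_{g,r}\setminus\cN(f)$ with the complementary marked-disc data, and the induced diffeomorphism $B\# \cN(f)\cong V^{s}_{g,r}$ gives a vertex mapping to $f$.

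The only step requiring genuine care — the analogue of the ``$\psi$ is trivial'' argument in Proposition~\ref{prop:stab1-S_g-iso-Y} — is injectivity. If $\phi\circ\Psi'_{0}=\phi'\circ\Psi'_{0}$ then, since $V_{0,2}$ is a regular neighbourhood of the image of $\Psi'_{0}$, one gets $\phi^{-1}\circ\phi'=\varphi\#\psi$ with $\varphi\in\Aut_{\sH^{\ge 2}_{r,s}}(B,B')$ and $\psi$ an element of the mapping class group of $V_{0,2}=\bD^2_{g=0}\times I$ fixing both marked discs pointwise and fixing the image of $\Psi_0$; I would show $\psi=\id$ by cutting along the coconnected disc $\Psi_0(\partial\bD^2)$, which turns $V_{0,2}$ into a single $3$-ball with the marked discs becoming configurations of fixed discs/points, forcing $\psi\in\cH^{?}_{0,?}$ to be trivial by Alexander's trick (via Lemma~\ref{lem:H-injects-into-Gamma}, cf.\ Proposition~\ref{prop:no-0-divisors-no-automorphisms-of-0}) — there is no nonseparating disc left to Dehn-twist along. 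Concluding, $\phi'=(\varphi\#\id_{V_{0,2}})\circ\phi$, so $[B,\phi]=[B',\phi']$. I expect this last injectivity argument, specifically the correct bookkeeping of which half-discs of $\cD^1,\cD^2$ get identified under $\#$ and the ensuing identification of the relevant mapping class group, to be the main obstacle; everything else is a routine transcription of \S\ref{sss:connectivity-ss} with $V_{1,1}\rightsquigarrow V_{0,2}$, $\natural\rightsquigarrow\#$, $x_0=x_1\rightsquigarrow x_0\neq x_1$.
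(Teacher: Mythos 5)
Your construction of the comparison map (the fixed model $\Psi_{0}\colon I+\bD^{2}\to V_{0,2}$, the assignment $[B,\phi]\mapsto\phi\circ\Psi'_{0}$), and your treatment of well-definedness, simplices and surjectivity, coincide with the paper's proof, which indeed proceeds by transcribing \S\ref{sss:connectivity-ss} with $V_{1,1}\rightsquigarrow V_{0,2}$ and $\natural\rightsquigarrow\#$. The problem is the injectivity step, which you yourself flag as the crux. First, a topological slip: $V_{0,2}=I\times\bD^{2}$ is a ball, so every properly embedded disc in it -- in particular the model disc $\{1/2\}\times\bD^{2}$ -- is \emph{separating}, and cutting along it yields two $3$-balls, not ``a single $3$-ball''; the coconnectedness condition of Definition~\ref{def:sc-stabilisation} refers to $\partial V_{g}$ in the ambient handlebody, not to $V_{0,2}$ itself. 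More seriously, the conclusion ``the cut pieces have trivial mapping class group by Alexander's trick, and there is no nonseparating disc left to Dehn-twist along, hence $\psi=\id$'' is not valid. Triviality of the mapping class groups of the cut-open pieces does not transfer back to $V_{0,2}$: the disc twist along $\{t\}\times\bD^{2}$ is supported in a collar of the cut locus, becomes invisible on each cut piece, and yet is nontrivial in $\cH_{0,2}$. In fact $\cH_{0,2}\hookrightarrow\MCG_{0,2}\cong\bZ$ (Lemma~\ref{lem:H-injects-into-Gamma}), and this group is generated precisely by that twist along a \emph{separating} disc, so the absence of nonseparating discs cannot be the reason $\psi$ vanishes.

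The missing ingredient is the arc. The paper's proof does not cut at all: it observes that any nontrivial element of $\cH_{0,2}\subset\MCG_{0,2}\cong\bZ$ is a nonzero power of the meridian twist, and that such a power does not preserve the isotopy class of the embedding $\Phi_{0}$ (the twisted arc wraps around the core of the annulus $\partial V_{0,2}\setminus\mathring{\cD}_{2}$ and is not isotopic rel endpoints to the original), whereas $\psi$ by hypothesis does preserve it; hence $\psi=\id$, exactly as the longitude $\gamma$ was used in Proposition~\ref{prop:stab1-S_g-iso-Y}. Your argument can be repaired along your lines by noting that, after cutting and trivialising on each piece, the reglueing ambiguity is exactly a power of the meridian disc twist, and then excluding nonzero powers via the arc as above; but as written, the appeal to Alexander's trick plus ``no nonseparating disc'' leaves the generator of $\cH_{0,2}\cong\bZ$ unaccounted for, so the injectivity argument has a genuine gap.
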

\begin{proof}
The definition of the morphism simplicial complexes $S_{g}(V^{s}_{0,r},V_{0,2})_{\bullet}\to Y^{A}(V^{s}_{g,r},R^{s}_{g,r},x_{0},x_{1})$ is very similar to that of \eqref{eq:iso-simplicial-complexes}, while the proof that this is well-defined and an isomorphism is almost identical to those of Lemma~\ref{lem:Phi-well-defined} and Proposition~\ref{prop:stab1-S-g-iso-Y}. Let us only sketch most of the proof and only include the details for the points that differ.

Recalling that $I$ denotes the unit interval $[0,1]$, let us make the identification $V_{0,2}=I\times\bD^{2}$, with $\{0\}\times\bD^{2}$ and $\{1\}\times\bD^{2}$ the marked discs parametrised in the obvious way. Let $y\in\partial\bD^{2}$ be the point such that $i_{1}(1,0)=(0,y)$ and $i_{2}(1,0)=(1,y)$, where $i_{1},i_{2}\colon \bD^{2}\to\partial V_{0,2}$ are the defining maps of the marked discs $\cD^{1}_{V_{0,2}}$ and $\cD^{2}_{V_{0,2}}$. We then define $\Phi_{0}\colon I+\bD^{2}\to V_{0,2}$ by 
\begin{align*}
    \Phi_{0}(x) = \begin{cases}
    (x,y) & \text{ if }x\in I,\\
    (1/2,x) &\text{ if }x\in\bD^{2}.
    \end{cases}
\end{align*}
We recall from Notation~\ref{nota:simplicial-complex-destabilisation} that a vertex of $S_{g}(V^{s}_{0,r},V_{0,2})_{\bullet}$ is a morphism $[B,\phi]$ in the colimit over $\sH^{\ge 2}_{r,s}$ of $\Hom_{\sH^{\ge 2}_{r,s}}(- \odot V_{0,2},V^{s}_{g,r})$, i.e.~an equivalence class of pairs $(B,\phi)$ where $\phi\colon B\# V_{0,2}\to V^{s}_{g,r}$ is a morphism in $\sH^{\ge 2}_{r,s}$. We denote by $\Phi'_{0}$ the isotopy class of the map $I+\bD^{2}\to B\# V_{0,2}$ defined by postcomposing the map $\Phi_{0}\colon I+\bD^{2}\to V_{0,2}$ with the canonical embedding $V_{0,2}\hookrightarrow B\# V_{0,2}$ induced by the monoidal product $\#$.
We may then define a map of simplicial complexes $\Phi^{s}_{g,r}\colon S_{g}(V^{s}_{0,r},V_{0,2})_{\bullet}\to Y^{A}(V_{g},\cD_{r}\sqcup\cP_{s},x_{0},x_{1})$ by sending $[B,\phi]$ to the isotopy class $\phi\circ \Phi'_{0}$. That this gives a well-defined, surjective map of simplicial complexes is proved by repeating mutatis mutandi the proof of Lemma~\ref{lem:Phi-well-defined} and the first half of Proposition~\ref{prop:stab1-S-g-iso-Y}. The proof of injectivity also goes through in a similar way to the second half of the proof of Proposition~\ref{prop:stab1-S-g-iso-Y}: by a routine verbatim adaptation, the proof boils down to showing that given $\psi\in\Aut_{\sH^{\ge 2}_{2}}(V_{0,2})=\cH_{0,2}$ which fixes the isotopy class of the embedding $\Phi_{0}\colon I+\bD^{2}\to V_{0,2}$, we have $\psi=\id_{V_{0,2}}$. Recall from Lemma~\ref{lem:H-injects-into-Gamma} that $\cH_{0,2}\hookrightarrow \MCG_{0,2}$ and it is a classical fact (see \cite[\S9.1]{farbmargalit} for instance) that $\MCG_{0,2} \cong \bZ$, generated by a Dehn twist around the meridian $\{t\}\times\bD^{2}$ for any $0<t<1$. However, such a Dehn twist does not fix the isotopy class of the embedding $\Phi_{0}$, so the claim follows.
\end{proof}
A fundamental theorem of \cite{HatcherWahl} is that the simplicial complex $Y^{A}(V^{s}_{\infty,r},R^{s}_{\infty,r},x_{0},x_{1})$ is contractible; see \cite[Thm.~8.6]{HatcherWahl}. By using Proposition~\ref{prop:iso-simplicial-complex-2}, we are now ready to show the following key result:
\begin{thm}\label{thm:sc-infty-contractible}
For each $g\ge 1$, the simplicial complex $S_{g}(V_{\infty},V_{0,2})_{\bullet}$ is contractible.
\end{thm}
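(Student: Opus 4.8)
The plan is to identify the destabilisation complex $S_{g}(V^{s}_{\infty,r},V_{0,2})_{\bullet}$ with the colimit complex $Y^{A}(V^{s}_{\infty,r},R_{\infty},x_{0},x_{1})$ of Definition~\ref{def:colim-complexes-Y}, and then to invoke the contractibility of the latter, due to \cite{HatcherWahl} (their Thm.~8.6), together with Lemma~\ref{lem:iso-simplicial-complex-marked-points} to reconcile the treatment of marked points. Concretely, I would first unwind the colimit structure of the infinite objects: by Definition~\ref{def:groupoids-handlebody-infinity} and Lemma~\ref{lem:right-module-infinite}, the object $A_{g}:=V^{s}_{\infty,r}\#V^{\#g}_{0,2}$ is the directed colimit, in $\langle \sH^{\ge 1},\sH^{\ge 1}]$, of the sequence $V^{s}_{g,r}\to V^{s}_{g+1,r}\to\cdots$ whose transition maps $\iota_{i}$ attach $V_{1,1}$ on the left via $\natural$ (and by Lemma~\ref{lem:stabilising-handlebody-of-inf-rank} one may as well drop the trailing $\#V^{\#g}_{0,2}$, so $A_{g}\cong V^{s}_{\infty,r}$). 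Using the cancellation property established in Proposition~\ref{prop:homogeneity-handlebody}\eqref{item:infinite-stabilisation}, every $p$-simplex of $S_{g}(V^{s}_{\infty,r},V_{0,2})_{\bullet}$ is represented by a pair whose first entry is the infinite object, and such a representative ``lives at a finite stage'' in the sense of Definition~\ref{def:groupoids-handlebody-infinity}. This exhibits $S_{g}(V^{s}_{\infty,r},V_{0,2})_{\bullet}$ as the directed colimit over $i$ of the finite-stage destabilisation complexes associated to $V^{s}_{g+i,r}\cong V_{1,1}^{\natural i}\natural V^{s}_{g,r}$, with transition maps induced by the $\iota_{i}$.

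Next I would match the finite stages. By Proposition~\ref{prop:iso-simplicial-complex-2} (applied with genus $g+i$), the $i$-th stage is isomorphic to $Y^{A}(V_{g+i},R_{g+i},x_{0},x_{1})$. The point that genuinely needs checking is that these isomorphisms are compatible with the two families of transition maps — on the destabilisation side the maps $\iota_{i}$ that attach $V_{1,1}$ on the left, and on the arc-complex side the maps $\iota^{Y}_{i}$ of the diagram~\eqref{eq:infinite-sc-stabilisation}. This is a direct diagram chase using the explicit form of the isomorphism $\Phi^{s}_{g,r}$ from the proof of Proposition~\ref{prop:iso-simplicial-complex-2}: it is postcomposition with a fixed ``arc plus disc'' $\Phi'_{0}$ supported in the newest $V_{0,2}$-summand, and both $\Phi_{0}$ and the basepoints $x_{0},x_{1}$ involve only the marked discs and the $\#$-summands, which are left untouched by the left $\natural$-stabilisation — exactly the observation already recorded in the paragraph preceding Definition~\ref{def:colim-complexes-Y}. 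Passing to colimits then gives an isomorphism of simplicial complexes $S_{g}(V^{s}_{\infty,r},V_{0,2})_{\bullet}\cong \colim_{i}Y^{A}(V_{g+i},R_{g+i},x_{0},x_{1})=Y^{A}(V^{s}_{\infty,r},R_{\infty},x_{0},x_{1})$.

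Finally, since $Y^{A}(V^{s}_{\infty,r},R_{\infty},x_{0},x_{1})$ is contractible by \cite[Thm.~8.6]{HatcherWahl}, so is $S_{g}(V^{s}_{\infty,r},V_{0,2})_{\bullet}$, which proves the theorem. I expect the main obstacle to lie in the first two steps, namely making precise that forming the destabilisation complex commutes with the directed colimit defining the infinite object, and that the Hatcher--Wahl complexes assemble along the very same directed system; none of this is deep, but it requires care to state cleanly given that two different structures are in play simultaneously — the monoidal product $\natural$ acting on the left to produce the infinite handlebody, and the module action $\#$ on the right producing the destabilisations.
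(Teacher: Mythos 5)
Your proposal is correct and follows essentially the same route as the paper: the paper likewise reduces via the isomorphisms $V^{s}_{\infty,r}\cong V^{s}_{\infty,r}\# V_{0,2}^{\# g}$ and $V^{s}_{g,r}\cong V^{s}_{0,r}\# V_{0,2}^{\# g}$, exhibits $S_{g}(V^{s}_{\infty,r},V_{0,2})_{\bullet}$ as the directed colimit of the finite-stage complexes $S_{0}(V^{s}_{g,r},V_{0,2})_{\bullet}$ (using that morphisms of infinite objects are determined at finite stages, exactly your ``lives at a finite stage'' step), identifies that colimit with $Y^{A}(V^{s}_{\infty,r},R_{\infty},x_{0},x_{1})$ via Proposition~\ref{prop:iso-simplicial-complex-2}, and concludes by the Hatcher--Wahl contractibility theorem. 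The only cosmetic difference is that you verify compatibility of the isomorphisms $\Phi^{s}_{g,r}$ with the transition maps directly before passing to colimits, whereas the paper invokes the universal property and checks separately (on vertices) that the induced map $\Xi$ to $S_{0}(V^{s}_{\infty,r},V_{0,2})_{\bullet}$ is an isomorphism.
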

\begin{proof}
For each $g\ge 1$, we recall that $V^{s}_{\infty,r}\cong V^{s}_{\infty,r}\# V_{0,2}^{\# g} $ thanks to the isomorphism \eqref{eq:canonical-iso-dash-natural} of Lemma~\ref{lem:stabilising-handlebody-of-inf-rank}, and that we have an isomorphism $V^{s}_{g,r}\cong V^{s}_{0,r}\#V_{0,2}^{\# g}$ (see \eqref{eq:natural-iso-decomposition}). Then, we deduce from the definition of the simplicial complex of destabilisations (see Notation~\ref{nota:simplicial-complex-destabilisation}) the following simplicial complex isomorphisms
\begin{equation}\label{eq:iso-SC-infty}
S_{0}(V^{s}_{\infty,r},V_{0,2})_{\bullet}\cong S_{g}(V^{s}_{\infty,r},V_{0,2})_{\bullet}
\end{equation}
and
\begin{equation}\label{eq:iso-SC-g-0}
S_{g}(V^{s}_{0,r},V_{0,2})_{\bullet}\cong S_{0}(V^{s}_{g,r},V_{0,2})_{\bullet}.
\end{equation}
Now, each morphism $\iota_{g}(V^{s}_{0,r})$ for $g\ge 1$ induces by postcomposition a map of simplicial complexes $\iota^{S}_{i}\colon S_{0}(V^{s}_{g-1,r},V_{0,2})_{\bullet}\to S_{0}(V^{s}_{g,r},V_{0,2})_{\bullet}$, and we denote by $\colim_{g} \left(S_{0}(V^{s}_{g,r},V_{0,2})_{\bullet}\right)$ the directed colimit of the sequential system defined by these maps $\{\iota^{S}_{g}\}_{g\ge 1}$. By the universal property of a colimit, it then follows from Proposition~\ref{prop:iso-simplicial-complex-2} and the isomorphism \eqref{eq:iso-SC-g-0} that we have an isomorphism of simplicial complexes
\begin{equation}\label{eq:iso-SC-0-Y}
\colim_{g} \left(S_{0}(V^{s}_{g,r},V_{0,2})_{\bullet}\right) \cong Y^{A}(V^{s}_{\infty,r},R^{s}_{\infty,r},x_{0},x_{1}).
\end{equation}
Furthermore, since the postcomposition by the structural maps $V^{s}_{g,r}\to V^{s}_{\infty,r}$ for each $g\ge 0$ induces a map of simplicial complex of destabilisations $S_{0}(V^{s}_{g,r},V_{0,2})_{\bullet}\to S_{0}(V^{s}_{\infty,r},V_{0,2})_{\bullet}$, we deduce that there is a canonical map of simplicial complexes induced by the universal property of a colimit
\[
\Xi_{\bullet} \colon \colim_{g} \left(S_{0}(V^{s}_{g,r},V_{0,2})_{\bullet}\right) \to S_{0}(V^{s}_{\infty,r},V_{0,2})_{\bullet}.
\]
Let us describe more precisely what happens for the vertices. By Definition~\ref{def:groupoids-handlebody-infinity}, a morphism $\varphi \in S_{0}(V^{s}_{\infty,r},V_{0,2})_{0}$ is the colimit map induced by a set of morphisms $\{\varphi_{g}=[V^{s}_{g-1,r},\phi_{g}]\}_{g\ge j}$ for some integer $j\ge 0$, with $\phi_{g}\in \Aut_{\sH^{\ge 2}}(V^{s}_{g,r})$ for some integer $j\ge 0$ and making the following diagram commutative
\[
\begin{tikzcd}[column sep=.7em]
    V_{0,2}\arrow{d}{\varphi_{j}}\arrow[rrrrrr,"\id_{V_{0,2}}"]&&&&&&
    V_{0,2}\arrow{d}{\varphi_{j+1}}\arrow[rrrrrr,"\id_{V_{0,2}}"] &&&&&&
    \cdots \arrow[rrrrrr,"\id_{V_{0,2}}"] &&&&&&
    V_{0,2}\arrow{d}{\varphi_{g}} \arrow[rrrrrr,"\id_{V_{0,2}}"] &&&&&&
    \cdots
    \\
    V^{s}_{j,r}\arrow[rrrrrr,"\iota_{j+1}(V_{0,2})"] &&&&&&
    V^{s}_{j+1,r}\arrow[rrrrrr,"\iota_{j+2}(V_{0,2})"] &&&&&& 
    \cdots \arrow[rrrrrr,"\iota_{g}( V_{0,2})"] &&&&&&
    V^{s}_{g,r}\arrow[rrrrrr,"\iota_{g+1}(V_{0,2})"]  &&&&&&
    \cdots.
\end{tikzcd}
\]
In particular, each $\varphi_{g}=[V^{s}_{g-1,r},\phi_{g}]$ belongs to $S_{0}(V^{s}_{g,r},V_{0,2})_{0}$ by definition (see Notation~\ref{nota:simplicial-complex-destabilisation}), and so the morphisms $\{\varphi_{g}\}_{g\ge j}$ induce an element $[\varphi]$ in $\colim_{g} \left(S_{0}(V^{s}_{g,r},V_{0,2})_{0}\right)$ by the universal property of that colimit. Assigning $\varphi \mapsto [\varphi]$ then clearly induces a well-defined map $\Upsilon\colon  S_{0}(V^{s}_{\infty,r},V_{0,2})_{0} \to \colim_{g} \left(S_{0}(V^{s}_{g,r},V_{0,2})_{0}\right)$.
Writing the elementary explicit assignment of the map $\Xi_{0}$ on an element $\colim_{g} \left(S_{0}(V^{s}_{g,r},V_{0,2})_{0}\right)$, it is a clear routine to check that is an $\Upsilon$ is the inverse of $\Xi_{0}$. Therefore, the map $\Xi$ is a bijection on vertices, and so it is an isomorphism of simplicial complexes.
Hence, composing $\Xi$ with the isomorphisms \eqref{eq:iso-SC-infty} and \eqref{eq:iso-SC-0-Y}, we obtain a simplicial complex isomorphism $S_{g}(V^{s}_{\infty,r},V_{0,2})_{\bullet}\cong Y^{A}(V^{s}_{\infty,r},R^{s}_{\infty,r},x_{0},x_{1})$. The contractibility of $S_{g}(V^{s}_{\infty,r},V_{0,2})_{\bullet}$ thus follows from that of $Y^{A}(V^{s}_{\infty,r},R^{s}_{\infty,r},x_{0},x_{1})$ proved in \cite[Thm.~8.6]{HatcherWahl}.
\end{proof}

\paragraph*{Twisted homological stability for infinite handlebodies.}
We are now ready to prove the twisted homological stability result Theorem~\ref{thm:stable-stability-for-sharp} for infinite handlebodies. We follow the method of Theorem~\ref{thm:RWW-main-thm}, with the setting $(\cG,\odot,0)=(\sH^{\ge 2}_{2},\natural,I_{2})$, $(\cM,\odot)=(\overline{\sH}^{\ge 2}_{r,s},\natural)$, $A=V^{s}_{\infty,r}$ and $X=V_{0,2}$, where we write $\overline{\sH}^{\ge 2}_{r,s}$ for $\overline{\sH}^{\ge 2}_{V^{s}_{\infty,r},V_{0,2}}$ for brevity.
Beforehand, let us fix some notation. We recall that the category of groups is cocomplete (see \cite[\S V.1.]{MacLane1} for instance). We denote by $\cH^{s}_{\infty,r}$ the group of automorphisms $\Aut_{\overline{\sH}^{\ge 2}_{r,s}}(V^{s}_{\infty,r})$, i.e.~equivalently the colimits of the groups $\{\cH^{s}_{g,r}\}_{g\ge 0}$ with respect to the maps $\{\iota_{g}(V^{s}_{0,r})\}_{g\ge 0}$; see Definition~\ref{def:groupoids-handlebody-infinity}. We write $\rho^{s}_{\infty,g,r}\colon \Aut_{\overline{\sH}^{\ge 2}_{r,s}}(V^{s}_{\infty,r}\# V^{\# g}_{0,2})\to \Aut_{\overline{\sH}^{\ge 2}_{r,s}}(V^{s}_{\infty,r}\# V^{\# g+1}_{0,2})$ for the colimit of the stabilisation maps $\{\rho^{s}_{i,r}\}_{i\ge 0}$ (see \eqref{eq:canonical_stabilisation-sharp}) with respect to the maps $\{\iota_{j}(V^{s}_{0,r}\# V^{\# g}_{0,2})\}_{j\ge 0}$.
Finally, let $d^{s}_{\infty,g,r}$ denote the canonical morphism $[\id_{V^{s}_{\infty,r}\# V^{\# g}_{0,2}},V_{0,2}]$ of $[\overline{\sH}^{\ge 2}_{r,s},\sH^{\ge 2}_{2} \rangle$.

\begin{thm}\label{thm:stable-stability-for-sharp}
We fix integers $r\ge 2$ and $s\ge 0$. Let $F\colon [ \overline{\sH}^{\ge 2}_{r,s},\sH^{\ge 2}_{2} \rangle \to \Ab$ be a coefficient system of degree $d$. For any $i\ge 0$, the map
\[
(\rho^{s}_{\infty,0,r};F(d^{s}_{\infty,1,r}))_{i}\colon H_{i}(\cH^{s}_{\infty,r}; F(V^{s}_{\infty,r}))\to H_{i}(\Aut_{\overline{\sH}^{\ge 2}_{r,s}}(V^{s}_{\infty,r}\# V_{0,2}); F(V^{s}_{\infty,r}\# V_{0,2})).
\]
is an isomorphism.
\end{thm}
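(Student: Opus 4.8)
The plan is to run the machinery behind Theorem~\ref{thm:RWW-main-thm} with the data $(\cG,\odot,0)=(\sH^{\ge 2}_{2},\#,I_{2})$, $(\cM,\odot)=(\overline{\sH}^{\ge 2}_{r,s},\#)$, $A=V^{s}_{\infty,r}$ and $X=V_{0,2}$, but feeding in a connectivity input which is \emph{infinite} rather than merely linear, so as to obtain an isomorphism in every homological degree. First I would record that the structural hypotheses are met: the braided monoidal groupoid $(\sH^{\ge 2}_{2},\#,I_{2})$ has no zero-divisors and satisfies $\Aut_{\sH^{\ge 2}_{2}}(I_{2})=\{\id_{I_{2}}\}$, inheriting these from $(\sH^{\ge 2},\#,I_{2})$ via Lemma~\ref{lem:G-X_M-X-Quillen} together with Propositions~\ref{prop:no-0-divisors-no-automorphisms-of-0} and \ref{prop:beta(1)-braided-monoidal}; the groupoid $\overline{\sH}^{\ge 2}_{r,s}$ carries a right $(\sH^{\ge 2}_{2},\#,I_{2})$-module structure by Lemma~\ref{lem:right-module-infinite} and restriction; local homogeneity at $(V^{s}_{\infty,r},V_{0,2})$ holds by Proposition~\ref{prop:homogeneity-handlebody}\eqref{item:infinite-stabilisation}; and $F$ restricts to a finite degree $d$ coefficient system on the relevant full subcategory $[\overline{\sH}^{\ge 2}_{r,s},\sH^{\ge 2}_{2}\rangle$, on which the framework operates.

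The heart of the argument would be to show that the semi-simplicial set of destabilisations $W_{n}(V^{s}_{\infty,r},V_{0,2})_{\bullet}$ is \emph{contractible} for every $n\ge 1$. For this I would invoke Proposition~\ref{prop:local-standardness}\eqref{item:infinite-stabilisation-standard}, so that $(\overline{\sH}^{\ge 2}_{r,s},\#)$ is locally standard at $(V^{s}_{\infty,r},V_{0,2})$; then, arguing exactly as in the proof of Proposition~\ref{prop:stab1-W_g-S_g-homeomorphic} (through Proposition~\ref{prop:LS-iff-Wn-determined-by-vertices}), the canonical map $\pi\colon|W_{n}(V^{s}_{\infty,r},V_{0,2})_{\bullet}|\to|S_{n}(V^{s}_{\infty,r},V_{0,2})_{\bullet}|$ is a homeomorphism for each $n\ge 1$. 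Since $|S_{n}(V^{s}_{\infty,r},V_{0,2})_{\bullet}|$ is contractible for $n\ge 1$ by Theorem~\ref{thm:sc-infty-contractible}, the same then holds for $|W_{n}(V^{s}_{\infty,r},V_{0,2})_{\bullet}|$.

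It remains to convert this contractibility into the stated homological statement. Here I would return to the proof of Theorem~\ref{thm:RWW-main-thm}, i.e.\ to \cite[Thm.~C]{Krannich}: realising $\overline{\sH}^{\ge 2}_{r,s}$ as a graded $E_{1}$-module over the graded $E_{2}$-algebra $\B\sH^{\ge 2}_{2}$, the stability ranges it produces are controlled, in each degree $n$, by the connectivity of the homotopy fibre of the canonical resolution $R_{\bullet}(\B\overline{\sH}^{\ge 2}_{r,s})\to\B\overline{\sH}^{\ge 2}_{r,s}$, which by \cite[Lem.~7.6]{Krannich} and the injectivity condition of local homogeneity is, in degree $n$, homotopy equivalent to $|W_{n}(V^{s}_{\infty,r},V_{0,2})_{\bullet}|$. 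As this space is contractible for all $n\ge 1$, the canonical resolution is a genuine (fully connected) resolution, and \cite[Thm.~C]{Krannich} then yields that the stabilisation map $(\rho^{s}_{\infty,n,r};F(d^{s}_{\infty,n+1,r}))_{i}$ is an isomorphism for all $i\ge 0$ and all $n\ge 0$; taking $n=0$ gives the theorem. I expect this last step to be the main obstacle: the statement of Theorem~\ref{thm:RWW-main-thm} only produces a finite range of degrees, so one must argue directly with \cite[Thm.~C]{Krannich} --- equivalently, with the first-quadrant spectral sequence underlying it --- and verify that infinite connectivity of every $|W_{n}(V^{s}_{\infty,r},V_{0,2})_{\bullet}|$, $n\ge 1$, forces the relevant edge maps to be isomorphisms in every total degree, using the finite-degree hypothesis on $F$ to control the coefficient side of that argument.
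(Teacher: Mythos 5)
Your setup and your main geometric input match the paper: the verification of the monoidal/module hypotheses and of local homogeneity at $(V^{s}_{\infty,r},V_{0,2})$, and the identification $|W_{n}(V^{s}_{\infty,r},V_{0,2})_{\bullet}|\cong|S_{n}(V^{s}_{\infty,r},V_{0,2})_{\bullet}|$ via local standardness (as in Proposition~\ref{prop:stab1-W_g-S_g-homeomorphic}) combined with Theorem~\ref{thm:sc-infty-contractible}, are exactly how the paper gets contractibility of the destabilisation spaces. The divergence, and the genuine gap, is in your last step. Contractibility of every $|W_{n}|$ does \emph{not} let the stability machinery produce isomorphisms in every homological degree at a \emph{fixed} grading $n$, and in particular not at $n=0$. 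In Theorem~\ref{thm:RWW-main-thm} (equivalently \cite[Thm.~C]{Krannich}), infinite connectivity only means the hypothesis ``$\tfrac{n-a}{k}$-connected'' holds for every choice of $(a,k)$; the resulting range $i\le\tfrac{n+2-a}{k}-d-1$ is still bounded by roughly $\tfrac{n}{2}-d-1$ (larger $k$ only worsens it), and at $n=0$ it is empty. This is not an artefact of how the theorem is stated: the underlying inductive/spectral-sequence argument in grading $n$ only has access to the finitely many lower gradings $G_{n-p}$, so for fixed small $n$ and large $i$ there is simply no room, no matter how connected the $W_{m}$'s are. So your hope that ``infinite connectivity forces the relevant edge maps to be isomorphisms in every total degree'' cannot be carried out as described, and the finite-degree hypothesis on $F$ does not repair this.

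The missing ingredient is Lemma~\ref{lem:stabilising-handlebody-of-inf-rank}: since $(V_{\infty,1}\natural V)\# V_{0,2}\cong V_{\infty,1}\natural V$, all the objects $V^{s}_{\infty,r}\# V_{0,2}^{\# g}$ are isomorphic in $\overline{\sH}^{\ge 2}_{r,s}$, and these isomorphisms intertwine (in the Quillen bracket category, hence after applying $F$ and passing to group homology) the stabilisation map at level $g$ with the one at level $0$. The paper therefore argues: for a fixed $i$, apply the finite-range Theorem~\ref{thm:RWW-main-thm} at some $g\gg i+d$, where the range does cover degree $i$, to see that $(\rho^{s}_{\infty,g,r};F(d^{s}_{\infty,g,r}))_{i}$ is an isomorphism, and then transport this isomorphism down to $g=0$ using $V^{s}_{\infty,r}\# V_{0,2}^{\# g}\cong V^{s}_{\infty,r}$ and the functoriality of $F$. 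If you add this transport step to your argument (replacing the attempted ``all degrees at $n=0$ directly from the spectral sequence''), your proof becomes the paper's proof.
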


\begin{proof}
First, we note that the same argument as in the proof of Proposition~\ref{prop:stab1-W-g-S-g-homeomorphic} works to show that the map $\pi\colon |W_{g}(V_{\infty},V_{0,2})_{\bullet}|\to |S_{g}(V_{\infty},V_{0,2})_{\bullet}|$ (see Notation~\ref{nota:simplicial-complex-destabilisation})
is a homeomorphism for each $g\ge 1$. Hence it follows from Theorem~\ref{thm:sc-infty-contractible} that the semi-simplicial set $W_{g}(V^{s}_{\infty,r},V_{0,2})_{\bullet}$ is contractible for each $g\ge 1$, and so it is $k$-connected for any $k\ge 0$.
Moreover, the necessary hypotheses of having no zero-divisors, $\Aut_{\sH^{\ge 2}_{2}}(I_{2})=\{\id_{I_{2}}\}$ and local homogeneity at $(V^{s}_{\infty,r},V_{0,2})$ are verified in Propositions~\ref{prop:no-0-divisors-no-automorphisms-of-0} and \ref{prop:homogeneity-handlebody}\eqref{item:infinite-stabilisation} respectively. Therefore, by applying Theorem~\ref{thm:RWW-main-thm}, we deduce that for any $i\ge 0$, there exists $g\gg i+d$ such that the map $(\rho^{s}_{\infty,g,r};F(d^{s}_{\infty,g,r}))_{i}$ of the form
\[
H_{i}(\Aut_{\overline{\sH}^{\ge 2}_{r,s}}(V^{s}_{\infty,r}\# V_{0,2}^{\# g}); F(V^{s}_{\infty,r}\# V_{0,2}^{\# g}))\to H_{i}(\Aut_{\overline{\sH}^{\ge 2}_{r,s}}(V^{s}_{\infty,r}\# V_{0,2}^{\# (g+1)}); F(V^{s}_{\infty,r}\# V_{0,2}^{\# g+1}))
\]
is an isomorphism. Then, we deduce from the isomorphism \eqref{eq:canonical-iso-dash-natural} of Lemma~\ref{lem:stabilising-handlebody-of-inf-rank} that we have an isomorphism $V^{s}_{\infty,r}\# V_{0,2}^{\#g}\cong V^{s}_{\infty,r}$ for each $g\ge 0$, and thus also $V^{s}_{\infty,r}\# V_{0,2}^{\#(g+1)}\cong V^{s}_{\infty,r}\# V_{0,2}$. By the functoriality of $F$, we also have isomorphisms $F(V^{s}_{\infty,r}\# V_{0,2}^{\# g})\cong F(V^{s}_{\infty,r})$ and similarly $F(V^{s}_{\infty,r}\# V_{0,2}^{\# g+1})\cong F(V^{s}_{\infty,r}\# V_{0,2})$, proving the theorem.
\end{proof}

\subsubsection{Proof of Theorem~\ref{thm:HS_stabilistation-2}}\label{sss:proof-HS-2}

We can now prove Theorem~\ref{thm:HS_stabilistation-2} from Theorem~\ref{thm:HS_stabilistation-1} and Theorem~\ref{thm:stable-stability-for-sharp}.

\paragraph*{$V_{1,1}\natural(-)$-stabilisation version of Theorem~\ref{thm:HS_stabilistation-1}.} Actually, we need an alternative version of Theorem~\ref{thm:HS_stabilistation-1}, where we stabilise with $V_{1,1}$ on the left; see Theorem~\ref{thm:HS_stabilistation-1'}. Namely, we now work in the category $\langle \sH^{\ge 1}_{1}, \sH^{\ge 1}_{r,s}]$ defined by the Quillen bracket construction (see \S\ref{sss:Quillen-bracket-construction}) applied to $(\sH^{\ge 1}_{r,s},\natural)$ seen as a \emph{left}-module over the braided monoidal category $(\sH^{\ge 1}_{1},\natural,I_{1})$ (see \S\ref{sss:groupoids-handlebodies}). The stabilisation map is defined by the map $\bar{\sigma}^{s}_{g,r}:=\id_{V_{1,1}}\natural(-)\colon \cH^{s}_{g,r}\hookrightarrow \cH^{s}_{g+1,r}$, i.e.~analogously to \eqref{eq:canonical_stabilisation-natural} by extending by the identity on the left, for which Proposition~\ref{prop:pullback-square-handlebody-MCG} repeats mutatis mutandis.
Also, we write $\bar{c}_{g,1}$ for the canonical morphism $[V_{1,1},\id_{V^{s}_{g+1,r}}]\colon V^{s}_{g,r}\to V^{s}_{g+1,r}$ of $\langle \sH^{\ge 1}_{1}, \sH^{\ge 1}_{r,s}]$. Moreover, the notion of finite degree coefficient system from Definition~\ref{def:finite-deg-coeff-system} as well as the examples of \S\ref{sss:first-homologies-coeff-systems} adapt verbatim for functors of the form $\langle \sH^{\ge 1}_{1}, \sH^{\ge 1}_{r,s}] \to \Ab$.

\begin{thm}\label{thm:HS_stabilistation-1'}
We fix integers $r\ge 1$ and $s\ge 0$. Let $F\colon \langle \sH^{\ge 1}_{r,s},\sH^{\ge 1}_{1} ] \to \Ab$ be a coefficient system of degree $d$ at $0$. Then
\[
(\bar{\sigma}^{s}_{g,r};F(\bar{c}_{g,1}))_{i}\colon H_{i}(\cH^{s}_{g,r};F(V^{s}_{g,r}))\to H_{i}(\cH^{s}_{g+1,r};F(V^{s}_{g+1,r}))
\]
is an isomorphism for $i\le \frac{g-1}{2}-d-1$. If $F$ is split, it is an isomorphism for $i\le\frac{g-1-d}{2}-1$.
\end{thm}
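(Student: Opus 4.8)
The plan is to reduce Theorem~\ref{thm:HS_stabilistation-1'} to Theorem~\ref{thm:HS_stabilistation-1} by means of the braiding $\beta^{1}$ of the monoidal groupoid $(\sH^{\ge 1}_{1},\natural,I_{1})$ constructed in \S\ref{sss:groupoids-handlebodies}. First I would observe that $\beta^{1}$ provides, for each $g\ge 0$, an isomorphism $\beta^{1}_{V_{1,1},V^{s}_{g,r}}\colon V_{1,1}\natural V^{s}_{g,r}\xrightarrow{\ \sim\ }V^{s}_{g,r}\natural V_{1,1}$ in $\sH^{\ge 1}$, and that, by naturality of the braiding in the first variable, conjugation by these isomorphisms carries the left stabilisation homomorphism $\bar\sigma^{s}_{g,r}=\id_{V_{1,1}}\natural(-)$ to the right stabilisation homomorphism $\sigma^{s}_{g,r}=(-)\natural\id_{V_{1,1}}$, compatibly with the canonical morphisms $\bar c_{g,1}$ and $c^{s}_{g,r}$. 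Equivalently --- and this is the formulation I would actually carry through --- the braiding induces an isomorphism of categories between the left-module Quillen bracket $\langle\sH^{\ge 1}_{1},\sH^{\ge 1}_{r,s}]$ and the right-module Quillen bracket $[\sH^{\ge 1}_{r,s},\sH^{\ge 1}_{1}\rangle$ which intertwines the respective translation (suspension) functors; this is precisely the equivalence between the lower- and upper-suspension notions of finite-degree coefficient system recorded in Remark~\ref{rmk:generalisation-finite-degree} (see also \cite[Rem.~4.11,\S7.3]{Krannich}).

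Next, given the coefficient system $F\colon\langle\sH^{\ge 1}_{1},\sH^{\ge 1}_{r,s}]\to\Ab$ of degree $d$ at $0$, I would transport it along this isomorphism of categories to a coefficient system $F^{\flat}\colon[\sH^{\ge 1}_{r,s},\sH^{\ge 1}_{1}\rangle\to\Ab$. Since the isomorphism intertwines the suspensions, the natural transformation $i_{V_{1,1}}(F^{\flat})$ is identified with $i_{V_{1,1}}(F)$ up to the braiding isomorphisms, so the evanescence functors $\kappa_{V_{1,1}}$ and difference functors $\delta_{V_{1,1}}$ correspond to one another; an immediate induction on $d$ then shows $F^{\flat}$ is of degree $d$ at $0$, and split whenever $F$ is. On objects one has $F^{\flat}(V^{s}_{g,r})=F(V^{s}_{g,r})$ as abelian groups, and the two $\cH^{s}_{g,r}$-actions agree under the conjugation identification above, so there is a canonical isomorphism $H_{i}(\cH^{s}_{g,r};F(V^{s}_{g,r}))\cong H_{i}(\cH^{s}_{g,r};F^{\flat}(V^{s}_{g,r}))$ under which $(\bar\sigma^{s}_{g,r};F(\bar c_{g,1}))_{i}$ corresponds to $(\sigma^{s}_{g,r};F^{\flat}(c^{s}_{g,r}))_{i}$. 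Applying Theorem~\ref{thm:HS_stabilistation-1} to $F^{\flat}$ then yields the stated ranges verbatim.

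An alternative, equally viable route is to rerun the argument of \S\ref{sss:connectivity-ss}--\S\ref{sss:proof-HS-1} directly with the left-module structure of $(\sH^{\ge 1}_{r,s},\natural)$ over $(\sH^{\ge 1}_{1},\natural,I_{1})$: local homogeneity and local standardness at $(V^{s}_{g,r},V_{1,1})$ hold by the same arguments (with Proposition~\ref{prop:pullback-square-handlebody-MCG} applied to $\bar\sigma^{s}_{g,r}$, which is again conjugate to $\sigma^{s}_{g,r}$ by braiding naturality), and the destabilisation complex $W_{g}(V^{s}_{0,r},V_{1,1})_{\bullet}$ built from the left module is the same (braided) object, so its $\frac{g-3}{2}$-connectivity still follows from Theorem~\ref{thm:HatcherWahl-connectivity1} through Propositions~\ref{prop:stab1-S_g-iso-Y} and \ref{prop:stab1-W_g-S_g-homeomorphic}. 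One then invokes Theorem~\ref{thm:RWW-main-thm}, noting that Krannich's framework via $E_{1}$-modules over $E_{2}$-algebras is insensitive to the left/right convention. In either approach I expect the only genuinely delicate point to be the bookkeeping that conjugation by the braiding isomorphisms is coherent across the whole stabilising sequence, so that the two stabilisation maps in homology are literally identified rather than merely isomorphic in each degree; this is where I would spend the most care, using naturality of $\beta^{1}$ and the hexagon identities verified in Proposition~\ref{prop:beta(1)-braided-monoidal}.
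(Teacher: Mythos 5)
Your proposal is correct, but your primary route is not the one the paper takes: the paper's proof is essentially your \emph{alternative} route, namely rerunning the machinery of \S\ref{sss:HS-meta-theorem} and \S\ref{sss:connectivity-ss}--\S\ref{sss:proof-HS-1} mutatis mutandis for the left-module structure (an adapted version of Theorem~\ref{thm:RWW-main-thm}, local homogeneity and standardness for $\id_{V_{1,1}}\natural(-)$, and the $\frac{g-3}{2}$-connectivity of the left-handed destabilisation complex $\bar{W}_{g}(V^{s}_{0,r},V_{1,1})_{\bullet}$ via Propositions~\ref{prop:stab1-S_g-iso-Y} and \ref{prop:stab1-W_g-S_g-homeomorphic}). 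Your main argument instead transports the coefficient system across the braiding-induced identification of $\langle \sH^{\ge 1}_{1},\sH^{\ge 1}_{r,s}]$ with $[\sH^{\ge 1}_{r,s},\sH^{\ge 1}_{1}\rangle$ and quotes Theorem~\ref{thm:HS_stabilistation-1} directly; this is a legitimate reduction, resting on the same lower-/upper-suspension equivalence the paper itself records in Remark~\ref{rmk:generalisation-finite-degree} (and \cite[Rem.~4.11, \S7.3]{Krannich}), and it buys brevity at the cost of exactly the coherence bookkeeping you flag. Concretely, if you conjugate by the single braidings $b^{1}_{V_{1,1}^{\natural(g+1)},V^{s}_{0,r}}$ then, by the hexagon identity, the left stabilisation is carried to the right stabilisation only up to an inner automorphism of $\cH^{s}_{g+1,r}$ (the factor $\id_{V^{s}_{0,r}}\natural\, b^{1}_{V_{1,1},V_{1,1}^{\natural g}}$); one either chooses the identifications recursively via naturality of $b^{1}_{V_{1,1},-}$ so that the stabilisations match on the nose, or invokes the standard fact that conjugation by a group element acting compatibly on the coefficients induces the identity on twisted homology. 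With that point handled, your transported system $F^{\flat}$ has the same degree (and splitness) as $F$, and the stated ranges follow from Theorem~\ref{thm:HS_stabilistation-1}; so both your routes are sound, and the second coincides with the paper's argument.
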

\begin{proof}
First of all, the work of \S\ref{sss:HS-meta-theorem} repeats verbatim to provide an analogous version of Theorem~\ref{thm:RWW-main-thm} for a functor $F\colon \langle \sH^{\ge 1}_{r,s},\sH^{\ge 1}_{1} ] \to \Ab$ and a suitable corresponding semi-simplicial set $\bar{W}_{g}(V^{s}_{0,r},V_{1,1})_{\bullet}$.
Then, the hypotheses of having no zero-divisors, $\Aut_{\sH^{\ge 1}_{1}}(I_{1})=\{\id_{I_{1}}\}$ are done in Proposition~\ref{prop:no-0-divisors-no-automorphisms-of-0}, while the alternative version of local homogeneity at $(V^{s}_{0,r},V_{1,1})$ in this context is checked by adapting mutatis mutandis Proposition~\ref{prop:homogeneity-handlebody}\eqref{item:first-stabilisation}. Also, the work of \S\ref{sss:connectivity-ss} carries over mutatis mutandis as well as the proof of Proposition~\ref{prop:stab1-W-g-S-g-homeomorphic}, which proves that the semi-simplicial set $\bar{W}_{g}(V^{s}_{0,r},V_{1,1})_{\bullet}$ is $\frac{g-3}{2}$-connected for each $g\ge 1$. The result thus follows from the adapted version of Theorem~\ref{thm:RWW-main-thm}.
\end{proof}
Furthermore, we need the following commutativity result to prove Theorem~\ref{thm:HS_stabilistation-2}:
\begin{lem}\label{lem:sigma-bar-rho-commute}
For each $g\ge 0$, there is a commutative diagram of group injections
\begin{equation}\label{eq:commutative-square-injections-sigma-bar-rho}
\centering
\begin{split}
\begin{tikzpicture}
[x=1mm,y=1mm]
\node (tl) at (0,12) {$\cH^{s}_{g,r}$};
\node (tr) at (35,12) {$\cH^{s}_{g+1,r}$};
\node (bl) at (0,0) {$\cH^{s}_{g+1,r}$};
\node (br) at (35,0) {$\cH^{s}_{g+2,r}.$};
\draw[right hook->] (tl) to node[above,font=\small]{$\bar{\sigma}^{s}_{g,r}$} (tr);
\draw[right hook->] (bl) to node[above,font=\small]{$\bar{\sigma}^{s}_{g+1,r}$} (br);
\draw[right hook->] (tl) to node[left,font=\small]{$\rho^{s}_{g,r}$} (bl);
\draw[right hook->] (tr) to node[right,font=\small]{$\rho^{s}_{g+1,r}$} (br);
\end{tikzpicture}
\end{split}
\end{equation}
\end{lem}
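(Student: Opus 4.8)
The plan is to prove commutativity of \eqref{eq:commutative-square-injections-sigma-bar-rho} directly, by unwinding the two composites as ``extension by the identity'' operations and observing that both attach a copy of $V_{1,1}$ and a copy of $V_{0,2}$ to $V^{s}_{g,r}$ along disjoint pieces of its boundary, the result being in each case $V^{s}_{g+2,r}$. Injectivity of the four maps is not a new point: it follows from Proposition~\ref{prop:pullback-square-handlebody-MCG} for $\rho^{s}_{g,r}$ and $\rho^{s}_{g+1,r}$, and from its left-module analogue used in the proof of Theorem~\ref{thm:HS_stabilistation-1'} for $\bar{\sigma}^{s}_{g,r}$ and $\bar{\sigma}^{s}_{g+1,r}$.

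First I would recall from Definition~\ref{def:monoidal-structure-cG} that, for $\phi\in\cH^{s}_{g,r}$, the class $\bar{\sigma}^{s}_{g,r}(\phi)$ is obtained by gluing $V_{1,1}$ to $V^{s}_{g,r}$ along the right half of the first marked disc of $V_{1,1}$ and the left half of the first marked disc of $V^{s}_{g,r}$, and then extending $\phi$ by the identity on $V_{1,1}$; whereas $\rho^{s}_{g,r}(\phi)$ is obtained by gluing $V_{0,2}$ to $V^{s}_{g,r}$ along the right halves of the first two marked discs of $V^{s}_{g,r}$ and the left halves of the first two marked discs of $V_{0,2}$, and then extending $\phi$ by the identity on $V_{0,2}$. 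Tracing through the formula for $i_{1}\natural_{r}i_{2}$ in Definition~\ref{def:monoidal-structure-cG}, in the composite $\rho^{s}_{g+1,r}\circ\bar{\sigma}^{s}_{g,r}$ the $\#=\natural_{2}$-gluing attaches $V_{0,2}$ along the right halves of the first two marked discs of $V_{1,1}\natural V^{s}_{g,r}$, which are precisely the right halves of the first two marked discs of $V^{s}_{g,r}$; while in $\bar{\sigma}^{s}_{g+1,r}\circ\rho^{s}_{g,r}$ the $\natural=\natural_{1}$-gluing attaches $V_{1,1}$ along the left half of the first marked disc of $V^{s}_{g,r}\# V_{0,2}$, which is precisely the left half of the first marked disc of $V^{s}_{g,r}$. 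Thus both composites produce a handlebody obtained from $V^{s}_{g,r}$ by attaching $V_{1,1}$ along the left half of its first marked disc and $V_{0,2}$ along the right halves of its first two marked discs, and in both cases $\phi$ is extended by the identity on the two attached pieces.

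Next I would check that these two attaching loci are disjoint: the left and right halves of the first marked disc of $V^{s}_{g,r}$ meet only along a diameter, and after both gluings the local model along that diameter is a half-space together with two quarter-spaces, i.e.\ genuinely $\bR^{3}$, so one obtains a smooth handlebody after the standard corner smoothing. Consequently the two iterated boundary connected sums are canonically identified, and this identification is an isomorphism in $\sH$, not merely a diffeomorphism of underlying handlebodies: the induced parametrisations of the new marked discs and the positions of the marked points of $\cP_{s}$ are computed identically from both sides via the universal property of the pushout. Under this identification $\rho^{s}_{g+1,r}\circ\bar{\sigma}^{s}_{g,r}(\phi)$ and $\bar{\sigma}^{s}_{g+1,r}\circ\rho^{s}_{g,r}(\phi)$ both restrict to $\phi$ on the $V^{s}_{g,r}$-summand and to the identity on the $V_{1,1}$- and $V_{0,2}$-summands, hence represent the same isotopy class, which gives the commutativity of \eqref{eq:commutative-square-injections-sigma-bar-rho}.

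I expect the main obstacle to be purely bookkeeping: matching up the marked-disc data of the two iterated sums precisely enough to see that the canonical identification is a genuine isomorphism in $\sH$, and making the disjoint-support statement airtight given that $\natural=\natural_{1}$ and $\#=\natural_{2}$ both involve the first marked disc. Once the attaching loci are seen to be disjoint, the conclusion is an instance of the ``disjoint support $\Rightarrow$ commuting extensions by the identity'' principle already invoked repeatedly in \S\ref{sss:categorical_framework}.
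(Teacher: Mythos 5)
Your proposal is correct and follows essentially the same route as the paper's own proof: both unwind $\bar{\sigma}^{s}_{g,r}$ and $\rho^{s}_{g,r}$ as extensions by the identity over the $V_{1,1}$- and $V_{0,2}$-pieces glued along (up to isotopy) disjoint parts of $\partial V^{s}_{g,r}$, and conclude $(\id_{V_{1,1}}\natural \phi)\#\id_{V_{0,2}}=\id_{V_{1,1}}\natural(\phi\#\id_{V_{0,2}})$; your extra bookkeeping of which half-discs are used, and the appeal to Proposition~\ref{prop:pullback-square-handlebody-MCG} and its left-module analogue for injectivity, matches the paper's treatment.
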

\begin{proof}
We consider the canonical inclusions $\mathfrak{i}_{1}(g)\colon V^{s}_{g,r}\hookrightarrow V_{1,1}\natural V^{s}_{g,r}$ and $\mathfrak{i}_{2}(g)\colon V^{s}_{g,r}\hookrightarrow V^{s}_{g,r} \# V_{0,2}$.
For an element $\phi\in \cH^{s}_{g,r}$, recall that a representative diffeomorphism $\tilde{\phi}$ is isotopic to the identity in a neighbourhood of the gluing discs. By definition, the morphisms $\bar{\sigma}^{s}_{g,r}$ and $\rho^{s}_{g,r}$ consists in extending (up to isotopy) $\phi$ by the identity over the complement of the canonical inclusions $\mathfrak{i}_{1}(g)$ and $\mathfrak{i}_{2}(g)$ respectively.
Then we note that the extension along $\mathfrak{i}_{1}(g)$ and then that along $\mathfrak{i}_{2}(g+1)$ is the same as extending along $\mathfrak{i}_{2}(g)$ and then along $\mathfrak{i}_{1}(g+1)$, because in both cases we simply prolongate isotopy classes of diffeomorphisms by the identity in regions which have disjoint support up to isotopy.
Therefore, we have an equality $(\rho^{s}_{g+1,r}\circ \bar{\sigma}^{s}_{g,r})(\phi) =(\id_{V_{1,1}}\natural \phi) \# \id_{V_{0,2}} = \id_{V_{1,1}}\natural (\phi \# \id_{V_{0,2}})=(\bar{\sigma}^{s}_{g+1,r}\circ \rho^{s}_{g,r})(\phi)$ for each $\phi\in \cH^{s}_{g,r}$, and so the diagram \eqref{eq:commutative-square-injections-sigma-bar-rho} is commutative.
\end{proof}

\paragraph*{Proving Theorem~\ref{thm:HS_stabilistation-2}.}
We now use a trick from \cite[\S8.4]{HatcherWahl} to prove Theorem~\ref{thm:HS_stabilistation-2}. We fix an integer $i\ge 0$. 
By the commutativity of the diagrams \eqref{eq:commutative-square-injections-sigma-bar-rho} (for the groups) and \eqref{eq:commutative-diag-extension-functor} (for the coefficients) Definition~\ref{def:double-coeff-system} and the functoriality of group homology (see \cite[Chap.~III,\S8]{Brown} for instance), we obtain the following commutative diagram:
\[
\begin{tikzcd}[column sep=.7em]
H_{i}(\cH^{s}_{0,r};F(V^{s}_{0,r})) \arrow[rr,"\varPsi^{s}_{0,r}"] \arrow{d}{(\rho^{s}_{0,r},d^{s}_{0,r})_{*}} &&
\cdots \arrow[rr,"\varPsi^{s}_{g-1,r}"] &&
H_{i}(\cH^{s}_{g,r};F(V^{s}_{g,r}))\arrow[rrr,"\varPsi^{s}_{g,r}"]\arrow{d}{(\rho^{s}_{g,r},d^{s}_{g,r})_{*}}  &&&
H_{i}(\cH^{s}_{g+1,r};F(V^{s}_{g+1,r}))\arrow[rr,"\varPsi^{s}_{g+1,r}"]\arrow{d}{(\rho^{s}_{g+1,r},d^{s}_{g+1,r})_{*}}  &&
\cdots
\\
H_{i}(\cH^{s}_{1,r};F(V^{s}_{1,r})) \arrow[rr,"\varPsi^{s}_{1,r}"] &&
\cdots \arrow[rr,"\varPsi^{s}_{g,r}"] &&
H_{i}(\cH^{s}_{g+1,r};F(V^{s}_{g+1,r}))\arrow[rrr,"\varPsi^{s}_{g+1,r}"]  &&& H_{i}(\cH^{s}_{g+2,r};F(V^{s}_{g+2,r}))\arrow[rr,"\varPsi^{s}_{g+2,r}"]  && \cdots
\end{tikzcd}
\]
where $\varPsi^{s}_{g,r}=(\bar{\sigma}^{s}_{g,r}{,}\bar{c}_{g,1})_{i}$.
By Theorem~\ref{thm:HS_stabilistation-1'}, the map $(\bar{\sigma}^{s}_{g,r}{,}\bar{c}_{g,1})_{i}$ is an isomorphism for $i\le \frac{g-1}{2}-r-1$.
Also, we recall from \S\ref{sss:double-coeff-systems} and Proposition~\ref{prop:extension-coeff-system-infty} that the double coefficient system $F$ induces by definition a coefficient system $\overline{F}\colon [\overline{\sH}^{\ge 2}_{V^{s}_{\infty,r},V_{0,2}},\sH^{\ge 2}_{V_{0,2}}\rangle \to \Ab$, whose assignment on $[\id_{V^{s}_{\infty,r}\# V_{0,2}},V_{0,2}]$ is defined as the colimit of the vertical abelian group maps defined by diagram \eqref{eq:commutative-diag-extension-functor} for $m=n-1=0$.
Since twisted group homology commutes with filtered colimits (as a consequence of \cite[Th.~2.6.15]{Weibel} for instance), we deduce that the colimit of the vertical arrows in the above diagram is the isomorphism $(\rho^{s}_{\infty,0,r};\overline{F}(d^{s}_{\infty,1,r}))_{i}$ of Theorem~\ref{thm:stable-stability-for-sharp}. Hence $(\rho^{s}_{g,r}{,}d^{s}_{g,r})_{*}$ is an isomorphism once $(\bar{\sigma}^{s}_{g,r}{,}\bar{c}_{g,1})_{i}$ has reached its range of stability, whence the result of Theorem~\ref{thm:HS_stabilistation-2}.

\subsection{Independence of number of marked discs}\label{ss:HS-marked-discs}

Combining Theorems~\ref{thm:HS_stabilistation-1} and \ref{thm:HS_stabilistation-2}, we may now prove the twisted homological result of Theorem~\ref{thm:mainB}. In order to state it, let us start by fixing some notation. For all \S\ref{ss:HS-marked-discs}, we fix integers $r\ge 1$ and $s\ge 0$. Let $\bar{\natural}:=\bar{\natural}_{1}$ and $\bar{\#}:=\bar{\natural}_{2}$ be the operations analogous to $\natural=\natural_{1}$ and $\#=\natural_{2}$ respectively where we glue handlebodies along the full marked discs rather than just along half-discs Definition~\ref{def:monoidal-structure-cG}, i.e.~$V_{1}\bar{\natural}_{r} V_{2}$ with $r\in\{1,2\}$ is the adjunction space $V_{1}\cup_{\bar{f}_{1}\sim \bar{f}_{2}} V_{2}$ where $\bar{f}_{k} \colon \sqcup_{r}\bD^{2}\hookrightarrow \Image(i_{k})$ denotes the canonical inclusion into the first $r$ marked discs of $V_{k}$; see Figures~\ref{fig:Stabilisation_mu} and \ref{fig:Stabilisation}.

\begin{rmk}
The operations $\bar{\natural}$ and $\bar{\#}$ do not define monoidal structures on $\sH^{\ge 1}$ and $\sH^{\ge 2}$ respectively, which is the reason why we do not work with them in \S\ref{ss:handle-stabilisation-natural} and \S\ref{ss:handle-stabilisation-sharp}.
\end{rmk}

For each $g\ge 0$, repeating the framework of Proposition~\ref{prop:pullback-square-handlebody-MCG}, we consider the injective group morphisms $\mu^{s}_{g,r}:=(-)_{g}\bar{\natural} \id_{V_{0,3}}\colon \cH^{s}_{g,r} \hookrightarrow \cH^{s}_{g,r+1}$ and $\nu^{s}_{g,r+1}:=(-)_{g}\bar{\#} \id_{V_{0,3}}\colon \cH^{s}_{g,r+1} \hookrightarrow \cH^{s}_{g+1,r}$ induced by extending the isotopy classes of diffeomorphisms by the identity on $V_{0,3}$, analogous to the injections $\sigma^{s}_{g,r}$ and $\rho^{s}_{g,r}$ (see \eqref{eq:canonical_stabilisation-natural} and \eqref{eq:canonical_stabilisation-sharp}).
Furthermore, following Notation~\ref{nota:standard-notation-HS}, we write $m^{s}_{g,r}$ and $n^{s}_{g,r+1}$ for the equivalence classes of pairs $[\id_{V^{s}_{g,r+1}},V_{0,3}]_{\bar{\natural}}\colon V^{s}_{g,r} \to V^{s}_{g,r+1}$ and $[\id_{V^{s}_{g,r}},V_{0,3}]_{\bar{\#}}\colon V^{s}_{g,r+1} \to V^{s}_{g+1,r}$ respectively, where the pair $(\phi,V_{0,3})$ is equivalent to $(\id_{V^{s}_{g,r+1}},V_{0,3})$ (resp.~$(\id_{V^{s}_{g+1,r}},V_{0,3})$) if there exists an isomorphism $\chi \in \cH_{0,3}$ such that $\phi = \id_{V^{s}_{g,r+1}} \bar{\natural} \chi$ (resp.~$\phi= \id_{V^{s}_{g+1,r}} \bar{\#} \chi$).
Since the classes of the diffeomorphisms in the handlebody groups are isotopic to the identity on a collar neighbourhood of each marked disc, it is a routine to check from these definitions that we have equalities of the following group morphisms for each $g\ge 0$:
For each $g\ge 0$, we have equalities
\begin{equation}\label{eq:sigma-decomposition}
\sigma^{s}_{g,r} = \nu^{s}_{g,r+1}\circ \mu^{s}_{g,r} \,\,\textrm{ and }\,\, c^{s}_{g,r} = n^{s}_{g,r+1}\circ m^{s}_{g,r}
\end{equation}
while
\begin{equation}\label{eq:rho-decomposition}
\rho^{s}_{g,r} = \mu^{s}_{g,r}\circ \nu^{s}_{g,r+1} \,\,\textrm{ and }\,\, d^{s}_{g,r} = m^{s}_{g,r}\circ n^{s}_{g,r+1}.
\end{equation}
We deduce the following more detailed version of Theorem~\ref{thm:mainB}:
\begin{coro}\label{cor:independence-of-marked-discs}
For integers $r\ge 1$ and $s\ge0$, let $F\colon \sH\to\Ab$ be a coefficient bisystem at $V^{s}_{0,r}$ of degree $d \ge 0$ (see Definition~\ref{def:double-coeff-system}). Then both of the maps
\begin{align*}
    (\mu^{s}_{g,r};F(m^{s}_{g,r}))_{i}\colon H_{i}(\cH^{s}_{g,r}; F(V^{s}_{g,r}))\to H_{i}(\cH^{s}_{g,r+1};F(V^{s}_{g,r+1})),\\
    (\nu^{s}_{g,r+1};F(n^{s}_{g,r+1}))_{i}\colon H_{i}(\cH^{s}_{g,r+1}; F(V^{s}_{g,r+1}))\to H_{i}(\cH^{s}_{g,r};F(V^{s}_{g,r})),
\end{align*}
are isomorphisms for $i\le\frac{g-1}{2}-d-1$. If both coefficient systems are split, they are isomorphisms for $i\le\frac{g-1-d}{2}-1$. 
\end{coro}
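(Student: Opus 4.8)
The plan is to deduce the statement from Theorems~\ref{thm:HS_stabilistation-1} and \ref{thm:HS_stabilistation-2} via the zigzag trick of \cite[\S8.4]{HatcherWahl}, exploiting the factorisations \eqref{eq:sigma-decomposition} and \eqref{eq:rho-decomposition}. First I would record that a coefficient bisystem $F\colon\sH\to\Ab$ at $V^s_{0,r}$ of degree $d$ supplies exactly the hypotheses of both theorems. Indeed, by Definition~\ref{def:double-coeff-system} the restriction of $F$ to $\sH^{\ge 2}_{r,s}$ lifts to a finite-degree-$d$ coefficient system over $[\sH^{\ge 2}_{r,s},\sH^{\ge 2}_2\rangle$ and $F$ is in particular a double coefficient system at $V^s_{0,r}$ of degree $d$, so Theorem~\ref{thm:HS_stabilistation-2} applies; and the additional clause in the definition of a coefficient bisystem furnishes a lift of $F|_{\sH^{\ge 1}_{r,s}}$ to a finite-degree-$d$ coefficient system over $[\sH^{\ge 1}_{r,s},\sH^{\ge 1}_1\rangle$, so Theorem~\ref{thm:HS_stabilistation-1} applies. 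Hence both $(\sigma^s_{g,r};F(c^s_{g,r}))_i$ and $(\rho^s_{g,r};F(d^s_{g,r}))_i$ are isomorphisms for $i\le\frac{g-1}{2}-d-1$, and for $i\le\frac{g-1-d}{2}-1$ if the relevant coefficient systems are split.

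Next I would build the zigzag. Fix $i\ge 0$, $r\ge 1$, $s\ge 0$. The group injections $\mu^s_{g,r}$ and $\nu^s_{g,r+1}$, together with the compatible coefficient maps $m^s_{g,r}$ and $n^s_{g,r+1}$ of \S\ref{ss:HS-marked-discs}, induce by functoriality of twisted group homology a sequence on $H_i(-;F(-))$ which alternates $\mu$-induced arrows $H_i(\cH^s_{g,r};F(V^s_{g,r}))\to H_i(\cH^s_{g,r+1};F(V^s_{g,r+1}))$ and $\nu$-induced arrows $H_i(\cH^s_{g,r+1};F(V^s_{g,r+1}))\to H_i(\cH^s_{g+1,r};F(V^s_{g+1,r}))$, for $g=0,1,2,\dots$. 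By \eqref{eq:sigma-decomposition} and the functoriality of group homology, the composite of a $\mu$-arrow followed by the next $\nu$-arrow (based at $(g,r)$) equals $(\sigma^s_{g,r};F(c^s_{g,r}))_i$; by \eqref{eq:rho-decomposition}, the composite of a $\nu$-arrow followed by the next $\mu$-arrow is a map of the form $(\rho^s_{g,r};F(d^s_{g,r}))_i$. Consequently every composite of two consecutive arrows of the zigzag is an isomorphism for $i\le\frac{g-1}{2}-d-1$ (resp. $i\le\frac{g-1-d}{2}-1$ when split), the genus entering the relevant $\sigma$- or $\rho$-stabilisation being at least $g$ along the whole sequence.

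Then I would invoke the elementary lemma: if $A\xrightarrow{f}B\xrightarrow{g}C\xrightarrow{h}D$ are homomorphisms of abelian groups with $g\circ f$ and $h\circ g$ both isomorphisms, then $g$ is an isomorphism — it is injective because $h\circ g$ is, and surjective because $g\circ f$ is — whence $f=g^{-1}\circ(g\circ f)$ is an isomorphism. Applying this to three consecutive arrows with $f=(\mu^s_{g,r};F(m^s_{g,r}))_i$, $g=(\nu^s_{g,r+1};F(n^s_{g,r+1}))_i$ and $h=(\mu^s_{g+1,r};F(m^s_{g+1,r}))_i$, and using that $g\circ f$ and $h\circ g$ are the isomorphisms $(\sigma^s_{g,r};\cdot)_i$ and $(\rho^s_{g,r+1};\cdot)_i$ in the range $i\le\frac{g-1}{2}-d-1$, shows simultaneously that $(\mu^s_{g,r};F(m^s_{g,r}))_i$ and $(\nu^s_{g,r+1};F(n^s_{g,r+1}))_i$ are isomorphisms in that range, and in the range $i\le\frac{g-1-d}{2}-1$ when split — which is precisely the assertion.

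The seemingly routine part is the only place needing care: one must check that the factorisations \eqref{eq:sigma-decomposition}–\eqref{eq:rho-decomposition} genuinely identify every pair of consecutive maps of the zigzag with an instance of Theorem~\ref{thm:HS_stabilistation-1} or Theorem~\ref{thm:HS_stabilistation-2} whose genus is at least $g$, so that the common stability range remains valid at each stage, and that the coefficient bisystem condition does produce the input required by Theorem~\ref{thm:HS_stabilistation-1}, whose hypothesis is phrased over $[\sH^{\ge 1}_{r,s},\sH^{\ge 1}_1\rangle$ rather than over $\sH$. Both points reduce to unwinding the constructions of \S\ref{sss:double-coeff-systems} and \S\ref{ss:HS-marked-discs} and present no real difficulty.
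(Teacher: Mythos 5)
Your proposal is correct and follows essentially the same route as the paper: both deduce the result from Theorems~\ref{thm:HS_stabilistation-1} and \ref{thm:HS_stabilistation-2} via the factorisations \eqref{eq:sigma-decomposition} and \eqref{eq:rho-decomposition}, the only (cosmetic) difference being that you package the conclusion as a three-arrow sandwich lemma, whereas the paper extracts injectivity of $(\mu^{s}_{g,r};F(m^{s}_{g,r}))_{i}$ and surjectivity of $(\nu^{s}_{g,r+1};F(n^{s}_{g,r+1}))_{i}$ from the $\sigma$-composite and the complementary halves from the $\rho$-composite.
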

\begin{proof}
By Theorem~\ref{thm:HS_stabilistation-1}, it follows from the decomposition of an isomorphism into two composite factors and the equalities \eqref{eq:sigma-decomposition} that $(\mu^{s}_{g,r};F(m^{s}_{g,r}))_{i}$ is injective and that $(\nu^{s}_{g,r+1};F(n^{s}_{g,r+1}))_{i}$ is surjective for the ranges stated in the result. Similarly, we deduce from Theorem~\ref{thm:HS_stabilistation-2} combined with the equalities \eqref{eq:rho-decomposition} that $(\mu^{s}_{g,r};F(m^{s}_{g,r}))_{i}$ is surjective and that $(\nu^{s}_{g,r+1};F(n_{g,r+1}))_{i}$ is injective in the same ranges, whence the result.
\end{proof}
\begin{eg}\label{eg:HW-recover}
Assigning $s=0$ and $F=\bZ$, Corollary~\ref{cor:independence-of-marked-discs} corresponds to \cite[Th.~1.8(ii)]{HatcherWahl} and may be seen as a generalisation of that previous result to any $s\ge 0$ and to twisted coefficients given by coefficient bisystems. For instance, as explained in Example~\ref{eg:extending-functors-first-homologies}, the functors $T^{d}H_{\Sigma}$ and $T^{d}H_{\cV}$ define coefficient bisystems at $V^{s}_{0,r}$ of degree $d \ge 0$, and thus Corollary~\ref{cor:independence-of-marked-discs} applies to these functors.
\end{eg}

\begin{rmk}\label{rmk:non-stability-no-marked-discs}
The theorem \cite[Th.~1.8(ii)]{HatcherWahl} also shows that the map $(\nu_{g,1};\bZ)_{i}$ (i.e.~$s=r=0$ and $F=\bZ$) is an isomorphism for $g\ge 2i+4$. However, Corollary~\ref{cor:independence-of-marked-discs} does not hold for $r=0$ with finite degree coefficient bisystem. Indeed, Ishida and Sato \cite[Th.~1.1-1.2]{IshidaSato} provide a counterexample, by showing that $(\nu_{g,1};H_{\Sigma}(n^{s}_{g,r}))_{1}$ is not an isomorphism for $g\ge 4$.
\end{rmk}

\begin{rmk}\label{rmk:Boldsen-comparison}
Analogous results to those of Corollary~\ref{cor:independence-of-marked-discs} above for mapping class groups of surfaces are proven in \cite[Th.~4.13]{Boldsen}, but under the stronger assumption that the involved coefficient systems are \emph{split} (following the terminology of Definition~\ref{def:finite-deg-coeff-system}, see \cite[Def.~4.2]{Boldsen}). However, the framework of \S\ref{s:background} and studies of \S\ref{s:homological_stability} leading to Corollary~\ref{cor:independence-of-marked-discs} may be repeated mutatis mutandis for mapping class groups of surfaces, and so this assumption appears to us to be unnecessary.
\end{rmk}

\section{Moduli spaces of handlebodies with tangential structures}\label{s:HS-moduli-tangential-structures}

In this section, we apply the results of \S\ref{s:homological_stability} to prove Theorem~\ref{thm:mainC} (see \S\ref{ss:proof-of-thmC}) after some preliminary recollections on tangential structures (see \S\ref{def:tangential-structures}).

\subsection{Tangential structures}\label{def:tangential-structures}

We start by recalling the definition and some basics on tangential structures. Throughout the section, $\Orth(d)$ denotes the group of $d\times d$ orthogonal matrices and $\B\Orth(d)$ its classifying space. Furthermore, we let $\gamma_{d}\to\B\Orth(d)$ denote the tautological $d$--dimensional vector bundle over $\B\Orth(d)$.

\begin{defn}\label{def:theta-structure}
For $d\ge 1$, a $d$-tangential structure is a fibration $\theta\colon B \to \B\Orth(d)$. For $M$ an $m$-manifold with $m\le d$, a $\theta$-structure on $M$ is a map of vector bundles (i.e.~fibrepreserving map whose restriction to each fibre is a linear isomorphism; see for example \cite[Chapter 3]{MilnorStasheff-CharClasses})
\[
\ell\colon TM\oplus\epsilon_{M}^{\oplus(d-m)}\to\theta^{*}\gamma_{d},
\] where $TM$ is the tangent bundle on $M$, $\epsilon_{M}=\bR\times M\to M$ denotes the trivial line bundle on $M$, $\theta^{*}\gamma_{d}\to B$ is the pullback vector bundle of $\gamma_{d}$ along $\theta$ and $\oplus$ denotes the Whitney sum of vector bundles.
\end{defn}

We now introduce the moduli space of manifolds equipped with a $\theta$-structure. In order to construct stabilisation maps, we restrict attention to $\theta$-structures on $M$ satisfying a prescribed condition along a fixed submanifold of the boundary $\partial M$.
Let $G$ be a topological group. We denote by $\Top_{G}$ the category of $G$-spaces and equivariant maps. We also fix a functorial model of the total space of the universal principal $G$-bundle $EG \to \B G$. The \emph{homotopy orbit} (or Borel construction) functor $(-)_{hG} \colon \Top_{G} \to \Top$ is defined by $X \mapsto (X \times EG)/G$ where $G$ acts diagonally on $X \times EG$, i.e.~the set of the orbits of $(-)\times EG$ under the action of $G$.
We use this construction to formulate the following definition.
\begin{defn}\label{def:Bun-BDiff}
Let $\theta \colon B \to \B\Orth(d)$ be a tangential structure, let $M$ be a smooth $d$-manifold, and let $R \subset \partial M$ be a $(d-1)$--dimensional submanifold. Suppose we are given a fixed $\theta$-structure $\ell_{R} \colon TR\oplus\epsilon_{R} \to \theta^{*}\gamma_{d}$ on $R$. We define $\Bun^{\theta}(M;\ell_{R})$ to be the space of $\theta$-structures on $M$ (topologised as a subspace of $\Map(TM,\theta^{*}\gamma_d)$) which restrict to $\ell_{R}$ on $R$.

Furthermore, let $\Diff(M;R)$ denote the group of diffeomorphisms of $M$ that restrict to the identity on $R$. This group acts on $\Bun^{\theta}(M;\ell_{R})$ by $\phi \cdot \ell = \ell \circ D\phi$, where $D\phi$ is the differential of $\phi$. We then define
\[
\B\Diff^{\theta}(M;\ell_{R})
\;:=\;
\Bun^{\theta}(M;\ell_{R})_{h\Diff(M;R)}
\]
to be the homotopy orbit space of this action.
\end{defn}

\begin{rmk}
We note that the notation $\B\Diff^{\theta}(M;\ell_{R})$ is potentially misleading: in general, there is no topological group $\Diff^{\theta}(M;\ell_{R})$ whose classifying space is $\B\Diff^{\theta}(M;\ell_{R})$. Rather, this space is defined as a homotopy orbit space, and should be regarded as the moduli space of $\theta$-structured manifolds $(M,\ell)$ extending the fixed boundary condition $\ell_{R}$.
\end{rmk}

\subsection{Proof of Theorem~\ref{thm:mainC}}\label{ss:proof-of-thmC}

Now let us restrict to the case where $d=3$, $M=V$ where $(V,r,s,i,j)\in\sH$ is a compact $3$--dimensional handlebody and $R=\Image(i)\sqcup\Image(j)$ (see \S\ref{ss:handlebody_MCG_recollections}). Recall that $V\cong V^{s}_{g,r}$, for some $g\ge 0$, $r\ge 1$ and $s\ge 0$, and we write $R(V)=R^{s}_{g,r}=\cD_{r}\sqcup\cP_{s}\subset\partial V_{g}$ for brevity.

In order to construct stabilisation maps between spaces of $\theta$-structures, we follow the strategy of \cite[\S8]{Perlmutter}. Henceforth, by a \emph{framed} $m$-manifold we mean a parallelisable $m$-manifold $M$ equipped with a specified trivialisation of its tangent bundle, i.e.~a vector bundle isomorphism $\epsilon_{M}^{\oplus m}\cong TM$. We now introduce the following definition.

\emph{We fix once and for all of \S\ref{ss:proof-of-thmC} a bundle map $\ell_{0}\colon\bR^{3}\to\theta^{*}\gamma_{3}$, where $\bR^{3}$ is regarded as the trivial rank--$3$ vector bundle over a point.}

\begin{defn}\label{def:canonical-theta-structure-framing}
Let $M$ be a framed $m$-manifold with $m \leq 3$ for $m\le 3$. Using the standard inclusion $\bR^{m} \hookrightarrow \bR^{3}$ (for $m \leq 3$), the specified trivialisation $\epsilon_{M}^{\oplus m} \xrightarrow{\cong} TM$ identifies $TM$ with a trivial subbundle of $M \times \bR^{3}$. The map $\ell_{0}$ then induces a canonical $\theta$-structure $\ell_{0}(M)\colon TM \times \bR^{3-m} \cong M\times\bR^{3}\to\theta^{*}\gamma_{3}$ by $(x,y)\mapsto \ell_{0}(y)$.
\end{defn}

Let us choose framings on $V_{1,1}$ and $V_{0,2}$ once and for all, which determine canonical $\theta$-structures $\ell_{0}(V_{1,1})$ and $\ell_{0}(V_{0,2})$ as in Definition~\ref{def:canonical-theta-structure-framing}. We may then define a stabilisation map
\begin{equation}\label{eq:bundle-stabilisation-sigma}
    \sigma^{\theta,s}_{g,r}\colon
    \Bun^{\theta}(V_{g};\ell_{0}(R^{s}_{g,r}))
    \longrightarrow
    \Bun^{\theta}(V_{g+1};\ell_{0}(R^{s}_{g+1,r}))
\end{equation}
by $\ell \mapsto \ell\natural\ell_{0}(V_{1,1})$, where the operation $\natural$ is induced from the monoidal structure $\natural_{1}$ of Definition~\ref{def:monoidal-structure-cG} on the source. Similarly, we define a stabilisation map
\begin{equation}\label{eq:bundle-stabilisation-mu}
    \mu_{g,r}^{\theta,s}\colon 
    \Bun^{\theta}(V_{g};\ell_{0}(R^{s}_{g,r}))
    \longrightarrow 
    \Bun^{\theta}(V_{g};\ell_{0}(R^{s}_{g,r+1}))
\end{equation}
by $\ell\mapsto \ell\bar{\natural}\ell_{0}(V_{0,3})$, where the operation $\bar{\natural}$ is induced from that on the source introduced in \S\ref{ss:HS-marked-discs}. Both maps are $\Diff(V_{g};R^{s}_{g,r})$-equivariant, where the action on the targets is defined via the standard stabilisation maps $\Diff(V_{g};R^{s}_{g,r})\to \Diff(V_{g+1};R^{s}_{g+1,r})$ and $\Diff(V_{g};R^{s}_{g,r})\to \Diff(V_{g};R^{s}_{g,r+1})$ respectively. Hence applying the homotopy orbit functor therefore yields induced stabilisation maps
\begin{equation}
    \breve{\sigma}_{g,r}^{\theta,s}\colon
    \B\Diff^{\theta}(V_{g};\ell_{0}(R^{s}_{g,r}))
    \longrightarrow
    \B\Diff^{\theta}(V_{g+1};\ell_{0}(R^{s}_{g+1,r}))
\end{equation}
and
\begin{equation}
    \breve{\mu}_{g,r}^{\theta,s}\colon
    \B\Diff^{\theta}(V_{g};\ell_{0}(R^{s}_{g,r}))
    \longrightarrow
    \B\Diff^{\theta}(V_{g};\ell_{0}(R^{s}_{g,r+1})).
\end{equation}We now recall the statement of Theorem~\ref{thm:mainC}.

\begin{thm}\label{thmC-detailed}
If $\theta \colon B \to \B\Orth(3)$ is a simply connected tangential structure, then the stabilisation maps $\breve{\sigma}_{g,r}^{\theta,s}$ and $\breve{\mu}_{g,r}^{\theta,s}$ induce isomorphisms on homology with rational coefficients in all degrees $*\le \frac{g-3}{2}$. 
\end{thm}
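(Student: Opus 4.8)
The plan is to reduce the statement about moduli spaces of handlebodies with tangential structures to the twisted homological stability results for the handlebody groups themselves, namely Theorem~\ref{thm:HS_stabilistation-1} and Corollary~\ref{cor:independence-of-marked-discs}. The key observation is that $\B\Diff_0^\theta(V_g;\ell_0(R_{g,r}^s))$ is a homotopy orbit space of a connected space $\Bun_0^\theta(V_g;\ell_0(R_{g,r}^s))$ under $\Diff(V_g;R_{g,r}^s)$, and by Lemma~\ref{lem:H-injects-into-Gamma} the latter has contractible path components for $g\ge 2$, so it is homotopy equivalent to the discrete group $\cH_{g,r}^s$. Consequently there is a homotopy fibre sequence
\[
\Bun_0^\theta(V_g;\ell_0(R_{g,r}^s))\longrightarrow \B\Diff_0^\theta(V_g;\ell_0(R_{g,r}^s))\longrightarrow \B\cH_{g,r}^s,
\]
and the associated Serre spectral sequence with rational coefficients has $E_2$-page $H_p(\cH_{g,r}^s;H_q(\Bun_0^\theta(V_g;\ell_0(R_{g,r}^s));\bQ))$ converging to $H_{p+q}$ of the total space. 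The stabilisation maps $\breve\sigma_{g,r}^{\theta,s}$ and $\breve\mu_{g,r}^{\theta,s}$ are compatible with these fibre sequences, inducing maps of spectral sequences.

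The first main step is to identify, for each fixed $q$, the $\cH_{g,r}^s$-module $H_q(\Bun_0^\theta(V_g;\ell_0(R_{g,r}^s));\bQ)$ and to show that as $g$ varies these assemble into a coefficient system (respectively coefficient bisystem) of finite degree. Here I would exploit the fact that, since $\B$ is pointed and $V_g$ is homotopy equivalent to a wedge of $g$ circles, the space $\Bun_0^\theta(V_g;\ell_0(R_{g,r}^s))$ of $\theta$-structures restricting to the constant structure on $R_{g,r}^s$ is homotopy equivalent to the based mapping space of the fibre $\theta^{-1}(\ast)=:\mathcal{F}$ (more precisely the space of lifts, which is a section space over $V_g$ of a bundle with fibre $\mathcal{F}$, and since $V_g\simeq\bigvee_g S^1$ this section space splits appropriately). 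The rational homology of such an iterated based loop/mapping space on a wedge of circles is computable and, crucially, polynomial in $g$: the generators of $H_*(\Bun_0^\theta(V_g);\bQ)$ in each degree are indexed by tuples drawn from a fixed finite set attached to each of the $g$ handles plus the boundary data, so one obtains a functor on $\sH$ built out of tensor powers of the degree-one coefficient systems $H_1(V_g;\bQ)$ (or $H_\Sigma$) of \S\ref{sss:first-homologies-coeff-systems}, hence of finite degree by Proposition~\ref{prop:H-Sigma-V_functors} and \cite[Cor.~1.5]{AnnexLS1}, and a coefficient bisystem by Example~\ref{eg:extending-functors-first-homologies}. This is precisely where the passage to $\bQ$-coefficients is forced, as flagged in Remark~\ref{rmk:thmC-over-Q}: over $\bZ$ the homology of the mapping space need not be free, and the finite-degree machinery (which wants torsion-free values) breaks down.

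The second step is a spectral sequence comparison. Having established that each row $E_2^{p,q}=H_p(\cH_{g,r}^s;H_q(\Bun_0^\theta(V_g);\bQ))$ of the source spectral sequence maps to the corresponding row of the target, and that the coefficient system in row $q$ has finite degree $d_q$, Theorem~\ref{thm:HS_stabilistation-1} (resp.\ Corollary~\ref{cor:independence-of-marked-discs}) gives that this map is an isomorphism for $p\le \tfrac{g-1}{2}-d_q-1$. To get a uniform range $\ast\le\tfrac{g-3}{2}$ for $H_{p+q}$ of the total space, I need the degrees $d_q$ to be controlled: specifically one needs $d_q\le q$ (or $d_q$ linear in $q$ with small slope), so that in total degree $n=p+q$ one has $p\le n$ and $d_q\le q\le n$, giving the isomorphism whenever $n+d_q+1\le\tfrac{g-1}{2}$, i.e.\ roughly $n\le\tfrac{g-3}{2}$ after a short bookkeeping argument using the standard Zeeman-type comparison. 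The bound $d_q\le q$ should follow from the structure of $H_*(\Bun_0^\theta(V_g);\bQ)$ as (a subquotient of) a tensor algebra on classes of positive degree whose degree-$q$ part involves at most $q$ tensor factors of the degree-one coefficient systems. Then a standard five-lemma/induction-on-filtration argument on the two spectral sequences yields the isomorphism on $H_n$ of the total spaces in the stated range.

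The main obstacle I anticipate is the first step: pinning down the $\cH_{g,r}^s$-equivariant structure of $H_*(\Bun_0^\theta(V_g);\bQ)$ precisely enough to recognise it as a finite-degree coefficient system (bisystem) with degree bounded linearly in homological degree. One must be careful that the mapping class group acts not merely on the homotopy type of $V_g$ but through the induced action on $H_1(V_g)$ and on the set of homotopy classes of maps to $\mathcal{F}$, and that the constant-structure boundary condition and the restriction to the basepoint component $\Bun_0^\theta$ interact correctly with the stabilisation maps $\natural$ and $\bar\natural$. I expect this is handled by a Koszul-type / bar-complex description of the rational homology of the (homotopy-theoretic) section space over a wedge of circles — so that $H_q$ is a direct summand of a sum of terms $H_1(V_g;\bQ)^{\otimes k}\otimes(\text{fixed }\bQ\text{-vector space})$ with $k\le q$ — combined with the observation that these are exactly the finite-degree coefficient systems produced in \S\ref{sss:first-homologies-coeff-systems}. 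Once that functorial description is in place, the rest is the spectral sequence comparison, which is routine given Theorems~\ref{thm:HS_stabilistation-1} and \ref{thm:mainB}.
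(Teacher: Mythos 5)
Your overall architecture is the same as the paper's: pass to the Borel fibration $\Bun^{\theta}_{0}(V_{g};\ell_{0})\to\B\Diff^{\theta}_{0}(V_{g};\ell_{0})\to\B\Diff(V_{g};R^{s}_{g,r})\simeq\B\cH^{s}_{g,r}$, feed the coefficient systems $F_{q}=H_{q}(\Bun^{\theta}_{0}(-);\bQ)$ into Theorem~\ref{thm:HS_stabilistation-1} and Corollary~\ref{cor:independence-of-marked-discs}, and compare Serre spectral sequences; your bookkeeping of ranges ($d_{q}\le q$ giving $p+q\le\frac{g-3}{2}$) matches the paper. The gap is in your step 1, i.e.\ in the justification that $F_{q}$ is a coefficient bisystem of degree $\le q$. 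You propose to identify $\Bun^{\theta}_{0}(V_{g};\ell_{0})$ with a section/mapping space over a wedge of circles and to conclude that $H_{q}$ is a direct summand of sums of $H_{1}(V_{g};\bQ)^{\otimes k}\otimes W$ with $k\le q$, so that finiteness of degree and the bisystem property come from Proposition~\ref{prop:H-Sigma-V_functors} and Example~\ref{eg:extending-functors-first-homologies}. This structural claim is false in general. Writing $\mathcal{F}=\theta^{-1}(\ast)$, one does get $\Bun^{\theta}(V_{g};\ell_{0}(R^{s}_{g,r}))\simeq\Map_{*}(V_{g}/R^{s}_{g,r},\mathcal{F})\simeq(\Omega\mathcal{F})^{\times(g+r+s-1)}$, but the handlebody group acts on this through its action on $\pi_{1}(V_{g})$, a free group, not through $H_{1}(V_{g};\bQ)$: mapping classes acting by conjugation-type automorphisms are trivial on $H_{1}$ yet act on $H_{*}(\Omega\mathcal{F};\bQ)^{\otimes(g+r+s-1)}$ via the Pontryagin product, which is noncommutative for general $\mathcal{F}$, so the action does not factor through $\GL(H_{1})$ and $F_{q}$ is not an equivariant summand of $T^{k}H_{\cV}\otimes W$. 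Hence both the degree bound and, in particular, the bisystem conditions of Definition~\ref{def:double-coeff-system} are left unproved on your route (and the ``fixed finite set per handle'' picture is also inaccurate, since $H_{*}(\Omega\mathcal{F};\bQ)$ need not be of finite type).

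What repairs this—and is what the paper actually does—is to avoid computing the homotopy type of the fibre altogether: one proves (Lemma~\ref{lem:monoidal-str-homotopy-eq}) that the gluing maps $\Bun^{\theta}_{0}(V;\ell_{0})\times\Bun^{\theta}_{0}(V_{1,1};\ell_{0})\to\Bun^{\theta}_{0}(V\natural V_{1,1};\ell_{0})$ and the analogous map for $\#$ are homotopy equivalences (by pushing the structure off a collar of the gluing region), and then runs an induction on $q$ using the K\"unneth theorem: $\kappa_{X}(F_{q})=0$ and $\delta_{X}(F_{q})(V)\cong\bigoplus_{i<q}F_{i}(V)\otimes H_{q-i}(\Bun^{\theta}_{0}(X);\ell_{0});\bQ)$, which has degree $\le q-1$ because $-\otimes F_{j}(X)$ is exact over $\bQ$ (this is exactly where rationality enters, cf.\ Remark~\ref{rmk:thmC-over-Q}); the same argument applied to left stabilisation by $V_{1,1}$ and to stabilisation by $V_{0,2}$, plus a direct check of the compatibility square, yields the bisystem property (Proposition~\ref{prop:tangential-str-bisystem}). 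Your spectral-sequence comparison step is then fine as stated, but as written your proposal does not establish the key input it relies on.
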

The proof follows the strategy of \cite[Thm.~4.2(i)]{RWautfreegroups}, with Proposition~\ref{prop:tangential-str-bisystem} below providing the key additional input. We begin by outlining the main idea, in order to place the subsequent lemmas in context. First, by \cite[Lem.~2.1]{AnnexLS1}, applying the homotopy orbit construction to the trivial map $\Bun^{\theta}(V_{g};\ell_{0}(R^{s}_{g,r}))\to *$ yields a fibre bundle
\begin{equation}\label{eq:tang-str-fibre-bundle}
    \B\Diff^{\theta}(V_{g};\ell_{0}(R^{s}_{g,r}))\longrightarrow \B\Diff(V_{g};R^{s}_{g,r}),
\end{equation}
whose fibre is $\Bun^{\theta}(V_{g};\ell_{0}(R^{s}_{g,r}))$. Moreover, there is a commutative diagram
\begin{equation}\label{eq:projection-map-sigma}
\begin{tikzcd}
    \Bun^{\theta}(V_{g};\ell_{0}(R^{s}_{g,r})) \times E\Diff(V_{g};R^{s}_{g,r})
    \arrow[r] \arrow[d]
    &
    \Bun^{\theta}(V_{g+1};\ell_{0}(R^{s}_{g+1,r})) \times E\Diff(V_{g+1};R^{s}_{g+1,r})
    \arrow[d] \\
    E\Diff(V_{g};R^{s}_{g,r})
    \arrow[r]
    &
    E\Diff(V_{g+1};R^{s}_{g+1,r}),
\end{tikzcd}
\end{equation}
where the vertical maps are the projections, while the horizontal maps are defined by the stabilisation map $\sigma^{\theta,s}_{g,r}$ (see \eqref{eq:bundle-stabilisation-sigma}) and by the map of total spaces of universal bundles induced the usual stabilisation map $\Diff(V_{g};R^{s}_{g,r})\to \Diff(V_{g+1};R^{s}_{g,r})$ defined by extending diffeomorphisms by the identity on the complementary summand $V_{1}$. All the maps in the diagram are obviously $\Diff(V_{g};R^{s}_{g,r})$-equivariant, and hence pass to homotopy orbits. Taking the $\Diff(V_{g+1};R^{s}_{g+1,r})$-orbit in the targets of the horizontal maps, the induced horizontal maps are precisely the stabilisation maps $\breve{\sigma}_{g,r}^{\theta,s}$ and $\breve{\sigma}^{s}_{g,r}$. We therefore obtain a map of fibrations
\begin{equation}\label{eq:fibration-map-sigma}
\begin{tikzcd}
    \B\Diff^{\theta}(V_{g};\ell_{0}(R^{s}_{g,r}))
    \arrow[r,"\breve{\sigma}_{g,r}^{\theta,s}"]
    \arrow[d]
    &
    \B\Diff^{\theta}(V_{g+1};\ell_{0}(R^{s}_{g+1,r}))
    \arrow[d] \\
    \B\Diff(V_{g};R^{s}_{g,r})
    \arrow[r,"\breve{\sigma}^{s}_{g,r}"]
    &
    \B\Diff(V_{g+1};R^{s}_{g+1,r}),
\end{tikzcd}
\end{equation}
whose restriction to the fibres is the map is the map $\sigma^{\theta,s}_{g,r}\colon \Bun^{\theta}(V_{g};\ell_{0}(R^{s}_{g,r})) \to \Bun^{\theta}(V_{g+1};\ell_{0}(R^{s}_{g+1,r}))$. Similarly, the stabilisation maps $\mu_{g,r}^{\theta,s}$ and $\breve{\mu}^{s}_{g,r}$ fit into a map of fibrations
\begin{equation}
\begin{tikzcd}\label{eq:fibration-map-mu}
    \B\Diff^{\theta}(V_{g};\ell_{0}(R^{s}_{g,r}))
    \arrow[r,"\breve{\mu}_{g,r}^{\theta,s}"]
    \arrow[d]
    &
    \B\Diff^{\theta}(V_{g};\ell_{0}(R^{s}_{g,r+1}))
    \arrow[d] \\
    \B\Diff(V_{g};R^{s}_{g,r})
    \arrow[r,"\breve{\mu}^{s}_{g,r}"]
    &
    \B\Diff(V_{g};R^{s}_{g,r+1}),
\end{tikzcd}
\end{equation}
where the restriction to the fibres is the map $\mu_{g,r}^{\theta,s}\colon
\Bun^{\theta}(V_{g};\ell_{0}(R^{s}_{g,r})) \to \Bun^{\theta}(V_{g};\ell_{0}(R^{s}_{g,r+1}))$. To prove that the maps on total spaces in \eqref{eq:fibration-map-sigma} and \eqref{eq:fibration-map-mu} induce rational homology isomorphisms in the range of Theorem~\ref{thmC-detailed}, we analyse the associated Serre spectral sequences. The argument reduces to twisted homological stability for the base spaces, with coefficients given by the homology of the fibres.
More precisely, for each $q \ge 0$ we show that the homology groups $H_{q}(\Bun^{\theta}(V_{g};\ell_{0}(R^{s}_{g,r}));\bQ)$ assemble into the components of a functor $F_{q}^{\theta}\colon \sH\to\Ab$, and that this functor is a coefficient bisystem in the sense of Definition~\ref{def:double-coeff-system}; see Proposition~\ref{prop:tangential-str-bisystem}. This allows us to apply Theorems~\ref{thm:HS_stabilistation-1} and \ref{cor:independence-of-marked-discs}, thereby deducing the required twisted homological stability.

\subsubsection{Connectivity of the fibre}
A crucial ingredient in the proof is the following lemma, which is the only point at which the assumption that $\theta$ is simply connected is used.
\begin{lem}\label{lem:sc-tang-str-connected-space}
If the tangential structure $\theta\colon B \to B\Orth(3)$ is simply connected, then the space $\Bun^{\theta}(V_{g};\ell_{0}(R^{s}_{g,r}))$ is path-connected.
\end{lem}
\begin{proof}
Let $\bar{\ell}_{1},\bar{\ell}_{2}\in \Bun^{\theta}(V_{g};\ell_{0}(R^{s}_{g,r}))$ be two $\theta$-structures on $V_{g}$. Write $\ell_{1},\ell_{2}\colon V_{g}\to B$ for the underlying maps on the base, obtained by restricting $\bar{\ell}_{i}$ along the zero section of $TV_{g}\to V_{g}$ and then followed by postcomposition with the projection $\theta^{*}\gamma_{3}\to B$. We construct a bundle homotopy (i.e.\ a homotopy through bundle maps) from $\bar{\ell}_{1}$ to $\bar{\ell}_{2}$, thereby establishing the path-connectivity of $\Bun^{\theta}(V_{g};\ell_{0}(R^{s}_{g,r}))$.

Let $\bar{\theta}\colon \theta^{*}\gamma_{3}\to \gamma_{3}$ be the bundle map covering $\theta$, i.e.~the canonical map induced by the pullback of $\theta$ along the projection $\gamma_{3}\to\B\Orth(3)$ defining $\theta^{*}\gamma_{3}$. By \cite[Thm.~5.7]{MilnorStasheff-CharClasses}, the bundle maps $\bar{\theta}\circ \bar{\ell}_{1}$ and $\bar{\theta}\circ \bar{\ell}_{2}$ are bundle homotopic. Let $H'\colon TV_{g}\times I\to\gamma_{3}$ be such a bundle homotopy and let $H\colon V_{g}\times I\to \B\Orth(3)$ denote the induced homotopy on the bases. Thus $H$ is a homotopy from $\theta\circ \ell_{1}$ to $\theta\circ \ell_{2}$.

We first prove that $H$ admits a relative lift, as indicated by the dashed arrow in the diagram
\[
\begin{tikzcd}
    (\{0,1\}\times V_{g})\cup_{\{0,1\}\times R^{s}_{g,r}} (I\times R^{s}_{g,r})
    \arrow[r] \arrow[d,hook]
    &
    B \arrow[d,"\theta"] \\
    V_{g}\times I
    \arrow[r,"H"]
    \arrow[ur,dashed,"\tilde{H}"]
    &
    B\Orth(3),
\end{tikzcd}
\]
where the top map is given by $\ell_{1}$ on $\{0\}\times V_{g}$, by $\ell_{2}$ on $\{1\}\times V_{g}$, and by $\ell_{0}(R^{s}_{g,r})$ on $I\times R^{s}_{g,r}$.
Let $F$ denote the fibre of $\theta$. By obstruction theory (see e.g.~\cite[Prop.~9.2.3]{Arkowitz-HomotopyTheory}), such a relative lift exists provided that
\[
H^{j+1}\left(V_{g} \times I,
(\{0,1\}\times V_{g})\cup_{\{0,1\}\times R^{s}_{g,r}} (I\times R^{s}_{g,r});
\pi_{j}(F)\right)=0
\]
for all $0\le j\le 4$. Since $\theta$ is simply connected, we have $\pi_{0}(F)=\pi_{1}(F)=0$, so the above relative cohomology groups vanish for $j\le 1$. Furthermore, we note that $V_{g}$ deformation retracts onto a wedge of $g$ circles and $I$ is contractible, hence $V_{g}\times I$ has the homotopy type of a $1$--dimensional CW complex. Likewise, each component of $I\times R^{s}_{g,r}$ deformation retracts onto an interval, and each of $\{0\}\times V_{g}$ and $\{1\}\times V_{g}$ deformation retracts onto a wedge of $g$ circles. Gluing these deformation retractions compatibly along $\{0,1\}\times R^{s}_{g,r}$ shows that
$(\{0,1\}\times V_{g})\cup_{\{0,1\}\times R^{s}_{g,r}} (I\times R^{s}_{g,r})$ has the homotopy type of a graph obtained from two wedges of $g$ circles by attaching finitely many intervals between specified vertices, and thus also has the homotopy type of a $1$--dimensional CW complex. We deduce that $H^{j+1}(V_{g} \times I;\pi_{j}(F))= H^{j+1}((\{0,1\}\times V_{g})\cup_{\{0,1\}\times R^{s}_{g,r}} (I\times R^{s}_{g,r});\pi_{j}(F))= 0$ for all $j\ge 1$. 
The long exact sequence in relative cohomology for the pair then implies that $H^{j+1}(V_{g} \times I,(\{0,1\}\times V_{g})\cup_{\{0,1\}\times R^{s}_{g,r}} (I\times R^{s}_{g,r});\pi_{j}(F))=0$ for all $j\ge 2$. Therefore the desired relative lift exists.

Now, we now consider for $i\in\{1,2\}$ the pullback square defining $\theta^{*}\gamma_{3}$ and the resulting commutative diagram
\[
\begin{tikzcd}
    TV_{g} \arrow[r,"\bar{\ell}_{i}"] \arrow[d,"\pi"]
    &
    \theta^{*}\gamma_{3}
    \arrow[dr, phantom, "\usebox\pullback", very near start, color=black]
    \arrow[d]
    \arrow[r,"\bar{\theta}"]
    &
    \gamma_{3} \arrow[d] \\
    V_{g} \arrow[r,"\ell_{i}"]
    &
    B \arrow[r,"\theta"]
    &
    B\Orth(3).
\end{tikzcd}
\]
By the existence of the relative lift $\tilde{H}$, we obtain a commutative diagram
\[
\begin{tikzcd}
    TV_{g}\times I
    \arrow[rr,bend left,"H'"]
    \arrow[r,dashed,"\tilde{H}'"]
    \arrow[rd,"\tilde{H}\circ(\pi\times\id_{I})"',bend right]
    &
    \theta^{*}\gamma_{3}
    \arrow[dr, phantom, "\usebox\pullback", very near start, color=black]
    \arrow[d]
    \arrow[r,"\bar{\theta}"]
    &
    \gamma_{3} \arrow[d] \\
    &
    B \arrow[r,"\theta"]
    &
    B\Orth(3),
\end{tikzcd}
\]
where $\tilde{H}'\colon TV_{g}\times I\to \theta^{*}\gamma_{3}$ is the unique map provided by the universal property of the pullback.
By construction, the bundle $\theta^{*}\gamma_{3} \to B$ has the same fibre as the tautological bundle $\gamma_{3}\to B\Orth(3)$, and the action of $\tilde{H}'$ on each fibre is determined by $H'$. Then, since $H'$ is a bundle homotopy, it follows by a straightforward verification that $\tilde{H}'$ is also a bundle homotopy. Moreover, since $\tilde{H}$ is a lift relative to $R^{s}_{g,r}$ and $H'$ is a bundle homotopy which restricts to $\theta\circ \ell_{0}(R^{s}_{g,r})$ on $TR^{s}_{g,r}$, we deduce that $\tilde{H}'$ is a homotopy through bundle maps in $\Bun^{\theta}(V_{g};\ell_{0}(R^{s}_{g,r}))$.

Finally, by uniqueness of $\tilde{H}'$ implied by the universal property of pullbacks, we have $\tilde{H}'(0,-)=\bar{\ell}_{1}$ and $\tilde{H}'(1,-)=\bar{\ell}_{2}$. Hence $\tilde{H}'$ is a path from $\bar{\ell}_{1}$ to $\bar{\ell}_{2}$ in $\Bun^{\theta}(V_{g};\ell_{0}(R^{s}_{g,r}))$.
\end{proof}

\subsubsection{A key coefficient bisystem}

Let $\theta \colon B \to \B\Orth(3)$ be an arbitrary tangential structure. For brevity, we continue to write $\ell_{0}$ for the prescribed boundary condition, whenever it is clear from the context. For each $q \ge 0$, we now introduce a functor $F_{q}^{\theta} \colon \sH \to \Ab$, defined as follows.
We begin by defining a functor $\Bun^{\theta}(-;\ell_{0})\colon \sH\to\hTop$, where $\hTop$ denotes the (naive) homotopy category of spaces (i.e.~whose morphisms are homotopy classes of maps). On objects, we set $\Bun^{\theta}(-;\ell_{0})(V) := \Bun^{\theta}(V;\ell_{0})$.
On morphisms, if $\phi \colon V \to W$ is an isomorphism in $\sH$, and $\tilde{\phi}$ is a diffeomorphism representing $\phi$, we define $\Bun^{\theta}(\phi;\ell_{0}) \colon \Bun^{\theta}(V;\ell_{0}) \to \Bun^{\theta}(W;\ell_{0})$ to be the homotopy class of the map induced by precomposition with $D\tilde{\phi}^{-1}$. This is well-defined at the level of homotopy classes, and hence determines a functor to $\hTop$. We then define the functor $F_{q}^{\theta}$ by postcomposing with the homology functor $H_{q}(-;\bQ)\colon \hTop \to \Ab$ (see \cite[\S2.3]{Hatcherbook} for instance):
\[
F_{q}^{\theta} := H_{q}(-;\bQ) \circ \Bun^{\theta}(-;\ell_{0})
\]

\begin{prop}\label{prop:tangential-str-bisystem}
Let $\theta \colon B \to \B\Orth(3)$ be a simply connected tangential structure. 
For all $r \ge 2$ and $s \ge 0$, and for each $q \ge 0$, the functor $F_{q}^{\theta} \colon \sH \to \Ab$ is a coefficient bisystem at $V^{s}_{0,r}$ of degree $q$.
\end{prop}

Following Definition~\ref{def:double-coeff-system}, the key input for Proposition~\ref{prop:tangential-str-bisystem} is a precise understanding of the kernels and cokernels (in homology) of the stabilisation maps. For $V \in \sH^{\ge 1}$ we consider the stabilisation map
\begin{equation}
    \sigma^{\theta}_{V}\colon
    \Bun^{\theta}(V;\ell_{0})
    \longrightarrow
    \Bun^{\theta}(V \natural V_{1,1};\ell_{0}),
\end{equation}
defined by $\ell \mapsto \ell \natural \ell_{0}(V_{1,1})$, and for $W \in \sH^{\ge 2}$ we consider the stabilisation map
\begin{equation}
    \rho^{\theta}_{W}\colon
    \Bun^{\theta}(W;\ell_{0})
    \longrightarrow
    \Bun^{\theta}(W \# V_{0,2};\ell_{0}),
\end{equation}
defined by $\ell \mapsto \ell \# \ell_{0}(V_{0,2})$. Here the operation $\#$ is induced by the monoidal structure $\#=\natural_{2}$ of Definition~\ref{def:monoidal-structure-cG}.
It straightforwardly follows from the definition of the spaces of $\theta$-structures (see Definition~\ref{def:Bun-BDiff}), the monoidal product $\natural$ (Definition~\ref{def:monoidal-structure-cG}) induces a continuous map
\begin{equation}\label{diag:homotopy-eq-natural}
\Bun^{\theta}(V;\ell_{0}) \times \Bun^{\theta}(V_{1,1};\ell_{0})
\;\xrightarrow{-\natural-}\;
\Bun^{\theta}(V \natural V_{1,1};\ell_{0}),
\end{equation}
and similarly the product $\#$ induces a continuous map
\begin{equation}\label{diag:homotopy-eq-sharp}
\Bun^{\theta}(W;\ell_{0}) \times \Bun^{\theta}(V_{0,2};\ell_{0})
\;\xrightarrow{-\#-}\;
\Bun^{\theta}(W \# V_{0,2};\ell_{0}).
\end{equation}
We also generically write $-\times\ell_{0}$ for the canonical inclusion into the left factor of a product, where $\ell_{0}$ denotes the unique map from the initial object $\emptyset$ to any space of $\theta$-structures, whose image is the $\theta$-structure induced by the fixed bundle map $\ell_{0}$ (see Definition~\ref{def:canonical-theta-structure-framing}). With this convention, the above maps are compatible with stabilisation in the sense that the diagrams
\begin{equation}
\begin{tikzcd}
    \Bun^{\theta}(V;\ell_{0})
    \arrow[r,hook,"-\times\ell_{0}"]
    \arrow[rd,"\sigma^{\theta}_{V}"']
    &
    \Bun^{\theta}(V;\ell_{0}) \times \Bun^{\theta}(V_{1,1};\ell_{0})
    \arrow[d,"-\natural-"]
    \\
    &
    \Bun^{\theta}(V \natural V_{1,1};\ell_{0})
\end{tikzcd}
\end{equation}
and
\begin{equation}
\begin{tikzcd}
    \Bun^{\theta}(W;\ell_{0})
    \arrow[r,hook,"-\times\ell_{0}"]
    \arrow[rd,"\rho^{\theta}_{W}"']
    &
    \Bun^{\theta}(W;\ell_{0}) \times \Bun^{\theta}(V_{0,2};\ell_{0})
    \arrow[d,"-\#-"]
    \\
    &
    \Bun^{\theta}(W \# V_{0,2};\ell_{0})
\end{tikzcd}
\end{equation}
commute. To analyse the kernels and cokernels of the stabilisation maps on homology, we use the following lemma.
\begin{lem}\label{lem:monoidal-str-homotopy-eq}
For all $V \in \sH^{\ge 1}$ and $W \in \sH^{\ge 2}$, 
the maps~\eqref{diag:homotopy-eq-natural} and~\eqref{diag:homotopy-eq-sharp} 
are homotopy equivalences.
\end{lem}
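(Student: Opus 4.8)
The plan is to exhibit the maps \eqref{diag:homotopy-eq-natural} and \eqref{diag:homotopy-eq-sharp} as comparison maps into homotopy pullbacks over a weakly contractible base, hence as equivalences onto products. I treat \eqref{diag:homotopy-eq-natural} in detail; \eqref{diag:homotopy-eq-sharp} is identical, with the single gluing half-disc replaced by a disjoint union of two half-discs.

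Since $V$, $V_{1,1}$ and $V\natural V_{1,1}$ are parallelisable and $\ell_{0}$ is the constant structure, each space $\Bun^{\theta}(-;\ell_{0})$ is the space of maps into the fibre $F:=\theta^{-1}(\mathrm{pt})$ (pointed, as $\theta$ is pointed) sending the prescribed closed set to the basepoint; these are section spaces of (trivial) fibrations relative to a boundary condition, restriction along the inclusion of a submanifold is a Serre fibration, and all the spaces in sight have CW homotopy type, so weak equivalences between them are homotopy equivalences. Write $V\natural V_{1,1}=V\cup_{D} V_{1,1}$, where $D$ is the gluing half-disc --- by Definition~\ref{def:monoidal-structure-cG} the right half of the marked disc $\cD^{1}_{V}$, glued to the left half of $\cD^{1}_{V_{1,1}}$, with the separating diameter lying in the new first marked disc of $V\natural V_{1,1}$. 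Restriction to the two pieces identifies $\Bun^{\theta}_{0}(V\natural V_{1,1};\ell_{0})$ with a pullback $\cX_{V}\times_{\cX_{D}}\cX_{V_{1,1}}$, where $\cX_{V}$ is the space of $\theta$-structures on $V$ equal to $\ell_{0}$ on the marked points, the other marked discs, and the left half of $\cD^{1}_{V}$ (similarly $\cX_{V_{1,1}}$), where $\cX_{D}=\Bun^{\theta}(D;\ell_{0}\text{ on the diameter})$, and where the two maps to $\cX_{D}$ are restrictions, hence fibrations.

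The two geometric inputs are then: (i) $D$ deformation retracts onto its diameter, which carries the $\ell_{0}$-condition, so $\cX_{D}$ is weakly contractible, whence the structure map $\cX_{V}\times_{\cX_{D}}\cX_{V_{1,1}}\hookrightarrow \cX_{V}\times \cX_{V_{1,1}}$, being the pullback of a fibration along the diagonal weak equivalence $\cX_{D}\to\cX_{D}\times\cX_{D}$, is a weak equivalence; and (ii) likewise the right half of $\cD^{1}_{V}$ deformation retracts onto the diameter, so restricting structures to it defines a fibration $\cX_{V}\to \Bun^{\theta}(\text{right half of }\cD^{1}_{V};\ell_{0}\text{ on the diameter})$ with weakly contractible base, whose fibre over $\ell_{0}$ --- which is precisely $\Bun^{\theta}(V;\ell_{0})$ --- includes into $\cX_{V}$ by a weak equivalence, and similarly $\Bun^{\theta}(V_{1,1};\ell_{0})\hookrightarrow\cX_{V_{1,1}}$. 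Unwinding the definitions, $-\natural-$ corresponds under the identification $\Bun^{\theta}_{0}(V\natural V_{1,1};\ell_{0})\cong\cX_{V}\times_{\cX_{D}}\cX_{V_{1,1}}$ to the map $(\ell,\ell')\mapsto(\ell,\ell')$ given by these two inclusions --- which lands in the pullback because $\ell$ and $\ell'$ both restrict to the constant $\ell_{0}$ on $D$ --- and postcomposing with the inclusion from (i) gives the product of the two weak equivalences from (ii). Two-out-of-three then shows $-\natural-$ is a weak equivalence, hence a homotopy equivalence; restricting throughout to the path component of the constant structure (which matches the pair of constant structures under all of the above, since all the maps involved are $\pi_{0}$-bijections) yields the claim on the subscript-$0$ spaces. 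I expect the main obstacle to be purely the bookkeeping --- tracking which faces of $V$, of $V_{1,1}$ and of the gluing half-disc carry the $\ell_{0}$-boundary condition, and verifying the relevant deformation retractions onto those faces --- the homotopy-theoretic skeleton (section spaces, Serre fibrations, pullbacks over weakly contractible bases) being routine.
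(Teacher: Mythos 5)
Your argument is correct in substance, but it takes a genuinely different route from the paper. The paper works with explicit homotopies: it constructs a stretching homotopy $f_{t}$ supported in a collar $[-1,1]\times D$ of the gluing half-disc, uses it to show that the inclusion of the subspace $\Bun^{\theta}_{0}(V\natural V_{1,1};\ell_{0}(R(V)\cup R(V_{1,1})))$ (structures constant on \emph{all} of $D$, not just on its diameter) into $\Bun^{\theta}_{0}(V\natural V_{1,1};\ell_{0}(R(V\natural V_{1,1})))$ is a homotopy equivalence with explicit inverse $-\circ\tilde{f}_{1}$, and observes that $-\natural-$ is a homeomorphism onto that subspace (restriction to the two factors being the inverse). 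You instead identify $\Bun^{\theta}(V\natural V_{1,1};\ell_{0})$ with the strict pullback $\cX_{V}\times_{\cX_{D}}\cX_{V_{1,1}}$ over the space of structures on $D$ relative to its diameter, note that this base is weakly contractible because $D$ deformation retracts onto the diameter carrying the boundary condition, and conclude by right properness and two-out-of-three, with a final upgrade from weak to homotopy equivalence via CW homotopy types. Both proofs rest on the same geometric fact (the gluing region deformation retracts onto its intersection with the new boundary condition), but the techniques differ: the paper's explicit-homotopy argument yields genuine homotopy inverses with no hypotheses beyond those stated, and in particular never needs $\B$ to have CW homotopy type; your argument is more conceptual and generalises readily (any gluing locus that deformation retracts onto its intersection with the boundary-condition set), but a priori produces only weak equivalences, so the final step silently assumes that $\Map(M,\B)$ and its relative subspaces have CW homotopy type, i.e.\ an unstated hypothesis on $\B$ (this is harmless for the intended application, where only the induced isomorphisms on homology enter the K\"unneth and Serre spectral sequence arguments, but it is a hypothesis the paper's proof avoids). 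The routine points you defer --- that restrictions along the relevant cofibrations of relative mapping spaces are Serre fibrations, that mapping out of the pushout $V\cup_{D}V_{1,1}$ gives the strict pullback, and the $\pi_{0}$ bookkeeping for passing to the component of the constant structure --- all check out, including the correct identification of which half-discs and which diameter carry the $\ell_{0}$-condition after the boundary connected sum.
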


\begin{proof}
We begin with treating the case of \eqref{diag:homotopy-eq-natural} in detail.  Let $D$ be the closed half-disc along which $V$ and $V_{1,1}$ are glued to form $V\natural V_{1,1}$. We choose a neighbourhood $\cN_{D}\subset V\natural V_{1,1}$ of $D$, along with a homeomorphism $\phi\colon [-1,1]\times D \overset{\cong}{\longrightarrow} \cN_{D}$, whose restriction to $\{0\}\times D$ identifies $\{0\}\times D$ with $D\subset \cN_{D}$ via the identity. Using $\phi$, we henceforth identify $\cN_{D}$ with $[-1,1]\times D$. We also identify $D$ with the right half of the unit disc in $\bR^{2}$, with coordinates $(x,y)$. Now, for each $t\in I=[0,1]$, we define a self-map $f_{t}\colon \cN_{D}\to \cN_{D}$ by
\[
f_{t}(s,(x,y))
=
(s,((1+(|s|-1)t)\,x,\;y)).
\]
Then $f_{0}=\id_{\cN_{D}}$ and $f_{t}$ restricts to the identity on $\{\pm 1\}\times D$ for all $t\in I$. Extending $f_{t}$ by the identity on $(V\natural V_{1,1})\setminus \cN_{D}$ and taking the cartesian product with $\id_{\bR^{3}}$ yields a self-map $\tilde{f}_{t}\colon (V\natural V_{1,1})\times\bR^{3}\to (V\natural V_{1,1})\times\bR^{3}$, which is a bundle map over $V\natural V_{1,1}$. We also have a bundle homotopy $H\colon I\times (V\natural V_{1,1})\times\bR^3 \to (V\natural V_{1,1})\times\bR^{3}$ between $\id_{(V\natural V_{1,1})\times\bR^{3}}$ and $\tilde{f}_{1}$, defined by $H(t,-)=\tilde{f}_t(-)$. Then, the precomposition with $H$ induces a homotopy
\[
K\colon I\times \Bun^{\theta}(V\natural V_{1,1};\ell_{0}(R(V\natural V_{1,1})))
\longrightarrow
\Bun^{\theta}(V\natural V_{1,1};\ell_{0}(R(V\natural V_{1,1}))),
\]
defined by $(t,\ell)\mapsto \ell\circ H(t,-)$.

Now, we consider the inclusion
\[
\iota\colon
\Bun^{\theta}(V\natural V_{1,1};\ell_{0}(R(V)\cup R(V_{1,1})))
\hookrightarrow
\Bun^{\theta}(V\natural V_{1,1};\ell_{0}(R(V\natural V_{1,1})))
\]
induced by the inclusion $R(V\natural V_{1,1})\hookrightarrow R(V)\cup R(V_{1,1})$. By construction, $K$ is a homotopy between the identity and $\iota\circ(-\circ \tilde{f}_{1})$. Moreover, if $\ell$ lies in the subspace
$\Bun^{\theta}(V\natural V_{1,1};\ell_{0}(R(V)\cup R(V_{1,1})))$, then $\ell\circ \tilde{f}_{t}$ lies in the same subspace for all $t\in I$. Hence, the restriction of $K$ along $\iota$ yields a homotopy from $\id$ to $(-\circ \tilde{f}_{1})\circ \iota$ (i.e.~$(t,\ell')\mapsto \ell'\circ \tilde{H}(t,-)$ for $(t,\ell')\in I\times \Bun^{\theta}(V\natural V_{1,1};\ell_{0}(R(V)\cup R(V_{1,1})))$) yields a homotopy from the identity map on $\Bun^{\theta}(V\natural V_{1,1};\ell_{0}(R(V)\cup R(V_{1,1})))$ to $(-\circ \tilde{f}_{1})\circ \iota$. It follows that $\iota$ and $(-\circ \tilde{f}_{1})$ are homotopy inverses.

On the other hand, the map \eqref{diag:homotopy-eq-natural} lands in the subspace $\Bun^{\theta}(V\natural V_{1,1};\ell_{0}(R(V)\cup R(V_{1,1})))$, and the restrictions along the canonical embeddings $V_{1,1}\hookrightarrow V\natural V_{1,1}$ and $V\hookrightarrow V\natural V_{1,1}$ induce a map
\[
\Bun^{\theta}(V\natural V_{1,1};\ell_{0}(R(V)\cup R(V_{1,1})))
\longrightarrow
\Bun^{\theta}(V;\ell_{0})\times \Bun^{\theta}(V_{1,1};\ell_{0}),
\]
which is inverse to $-\natural-$. This shows that $-\natural -$ is a homeomorphism onto its image. Since $-\natural-$ is, by definition, the composite of this homeomorphism with the homotopy equivalence $\iota$, we conclude that \eqref{diag:homotopy-eq-natural} is a homotopy equivalence.

The argument for~\eqref{diag:homotopy-eq-sharp} is identical: one replaces $\cN_{D}$ by the disjoint union of neighbourhoods of the two half-discs used to form $W\# V_{0,2}$, defines the corresponding isotopy $f_{t}$ on each neighbourhood, and repeats the above proof verbatim.
\end{proof}

We can now combine Lemmas~\ref{lem:sc-tang-str-connected-space} and~\ref{lem:monoidal-str-homotopy-eq} to prove Proposition~\ref{prop:tangential-str-bisystem}.

\begin{proof}[Proof of Proposition~\ref{prop:tangential-str-bisystem}]    
Let us verify that the functor $F_q^{\theta}$ satisfies all of the hypotheses of Definition~\ref{def:double-coeff-system}. For brevity, we write $A=V^{s}_{0,r}$, $X=V_{1,1}$.

We start by checking that the restriction of $F_{q}^{\theta}$ to $\sH_{A,X}^{\ge 1}$ extends to a functor $F_{q}^{\theta} \colon [\sH_{A,X}^{\ge 1},\sH_{X}^{\ge 1}\rangle\to\Ab$ of degree $d$ at $0$. We show this by first extending the functor $\Bun^{\theta}(-;\ell_{0})\colon \sH_{A,X}^{\ge 1}\to \hTop$, which was defined prior to Proposition~\ref{prop:tangential-str-bisystem}. Given a representative $(\phi,V_{h})$ of a morphism $V\to W$ in $[\sH_{A,X}^{\ge 1},\sH_{X}^{\ge 1}\rangle$, and a diffeomorphism $\tilde{\phi}$ representing $\phi$, we obtain a (well-defined homotopy class of) map $\Bun^{\theta}(-;\ell_{0})(\phi):\Bun^{\theta}(V\natural V_{h};\ell_{0})\to \Bun^{\theta}(W;\ell_{0})$. Furthermore, using the stabilisation map
\[
\tilde{\sigma}^{h}_{V}\colon
\Bun^{\theta}(V;\ell_{0})
\longrightarrow
\Bun^{\theta}(V\natural V_{h};\ell_{0})
\]
defined by $\ell\mapsto \ell\natural\ell_{0}(V_{h})$, and we assign 
\[
\Bun^{\theta}(-;\ell_{0})(\phi,V_{h})
:=
\Bun^{\theta}(-;\ell_{0})(\phi)\circ \tilde{\sigma}^{h}_{V}.
\]
For two different representatives $(\phi,V_{h})$ and $(\phi',V_{h})$ of the same class of morphisms, we now need to show that $\Bun^{\theta}(-;\ell_{0})(\phi,V_{h})$ and $\Bun^{\theta}(-;\ell_{0})(\phi',V_{h})$ belong to the same homotopy class of maps. By definition, there exists an isomorphism $f:V_{h}\to V_{h}$ in $\sH_{A,X}^{\ge 1}$ such that $\phi'=\phi\circ(\id_{V}\natural f)$. It therefore suffices to show that
\[
\Bun^{\theta}(-;\ell_{0})(\id_V\natural f)\circ \tilde{\sigma}^{h}_{V} \colon \Bun^{\theta}(V;\ell_{0})\longrightarrow \Bun^{\theta}(V\natural V_{h};\ell_{0})
\]
is homotopic to $\tilde{\sigma}^{h}_{V}$.
belongs to the same homotopy class as $\tilde{\sigma}_{V}^{h}$. We have a commutative diagram
\[
\begin{tikzcd}
    \Bun^{\theta}(V;\ell_{0}) \arrow[rrd,"\tilde{\sigma}_{V}^{h}"'] \arrow[rr,"-\times\ell_{0}(V_{h})"]
    && \Bun^{\theta}(V;\ell_{0})\times\Bun^{\theta}(V_{h};\ell_{0}) \arrow[d,"-\natural-"] \arrow[rrr,"\id\times\Bun^{\theta}(-;\ell_{0})(f)"]
    &&& \Bun^{\theta}(V;\ell_{0})\times\Bun^{\theta}(V_{h};\ell_{0}) \arrow[d,"-\natural-"]
    \\
    && \Bun^{\theta}(V\natural V_{h};\ell_{0})\arrow[rrr,"\Bun^{\theta}(-;\ell_{0})(\id_{V}\natural f)"]
    &&& \Bun^{\theta}(V\natural V_{h};\ell_{0}).
\end{tikzcd}
\]
The vertical maps are homotopy equivalences by Lemma~\ref{lem:monoidal-str-homotopy-eq}. Hence it is enough to check that the constant map $*\to\Bun^{\theta}(V_{h};\ell_{0})$ with value $\ell_{0}(V_h)$ is homotopic to its postcomposition with $\Bun^{\theta}(-;\ell_{0})(f)$, or in other words that the two bundle maps $TV_{h}\to\theta^{*}\gamma_{3}$ are bundle homotopic. This follows from Lemma~\ref{lem:sc-tang-str-connected-space}, which implies that $\Bun^{\theta}(V_h;\ell_{0})$ is connected. Thus the extension is well-defined up to homotopy, and so we have a well-defined functor $\Bun^{\theta}(-;\ell_{0}) \colon [\sH_{A,X}^{\ge 1},\sH_{X}^{\ge 1}\rangle\to \hTop$. Then, composing with the functor $H_{1}(-;\bQ)\colon \hTop\to\Ab$ gives the desired functor $F_{q}^{\theta}$.

Now, we prove by induction on $q\ge 0$ that the functor $F_{q}^{\theta}$ is a finite degree coefficient system $[\sH_{A,X}^{\ge 1},\sH_{X}^{\ge 1}\rangle\to \Ab$ of degree $q$. Since $\theta$ is simply connected, Lemma~\ref{lem:sc-tang-str-connected-space} shows that the space of $\theta$-structures $\Bun^{\theta}(V;\ell_{0})$ is connected. Hence the functor $F_{0}^{\theta}$ is constant with value $\bQ$, and so a coefficient system of degree $0$; see Example~\ref{eg:Z_H_poly_functor}.
Thus the functor $F_{0}^{\theta}$ is constant with value $\bQ$ and so indeed a coefficient system of degree $0$; see Example~\ref{eg:Z_H_poly_functor}.

Let us now assume the claim holds for $F_{k}$ for all $k < q$.
We note that for any $V\in\sH^{\ge 1}_{A,X}$, the stabilisation map $\sigma^{\theta}_{V}\colon \Bun^{\theta}(V;\ell_{0})\to \Bun^{\theta}(V\natural X;\ell_{0})$ and the functor $F_{q}^{\theta}$ are defined precisely so that $H_{q}(\sigma^{\theta}_{V};\bQ)=F_{q}^{\theta}(\iota_{X}(V))$, where we recall that $\iota_{X}(V)$ is the map $[\id,X]\colon V\to V\natural X$. Furthermore, we have a commutative diagram
\begin{equation}\label{diag:natural-stabilisation-via-monoidal-str}
\begin{tikzcd}
    \Bun^{\theta}(V;\ell_{0})\arrow[rd,"\sigma^{\theta}_{V}"']\arrow[r,hook,"-\times\ell_{0}"] &
    \Bun^{\theta}(V;\ell_{0})\times\Bun^{\theta}(V_{1,1};\ell_{0})\arrow[d,"-\natural-"] \\
    & \Bun^{\theta}(V\natural V_{1,1};\ell_{0})
    \end{tikzcd}
\end{equation}
where the vertical map is a homotopy equivalence by Lemma~\ref{lem:monoidal-str-homotopy-eq}. Applying the Künneth theorem for topological spaces (see \cite[Cor.~VI.12.10, Prop.~VII.2.6]{Dold}) yields a natural commutative diagram
\[
\begin{tikzcd}
    F_{q}^{\theta}(V)\arrow[r]\arrow[rd,"F_{q}^{\theta}(\iota_{X}(V))"']&\underset{i+j=q}{\bigoplus} F_{i}(V)\otimes F_{j}(X) \arrow[d,"\cong"]\\
    & F_{q}^{\theta}(V\natural X),
\end{tikzcd}
\]
where the vertical arrow is the Künneth isomorphism and the horizontal arrow is the direct summand inclusion. We deduce that $\kappa_{X}(F_{q}^{\theta})=0$ and we have a natural isomorphism
\begin{equation}\label{iso:Kunneth-delta}
\delta_{X}(F_{q}^{\theta})(V)
\cong
\bigoplus_{i=0}^{q-1}F_{i}(V)\otimes F_{q-i}(X).
\end{equation}
For each $1\le j\le q$, we note that the functor $-\otimes F_{j}(X)\colon \Ab\to\Ab$ is exact, because the functor $F_{j}$ takes values in $\bQ$-vector spaces. By the inductive assumption, it is then immediate that the coefficient system $F_{i}(-)\otimes F_{q-i}(X)$ is of finite degree at most $i$ for each $0\le i\le q-1$. Since finite degree coefficient systems are closed under direct sums (see \cite[Lem.~1.6]{AnnexLS1}), it follows from \eqref{iso:Kunneth-delta} that the coefficient system $\delta_{X}(F_{q}^{\theta})$ is of finite degree at most $q-1$, and thereby finally that $F_{q}^{\theta}$ is a finite degree coefficient system of degree $q$. Moreover, by repeating mutatis mutandis the above proof by stabilising on the left instead of on the right, we also check that the functor $F_{q}^{\theta}$ defines a coefficient system $\langle \sH^{\ge 1}_{V_{1,1}}, \sH^{\ge 1}_{V,V_{1,1}} ]  \to \Ab$ of degree $q$, i.e.~it satisfies Condition~\eqref{item:extension-left} of Definition~\ref{def:double-coeff-system}.

The verification of Condition~\eqref{item:extension-right} in Definition~\ref{def:double-coeff-system} is entirely analogous to the argument for Condition~\eqref{item:extension-left}. We therefore sketch the construction and highlight only the points where the details differ. First of all, let $W\in\sH^{\ge 2}$ and consider the restriction of $F_{q}^{\theta}$ to $\sH_{W,Y}^{\ge 2}$. We extend it to $[\sH_{W,Y}^{\ge 2},\sH_{Y}^{\ge 2}\rangle$ by assigning $F_{q}^{\theta}(\phi,Y^{\# h}):=H_{q}(F(\phi)\circ \rho^{h}_{W};\bZ)$, where 
\[
\tilde{\rho}^{h}_{W}:\Bun^{\theta}(W;\ell_{0})\longrightarrow \Bun^{\theta}(W\# Y^{\# h};\ell_{0})
\]
is defined by assigning $\ell\mapsto \ell\natural\ell_{0}(V_{h})$. As before, we prove by induction on $q\ge 0$ that $F_{q}^{\theta}$ is a coefficient system of finite degree $q$ over $[\sH_{W,Y}^{\ge 2},\sH_{Y}^{\ge 2}\rangle$.
The case $q=0$ again follows from Lemma~\ref{lem:sc-tang-str-connected-space}. For the inductive step, we now use the commutative diagram
\begin{equation*}
\begin{tikzcd}
    \Bun^{\theta}(W;\ell_{0}) \arrow[rd,"\rho^{\theta}_{W}"'] \arrow[r,hook,"-\times\ell_{0}"]
    & \Bun^{\theta}(W;\ell_{0})\times\Bun^{\theta}(V_{0,2};\ell_{0}) \arrow[d,"-\#-"]
    \\
    & \Bun^{\theta}(W\# V_{0,2};\ell_{0})
\end{tikzcd}
\end{equation*}
in which the vertical map is a homotopy equivalence by Lemma~\ref{lem:monoidal-str-homotopy-eq}. The Künneth theorem then implies that $\kappa_{Y}(F_{q}^{\theta})=0$ and that $\delta_{Y}(F_{q}^{\theta})$ identifies with a direct sum of coefficient systems of degrees $\le q-1$. Since finite-degree coefficient systems are closed under direct sums \cite[Lem.~1.6]{AnnexLS1}, it follows that $\delta_{Y}(F_{q}^{\theta})$ has degree at most $q-1$, and hence $F_{q}^{\theta}$ has degree $q$.

Finally, we show that $F_{q}^{\theta}$ satisfies Condition~\eqref{item:extension-mixed} of Definition~\ref{def:double-coeff-system}. For any $V\in \sH^{\ge 2}$, there is a commutative diagram of spaces
\[
\begin{tikzcd}
    \Bun^{\theta}(V;\ell_{0}) \arrow[r,"\tilde{\rho}^{1}_{V}"] \arrow[d,"\tilde{\sigma}^{1}_{V}"]
    & \Bun^{\theta}(V\# Y;\ell_{0}) \arrow[d,"\tilde{\sigma}^{1}_{V}"]
    \\
    \Bun^{\theta}(X\natural V;\ell_{0}) \arrow[r,"\tilde{\rho}^{1}_{V}"]
    & \Bun^{\theta}(X\natural V\# Y;\ell_{0}),
\end{tikzcd}
\]
whose commutativity is immediate from the above definitions of the maps $\tilde{\sigma}^{1}_{V}$ and $\tilde{\rho}^{1}_{V}$. Applying the functor $H_{q}(-;\bQ)$ gives the required commutative diagram in $\Ab$.
\end{proof}

\begin{rmk}
A careful inspection of the proof shows that the coefficient systems appearing in the bisystem $F_{q}^{\theta}$ are in fact \emph{split}. We shall not make use of this stronger statement, since it does not improve the stable range in Theorem~\ref{thmC-detailed}, and we have therefore omitted it from the argument.
\end{rmk}

\begin{rmk}\label{rmk:thmC-over-Q}
If one replaces the rational homology functor $H_{q}(-;\bQ) \colon \hTop \to \Ab$ with integral homology $H_{q}(-;\bZ)$, most of the argument still applies. Let us indicate where rational coefficients are essential.
The first issue arises from the use of exactness of the functor $- \otimes F_{j}(X)$, which need not hold over $\bZ$. This can be remedied by assuming that each $F_{i}$ is \emph{split} (see \cite[Lem.~1.7]{AnnexLS1}). However, a further obstruction appears: over $\bZ$, the Künneth theorem expresses $F_{q}^{\theta}(V \natural X)$ as a (non-naturally split) extension of split coefficient systems. Such extensions are not necessarily split, and this prevents the inductive step from going through.
\end{rmk}

\subsubsection{Proof of Theorem~\ref{thm:mainC}}

We now assemble the intermediate results to complete the proof of Theorem~\ref{thm:mainC}.

\begin{proof}[Proof of Theorem~\ref{thmC-detailed}]
For each $(g,r,s)$, let $E(g,r,s)$ denote the homological Serre spectral sequence associated to the fibre bundle~\eqref{eq:tang-str-fibre-bundle}. Its $E^{2}$-page has the form
\[
E(g,r,s)^{2}_{p,q}
=
H_{p}\left(\B\Diff(V_{g};R^{s}_{g,r});F_{q}^{\theta}(V^{s}_{g,r})\right)
\;\Longrightarrow\;
H_{p+q}\left(\B\Diff^{\theta}(V_{g};\ell_{0}(R^{s}_{g,r}));\bQ\right).
\]
By functoriality of the Serre spectral sequence with respect to maps of fibrations (see for instance \cite[Th.~24.5.1]{MayPonto}), the diagrams~\eqref{eq:fibration-map-sigma} and~\eqref{eq:fibration-map-mu} induce morphisms of spectral sequences $E(g,r,s)\to E(g+1,r,s)$ and $E(g,r,s)\to E(g,r+1,s)$ respectively. Recall that the homology of $\B\Diff(V_{g};R^{s}_{g,r})$ with coefficients in the local system $F_{q}^{\theta}(V^{s}_{g,r})$ agrees with the group homology of $\cH^{s}_{g,r}$ with coefficients in the corresponding representation (see Remark~\ref{rmk:classifying-space-handlebody}). Combining Proposition~\ref{prop:tangential-str-bisystem} with Theorem~\ref{thm:HS_stabilistation-1} and Corollary~\ref{cor:independence-of-marked-discs}, we deduce that the above maps are isomorphisms on the $E^{2}$-page in the range $p\le\frac{g-1}{2}-q$ (i.e.~$p+q\le\frac{g-3}{2}$). It follows that the induced maps on the abutments $(\breve{\sigma}^{\theta,s}_{g,r})_{*}$ and $(\breve{\mu}_{g,r}^{\theta,s})_{*}$ are isomorphisms in degrees $* \le \frac{g-3}{2}$, as claimed.
\end{proof}

\phantomsection
\addcontentsline{toc}{section}{References}
\renewcommand{\bibfont}{\normalfont\small}
\setlength{\bibitemsep}{0pt}
\printbibliography

\noindent {Erik Lindell, \itshape Institut for Matematiske Fag, Københavns Universitet, Universitetsparken 5, 2100
København Ø, Denmark.} \noindent {Email address: \tt erikjlindell@gmail.com, ejl@math.ku.dk}

\noindent {Arthur Souli{\'e}, \itshape Normandie Univ., UNICAEN, CNRS, LMNO, 14000 Caen, France.} \noindent {Email address: \tt artsou@hotmail.fr, arthur.soulie@unicaen.fr, arthur.soulie@cnrs.fr}

\end{document}